\numberwithin{equation}{section}
\newtheorem{Proposition}[equation]{Proposition}
\newtheorem{Lemma}[equation]{Lemma}
\newtheorem{Theorem}[equation]{Theorem}
\newtheorem{MainTheorem}{Theorem}
\theoremstyle{definition}  
\newtheorem{Remark}[equation]{Remark}
\newcommand\Comment[2][\relax]{\space\par\medskip\noindent%
   \fbox{\begin{minipage}{\textwidth}\textbf{Comment\ifx\relax#1\else---#1\fi}\newline%
        #2\end{minipage}}\medskip
}
\def\bone{\text{\boldmath$1$}}
\def\pmod#1{\text{ }(\text{\rm mod } #1)\,}
\newcommand{\Hom}{\operatorname{Hom}}
\newcommand{\End}{\operatorname{End}}
\newcommand{\im}{\operatorname{im}}
\def\sgn{\mathtt{sgn}}
\newcommand{\res}{\operatorname{res}}
\newcommand{\soc}{\operatorname{soc}}
\newcommand{\head}{\operatorname{head}}
\newcommand{\Z}{\mathbb{Z}}
\def\eps{{\varepsilon}}
\def\phi{{\varphi}}
\newcommand\dbl{\operatorname{dbl}}
\newcommand{\F}{{\mathbb F}}
\newcommand{\la}{\lambda}
\newcommand{\al}{\alpha}
\newcommand{\be}{\beta}
\newcommand{\si}{\sigma}
\newcommand{\Om}{\Omega}
\newcommand{\de}{\delta}
\newcommand{\Mull}{{\tt M}}
\newcommand{\JS}{{\tt JS}}
\newcommand{\EE}{{\mathcal E}}
\newcommand{\SSS}{{\sf S}}
\newcommand{\AAA}{{\sf A}}
\newcommand{\da}{{{\downarrow}}}
\newcommand{\ua}{{\uparrow}}
\newcommand{\I}{{\mathcal I}}
\newcommand{\J}{{\mathcal J}}
\def\Parp{{\mathscr P}_p}
\def\Par{{\mathscr P}}
\def\Parinv{{\mathscr P}^\AAA}
\def\k{\Bbbk}
\def\spa{\operatorname{span}}
\def\im{{\mathrm{im}\,}}
\def\col{{\tt col}}
\def\row{{\tt row}}
\def\v{\bar v}
  \gdef\set#1{\mathinner{\lbrace\,{\mathcode`\|"8000%
  \let|\midvert #1}\,\rbrace}}
\def\midvert{\egroup\mid\bgroup}
\colorlet{darkgreen}{green!50!black}
\tikzset{dots/.style={very thick,loosely dotted},
         greendot/.style={fill,circle,color=darkgreen,inner sep=1.5pt,outer sep=0},
         blackdot/.style={fill,circle,color=black,inner sep=1.5pt,outer sep=0},
         graydot/.style={fill,circle,color=gray,inner sep=1.1pt,outer sep=0}
}
\def\greendot(#1,#2){\node[greendot] at(#1,#2){}}
\def\blackdot(#1,#2){\node[blackdot] at(#1,#2){}}
\def\graydot(#1,#2){\node[graydot] at(#1,#2){}}
\newenvironment{braid}{
  \begin{tikzpicture}[baseline=6mm,black,line width=1pt, scale=0.32,
                      draw/.append style={rounded corners},
                      every node/.append style={font=\fontsize{5}{5}\selectfont}]%
  }{\end{tikzpicture}
}
\def\Grid(#1,#2){
  \draw[very thin,gray,step=2mm] (0,0)grid(#1,#2);
  \draw[very thin,darkgreen,step=10mm] (0,0)grid(#1,#2);
}
\newcommand\Tableau[2][\relax]{
  \begin{tikzpicture}[scale=0.5,draw/.append style={thick,black}]
    \ifx\relax#1\relax%
    \else 
      \foreach\box in {#1} { \filldraw[blue!30]\box+(-.5,-.5)rectangle++(.5,.5); }
    \fi
    \newcount\row\newcount\col
    \row=0
    \foreach \Row in {#2} {
       \col=1
       \foreach\k in \Row {
          \draw(\the\col,\the\row)+(-.5,-.5)rectangle++(.5,.5);
          \draw(\the\col,\the\row)node{\k};
          \global\advance\col by 1
       }
       \global\advance\row by -1
    }
  \end{tikzpicture}
}
\newcommand\YoungDiagram[2][\relax]{
  \begin{tikzpicture}[scale=0.5,draw/.append style={thick,black}]
    \ifx\relax#1\relax%
    \else 
    \foreach\box in {#1} {
      \filldraw[blue!30]\box rectangle ++(1,1);
    }
    \fi
    \newcount\row
    \row=0
    \foreach \col in {#2} {
       \draw(1,\the\row)grid ++(\col,1);
       \global\advance\row by -1
    }
  \end{tikzpicture}
}
\newdimen\hoogte    \hoogte=12pt    
\newdimen\breedte   \breedte=14pt  
\newdimen\dikte     \dikte=0.5pt 
\newenvironment{Young}{\begingroup
       \def\vr{\vrule height0.89\hoogte width\dikte depth 0.2\hoogte}
       \def\fbox##1{\vbox{\offinterlineskip
                    \hrule height\dikte
                    \hbox to \breedte{\vr\hfill##1\hfill\vr}
                    \hrule height\dikte}}
       \vbox\bgroup \offinterlineskip \tabskip=-\dikte \lineskip=-\dikte
            \halign\bgroup &\fbox{##\unskip}\unskip  \crcr }
       {\egroup\egroup\endgroup}
\def\Youngdiagram#1{\relax\ifmmode\vcenter{\,\begin{Young}#1\end{Young}\,}\else%
              $\vcenter{\,\begin{Young}#1\end{Young}\,}$\fi}
\begin{document}

\title[Irreducible restrictions of representations of symmetric groups]{{\bf Irreducible restrictions of representations of alternating groups in small characteristics: Reduction theorems}}

\author{\sc Alexander Kleshchev}
\address{Department of Mathematics\\ University of Oregon\\Eugene\\ OR 97403, USA}
\email{klesh@uoregon.edu}

\author{\sc Lucia Morotti}
\address
{Institut f\"{u}r Algebra, Zahlentheorie und Diskrete Mathematik\\ Leibniz Universit\"{a}t Hannover\\ 30167 Hannover\\ Germany} 
\email{morotti@math.uni-hannover.de}

\author{\sc Pham Huu Tiep}
\address
{Department of Mathematics\\ Rutgers University\\ Piscataway\\ NJ~08854, USA} 
\email{tiep@math.rutgers.edu}

\subjclass[2010]{20C20, 20C30, 20E28}

\thanks{The first author was supported by the NSF grant DMS-1700905 and the DFG Mercator program through the University of Stuttgart. The second author was supported by the DFG grant MO 3377/1-1, the DFG Mercator program through the University of Stuttgart, and is also grateful to the University of Oregon for hospitality. The third author was supported by the NSF-grants DMS-1839351 and DMS-1840702. This work was also supported by the NSF grant DMS-1440140 and Simons Foundation while all three authors were in residence at the MSRI during the Spring 2018 semester.}

\begin{abstract}
We study irreducible restrictions from modules over alternating groups to subgroups. 
We get reduction results which substantially restrict the classes of subgroups and modules for which this is possible. This is known when the characteristic of the ground field is greater than $3$, but the small characteristics cases require a substantially more delicate analysis and new ideas. In view of our earlier work on symmetric groups we may consider only the restriction of irreducible modules  over alternating groups which do not extend to symmetric groups. This work fits into the Aschbacher-Scott program on maximal subgroups of finite classical groups. 
\end{abstract}


\maketitle

\section{Introduction}

Let $\F$ be an algebraically closed field of characteristic $p\geq 0$. Denote by $\AAA_n$ the alternating group on $n$ letters. We always assume that $n\geq 5$. 
In this paper we are concerned with the following problem 

\vspace{2mm}
\noindent
{\bf Problem 1 (Irreducible Restriction Problem for Alternating Groups).}
{\em
Classify the subgroups $G <\AAA_n$ and $\F \AAA_n$-modules $V$ of dimension greater than $1$ such that the restriction $V\da_G$ is irreducible.
}
\vspace{2mm}

This is a special case of the general Irreducible Restriction Problem where we have an arbitrary almost quasi-simple group in place of $\AAA_n$. 
A major application of the Irreducible Restriction Problem is to the Aschbacher-Scott program on maximal subgroups of finite classical groups, see \cite{Asch,Scott,Magaard,KlL,BDR} for more details on this.  For the purposes of the applications to the Aschbacher-Scott program we may assume that $G$ is also almost quasi-simple, but  we will not be making this additional assumption. 

For the case $p=0$, Problem 1 has been solved by Saxl \cite{Saxl}. 
Let us assume from now on that $p>0$. Indeed, it is the positive characteristic case which is important for the Aschbacher-Scott program. For $p>3$, Problem 1 is solved in  
\cite{KSAlt}. It is important to extend this to the case of characteristics $2$ and $3$. 
However, there are formidable technical obstacles which make the small characteristics  cases much more complicated. The most  serious difficulty is that the submodule structure of certain important permutation modules over symmetric groups gets very complicated for $p=2$ and $3$. This in turn necessitates a rather  detailed study of branching for symmetric groups. 

Let $V$ be an irreducible $\F\AAA_n$-module. If $V$ lifts to the symmetric group $\SSS_n$ then the problem reduces to the Irreducible Restriction Problem for Symmetric Groups, which is studied in \cite{BK}, where the problem is completely solved for $p>3$, and \cite{KMTOne}, where reduction theorems are obtained for the small characteristics cases. So in this paper we are concerned mostly with the case where $V$ does not lift to $\SSS_n$, and prove major reduction theorems for that case. These reduction theorems, together with the ones in \cite{KMTOne}, will play a key role in our future work \cite{KMTTwo}, which will complete the solution of the Irreducible Restriction 
Problem for both $\SSS_n$ and $\AAA_n$ (and $G$ a maximal subgroup) in all characteristics.  

To formulate our main result we recall some facts about irreducible representations of symmetric and alternating groups referring the reader to the main body of the paper for more details. The irreducible $\F \SSS_n$-modules are labeled by the $p$-regular partitions of $n$. If $\la$ is such a partition, we denote by $D^\la$ the corresponding irreducible $\F\SSS_n$-module. We refer the reader to \cite[\S11.1]{KBook} for the definitions of combinatorial notions of a residue of a node and of a normal node. 

It is known that $D^\la\da_{\SSS_{n-1}}$ is  irreducible  if and only if $\la$ is in the explicitly defined class of {\em Jantzen-Seitz} (or {\em JS}) partitions which go back to \cite{JS,k2}. 
There is a special irreducible $\F\SSS_n$-module in characteristic $2$ called the {\em basic spin module $D^{\be_n}$}.

We denote by $\Parinv_p(n)$ the set of all $p$-regular partitions of $n$ such that $D^\la\da_{\AAA_n}$ is reducible. If $\la\in \Parinv_p(n)$ we have $D^\la\da_{\AAA_n}\cong E^\la_+\oplus E^\la_-$ for irreducible $\F\AAA_n$-modules $E^\la_+\not\cong E^\la_-$. The set of partitions $\Parinv_p(n)$ is well understood---if $p=2$ it is described explicitly in \cite{Benson} while for $p>2$ these are exactly the partitions which are fixed by the Mullineux bijection, see \cite{Mull,FK,BO}. 

We formulate our main results for all characteristics, although they are  only new for $p=2$ and $3$:

\begin{MainTheorem}\label{TA}
Let $n\geq 5$, $\la\in\Parinv_p(n)$ and $G\leq \AAA_n$. If $E^\la_\pm\da_{G}$ is irreducible then one of the following statements holds.
\begin{enumerate}
\item[{\rm (i)}] $G$ is primitive.
\item[{\rm (ii)}] $G\leq \AAA_{n-1}$, and either 
\begin{enumerate}
\item[{\rm (a)}] $\la$ is JS, or
\item[{\rm (b)}] $\la$ has exactly two normal nodes, both of residue different from $0$.
\end{enumerate} 

\item[{\rm (iii)}] $G\leq \AAA_{n-2,2}$ and $\la$ is JS.

\item[{\rm (iv)}] $p=2$, $n\not\equiv 2\pmod{4}$ and $\la=\be_n$.
\end{enumerate}
\end{MainTheorem}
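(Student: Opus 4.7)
The overall plan is to replace $G$ by a maximal imprimitive overgroup in $\AAA_n$ and analyze the resulting cases. Since $E^\la_\pm\da_G$ irreducible forces $E^\la_\pm\da_H$ irreducible for every intermediate $G\leq H\leq\AAA_n$, and since every non-primitive subgroup of $\AAA_n$ is contained in a maximal subgroup of $\AAA_n$ of one of the two types $\AAA_{n-k,k}=(\SSS_{n-k}\times\SSS_k)\cap\AAA_n$ (intransitive, $1\leq k\leq n/2$) or $(\SSS_a\wr\SSS_b)\cap\AAA_n$ (transitive imprimitive, $ab=n$, $a,b\geq 2$), I may assume that $G$ itself is of one of these two forms. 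Conclusions (ii)--(iv) then match the surviving subcases of this dichotomy, with (i) absorbing the case that $G$ was primitive from the outset.

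In the intransitive cases I would apply Clifford theory relative to the index-two embedding $\AAA_{n-k,k}\trianglelefteq\SSS_{n-k,k}$: irreducibility of $E^\la_\pm\da_{\AAA_{n-k,k}}$ forces a tightly constrained decomposition pattern for $D^\la\da_{\SSS_{n-k,k}}$, with at most two irreducible constituents related (for $p>2$) by the Mullineux involution on the $\SSS_{n-k}$-factor, exactly in parallel with the Mullineux-invariance $m(\la)=\la$ that cuts out $\Parinv_p(n)$. Coupling this with the modular branching rules for $\SSS_n\downarrow \SSS_{n-k}\times\SSS_k$ and the socle/head analyses of the companion paper \cite{KMTOne}, one reads off that $k=1$ forces $\la$ to be JS or to have exactly two normal nodes of nonzero residue (case (ii)), that $k=2$ forces $\la$ to be JS (case (iii)), and that $k\geq 3$ is impossible.

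For the transitive imprimitive case I would use a dimension squeeze. Every irreducible $\F G$-module for $G\leq \SSS_a\wr\SSS_b$ has dimension bounded above by $(\dim D^\mu)^b\cdot d$ for some $\mu\vdash a$ and some small $d$ controlled by $b$, and this is dwarfed by the available lower bounds for $\dim D^\la$ whenever $\la\in\Parinv_p(n)$ is not among a short list of small exceptional partitions. Inspecting those exceptions and invoking the description of $\Parinv_2(n)$ from \cite{Benson} (resp.\ the Mullineux-fixed condition for odd $p$ from \cite{Mull,FK,BO}), the only survivor is the basic spin $\la=\be_n$ in characteristic $2$; the congruence $n\not\equiv 2\pmod 4$ emerges as the combinatorial condition under which $(\SSS_a\wr\SSS_b)\cap\AAA_n$ is actually compatible with the splitting $D^{\be_n}\da_{\AAA_n}=E^{\be_n}_+\oplus E^{\be_n}_-$, yielding (iv).

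The principal obstacle will be the $k=2$ intransitive subcase in characteristics $2$ and $3$: the relevant permutation modules over $\SSS_n$ are highly non-semisimple there, and isolating the composition factors of $D^\la\da_{\SSS_{n-2,2}}$ carrying the twist/diagonal behaviour that keeps $E^\la_\pm\da_{\AAA_{n-2,2}}$ irreducible requires the detailed branching technology assembled in \cite{KMTOne}. A secondary but nontrivial difficulty is the basic spin endgame of (iv), where the wreath-product structure of $D^{\be_n}$ must be matched exactly against the stabilizer condition, and each small exceptional $\la$ dismissed in the dimension squeeze must be eliminated by hand against the $\Parinv_p$ criterion.
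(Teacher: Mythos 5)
Your reduction to maximal intransitive and transitive imprimitive overgroups matches the paper's opening move, and your description of the $k=1$ and $k=2$ intransitive endgames (Clifford theory for $\AAA_{n-k,k}\trianglelefteq\SSS_{n-k,k}$ plus branching) is in the right spirit. But there is a genuine gap in the transitive imprimitive case: the ``dimension squeeze'' cannot work. For $G=G_{n/2,2}=(\SSS_{n/2}\wr\SSS_2)\cap\AAA_n$ one has $\sqrt{|G|}\approx (n/2)!$, which dwarfs $\dim E^\la_\pm$ for essentially every $\la\in\Parinv_p(n)$ (already $\dim E^{\be_n}_\pm=2^{(n-4)/2}$, and the non-spin modules in $\Parinv_2(n)$ such as $(\be_{n-1},1)$ are not much larger), so no comparison of $\dim E^\la_\pm$ against the dimensions of irreducibles of $G_{a,b}$ rules out anything when $b=2$; the same problem occurs for most $(a,b)$. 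The paper instead has to prove that the restriction has too many constituents: for $a,b\geq 3$ via the homomorphism-counting machinery (two linearly independent maps in $\Hom_{\AAA_n}(\J(G),\EE_{\de,\eps}(\la))$ contradicting the bound of Lemma~\ref{LBasicAlt}, fed by the structure of $M_2,M_3$ and the invariant computations of Lemma~\ref{L100518_3}); for $G_{2,b}$ via restriction to the base group; and for $G_{a,2}$ with $p=2$ via Lemma~\ref{L200818} (at most two isomorphism classes of constituents of $D^\la\da_{\SSS_{n/2}}$), the hard combinatorial classification in Lemma~\ref{L221118}, and individual eliminations of the surviving families (Lemmas~\ref{L221118_3}--\ref{L221118_6}). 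None of this is replaceable by a dimension count.

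A secondary concern: in the intransitive case the assertion that ``$k\geq 3$ is impossible'' can be ``read off'' from branching rules conceals the main technical result (Proposition~\ref{PIntrans}). The actual mechanism is not a direct count of composition factors of $D^\la\da_{\SSS_{n-k,k}}$ but the construction, from $\I(G)\to M_j\to\EE(\la)$ for suitable $j\in\{1,2,3\}$, of two independent elements of $\Hom_{\AAA_n}(\J(G),\EE_{+,+}(\la))$ or of $\Hom_{\AAA_n}(\J(G),\EE_{+,-}(\la))$; making this work in characteristics $2$ and $3$ is exactly where the non-semisimplicity of $M_2,M_3$ bites, and is the reason the JS hypothesis survives only in case (iii). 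You correctly flag $k=2$ as delicate, but the argument you sketch does not yet contain the idea that closes either the $k\geq 3$ or the transitive imprimitive cases.
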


The exceptional case (i) in Theorem~\ref{TA} will be treated in \cite{KMTTwo}, and the exceptional cases (ii), (iii), (iv) are addressed in Theorems~\ref{TB}, \ref{TC}, \ref{TD}, respectively. 

\begin{MainTheorem}\label{TB}
Let $n\geq 5$, $\la\in\Parinv_p(n)$. Then  $E^\la_\pm\da_{\AAA_{n-1}}$ is irreducible if and only if one of the following statements holds.
\begin{enumerate}
\item[{\rm (a)}] $\la$ is JS.
\item[{\rm (b)}] $\la$ has exactly two normal nodes, both of residue different from $0$.
\end{enumerate} 
\end{MainTheorem}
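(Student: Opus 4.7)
The plan is to reduce the irreducibility question to a count of composition factors of $D^\la\da_{\AAA_{n-1}}$, and then to evaluate that count via the Kleshchev modular branching rules together with the behavior of $\Parinv_p$ under branching. First, for $n\geq 5$ all point stabilizers of $\AAA_n$ in its natural action are $\AAA_n$-conjugate, so the outer automorphism of $\AAA_n$ induced by a transposition in $\SSS_n$ (which swaps $E^\la_+$ and $E^\la_-$) transports $\AAA_{n-1}$ to an $\AAA_n$-conjugate of itself, and thereby induces an outer automorphism of $\AAA_{n-1}$ relating $E^\la_+\da_{\AAA_{n-1}}$ and $E^\la_-\da_{\AAA_{n-1}}$. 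In particular these two restrictions have the same composition length, and in view of the identity
\[
E^\la_+\da_{\AAA_{n-1}}\oplus E^\la_-\da_{\AAA_{n-1}}\;=\;(D^\la\da_{\SSS_{n-1}})\da_{\AAA_{n-1}},
\]
irreducibility of $E^\la_\pm\da_{\AAA_{n-1}}$ is equivalent to the right-hand side having exactly two composition factors.

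To obtain this count I combine two inputs. The composition length of $D^\la\da_{\SSS_{n-1}}$ equals the total number $a(\la)$ of normal nodes of $\la$ by Kleshchev's branching rules, while each composition factor $D^\mu$ of $D^\la\da_{\SSS_{n-1}}$ contributes one or two composition factors on restriction from $\SSS_{n-1}$ to $\AAA_{n-1}$, according as $\mu\notin\Parinv_p(n-1)$ or $\mu\in\Parinv_p(n-1)$. The case $a(\la)\geq 3$ is therefore reducible immediately. For $p>2$ the key additional input is the identity $(D^\la\da_{\SSS_{n-1}})\otimes\sgn\cong D^\la\da_{\SSS_{n-1}}$, obtained from $\la=\la^{\Mull}$ and commutation of the sgn-twist with restriction. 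When $a(\la)=1$ (so $\la$ is JS and $D^\la\da_{\SSS_{n-1}}=D^\mu$), this identity forces $\mu=\mu^{\Mull}$, so $\mu\in\Parinv_p(n-1)$ and we get exactly two composition factors, giving (a). When $a(\la)=2$ with both residues non-zero, Mullineux symmetry forces the two residues to be $i$ and $-i$ with $i\neq 0$, and the two composition factors $D^{\tilde e_i\la}$ and $D^{\tilde e_{-i}\la}$ are distinct Mullineux partners each lying outside $\Parinv_p(n-1)$, again producing exactly two composition factors and giving (b). Conversely, if $a(\la)=2$ but some normal node has residue $0$, then the relation $(\tilde e_0\la)^{\Mull}=\tilde e_0\la$ (from $\la=\la^{\Mull}$ since residue $0$ is self-conjugate under Mullineux) forces $\tilde e_0\la\in\Parinv_p(n-1)$, which creates a third composition factor and makes $E^\la_\pm\da_{\AAA_{n-1}}$ reducible.

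The main obstacle is the characteristic $p=2$ case, where the Mullineux map is trivial and the sgn-twist approach collapses, so the $\Parinv_2$-membership assertions above have to be established by other means. Here I would rely on Benson's explicit description of $\Parinv_2(n)$ combined with the combinatorial description of the normal nodes of a Benson partition to verify case-by-case the required statements: namely, that a JS partition in $\Parinv_2(n)$ branches to a partition in $\Parinv_2(n-1)$; that $a(\la)=2$ with both normal nodes of residue $1$ gives $\tilde e_1\la\notin\Parinv_2(n-1)$; and that the presence of a residue-$0$ normal node forces $\tilde e_0\la\in\Parinv_2(n-1)$. These combinatorial verifications, tightly coupled to the staircase decomposition underlying Benson's description, are where the technical work in small characteristic is concentrated.
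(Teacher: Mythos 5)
Your reduction of irreducibility of $E^\la_\pm\da_{\AAA_{n-1}}$ to the statement that $D^\la\da_{\AAA_{n-1}}$ has exactly two composition factors is correct, and your treatment of $p>2$ via Mullineux symmetry is essentially the argument behind \cite[Proposition 3.7]{KSAlt}, which the paper simply quotes. The gap is in how you compute the count. The claim that the composition length of $D^\la\da_{\SSS_{n-1}}$ equals the number $a(\la)$ of normal nodes is false: the branching rules (Lemma~\ref{Lemma39}(v)) give $\dim\End_{\SSS_{n-1}}(D^\la\da_{\SSS_{n-1}})=\sum_i\eps_i(\la)$, and the composition length is in general strictly larger. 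Indeed, by Lemma~\ref{Lemma39}(vi), if the two normal nodes $A$ (upper) and $B$ (lower) have the \emph{same} residue $i$ and $\la_A$ is $p$-regular, then $e_iD^\la$ contains $D^{\tilde e_i\la}$ twice and $D^{\la_A}$ once, so the length is already $3$ and reducibility holds irrespective of any $\Parinv$-membership. A concrete instance is $\la=(7,5,3,2)=\dbl(12,5)\in\Parinv_2(17)$, which has exactly two normal nodes (both of residue $0$) while $D^\la\da_{\SSS_{16}}$ has three composition factors. For $p=2$ this phenomenon is precisely the dividing line: the paper's Theorem~\ref{TAn-1} shows that two normal nodes give irreducibility exactly when $\la_1=\la_2+1$ (so $\la_A$ is not $2$-regular and the third factor disappears) and $\la_1$ is even (so $\mu=\la_B\notin\Parinv_2(n-1)$). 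Your list of facts to verify for $p=2$ omits the condition $\la_1=\la_2+1$ entirely; it must be extracted from $\la\in\Parinv_2(n)$ together with the evenness of $\la_1$ forced by the residue-$1$ hypothesis, as the paper does in \S\ref{SSTB}.

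A second gap concerns the converse for $p=2$. Since the Mullineux map is trivial in characteristic $2$, nothing in your setup excludes two normal nodes of residues $0$ and $1$; then $\eps_0(\la)=\eps_1(\la)=1$ and $D^\la\da_{\SSS_{n-1}}\cong D^{\tilde e_0\la}\oplus D^{\tilde e_1\la}$ has exactly two \emph{non-isomorphic} composition factors, so if neither $\tilde e_j\la$ lay in $\Parinv_2(n-1)$ your count would be $2$ and you would wrongly conclude irreducibility. The paper's proof of Theorem~\ref{TAn-1} closes this door by a different mechanism: if $D^\la\da_{\SSS_{n-1}}$ has two composition factors and $E^\la_\pm\da_{\AAA_{n-1}}$ are irreducible, the two factors must be isomorphic, because $E^\la_-\da_{\AAA_{n-1}}\cong(E^\la_+\da_{\AAA_{n-1}})^\si$ for $\si\in\SSS_{n-1}\setminus\AAA_{n-1}$ while $D^\mu\da_{\AAA_{n-1}}$ is $\si$-stable. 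You would need either this observation or a proof that the mixed-residue configuration cannot occur in $\Parinv_2(n)$; your proposed substitute (``a residue-$0$ normal node forces $\tilde e_0\la\in\Parinv_2(n-1)$'') is not established and is not what makes the argument work. Finally, for the positive direction in case (b) with $p=2$, counting alone does not suffice unless you also invoke Lemma~\ref{Lemma39}(vi) to exclude further composition factors of $e_1D^\la$; the paper instead uses a Frobenius reciprocity/socle argument to prove $D^\la\da_{\SSS_{n-1}}\cong D^\mu|D^\mu$ with $\mu\notin\Parinv_2(n-1)$ and to conclude that the restriction splits over $\AAA_{n-1}$.
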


We point out that the irreducible restrictions of the form $E^\la_\pm\da_{\AAA_{n-1}}$ for $p>2$ have been classified in \cite[Theorem 5.10]{BO2}, see also \cite[Proposition 3.7]{KSAlt}. For $p=2$ partial information is available in  \cite[Theorem 6.5 and Proposition 6.6]{BO2}, but Theorem~\ref{TB} says more. The irreducible restrictions $E^\la_\pm\da_{\AAA_{n-2,2}}$ for $p>2$ have been classified in \cite[Theorem 3.6]{KSAlt}, but for $p=2$ the following theorem is new. 


\begin{MainTheorem}\label{TC}
Let $n\geq 5$ and $\la\in\Parinv_p(n)$. If $p = 2$, assume in addition that $\la \neq \be_n$. Then the following are equivalent:
\begin{enumerate}
\item[{\rm (i)}] $E^\la_\pm\da_{\AAA_{n-2,2}}$ is irreducible;
\item[{\rm (ii)}] $E^\la_\pm\da_{\AAA_{n-2}}$ is irreducible;
\item[{\rm (iii)}] $\la$ is JS.
\end{enumerate} 
\end{MainTheorem}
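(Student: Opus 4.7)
My plan is to establish the three-way equivalence via the cyclic chain $(ii)\Rightarrow(i)\Rightarrow(iii)\Rightarrow(ii)$.

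The implication $(ii)\Rightarrow(i)$ is immediate, since $\AAA_{n-2}$ is a subgroup of $\AAA_{n-2,2}$: any proper $\AAA_{n-2,2}$-submodule of $E^\la_\pm{\da_{\AAA_{n-2,2}}}$ restricts to a proper $\AAA_{n-2}$-submodule of $E^\la_\pm{\da_{\AAA_{n-2}}}$, contradicting irreducibility at the smaller level.

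For $(iii)\Rightarrow(ii)$, I would proceed as follows. If $\la$ is JS with unique normal node of residue $i$, then $D^\la{\da_{\SSS_{n-1}}}=D^\mu$ is irreducible where $\mu=\tilde{e}_i\la$. By Clifford theory applied to $\AAA_{n-1}\trianglelefteq\SSS_{n-1}$, $E^\la_\pm{\da_{\AAA_{n-1}}}$ is already irreducible: it is $E^\mu_\pm$ when $\mu\in\Parinv_p(n-1)$, and $D^\mu{\da_{\AAA_{n-1}}}$ otherwise. To descend one further level to $\AAA_{n-2}$, I would analyze the normal-node structure of $\mu$ using the crystal identity $\varepsilon_i(\mu)=\varepsilon_i(\la)-1=0$ together with the Mullineux-compatibility $\Mull(\mu)=\tilde{e}_{-i}\la$ (which is particularly clean at $p=2$, where $-i=i$); this constrains $\mu$ to satisfy one of the hypotheses of Theorem~\ref{TB}, from which irreducibility of $E^\la_\pm{\da_{\AAA_{n-2}}}$ will follow by a second application of Clifford theory.

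For $(i)\Rightarrow(iii)$, assume $E^\la_\pm{\da_{\AAA_{n-2,2}}}$ is irreducible. Clifford theory for the index-$2$ inclusion $\AAA_{n-2}\trianglelefteq\AAA_{n-2,2}$ bounds the composition length of $E^\la_\pm{\da_{\AAA_{n-2}}}$ by two. Since restriction cannot decrease composition length, $E^\la_\pm{\da_{\AAA_{n-1}}}$ has at most two composition factors as well. When this restriction is irreducible, Theorem~\ref{TB} forces $\la$ to be JS or to satisfy condition (b) of that theorem; I would then eliminate the (b) case by decomposing $D^\la{\da_{\SSS_{n-2}\times\SSS_2}}$ via the $\SSS_2$-factor and showing that two distinct normal $i$-nodes of nonzero residue produce composition factors distinguishable by their $\SSS_2$-weight, yielding more composition factors than the hypothesis permits. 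When $E^\la_\pm{\da_{\AAA_{n-1}}}$ has two composition factors, I would run a parallel composition-length argument at the level of $\AAA_{n-2}$ to force $\la$ to be JS.

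The main obstacle is the elimination of the Theorem~\ref{TB}(b) alternative in the $(i)\Rightarrow(iii)$ direction, particularly at characteristic $2$, where the submodule structure of the permutation module associated to the Young subgroup $\SSS_{n-2}\times\SSS_2$ is highly non-semisimple and demands delicate bookkeeping of composition multiplicities from the branching graph. The hypothesis $\la\neq\be_n$ in characteristic $2$ is exactly what allows us to sidestep the exceptional basic-spin behavior, which is instead treated by Theorem~\ref{TD}.
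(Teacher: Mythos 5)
Your skeleton (the cycle $(ii)\Rightarrow(i)\Rightarrow(iii)\Rightarrow(ii)$, with $(ii)\Rightarrow(i)$ trivial) is sound, but the hard implication $(i)\Rightarrow(iii)$ is not actually proved, and the mechanism you propose for it fails at exactly the characteristic that matters. The paper's proof of this implication (Theorem~\ref{T221118} via Proposition~\ref{P100518_2}) does not count composition factors of $E^\la_\pm\da_{\AAA_{n-2}}$ at all; it runs the reduction machinery of Proposition~\ref{PIntrans}, namely the bound $\dim\Hom_{\AAA_n}(\J(G),\EE_{\de,\eps}(\la))\leq 1$ of Lemma~\ref{LBasicAlt} combined with explicit homomorphisms $\I(G)\to M_k\to\EE(\la)$ for $k\leq 3$ and the computation of the invariants $(S_k^*)^{\AAA_{n-2,2}}$. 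Your alternative --- bounding composition lengths and then eliminating case (b) of Theorem~\ref{TB} by separating composition factors of $D^\la\da_{\SSS_{n-2}\times\SSS_2}$ according to the $\SSS_2$-factor --- is vacuous for $p=2$: $\F\SSS_2$ is a local algebra whose unique simple module is trivial, so every composition factor of that restriction has the same $\SSS_2$-component and nothing is separated. Case (b) is a genuine obstacle: there $D^\la\da_{\SSS_{n-1}}\cong D^\mu|D^\mu$ with $\mu=\la_B$, $B=(2,\la_2)$, and $\mu\notin\Parinv_2(n-1)$, so $E^\la_\pm\da_{\AAA_{n-1}}\cong E^\mu$ really is irreducible; to rule out irreducibility over $\AAA_{n-2,2}$ you must show that $E^\mu\da_{\AAA_{n-2}}$ has at least three composition factors, a nontrivial branching statement you have not supplied (test case: $\la=(4,3,1)$, $n=8$, $p=2$). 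The ``parallel composition-length argument'' for the subcase where $E^\la_\pm\da_{\AAA_{n-1}}$ is reducible is likewise only a placeholder.

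The implication $(iii)\Rightarrow(ii)$ is also handled differently in the paper, and your version leaves its hypotheses unverified: to apply Theorem~\ref{TB} to $\nu=\tilde e_i\la$ at level $n-1$ you need $\nu\in\Parinv_p(n-1)$ (Theorem~\ref{TB} says nothing otherwise --- and note your ``$D^\mu\da_{\AAA_{n-1}}$ otherwise'' alternative is impossible on dimension grounds, since $\dim E^\la_\pm=\tfrac12\dim D^\mu$), and you need that $\nu$ has exactly two normal nodes, both of nonzero residue. Both facts do hold for $p=2$ --- a JS partition in $\Parinv_2(n)$ has all parts odd, so $i=0$ and $\eps_1(\tilde e_0\la)=2$ --- but they are part of the proof, not remarks. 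The paper instead descends two steps at once: for $\la$ JS it shows $D^\la\da_{\SSS_{n-2}}$ is self-dual with socle and head $D^\mu$ for $\mu=\tilde e_{1-i}\tilde e_i\la$, checks $\mu_1+\mu_2\equiv 2\pmod{4}$ so that $\mu\notin\Parinv_2(n-2)$, and uses Frobenius reciprocity with $E^\mu\ua^{\SSS_{n-2}}$ to conclude $D^\la\da_{\SSS_{n-2}}\cong D^\mu|D^\mu$, whence $E^\la_\pm\da_{\AAA_{n-2}}\cong E^\mu$ is irreducible. For $p>2$ the entire theorem is a citation to Kleshchev--Sheth, so the only new content is $p=2$ --- precisely where your separation argument breaks down.
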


The case  $\la=\be_n$, excluded in Theorems \ref{TA}(iv) and \ref{TC}, is handled in the following theorem (note that the condition $n\not\equiv 2\pmod{4}$ is equivalent to $\be_n\in\Parinv_2(n)$). 

\begin{MainTheorem}\label{TD}
Let $p=2$, $n\not\equiv 2\pmod{4}$ and $G\leq \AAA_n$. If  $E^{\be_n}_\pm\da_{G}$ is irreducible then one of the following statements 
holds.
\begin{enumerate}
\item[{\rm (i)}] $G$ is primitive.
\item[{\rm (ii)}] $G\leq \AAA_{n-k,k}$ for some $1\leq k<n$, and either 
$n\equiv 0\pmod{4}$ and $k$ is odd, or $k\equiv 2\pmod{4}$. Moreover, in all of these cases $E^{\be_n}_\pm\da_{\AAA_{n-k,k}}$ is irreducible.
\item[{\rm (iii)}] $G\leq (\SSS_a\wr \SSS_b)\cap \AAA_n$ for $a,b>1$ with $n=ab$,  and either $a$ is odd or $a\equiv 2\pmod{4}$ and $b=2$. Moreover, in all of these cases $E^\la_\pm\da_{(\SSS_a\wr \SSS_b)\cap \AAA_n}$ is irreducible.
\end{enumerate} 
\end{MainTheorem}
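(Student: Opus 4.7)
The plan is to proceed in two stages. First, by the classification of maximal subgroups of $\AAA_n$, if $G$ is not primitive then $G$ is contained in a maximal imprimitive subgroup $M$, which is either an intransitive subgroup $\AAA_{n-k,k}$ for some $1\leq k<n$, or a transitive imprimitive subgroup $(\SSS_a\wr\SSS_b)\cap\AAA_n$ for a factorisation $n=ab$ with $a,b>1$. The central observation is that if $E^{\be_n}_\pm\da_G$ is irreducible, then $D^{\be_n}\da_G = E^{\be_n}_+\da_G \oplus E^{\be_n}_-\da_G$ has composition length exactly two, with the two factors of equal dimension $\frac{1}{2}\dim D^{\be_n}$; since the composition length of a module can only increase upon restriction to a subgroup, every intermediate $H$ with $G\le H\le \SSS_n$ must satisfy the weaker condition that $D^{\be_n}\da_H$ has composition length at most two. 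Thus it suffices to identify those maximal imprimitive $M$ for which this necessary condition holds, and then to verify the ``moreover'' assertion in each admissible case.

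For the intransitive case $M=\AAA_{n-k,k}$, one works with the ambient $\SSS_{n-k}\times \SSS_k$ and exploits the fact that $\be_n$ is JS in characteristic~$2$: iterated Jantzen--Seitz branching yields an explicit description of $D^{\be_n}\da_{\SSS_{n-k}\times \SSS_k}$, which, together with the dimension formula $\dim D^{\be_n}=2^{\lfloor(n-1)/2\rfloor}$, allows one to determine precisely when the restriction has composition length at most two. The analysis reduces to a parity calculation in $n$ and $k$ modulo $4$ (reflecting whether the relevant basic spin partitions $\be_{n-k}$ and $\be_k$ are Mullineux-fixed, and hence whether $D^{\be_{n-k}}$ and $D^{\be_k}$ split on passing to the corresponding alternating groups), and the resulting arithmetic constraints coincide exactly with those in~(ii). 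In each admissible case the two composition factors of $D^{\be_n}\da_{\AAA_{n-k,k}}$ are then matched with $E^{\be_n}_+\da_{\AAA_{n-k,k}}$ and $E^{\be_n}_-\da_{\AAA_{n-k,k}}$ to establish the ``moreover'' statement.

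For the transitive imprimitive case $M=(\SSS_a\wr\SSS_b)\cap\AAA_n$, the strategy is to apply Clifford theory with respect to the base group $\SSS_a^b\trianglelefteq \SSS_a\wr\SSS_b$. Iterating the intransitive analysis identifies $D^{\be_n}\da_{\SSS_a^b}$ with an outer tensor product built from $b$ copies of $D^{\be_a}$, and the induced action of the top group $\SSS_b$ on this tensor product can then be described up to signs arising from the spin structure. The length-$\leq 2$ condition on $D^{\be_n}\da_M$ translates into a compatibility between this $\SSS_b$-action and the $\pm$-splitting of $D^{\be_n}$ over $\AAA_n$, and tracking the signs forces either $a$ odd (so that no sign obstruction arises at the base level), or $a\equiv 2\pmod 4$ with $b=2$ (so that the $\SSS_b$-action is small enough to preserve the splitting). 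In both admissible families, an explicit decomposition then confirms that $E^{\be_n}_\pm\da_M$ is irreducible.

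The main obstacle is the wreath product case: it requires precise bookkeeping of the interaction between the $\SSS_b$-action on the $b$-fold outer tensor product $(D^{\be_a})^{\boxtimes b}$ and the $\AAA_n$-level splitting of $D^{\be_n}$ into $E^{\be_n}_\pm$. The arithmetic conditions on $a$ and $b$ emerge precisely from this sign-compatibility analysis, and confirming both their necessity and their sufficiency is the most delicate portion of the argument.
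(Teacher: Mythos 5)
Your proposal follows the paper's own route: reduce to the maximal imprimitive overgroups $\AAA_{n-k,k}$ and $G_{a,b}=(\SSS_a\wr\SSS_b)\cap\AAA_n$, observe that all composition factors of $D^{\be_n}$ restricted to the corresponding subgroup of $\SSS_n$ are of basic-spin type with multiplicity forced by the dimension formula, and then decide the splitting over the alternating subgroup via the criterion that $D^{\be_m}\da_{\AAA_m}$ splits iff $m\not\equiv 2\pmod{4}$ --- this is exactly what Propositions~\ref{P221118} and~\ref{P261118} do. One small correction: $\be_n$ is JS in characteristic $2$ only when $n$ is even (for $n$ odd the two parts have different parities and $D^{\be_n}\da_{\SSS_{n-1}}$ already has two composition factors), so the branching input you should quote is just that every composition factor of $D^{\be_n}\da_{\SSS_\nu}$ is $D^{\be_{n_1}}\boxtimes\cdots\boxtimes D^{\be_{n_h}}$, not iterated JS irreducibility. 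Finally, note that the decisive step in the wreath-product case is proving \emph{reducibility} when $a\equiv 0\pmod{4}$ (with $b=2$ or $b$ odd), which your sketch only gestures at via ``sign-tracking''; the paper settles it by counting composition factors along the normal chain $\AAA_a\times\cdots\times\AAA_a\trianglelefteq\AAA_{(a^b)}\trianglelefteq G_{a,b}$, a count your Clifford-theoretic framing supports but which must actually be carried out.
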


Theorems~\ref{TA}, \ref{TB}, \ref{TC}, \ref{TD} are proved in \S\S~\ref{SSTA}, \ref{SSTB}, \ref{SSTC}, \ref{SSTD} respectively.

\section{Preliminaries}

\subsection{Groups and modules}
\label{SSGM}
Throughout the paper we work over a fixed algebraically closed ground field $\F$ of characteristic $p>0$. We do not yet assume that $p=2$ or $3$ but will do this when necessary. 

For a group $G$, we denote by $\bone_G$  the trivial $\F G$-module. For an $\F G$-module $V$, we denote by $V^G$ the set of {\em $G$-invariant vectors} in $V$. If $L_1,\dots,L_a$ are irreducible $\F G$-modules, we denote by $L_1|\cdots| L_a$ a {\em uniserial} $\F G$-module with composition factors $L_1,\dots,L_a$ listed from socle to head. If $V$ is an $\F G$-module, we use the notation 
$$
V\cong L_1|\cdots| L_a\ \oplus\,\cdots\, \oplus\ K_1|\cdots| K_b
$$
to indicate that $V$ is isomorphic to a direct sum of the uniserial modules $L_1|\cdots| L_a$, \dots, $K_1|\cdots| K_b$. On the other hand, if $V_1,\dots,V_a$ are any $\F G$-modules, we write 
$$
V\sim V_1|\dots|V_a
$$
to indicate that $V$ has a filtration with subquotients $V_1,\dots,V_a$ listed from bottom to top. 
We use the notation 
$$
V\sim V_1|\cdots| V_a\ \oplus\,\cdots\, \oplus\ W_1|\cdots| W_b
$$
to indicate that $V\cong X\oplus\dots\oplus Y$ for $X\sim L_1|\cdots| L_a$, \dots, $Y\sim K_1|\cdots| K_b$.



For an odd element $\si\in\SSS_n$ and an $\F\AAA_n$-module $V$, we denote by $V^\si$ the $\F\AAA_n$-module which is $V$ as a vector space with the twisted action $g\cdot v=\si g\si^{-1}v$ for $g\in \AAA_n,v\in V$. 
If $G$ is a subgroup of $\SSS_n$ (resp. $\AAA_n$), we consider the induced modules
$$
\I(G):=\bone_G{\ua}^{\SSS_n}\qquad(\text{\rm resp.}\ \J(G):=\bone_G{\ua}^{\AAA_n}).
$$
For $G\leq \AAA_n$ we have 
$\I(G)\da_{\AAA_n}\cong \J(G)\oplus \J(G)^\si$.

For a composition $\mu=(\mu_1,\dots,\mu_r)$ of $n$ and positive integers $a,b$ with $ab=n$, we have the subgroups 
\begin{align*}
&\SSS_\mu:= \SSS_{\mu_1}\times\dots\times \SSS_{\mu_r}\leq \SSS_n, &&\SSS_a\wr\SSS_b\leq \SSS_n,
\\
&\AAA_\mu:= \SSS_{\mu}\cap \AAA_{n}\leq \AAA_n, &&
G_{a,b}:=(\SSS_a\wr\SSS_b)\cap \AAA_{n}\leq \AAA_n.
\end{align*}


\subsection{Partitions}\label{SSPar}
We denote by $\Par(n)$ the set of all {\em partitions} of $n$ and by $\Parp(n)$ the set of all {\em $p$-regular} partitions of $n$, see \cite[10.1]{JamesBook}. 
We identify a partition $\la=(\la_1,\la_2,\dots)$ with its {\em Young diagram} $\{(r,s)\in\Z_{>0}\times\Z_{>0}\mid s\leq\la_r\}.$ 
The number of non-zero parts of a partition $\la$ is denoted by $h(\la)$. 
The following partition will play a special role in this paper:
\begin{eqnarray}\label{ESpin}
\be_n&:=&
\left\{
\begin{array}{ll}
(n/2+1,n/2-1) &\hbox{if $n$ is even,}\\
((n+1)/2,(n-1)/2) &\hbox{if $n$ is odd.}
\end{array}
\right.
\end{eqnarray}

We denote by $\la\mapsto \la^\Mull$ the {\em Mullineux bijection} on $\Par_p(n)$, see \cite{FK,BO,Mull}. If $p=2$, the Mullineux bijection is the identity map. 

For partitions $\mu^1=(\mu^1_1,\dots,\mu^1_{h_1})\in\Par(n_1), \dots, \mu^k=(\mu^k_1,\dots,\mu^k_{h_k})\in\Par(n_k)$, we define the composition 
$$
(\mu^1,\dots,\mu^k):=(\mu^1_1,\dots,\mu^1_{h_1},\dots,\mu^k_1,\dots,\mu^k_{h_k})
$$
of $n_1+\dots+n_k$. 
For a partition $\la=(\la_1,\dots,\la_h)$ of $n$, we now define its {\em double}
$$
\dbl(\la):=(\be_{\la_1},\dots,\be_{\la_h}).
$$
Following \cite{Benson}, we set 
$$
\Parinv_2(n):=\Par_2(n)\cap \{\dbl(\la)\mid \la\in\Par_2(n),\ \la_r\not\equiv 2\pmod{4}\ \text{for}\ 1\leq r\leq h(\la)\}.
$$
On the other hand, if $p>2$, we set 
$$
\Parinv_p(n):=\{\la\in\Par_p(n)\mid \la=\la^\Mull\}.
$$

\begin{Lemma} \label{L080518} 
Suppose that $n\geq 5$ and $\la\in \Parinv_p(n)$. Then $h(\la)\geq 3$ unless $p=2$, $n\not\equiv 2\pmod{4}$ and $\la=\be_n$. 
\end{Lemma}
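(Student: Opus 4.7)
The plan is to split on the characteristic, matching the two separate descriptions of $\Parinv_p(n)$.

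For $p=2$, I would use the characterisation $\la=\dbl(\mu)=(\be_{\mu_1},\ldots,\be_{\mu_h})$ with $\mu\in\Par_2(n)$ all of whose parts are $\not\equiv 2\pmod 4$. From the definition \eqref{ESpin}, $h(\be_k)=1$ precisely when $k=1$ and $h(\be_k)=2$ when $k\geq 3$ (the value $k=2$ being forbidden), so $h(\la)=\sum_r h(\be_{\mu_r})$. Since $\mu$ is $2$-regular its parts are distinct, so whenever $h(\mu)\geq 2$ at least one $\mu_r$ satisfies $\mu_r\geq 3$; that single block already contributes two parts to $\la$ and any other block contributes at least one more, giving $h(\la)\geq 3$. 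Hence $h(\la)\leq 2$ forces $h(\mu)=1$, i.e.\ $\mu=(n)$ with $n\not\equiv 2\pmod 4$, and then $\la=\be_n$, the announced exceptional case.

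For $p>2$, I would argue through the Mullineux symbol
\[
M(\la)=\begin{pmatrix} a_1 & a_2 & \cdots \\ r_1 & r_2 & \cdots \end{pmatrix},
\]
where at the $i$-th step of the Mullineux iteration $a_i$ is the size of the $p$-rim and $r_i$ the number of rows it meets. By \cite{Mull,FK}, the symbol of $\la^\Mull$ is obtained from $M(\la)$ by replacing each $r_i$ with $a_i-r_i+\vare_i$, where $\vare_i=0$ if $p\mid a_i$ and $\vare_i=1$ otherwise. So $\la=\la^\Mull$ entails $a_i=2r_i-\vare_i$ for every $i$, equivalently $a_i$ is even iff $p\mid a_i$. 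I would apply this at $i=1$ and show it is incompatible with $h(\la)\leq 2$ and $n\geq 5$.

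When $h(\la)=1$, i.e.\ $\la=(n)$, the first $p$-rim lies in the unique row, so $r_1=1$ and $a_1=\min(p,n)\geq 3$, while the fixed-point equation demands $a_1=2-\vare_1\leq 2$. When $h(\la)=2$, writing $\la=(\la_1,\la_2)$ with $\la_2\geq 1$, I would read off $(a_1,r_1)$ in three exhaustive subcases: (A) $\la_1\leq p-1$ makes the $p$-rim the entire rim, so $(a_1,r_1)=(\la_1+1,2)$; (B) $\la_1-\la_2+1\geq p$ confines the $p$-rim to row $1$, so $(a_1,r_1)=(p,1)$; (C) $\la_1\geq p$ and $\la_1-\la_2+1<p$ pushes the $p$-rim across both rows, so $(a_1,r_1)=(p,2)$. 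In (B) and (C) the fixed-point condition collapses to $p=2r_1\in\{2,4\}$, contradicting that $p$ is an odd prime. In (A) it gives $\la_1+1\in\{3,4\}$: if $\la_1=3$ then $p\mid 4$ is impossible for $p>2$, and if $\la_1=2$ then $\la_2\leq 2$, so $n\leq 4<5$.

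The main technical step is the combinatorial identification of $(a_1,r_1)$ in each of the three subcases of $h(\la)=2$ — a direct but careful inspection of which row(s) the first $p$ boxes of the rim occupy, using the convention for the $p$-rim from \cite{Mull,FK}. After that point, the fixed-point equation $a_1=2r_1-\vare_1$ defeats each subcase in a single parity/divisibility line, and combined with the $p=2$ analysis this finishes the proof.
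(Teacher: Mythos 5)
Your $p=2$ argument is correct and is exactly the ``clear from the definition'' computation: each block $\be_{\mu_r}$ of $\dbl(\mu)$ contributes one row if $\mu_r=1$ and two rows if $\mu_r\geq 3$ (and $\mu_r=2$ is excluded), so $h(\mu)\geq 2$ forces $h(\la)\geq 3$. For $p>2$ the paper simply cites \cite[Lemma 1.8(i)]{KSAlt}, so your self-contained Mullineux-symbol argument is a genuinely different (and in principle welcome) route. However, it contains a concrete error in subcase (B).

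The $p$-rim is never ``confined to row 1''. By definition, after the first $p$-segment ends in some row $i$, the next $p$-segment begins in row $i+1$ and the process continues until the last row of the partition is reached; consequently the $p$-rim meets \emph{every} row, and $r_1=h(\la)$ always (this is also how the paper uses the symbol, e.g.\ ``$h(\la)=r_0$'' in the proof of Lemma~\ref{L110518_2}). So in your subcase (B), where $\la=(\la_1,\la_2)$ with $\la_1-\la_2+1\geq p$, the first segment consists of $p$ nodes of row $1$ and a second segment of $\min(p,\la_2)$ nodes of row $2$ follows; hence $(a_1,r_1)=(p+\min(p,\la_2),\,2)$, not $(p,1)$. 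Your deduction ``$p=2r_1\in\{2,4\}$'' in (B) therefore rests on false values, and moreover $r_1=1$ contradicts your own description of $r_1$ as the number of rows met. The subcase is still salvageable: the fixed-point equation $a_1=4-\vare_1\leq 4$ fails because $a_1=p+\min(p,\la_2)\geq p+1\geq 4$ forces $a_1=4$, $p=3$, $\la_2=1$, whence $\vare_1=1$ (as $3\nmid 4$) and $a_1=3\neq 4$. Subcases (A) and (C) and the $h(\la)=1$ case are computed correctly. So the proof is repairable, but as written one of the three ``careful inspections'' you flag as the main technical step is wrong.
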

\begin{proof}
For $p=2$ this is clear from the definition.  
For $p>2$ the result is contained in \cite[Lemma 1.8(i)]{KSAlt}. \end{proof}

Let
$
I:=\Z/p\Z
$
identified with $\{0,1,\dots,p-1\}$. 
Given a node $A=(r,s)$ in row $r$ and column $s$, we consider its {\em residue}
$
\res A:=s-r\pmod{p}\in I.
$
Let $i\in I$ and $\la\in\Par(n)$. 
A node $A 
\in \la$ (resp. $B\not\in\la$) is called {\em $i$-removable} (resp. {\em $i$-addable}) for $\la$ 
if $\res A=i$ and $\la_A:=\la\setminus\{A\}$ (resp. $\la^B:=\la\cup\{B\}$) is a Young diagram of a partition. 
We refer the reader to \cite[\S11.1]{KBook} for the definition of {\em $i$-normal}, {\em $i$-conormal}, {\em $i$-good}, and {\em $i$-cogood} nodes for $\la$. 
We denote 
\begin{align*}
\eps_i(\la):=\sharp\{\text{$i$-normal nodes for $\la$}\},
\quad
\phi_i(\la):=\sharp\{\text{$i$-conormal nodes for $\la$}\}.
\end{align*}
If $\eps_i(\la)>0$, let $A$ be the $i$-good node of $\la$ and set
$
\tilde e_i \la:=\la_A.
$
If $\phi_i(\la)>0$, let $B$ be the $i$-cogood node for $\la$ and set
$
\tilde f_i\la:=\la^B.
$
Then $\tilde e_i \la$ and $\tilde f_i\la$ are $p$-regular, whenever $\la$ is so. 

We call $\la\in\Par_p(n)$ a {\em JS partition} if $\la$ has only one normal node, equivalently $\sum_{i\in I}\eps_i(\la)=1$. We will need the following technical result on JS partitions for $p=3$:

\begin{Lemma}\label{L110518_2}
Let $\la\in\Parinv_3(n)$ be a JS partition. Then one of the following holds:
\begin{enumerate}
\item[{\rm (i)}] $\la_1\geq\la_2+9$, $\la_3\geq 7$, $\la_1\leq(n+2)/2$, and  $n\geq 4h(\la)$.
\item[{\rm (ii)}] $\la_1\geq\la_2+7\geq\la_3+10$, $\la_4\geq 6$, $\la_1+\la_2\leq (n+8)/2$, $h(\la)\geq 6$ and  $n\geq 6h(\la)$.
\item[{\rm (iii)}] $\la_1\geq\la_2+4\geq\la_3+8$, $\la_4\geq 4$, $\la_1+\la_2\leq (n+8)/2$, $h(\la)\geq 6$ and  $n\geq 6h(\la)$.
\item[{\rm (iv)}] $\la$ is one of the following: $(1)$, $(4,1^2)$, $(7,3,2)$, $(10,4^2)$, $(13,6,5)$, $(7,3,2,1)$, $(10,4^2,1)$, $(13,6,5,1)$, $(10,6,3^2,1^2)$, $(13,6,5,4,1^2)$, $(13,9,5,4,3,2,1)$.
\end{enumerate}
\end{Lemma}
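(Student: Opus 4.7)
The plan is to exploit two independent structural constraints on $\la$: the JS condition forces the sequence of parts $\la_1,\la_2,\dots$ to obey a strict arithmetic pattern modulo $3$, while the Mullineux-fixed condition $\la = \la^\Mull$ imposes a self-duality on the $3$-rim peeling of $\la$ which in particular controls $\la_1$ from above. The interaction of the two conditions is very rigid, and the four cases in the lemma simply separate the possibilities by the size of the initial differences.

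First I would unwind the JS condition into an arithmetic statement on the rows. Writing $h := h(\la)$, the requirement $\sum_{i\in I}\eps_i(\la) = 1$ means there is exactly one removable node and it is normal; by the top-to-bottom pairing rule for $i$-normal/$i$-conormal nodes in \cite[\S11.1]{KBook}, this translates into specific residue conditions on each consecutive difference $\la_k - \la_{k+1}$ and on $\la_h$ modulo $3$, together with strict positivity of these differences. In particular every $\la_k - \la_{k+1}$ lies in a small list of residue classes mod $3$ determined by $k$, and all of them are at least $1$.

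Next I would incorporate the Mullineux-fixed condition. For $p=3$, $\la = \la^\Mull$ imposes a balanced structure on the $3$-rim: roughly, $\la_1$ is bounded above by approximately $n/2$, and the differences $\la_1-\la_2$, $\la_2-\la_3$ cannot simultaneously be both small and very unbalanced. I would then split on the value of $\la_1 - \la_2$: if $\la_1 - \la_2 \ge 9$ one derives case (i); if $\la_1-\la_2 \in \{7,8\}$ and $\la_2 - \la_3 \ge 10$ one derives case (ii); and the range $\la_1 - \la_2 \in \{4,5,6\}$ with $\la_2 - \la_3 \ge 8$ gives case (iii). In each of (i)--(iii), the lower bounds on $\la_3$ (or $\la_4$) and the height bounds $n \geq 4h(\la)$ or $n \geq 6h(\la)$ follow by summing the forced lower bounds on $\la_k - \la_{k+1}$ that the JS residue pattern imposes from row $3$ (resp. $4$) downward.

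The main obstacle will be closing off the ``residual'' branch where at least one of $\la_1 - \la_2$, $\la_2 - \la_3$, \dots\ falls below the thresholds needed for (i)--(iii). In that branch one is forced into small $h(\la)$ and small top parts, and the JS plus Mullineux-fixed conditions together admit only finitely many solutions. Producing the explicit list in (iv) amounts to a finite but delicate case analysis, enumerating $3$-regular $\la$ row-by-row under the JS arithmetic, checking Mullineux-fixedness against its known combinatorial description for $p=3$, and verifying no candidate is missed. This enumeration, rather than any of the generic inequality manipulations, is where the bulk of the argument lies.
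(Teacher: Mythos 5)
Your high-level decomposition (JS arithmetic on the rows, plus an upper bound on $\la_1$ from Mullineux-fixedness, plus a finite residual check) names the right ingredients, but the load-bearing quantitative claims are asserted rather than derived, and the mechanisms you propose would not deliver them. First, the bounds $\la_1\leq(n+2)/2$, $\la_1+\la_2\leq(n+8)/2$, $n\geq 4h(\la)$ and $n\geq 6h(\la)$ do not follow from ``summing the forced lower bounds on $\la_k-\la_{k+1}$'': below the top rows the differences of a $3$-regular JS partition can equal $1$, and a row-difference count then only gives $n$ of order $h(\la)^2/2$, which is weaker than $n\geq 6h(\la)$ for moderate $h(\la)$ --- indeed $(13,9,5,4,3,2,1)$ violates $n\geq 6h(\la)$, and nothing in your scheme distinguishes it from hypothetical larger partitions with the same staircase-like tail. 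Second, and more seriously, you give no reason why the residual branch is finite; that finiteness is essentially the content of the lemma, not a routine afterthought. Both the finiteness and the quantitative bounds come from the fact that for a Mullineux-fixed JS partition the successive $3$-rim sizes $a_0>a_1>\dots$ in the Mullineux symbol obey the explicit recursion of \cite[Theorem 4.1]{BO3} (each column has the form $\binom{6c}{3c}$, $\binom{6c-1}{3c}$ or $\binom{6c+1}{3c+1}$, with $c$ forced to increase as one moves up the symbol), so the rim sizes grow linearly with increment roughly $6$; your plan never engages with this recursion, and the vague appeal to a ``balanced structure on the $3$-rim'' is not a substitute.

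For comparison, the paper's proof is an induction along successive removals of the $3$-rim: Claims 3--6 show that each of the conditions (i)--(iii) is preserved when a $3$-rim is restored, Claims 1--2 extract $n\geq 2ch(\la)$ and hence $n\geq 6h(\la)$ for $n\geq 42$ from the growth of the $a_j$, and the argument then reduces to a finite inspection of the partitions $\la^{(j)}$ with $|\la^{(j)}|<42$, whose outcome is exactly the list (iv). Any repair of your approach would need this (or an equivalent) control of the rim sizes. Separately, you have misread the chained inequalities in the statement: in (ii) the condition $\la_2+7\geq\la_3+10$ means $\la_2\geq\la_3+3$, not $\la_2\geq\la_3+10$, and in (iii) it means $\la_2\geq\la_3+4$; with your reading the three main cases would fail to cover infinitely many partitions that the lemma does cover, so even the shape of your case division does not match the statement being proved.
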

\begin{proof}
Let $\left(\begin{array}{ccc}a_0&\dots&a_k\\r_0&\dots&r_k\end{array}\right)$ be the Mullineux symbol of $\la$, and let $\la^{(0)}=\la,\la^{(1)},\ldots,\la^{(k)}$ be obtained by recursively removing the $3$-rim. 
From \cite[Theorem 4.1]{BO3} we have that $\left(\begin{array}{c}a_k\\r_k\end{array}\right)=\left(\begin{array}{c}1\\1\end{array}\right)$ and that for $0\leq j<k$:
\begin{itemize}
\item if $\left(\begin{array}{c}a_{j+1}\\r_{j+1}\end{array}\right)=\left(\begin{array}{c}6c+1\\3c+1\end{array}\right)$ then $\left(\begin{array}{c}a_j\\r_j\end{array}\right)=\left(\begin{array}{c}6(c+1)-1\\3(c+1)\end{array}\right)$,

\item if $\left(\begin{array}{c}a_{j+1}\\r_{j+1}\end{array}\right)=\left(\begin{array}{c}6c-1\\3c\end{array}\right)$ or $\left(\begin{array}{c}a_{j+1}\\r_{j+1}\end{array}\right)=\left(\begin{array}{c}6c\\3c\end{array}\right)$ then $\left(\begin{array}{c}a_j\\r_j\end{array}\right)=\left(\begin{array}{c}6c\\3c\end{array}\right)$ or $\left(\begin{array}{c}a_j\\r_j\end{array}\right)=\left(\begin{array}{c}6c+1\\3c+1\end{array}\right)$.
\end{itemize}
Note in particular that $h(\la)=r_0$ is of the form $3c$ or $3c+1$ for some $c\geq 0$. 

\vspace{2mm}
\noindent
{\sf Claim 1:} {\em if $h(\la)$ is of the form $3c$ or $3c+1$ then $n\geq 2ch(\la)$. }

\vspace{1 mm}
\noindent
Indeed, if $h(\la)=3c$ then $\left(\begin{array}{c}6j-5\\3j-2\end{array}\right)=\left(\begin{array}{c}6(j-1)+1\\3(j-1)+1\end{array}\right)$ and $\left(\begin{array}{c}6j-1\\3j\end{array}\right)$ appear as column of the Mullineux symbol for each $1\leq j\leq c$. So
\[n=a_0+\dots+a_k\geq\sum_{j=1}^c(6j-5+6j-1)=6c^2=2ch(\la),\]
while if $h(\la)=3c+1$ then similarly 
\[n\geq6c+1+\sum_{j=1}^c(6j-5+6j-1)=6c^2+6c+1\geq2ch(\la).\]

\vspace{2mm}
\noindent
{\sf Claim 2:} {\em if $n\geq 42$ then $n\geq 6h(\la)$.}

\vspace{1 mm}
\noindent
Indeed, if $c\geq 3$ then $n\geq 6h(\la)$ by Claim 1, so we may assume that $c\leq 2$, in which case $h(\la)\leq 7$ and if $n\geq 42$ then $n\geq 6h(\la)$.
\vspace{2mm}

The next two claims are easy to see. 

\vspace{2mm}
\noindent
{\sf Claim 3:} {\em  If $\la^{(j)}_1\geq\la^{(j)}_2+3\geq\ldots\geq\la^{(j)}_m+3(m-1)$ 
for some $1\leq j\leq k$ and $m\geq 2$ 
then $\la^{(j-1)}_s-\la^{(j-1)}_{s+1}\geq\la^{(j)}_s-\la^{(j)}_{s+1}$ for all  $1\leq s<m$. }

\vspace{2mm}
\noindent
{\sf Claim 4:} {\em $\la^{(j-1)}_l\geq\la^{(j)}_l$ for all $1\leq j\leq k$ and $l\geq 1$.}

\vspace{2mm}
\noindent
{\sf Claim 5:} {\em if $h(\la^{(j)})\geq 3c$ and $\la^{(j)}_1+\ldots+\la^{(j)}_c\leq (|\la^{(j)}|+b)/2$ for some $1\leq j\leq k$,  $c\in\Z_{>0}$ and $b\in \Z$, then $\la^{(j-1)}_1+\ldots+\la^{(j-1)}_c
\leq (|\la^{(j-1)}|+b)/2$}.

\vspace{1 mm}
\noindent
Indeed, using the fact that $h(\la^{(j)})=r_j$ and $\la^\Mull=\la$, we deduce that $a_{j-1}\geq 6c$. So 
\begin{align*}
\la^{(j-1)}_1+\ldots+\la^{(j-1)}_c
&\leq\la^{(j)}_1+\ldots+\la^{(j)}_c+3c
\leq (|\la^{(j)}|+6c+b)/2
\\
&=(|\la^{(j-1)}|-a_{j-1}+6c+b)/2
\leq (|\la^{(j-1)}|+b)/2.
\end{align*}

\vspace{2mm}
\noindent
{\sf Claim 6:} {\em  if $1\leq j\leq k$, $|\la^{(j)}|\geq 42$ and $\la^{(j)}$ satisfies (i) (resp. (ii), resp. (iii)), then so does $\la$. }

\vspace{1 mm}
\noindent
We provide the proof for the condition (i), the conditions (ii) and (iii) are treated similarly. The condition $\la_1\geq \la_2+9$ is deduced using Claim 3 with $m=2$. The condition $\la_3\geq 7$ comes from Claim 4. The condition $\la_1\leq(n+2)/2$ comes from Claim 5 with $c=1$ and $b=2$ since $\la_3\geq 7$ of course implies $h(\la)\geq 3$. The condition $n\geq 4h(\la)$ comes from Claim 2. 

\vspace{2mm}
For $n<42$ the lemma holds by inspection (the exceptional cases are listed in part (iv)). Assume that $n\geq 42$. Pick $j$ maximal such that $|\la^{(j)}|\geq 42$. Then $|\la^{(j+1)}|<42$ and by inspection again we see that (i), (ii) or (iii) holds for $\la^{(j)}$. The proof is completed using Claim 6. 
\end{proof}

\subsection{Irreducible modules over symmetric and alternating groups}
We use James' notation 
$\{D^\la\mid \la\in\Par_p(n)\}$ for the set of the irreducible $\F\SSS_n$-modules up to isomorphism, see \cite[\S11]{JamesBook}. 
For example, $D^{(n)}\cong\bone_{\SSS_n}$. By \cite{JS} and \cite{k2}, $D^\la\da_{\SSS_{n-1}}$ is irreducible if and only if $\la$ is JS. The following much more general result is contained in \cite[Theorems 11.2.10]{KBook} and \cite[Theorem 1.4]{KDec}. 

\begin{Lemma}\label{Lemma39}
Let $\lambda\in\Parp(n)$, $i\in I$ and $r\in\Z_{\geq 0}$. Then:
\begin{enumerate}
\item[{\rm (i)}] $e_i^rD^\lambda\cong(e_i^{(r)}D^\lambda)^{\oplus r!}$;
\item[{\rm (ii)}]  $e_i^{(r)}D^\lambda\not=0$ if and only if $r\leq \eps_i(\lambda)$, in which case $e_i^{(r)}D^\lambda$ is a self-dual indecomposable module with socle and head both isomorphic to $D^{\tilde e_i^r\la}$.  
\item[{\rm (iii)}]  $[e_i^{(r)}D^\lambda:D^{\tilde e_i^r\la}]=\binom{\eps_i(\lambda)}{r}=\dim\End_{\SSS_{n-r}}(e_i^{(r)}D^\lambda)$;
\item[{\rm (iv)}] if $D^\mu$ is a composition factor of $e_i^{(r)}D^\lambda$ then $\eps_i(\mu)\leq \eps_i(\lambda)-r$, with equality holding if and only if $\mu=\tilde e_i^r\la$;
\item[{\rm (v)}] 
$\dim\End_{\SSS_{n-1}}(D^\lambda\da_{\SSS_{n-1}})=\sum_{j\in I}\eps_j(\lambda)$.
\item[{\rm (vi)}] Let $A$ be a removable node of $\la$ such that $\la_A$ is $p$-regular. Then $D^{\la_A}$ is a composition factor of $e_i D^\la$ if and only if $A$ is $i$-normal, in which case $[e_i D^\la:D^{\la_A}]$ is one more than the number of $i$-normal nodes for $\la$ above $A$. 
\end{enumerate}
\end{Lemma}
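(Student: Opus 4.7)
The plan is to deduce all six parts from the categorification of the basic representation of $\widehat{\mathfrak{sl}}_p$ on the Grothendieck group $\bigoplus_n [\mod{\F\SSS_n}]$ furnished by the $i$-restriction functors, so essentially to reconstruct the proof of \cite[Theorem 11.2.10]{KBook}. First I would introduce the block decomposition $e_i = \pr_i \circ \Res^{\SSS_n}_{\SSS_{n-1}}$, where $\pr_i$ projects onto the block whose central character corresponds to residue $i$, and define $e_i^{(r)}$ via Kleshchev's isotypic construction. The key input for (i) is that the natural $S_r$-action on $e_i^r$ coming from the Mackey identification of $\Res^{\SSS_n}_{\SSS_{n-r}}$ with the restriction through the intermediate affine Hecke algebra structure on $\End(\Res^r)$ factors so that $e_i^r$ splits, on the irreducible $D^\la$, as $r!$ copies of a single summand $e_i^{(r)} D^\la$.

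Next I would tackle the nonvanishing and head/socle statements in (ii) together with (iv) as the core combinatorial part. The approach is to combine the Grojnowski--Vazirani construction of the crystal structure on $\Par_p$ with Kleshchev's modular branching theorem, proving by induction on $n$ that $\soc(e_i D^\la) \cong D^{\tilde e_i \la}$ with multiplicity one and that any other constituent $D^\mu$ of $e_i D^\la$ has strictly smaller $\eps_i$ invariant. Iterating gives $e_i^{(r)} D^\la \neq 0$ iff $r \leq \eps_i(\la)$ and identifies $\soc(e_i^{(r)} D^\la) = D^{\tilde e_i^r \la}$, while self-duality of $e_i^{(r)}$ (as a left and right adjoint of $f_i^{(r)}$ up to twist) forces the head to agree with the socle and the module to be indecomposable. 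Part (iii) is then the endomorphism-ring calculation: $\End_{\SSS_{n-r}}(e_i^{(r)} D^\la)$ acquires dimension $\binom{\eps_i(\la)}{r}$ from the action of the divided powers of the Hecke algebra generators, and the multiplicity of the head in $e_i^{(r)} D^\la$ equals this dimension by self-duality and indecomposability.

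For (v), I would sum over $i \in I$ using $D^\la \da_{\SSS_{n-1}} \cong \bigoplus_{i\in I} e_i D^\la$ together with the $r=1$ case of (iii) and the fact that distinct $i$ produce modules lying in distinct blocks of $\F\SSS_{n-1}$, so that $\End_{\SSS_{n-1}}(D^\la \da)$ splits as a direct sum and each summand contributes $\eps_i(\la)$. For (vi), I would use the refined combinatorial description of the socle filtration of $e_i D^\la$ (effectively the content of \cite[Theorem 1.4]{KDec}): the removable nodes of residue $i$ that are \emph{normal} are precisely those for which the segment in the reduced $i$-signature survives, and the composition factor $D^{\la_A}$ appears with multiplicity equal to one plus the number of $i$-normal nodes strictly above $A$.

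The main obstacle is establishing the soul of (ii) and (iv)—namely the modular branching theorem identifying $\soc(e_i D^\la)$ and the strict drop of $\eps_i$ on all other constituents. This requires genuine input beyond formal categorification: the Kleshchev--Brundan analysis of the affine Hecke algebra, the matching of the algebraic crystal on $\{D^\la\}$ with the combinatorial crystal on $\Par_p$, and control of extensions via Jantzen--Seitz-type arguments. Once that cornerstone is in place, everything else is extracted by bookkeeping with adjunctions, the Serre relations categorified by the $e_i^{(r)}$ functors, and induction on $r$.
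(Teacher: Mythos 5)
Your outline is a faithful sketch of the standard proofs of the cited results, but note that the paper does not prove this lemma at all: it simply records it as known, citing \cite[Theorem 11.2.10]{KBook} for parts (i)--(v) and \cite[Theorem 1.4]{KDec} for part (vi). Since your plan is essentially to reconstruct the arguments of those references (divided-power functors and the $S_r$-action for (i), the modular branching theorem and self-duality/biadjointness of $e_i^{(r)}$ for (ii)--(iv), the block decomposition for (v), and the refined socle-filtration description for (vi)), it agrees in substance with the paper's source for the statement, and no further comparison is needed.
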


To describe the irreducible $\F\AAA_n$-modules, let us first suppose that $p=2$. For $\la\in\Par_2(n)$, by \cite[Theorem 1.1]{Benson}, 
we have $D^\la\da_{\AAA_n}$ is irreducible if and only if $\la\not\in\Parinv_2(n)$. In this case, we denote $E^\la=D^\la\da_{\AAA_n}$. On the other hand, if $\la\in\Parinv_2(n)$, then $D^\la\da_{\AAA_n}\cong E^\la_+\oplus E^\la_-$ for irreducible $\F \AAA_n$-modules $E^\la_+\not\cong E^\la_-$. Moreover
$$
\{E^\la\mid \la\in\Par_2(n)\setminus \Parinv_2(n)\}\cup\{E^\la_\pm\mid \la\in\Parinv_2(n)\}
$$
is a complete set of irreducible $\F\AAA_n$-modules up to isomorphism.


Now, let $p>2$. 
We denote by $\sgn$ the sign module over $\SSS_n$. Then by \cite{FK} (see also \cite{BO}), we have $D^\la \otimes \sgn \cong D^{\la^\Mull}$, and $E^\la:=D^\la\da_{\AAA_n} \cong D^{\la^\Mull}\da_{\AAA_n}$ is irreducible if and only if $\la\neq\la^\Mull$. If $\la=\la^\Mull$, i.e. $\la\in\Parinv_p(n)$, we have $D^\la\da_{\AAA_n}\cong E^\la_+\oplus E^\la_-$ for irreducible $\F \AAA_n$-modules $E^\la_+\not\cong E^\la_-$. By Clifford theory, 
$$
\{E^\la\mid \la\in\Par_p(n)\setminus \Parinv_p(n)\}\cup\{E^\la_\pm\mid \la\in\Parinv_p(n)\}
$$
is a complete set of irreducible $\F\AAA_n$-modules, and 
$E^\la\cong E^{\la^\Mull}$ for $\la\in\Par_p(n)\setminus \Parinv_p(n)$ are the only non-trivial isomorphisms among these.

For any $p$ we now have that $(E^\la_\pm)^\si\cong E^\la_{\mp}$ for $\si\in\SSS_n\setminus \AAA_n$ and $\la\in\Parinv_p(n)$. 
It follows that if $G=\si G\si^{-1}$ is a subgroup of $\AAA_n$ then $E^\la_+\da_G$ is irreducible if and only if $E^\la_-\da_G$ is irreducible. For example, this applies to the subgroups of the form  $\AAA_\mu$ and $G_{a,b}$.

\begin{Lemma} \label{L080518_4} 
Let $V$ be an $\F\SSS_n$-module, $W$ be an $\F\AAA_n$-module and $\mu\in\Par_p(n)\setminus \Parinv_p(n)$. 
\begin{enumerate}
\item[{\rm (i)}] If there is $\psi\in\Hom_{\SSS_n}(W\ua^{\SSS_n}, V)$ such that $[\im \psi:D^\mu]\neq 0$ then there exists $\psi'\in\Hom_{\AAA_n}(W, V\da_{\AAA_n})$ such that $[\im \psi':E^\mu]\neq 0$.
\item[{\rm (ii)}] If there is $\psi\in\Hom_{\SSS_n}( V, W\ua^{\SSS_n})$ such that $[\im \psi:D^\mu]\neq 0$ then there exists $\psi'\in\Hom_{\AAA_n}(V\da_{\AAA_n},W)$ such that $[\im \psi':E^\mu]\neq 0$.
\end{enumerate}
\end{Lemma}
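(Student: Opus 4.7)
The plan is to prove both parts by applying Frobenius reciprocity for the index-two normal inclusion $\AAA_n\trianglelefteq\SSS_n$ and then exploiting Clifford theory. The key structural fact is that for $\mu\in\Par_p(n)\setminus\Parinv_p(n)$, the restriction $D^\mu\da_{\AAA_n}\cong E^\mu$ is irreducible and extends to $\SSS_n$, so $(E^\mu)^\si\cong E^\mu$ for every $\si\in\SSS_n\setminus\AAA_n$; equivalently $[N:E^\mu]=[N^\si:E^\mu]$ for any finite-dimensional $\F\AAA_n$-module $N$. Moreover the hypothesis $[\im\psi:D^\mu]>0$ immediately yields $[(\im\psi)\da_{\AAA_n}:E^\mu]>0$, again because $D^\mu\da_{\AAA_n}\cong E^\mu$ irreducibly.

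For part (i), I take $\psi'$ to be the Frobenius reciprocity image of $\psi$; concretely, $\psi' = \psi\circ\iota$ with $\iota\colon W=1\otimes W\into W\ua^{\SSS_n}$ the canonical inclusion. Since $W\ua^{\SSS_n}=\F\SSS_n\cdot(1\otimes W)$, the image $\im\psi$ is the $\F\SSS_n$-span of $\im\psi'$, so $(\im\psi)\da_{\AAA_n}=\im\psi'+\si\cdot\im\psi'$ and thus is a quotient of $\im\psi'\oplus\si\cdot\im\psi'$ as $\F\AAA_n$-module, with $\si\cdot\im\psi'\cong(\im\psi')^\si$. Combining the resulting inequality $[(\im\psi)\da_{\AAA_n}:E^\mu]\leq 2[\im\psi':E^\mu]$ (using the $\si$-invariance of $E^\mu$) with the already-noted positivity $[(\im\psi)\da_{\AAA_n}:E^\mu]>0$ forces $[\im\psi':E^\mu]>0$.

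For part (ii), the Frobenius reciprocity image $\psi'$ is the composition of $\psi$ with the $\F\AAA_n$-linear projection $\pi\colon W\ua^{\SSS_n}=W\oplus\si W\to W$ onto the $1\otimes W$ summand. The $\F\SSS_n$-stability of $\im\psi$ forces the complementary projection $\pi_-(\im\psi)\subseteq\si W$ to coincide, as $\F\AAA_n$-module, with $(\im\psi')^\si$. Hence the map $x\mapsto(\pi(x),\pi_-(x))$ realizes $\im\psi$ as an $\F\AAA_n$-submodule of $\im\psi'\oplus(\im\psi')^\si$, and the same numerical sandwich as in (i) yields $[\im\psi':E^\mu]>0$.

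The only real subtlety is that the comparison between $(\im\psi)\da_{\AAA_n}$ and the doubled module $\im\psi'\oplus(\im\psi')^\si$ goes in opposite directions in the two parts (a quotient in (i), a submodule in (ii)). Both directions supply the same upper bound $[(\im\psi)\da_{\AAA_n}:E^\mu]\leq 2[\im\psi':E^\mu]$, and then the $\si$-invariance of $E^\mu$ together with the positivity of the left-hand side closes the argument.
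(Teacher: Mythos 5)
Your proof is correct and follows essentially the same route as the paper: both rest on the decomposition $W\ua^{\SSS_n}\da_{\AAA_n}\cong W\oplus W^\si$ together with the facts that $D^\mu\da_{\AAA_n}\cong E^\mu$ is irreducible and $\si$-stable for $\mu\notin\Parinv_p(n)$. The only (cosmetic) difference is that the paper argues by dichotomy --- either the canonical map works or one twists the map coming from the $W^\si$ summand by $\si$ --- whereas you observe that the two summands' contributions are $\si$-twists of each other and hence carry equal multiplicities of $E^\mu$, so the canonical Frobenius-reciprocity map always works.
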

\begin{proof}
We prove (i), the proof of (ii) being similar. Since $W\ua^{\SSS_n}\da_{\AAA_n}\cong W\oplus W^\si$, there exists $\psi'$ as required or there exists  $\psi''\in\Hom_{\AAA_n}(W^\si, V\da_{\AAA_n})$ such that $[\im \psi'':E^\mu]\neq 0$. In the second case, twisting $\psi''$ with $\si$ yields the required $\psi'$. 
\end{proof}

\begin{Lemma} \label{LKZ} 
Let $n\geq 8$, $\la\in\Par_p(n)$, and $\SSS_4\times \SSS_4\leq \SSS_8\leq \SSS_n$ be natural subgroups. Then $D^\la\da_{\SSS_4\times \SSS_4}$ has a composition factor of the form $D^\mu\boxtimes D^\nu$ with $\dim D^\mu>1$ and $\dim D^\nu>1$, unless $\la$ or $\la^\Mull$ belongs to $\{(n),(n-1,1)\}$. 
\end{Lemma}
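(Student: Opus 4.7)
My plan is to argue by induction on $n\geq 8$, with the base case $n=8$ handled by direct inspection and the inductive step reducing to $\SSS_{n-1}$. Before beginning, I would record a useful Mullineux symmetry of the claim: since $D^{\la^\Mull}\cong D^\la\otimes\sgn$ and the restriction of the sign character of $\SSS_n$ to $\SSS_4\times\SSS_4$ is $\sgn\boxtimes\sgn$, a composition factor $D^\mu\boxtimes D^\nu$ of $D^\la\da_{\SSS_4\times\SSS_4}$ corresponds bijectively to a composition factor $D^{\mu^\Mull}\boxtimes D^{\nu^\Mull}$ of $D^{\la^\Mull}\da_{\SSS_4\times\SSS_4}$, and the dimensions $\dim D^\mu$, $\dim D^\nu$ are preserved. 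Since the exceptional set $\{(n),(n-1,1),(n)^\Mull,(n-1,1)^\Mull\}$ is Mullineux-closed, the assertion for $\la$ is equivalent to that for $\la^\Mull$; I will use this freely to swap $\la$ and $\la^\Mull$ when convenient.

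For the base case $n=8$, the claim reduces to a finite verification: for each prime $p\in\{2,3,5,7\}$ and each $\la\in\Parp(8)$ outside the exceptional list, I exhibit a composition factor $D^\mu\boxtimes D^\nu$ of $D^\la\da_{\SSS_4\times\SSS_4}$ with $\dim D^\mu,\dim D^\nu>1$. The only $1$-dimensional irreducibles of $\SSS_4$ are $D^{(4)}$ and (for $p>2$) $D^{(2,1,1)}=\sgn$, so the forbidden composition factors are precisely those with one tensor factor trivial or sign. I expect this step to use the Specht filtration of $S^\la\da_{\SSS_4\times\SSS_4}$ (whose factors $S^\alpha\boxtimes S^\beta$ are given by the Littlewood--Richardson rule) together with the known decomposition matrices of $\SSS_4$ and $\SSS_8$; for instance, the factor $S^{(3,1)}\boxtimes S^{(3,1)}$ already supplies a suitable $D^\mu\boxtimes D^\nu$ in many cases. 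For $p\geq 11$ the decomposition matrices match characteristic zero, where the Littlewood--Richardson rule directly produces the required composition factor.

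For the inductive step $n>8$, the subgroup chain $\SSS_4\times\SSS_4\leq\SSS_{n-1}\leq\SSS_n$ gives
\[
D^\la\da_{\SSS_4\times\SSS_4}=\bigl(D^\la\da_{\SSS_{n-1}}\bigr)\da_{\SSS_4\times\SSS_4},
\]
so it suffices to find a composition factor $D^\tau$ of $D^\la\da_{\SSS_{n-1}}$ with $\tau,\tau^\Mull\notin\{(n-1),(n-2,1)\}$; the induction hypothesis applied to $\tau$ then produces the desired composition factor of $D^\la\da_{\SSS_4\times\SSS_4}$. By the modular branching rule (Lemma~\ref{Lemma39}), for each normal removable node $A$ with $\la_A$ $p$-regular, $D^{\la_A}$ appears as a composition factor of $D^\la\da_{\SSS_{n-1}}$, which provides a rich pool of candidate $\tau$.

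The hard part, and the principal content of the argument, is the case analysis showing that at least one such candidate is ``good'' whenever $\la,\la^\Mull\notin\{(n),(n-1,1)\}$. I expect it to split as follows: if $h(\la)\geq 4$, then any normal node yields a $\tau$ of height $\geq 3$, and in particular $\tau\notin\{(n-1),(n-2,1)\}$; if $h(\la)=2$, then $\la=(n-k,k)$ with $k\geq 2$ (since $\la\neq(n),(n-1,1)$), and removing the last node of the first row gives $\tau=(n-k-1,k)$, which lies outside $\{(n-1),(n-2,1)\}$ once $k\geq 3$, while the boundary case $k=2$ is handled by choosing instead the composition factor $\tau=(n-3,2)$; the intermediate height $h(\la)=3$ and the Mullineux-dual situations $h(\la^\Mull)\leq 2$ are reduced to the above via paragraph~1. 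The delicate point will be verifying the Mullineux-dual condition $\tau^\Mull\notin\{(n-1),(n-2,1)\}$, especially for $p=2,3$ where the Mullineux map can send partitions of small height to partitions of small height; this requires a short arithmetic check depending on $p$ and $n$, and is where the bulk of the technical work will lie.
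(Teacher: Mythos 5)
Your overall route is the same as the paper's: an explicit check at $n=8$ followed by induction through the chain $\SSS_4\times\SSS_4\leq\SSS_{n-1}\leq\SSS_n$. The inductive step you isolate --- that for $n\geq 9$ and $\la$ with $\la,\la^\Mull\notin\{(n),(n-1,1)\}$ the restriction $D^\la\da_{\SSS_{n-1}}$ has a composition factor $D^\tau$ with $\tau,\tau^\Mull\notin\{(n-1),(n-2,1)\}$ --- is precisely the statement the paper invokes as \cite[Proposition 2.3]{KZ}, and your Mullineux symmetry in the first paragraph is the standard way to organize that reduction. So the strategy is right.

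The gap is that you do not actually prove this inductive step, and the sketch you give would not close as stated. First, when $\la$ is JS there is exactly one normal node, so there is no ``rich pool of candidate $\tau$'' to choose from: you must check that the single partition $\tilde e_i\la$ avoids all four of $(n-1)$, $(n-2,1)$, $(n-1)^\Mull$, $(n-2,1)^\Mull$, and you must also watch $p$-regularity of $\la_A$ (e.g.\ for a two-row $\la$ the node at the end of the first row need not yield a $2$-regular partition, so that candidate may simply be unavailable). Second, your height dichotomy for $h(\la)\geq 4$ only excludes $\tau\in\{(n-1),(n-2,1)\}$; for $p=2,3$ the partitions $(n-1)^\Mull$ and $(n-2,1)^\Mull$ themselves have height $\geq 3$, so a $\tau$ of large height can still be one of them, and the ``short arithmetic check'' you defer is in fact the entire content of the lemma beyond the finite $n=8$ verification. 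You acknowledge this is ``where the bulk of the technical work will lie,'' but that work is missing. Either carry out the full case analysis (two-row, JS, and the $p=2,3$ Mullineux computations) or do what the paper does and cite \cite[Proposition 2.3]{KZ} for the inductive step.
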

\begin{proof}
If $n=8$ this is an easy explicit check. Now the result follows by induction using \cite[Proposition 2.3]{KZ}. 
\end{proof}

\subsection{Some special permutation modules}\label{SSPerm}
For a $2$-row partition $(n-k,k)$, we use the special notation
$$
S_k:=S^{(n-k,k)}\quad\text{and}\quad M_k:=M^{(n-k,k)}.
$$
(when it is clear what $n$ is). 
If $(n-k,k)\in\Par_p(n)$, we also denote
$$
D_k:=D^{(n-k,k)}\quad\text{and}\quad E_k:=D_k\da_{\AAA_n}.
$$
By Lemma~\ref{L080518}, we almost always have $(n-k,k)\not\in\Parinv_p(n)$, 
in which case 
$E_k\cong E^{(n-k,k)}$ is irreducible. 

Let $0\leq k\leq n/2$ and $G\leq \SSS_n$. We denote by $i_k(G)$ the number of $G$-orbits on $\Om_k$. Note that 
\begin{equation}\label{EItoM}
i_k(G)=\dim M_k^G=\dim\Hom_{\SSS_n}(\I(G),M_k). 
\end{equation}

We will need the following information on the structure of some special permutation modules.

\begin{Theorem} \label{Tp=3}
{\rm \cite[Lemmas 4.3, 4.4, 4.5]{KMTOne}} 
Let $p=3$ and $n\geq 6$. Then
$$M_1\sim D_0|S_1^*\quad\text{and}\quad M_2\sim M_1|S_2^*.$$
Further
\begin{enumerate}
\item[{\rm (i)}] If  $n\equiv 0\pmod{3}$ then $M_3\sim S_2^*\oplus ((D_0\oplus S_1^*)|S_3^*)$.

\item[{\rm (ii)}] If $n\equiv 1\pmod{3}$ then $M_3\sim S_1^*\oplus ((D_0\oplus S_2^*)|S_3^*)$.

\item[{\rm (iii)}] If $n\equiv 2\pmod{3}$ then $M_3\sim M_2|S_3^*$.
\end{enumerate}
\end{Theorem}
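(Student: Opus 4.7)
I would prove all three formulas by combining three standard ingredients: Young's rule (giving a Specht-module filtration of $M_k$ with sections $S_0,\dots,S_k$, each appearing once), the self-duality $M_k \cong M_k^*$ of any transitive permutation module, and the known submodule structure of the two-row Specht modules $S_j$ in characteristic $3$ via James's hook-length criterion. The analysis is driven by the natural $\SSS_n$-equivariant ``symmetrize'' maps $\varphi_k\colon M_{k-1}\to M_k$ sending $\{i_1,\dots,i_{k-1}\}\mapsto \sum_{j} \{i_1,\dots,i_{k-1},j\}$ and their duals, whose injectivity/surjectivity modulo $3$ depends on $n \bmod 3$ and which allow one to bootstrap from smaller $k$.

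For $M_1$, the augmentation submodule $S_1\subset M_1$ with quotient $D_0$ gives $M_1 \sim S_1 | D_0$, and self-duality flips this to $M_1 \sim D_0 | S_1^*$; these descriptions are consistent because when $3\mid n$ both $S_1$ and $S_1^*$ are uniserial with composition factors $D_0, D_1$. For $M_2$, one uses the explicit injection $M_1\hookrightarrow M_2$ provided by $\varphi_2$ (whose injectivity for $n\ge 6$ in characteristic $3$ is a short coefficient calculation) and computes the cokernel by comparing dimensions and heads: it must be $S_2^*$, yielding $M_2 \sim M_1 | S_2^*$.

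For $M_3$ the three cases differ because the composition factors of $S_j$, and hence the possible gluing patterns in $M_3$, depend sensitively on $n\bmod 3$. I would first extract a Specht filtration of $M_3$ with sections $S_0^*,S_1^*,S_2^*,S_3^*$ from Young's rule and duality, and then in each congruence case either construct an explicit idempotent in $\End_{\SSS_n}(M_3)$ splitting off $S_2^*$ (case (i)) or $S_1^*$ (case (ii)), or in case (iii) show that no such splitting exists and $M_3\sim M_2 | S_3^*$. The main obstacle is precisely the splitting step in cases (i) and (ii)---equivalently, showing that the relevant $\Ext^1$ groups between two-row Specht modules vanish. I would handle this by exhibiting an explicit $\SSS_n$-equivariant retraction built from a tailored combination of the maps $\varphi_j$ and their duals, with the congruence hypothesis on $n$ used to ensure that the relevant scalar factors arising in compositions like $\varphi_k^{\vee}\circ\varphi_k = (n-2k+2)\cdot\mathrm{id}$ (and analogous identities on higher $M_j$) are units in $\F$.
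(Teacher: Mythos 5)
First, note that the paper you are looking at does not prove this statement at all: Theorem~\ref{Tp=3} is quoted verbatim from the companion paper \cite[Lemmas 4.3, 4.4, 4.5]{KMTOne}, so there is no in-text argument to compare against. Judged on its own, your plan is the standard and essentially correct route to results of this kind (Specht filtration of $M_k$ from Young's rule, self-duality of $M_k$, James's criterion for the two-row Specht modules, and the ``up/down'' maps between the $M_k$'s), and your treatments of $M_1$ and $M_2$ are fine.

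There is, however, one concrete error and one genuine gap. The identity you state, $\varphi_k^{\vee}\circ\varphi_k=(n-2k+2)\cdot\mathrm{id}$ on $M_{k-1}$, is false for $k\geq 2$: a direct computation on a basis element $A$ gives $\varphi_k^{\vee}\varphi_k(A)=(n-k+1)A+\sum_{x\in A}\sum_{j\notin A}(A\setminus\{x\})\cup\{j\}$, which is not a scalar multiple of $A$. The correct statement is the commutator relation $\varphi_k^{\vee}\varphi_k-\varphi_{k-1}\varphi_{k-1}^{\vee}=(n-2k+2)\,\mathrm{id}$, from which one deduces inductively that $\varphi_k^{\vee}\varphi_k$ acts on the $S_j$-layer of $M_{k-1}$ by the scalar $(k-j)(n-k-j+1)$; it is these scalars (e.g.\ $n-4$, $2(n-3)$, $3(n-2)$ for $k=3$) whose (non)vanishing mod $3$ governs which summand splits off in cases (i) and (ii), via a Fitting-lemma argument applied to $\varphi_3\varphi_3^{\vee}\in\End_{\SSS_n}(M_3)$. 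With the corrected identity your splitting step goes through, but you still need to justify the finer assertion that the non-split summand has the shape $(D_0\oplus S_j^*)\,|\,S_3^*$ with a genuine \emph{direct sum} at the bottom rather than a nontrivial extension between $D_0$ and $S_j^*$; this requires additional input (e.g.\ computing $\dim\Hom_{\SSS_n}(M_1,M_3)$ and $\dim\Hom_{\SSS_n}(\bone,M_3)$ by orbit counting, together with the known socle structure of $S_1^*$ and $S_2^*$ in the relevant congruence class), and your proposal does not address it.
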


\begin{Theorem} \label{Tp=2}
{\rm \cite[Lemmas 4.6, 4.7, 4.9]{KMTOne}}
Let $p=2$ and $n\geq 7$. Then $M_1\sim D_0|S_1^*$. Further
\begin{enumerate}
\item[{\rm (i)}] If  $n\equiv 0\pmod{4}$ then
$$
M_2\sim(D_0\oplus S_1^*)|S_2^*\quad\text{and}\quad 
M_3\sim M_1\oplus (S_2^*|S_3^*).
$$
\item[{\rm (ii)}] If  $n\equiv 1\pmod{4}$ then
$$
M_2\sim M_1|S_2^*\quad\text{and}\quad
M_3\cong M_2\oplus S_3^*.
$$
\item[{\rm (iii)}] If $n\equiv 2\pmod{4}$, then
$$
M_2\sim(D_0\oplus S_1^*)|S_2^*\quad\text{and}\quad
M_3\sim M_1|S_2^*|S_3^*.
$$
\item[{\rm (iv)}] If $n\equiv 3\pmod{4}$, then 
$$
M_2\cong M_1\oplus S_2^*\quad\text{and}\quad
M_3\sim M_2|S_3^*.
$$
\end{enumerate}
\end{Theorem}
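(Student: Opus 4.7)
The plan is to combine Young's rule for the Specht filtration of the permutation module $M_k = M^{(n-k,k)}$ with James's description of the structure of two-row Specht modules $S_i = S^{(n-i,i)}$ in characteristic $2$, and then use Hom-space computations and self-duality of $M_k$ to upgrade this coarse data to the finer filtrations and direct sum decompositions stated.

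First, from Young's rule $M_k$ admits a Specht filtration $M_k \sim S_k | S_{k-1} | \cdots | S_0$ with $S_k$ at the bottom and $S_0 = D_0$ at the top, each Kostka multiplicity being $1$ for two-row shapes. Since $M_k$ is self-dual as a permutation module, dualizing yields the equivalent filtration $M_k \sim D_0 | S_1^* | S_2^* | \cdots | S_k^*$. For $k=1$ this immediately gives the claim $M_1 \sim D_0 | S_1^*$.

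Second, I would insert James's characteristic-$2$ structure theorems for the $S_i^*$, $1 \leq i \leq 3$: each $S_i^*$ is either simple (isomorphic to $D_i$) or uniserial of length $2$ with head $D_j$ for some $j<i$ (by James's theorem that $p$-regular Spechts have simple head $D^\la$, applied to the dual) and socle $D_i$, with the precise pattern controlled by James's core/Carter criterion, which in the range $i \leq 3$ reduces to the residue $n \bmod 4$. Inserting these descriptions into the filtration from the first step gives a refined composition series for $M_k$ that exposes where $D_0$, $D_1$, etc., sit.

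Third, to detect the direct summands asserted (for example $M_3 \cong M_2 \oplus S_3^*$ when $n \equiv 1 \pmod 4$, or the splitting $M_3 \cong M_1 \oplus (S_2^*|S_3^*)$ when $n \equiv 0 \pmod 4$) I would compute $\dim \Hom_{\SSS_n}(M_k, D_j) = \dim (D_j)^{\SSS_{n-k,k}}$ by Frobenius reciprocity, which counts the multiplicity of $D_j$ in the head of $M_k$. Combining this with self-duality (so the socle agrees with the head) and the composition factor counts from the second step, one reads off whether the short exact sequence $0 \to M_{k-1} \to M_k \to S_k^* \to 0$ (coming from the filtration above) splits off a summand and which of the remaining subquotients combine into a single indecomposable piece. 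The four cases of $n \bmod 4$ then emerge as separate bookkeeping.

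The main obstacle is proving non-splitness in the cases where the composition factor and Hom-space data alone leave both outcomes a priori possible (for instance $M_2 \sim (D_0 \oplus S_1^*)|S_2^*$ for $n \equiv 0,2 \pmod 4$, or the full uniserial $M_3 \sim M_1|S_2^*|S_3^*$ for $n \equiv 2 \pmod 4$). This requires a genuine $\Ext^1$ non-vanishing argument in the principal $2$-block of $\F\SSS_n$, most cleanly obtained either by exhibiting an explicit non-split extension as a subquotient of a suitable projective cover in the block, or by showing that $\dim \End_{\SSS_n}(M_k)$ (computed by Mackey as the number of $(\SSS_{n-k,k},\SSS_{n-k,k})$-double cosets in $\SSS_n$, which is independent of $p$) is too small to accommodate an additional direct summand; the mismatch with the summand count forced by a hypothetical splitting then gives the required contradiction.
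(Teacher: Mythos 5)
The paper offers no proof of this statement: Theorem~\ref{Tp=2} is quoted verbatim from \cite[Lemmas 4.6, 4.7, 4.9]{KMTOne}, so the only available comparison is with the argument in that companion paper, and your outline does follow the standard route taken there — the dual Specht filtration $M_k\sim D_0|S_1^*|\cdots|S_k^*$ coming from iterating $0\to M_{k-1}\to M_k\to S_k^*\to 0$, James's description of two-row Specht modules in characteristic $2$, Frobenius reciprocity $\dim\Hom_{\SSS_n}(M_k,D_j)=\dim (D_j)^{\SSS_{n-k,k}}$, self-duality of $M_k$, and the $p$-independent count $\dim\End_{\SSS_n}(M_k)=k+1$ to rule out spurious splittings. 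All of those ingredients are correct and are exactly the right tools.

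There is, however, one concrete error in your step 2 that would derail the bookkeeping if carried out literally: it is not true that each $S_i^*$ with $i\leq 3$ is simple or uniserial of length $2$ in characteristic $2$. For example, for $n\equiv 2\pmod{4}$ the module $S^{(n-2,2)}$ has the three composition factors $D_0,D_1,D_2$ (already for $n=6$: $\dim S^{(4,2)}=9=1+4+4$), and for $n\equiv 0\pmod{4}$ the module $S^{(n-3,3)}$ has the three factors $D_1,D_2,D_3$ (for $n=8$: $\dim S^{(5,3)}=28=6+14+8$). Two repairs are needed. First, the composition factors of $S_i$ must be taken from James's actual two-row decomposition numbers (the binomial-coefficient-mod-$2$ criterion), not from a length-$\leq 2$ assumption; with the wrong factor lists the dimension and multiplicity counts in your step 3 come out wrong. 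Second, for a length-$3$ Specht module, uniseriality (which you implicitly need to place the factors inside the filtration of $M_k$) does not follow from ``simple head plus simple socle of the dual''; it has to be extracted from the known submodule structure of $M^{(n-k,k)}$ or proved by a separate $\Hom$/$\Ext$ argument. Relatedly, note that a statement such as $M_2\sim(D_0\oplus S_1^*)|S_2^*$ for $4\mid n$ is \emph{not} obtained by substituting the structure of $M_1$ into $M_2\sim M_1|S_2^*$ (there $M_1\cong D_0|D_1|D_0$ is uniserial, whereas $D_0\oplus S_1^*$ is decomposable with socle $D_0\oplus D_1$); one must genuinely exhibit $D_1$ in $\soc M_2$ and show that it generates a copy of $S_1^*$, which your $\Hom$-space computations can do but which is an additional step your outline does not spell out. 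With these corrections the strategy goes through and coincides with the proof in \cite{KMTOne}.
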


\begin{Lemma} \label{C180418} 
Let $n\geq 5$, and $\la\in\Par_p(n)$ be such that $\dim D^\la>1$. If $p=2$ assume further that $\la\neq \be_n$. Then there exists $\zeta_2\in\Hom_{\SSS_n}(M_2, \End_\F(D^\la))$ with $[\im\zeta_2:D_2]\neq 0$.  
\end{Lemma}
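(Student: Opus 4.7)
The plan is to construct $\zeta_2$ from the natural action of transpositions and then show that its restriction to the Specht submodule $S_2\subseteq M_2$ is nonzero, which automatically forces $D_2$ into the image. Specifically, I would define $\zeta_2\in\Hom_{\SSS_n}(M_2,\End_\F(D^\la))$ by $\zeta_2(\{i,j\}):=\pi_{(i,j)}$, where $\pi_g$ denotes the action of $g\in\SSS_n$ on $D^\la$. The $\SSS_n$-equivariance is immediate from $\sigma(i,j)\sigma^{-1}=(\sigma(i),\sigma(j))$ together with the conjugation action on $\End_\F(D^\la)$, and under the Frobenius reciprocity identification $\Hom_{\SSS_n}(M_2,\End_\F(D^\la))\cong\End_{\SSS_{n-2}\times\SSS_2}(D^\la\da)$ this $\zeta_2$ corresponds to the involution $\pi_{(n-1,n)}$.

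Because $D_2=\head S_2$ is simple and appears in $S_2=S^{(n-2,2)}$ with multiplicity one, every nonzero quotient of $S_2$ has $D_2$ as head and therefore as a composition factor. Consequently $\zeta_2(S_2)\neq 0$ would give $[\im\zeta_2:D_2]\geq[\zeta_2(S_2):D_2]\geq 1$, so it suffices to prove $\zeta_2|_{S_2}\neq 0$.

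For this I would test $\zeta_2$ on a concrete polytabloid. Picking four distinct indices $a,b,c,d\in\{1,\dots,n\}$ (which exist since $n\geq 5$), I consider the polytabloid $e_T\in S_2$ attached to the $(n-2,2)$-tableau with first two columns $(c,a)$ and $(d,b)$; this is the alternating sum $\{a,b\}-\{c,b\}-\{a,d\}+\{c,d\}$ in characteristic $\neq 2$ and the corresponding ordinary sum in characteristic $2$, membership in $S_2=\ker(M_2\to M_1)$ being checked by applying $\{i,j\}\mapsto\{i\}+\{j\}$. Suppose for a contradiction that $\zeta_2(e_T)$ acts as zero on $D^\la$ for every such quadruple. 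A short cocycle/coboundary argument with indices in $\{1,\dots,n\}$ then shows that there exist operators $E_u\in\End_\F(D^\la)$ and a common operator $C$ such that $\pi_{(u,v)}=E_u+E_v-C$ uniformly for every pair $u\neq v$.

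The hard part is to derive a contradiction from this Clifford-like identity combined with the involution relation $\pi_{(u,v)}^2=\id$ and the braid relation $\pi_{(u,v)}\pi_{(v,w)}\pi_{(u,v)}=\pi_{(u,w)}$. Substituting the formula into these relations forces genuine Clifford-type identities $\{E_u,E_v\}=(\text{scalar})$ for $u\neq v$, realising $D^\la$ as a module for the Clifford algebra on $n$ generators modulo its centre. The only irreducible $\F\SSS_n$-modules with such a presentation are the one-dimensional ones (excluded by $\dim D^\la>1$) and, in characteristic $2$, the basic spin module $D^{\be_n}$ (excluded by the hypothesis $\la\neq\be_n$); recognising $D^{\be_n}$ via this Clifford presentation, in the spirit of \cite{Benson}, is the main technical obstacle. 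Once this identification step is carried out, the contradiction is immediate, so some quadruple $(a,b,c,d)$ must produce $\zeta_2(e_T)\neq 0$, and the proof is complete.
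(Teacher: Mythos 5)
Your route is completely different from the paper's: the paper obtains $\zeta_2$ by citing a dimension count for $\operatorname{Hom}_{\SSS_n}(M_2,\End_\F(D^\la))\cong\End_{\SSS_{n-2,2}}(D^\la\da_{\SSS_{n-2,2}})$ against the known submodule structure of $M_2$ (via \cite[Corollary 6.4]{KMTOne} and \cite[Lemma 3.8]{KS2Tran}), whereas you fix the single explicit homomorphism $\{i,j\}\mapsto\pi_{(i,j)}$ and try to show it already works. The first half of your plan is sound: the equivariance of $\zeta_2$, the fact that $(n-2,2)$ is $p$-regular for $n\geq 5$ so that any nonzero quotient of $S_2$ has $D_2$ in its head, and the reduction to showing $\zeta_2(e_T)=\pi_{(a,b)}-\pi_{(c,b)}-\pi_{(a,d)}+\pi_{(c,d)}\neq 0$ for some quadruple, are all correct (the identification $S_2=\ker(M_2\to M_1)$ is not quite right in characteristic $2$, but you only use that $e_T\in S_2$, which holds by definition).

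The genuine gap is the step you yourself flag as "the main technical obstacle," and it is not a removable convenience: it is the entire content of the lemma in your formulation. Concretely, you must prove that if $\pi_{(u,v)}=E_u+E_v-C$ for all $u\neq v$ on an irreducible $D^\la$ with $n\geq 5$, then $\dim D^\la=1$ or $p=2$ and $\la=\be_n$. You neither derive the asserted Clifford-type identities from the involution and braid relations (this already requires care: subtracting the relations $(E_u+E_v-C)^2=1$ for varying $u$ only yields anticommutation of \emph{differences} $E_a-E_b$, with a possibly degenerate and non-scalar bilinear form, and the constant $C$ need not vanish), nor do you carry out the recognition of which irreducible $\F\SSS_n$-modules admit such a presentation. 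In odd characteristic the expected answer (only one-dimensional modules) is plausible but is exactly the kind of statement that needs a proof, since the genuine spin modules live on the double cover and one must rule out degenerate "Clifford-like" quotients acting irreducibly. Moreover, because your strategy commits to one specific $\zeta_2$ rather than the existence statement, the recognition theorem must be proved in full strength: if even one further module satisfied the identity $\pi_{(u,v)}=E_u+E_v-C$, your argument would fail for it even though the lemma might still hold via a different homomorphism. As written, the proposal is therefore an outline whose decisive step is missing.
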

\begin{proof}
This follows from \cite[Corollary 6.4]{KMTOne} and \cite[Lemma 3.8]{KS2Tran}.
\end{proof}

\begin{Lemma} \label{LM_3E} 
Let $n\geq 7$, and $\la\in\Par_p(n)$ with $h(\la),h(\la^\Mull)\geq 3$. Then there exists $\zeta_3\in\Hom_{\SSS_n}(M_3, \End_\F(D^\la))$ with $[\im\zeta_3:D_3]\neq 0$.  
\end{Lemma}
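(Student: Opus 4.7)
The plan is to adapt the proof of Lemma~\ref{C180418}, which handles the analogous $M_2$-statement (cf.~\cite[Corollary~6.4]{KMTOne} and \cite[Lemma~3.8]{KS2Tran}), to the $M_3$-setting. By Frobenius reciprocity,
\[
\Hom_{\SSS_n}(M_3,\End_\F(D^\la))\;\cong\;\End_\F(D^\la)^{\SSS_{n-3,3}}\;=\;\End_{\SSS_{n-3,3}}(D^\la\da_{\SSS_{n-3,3}}),
\]
under which a map $\zeta_3$ corresponds to an $\SSS_{n-3}\times\SSS_3$-equivariant endomorphism $\phi$ of $D^\la$, and $\im\zeta_3=\F\SSS_n\cdot\phi\subseteq\End_\F(D^\la)$. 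Let $V\subseteq\End_\F(D^\la)$ be the $\F\SSS_n$-submodule generated by all $\SSS_{n-3,3}$-invariants. As a (possibly non-direct) sum of cyclic quotients of $M_3$, $V$ is a quotient of the external direct sum of those cyclic pieces, so every composition factor of $V$ must appear in some $\F\SSS_n\cdot\phi$. Thus the problem reduces to showing $[V:D_3]>0$.

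I would next verify that $D_3=D^{(n-3,3)}$ appears as a composition factor of $\End_\F(D^\la)\cong D^\la\otimes D^\la$ at all. In characteristic zero, a direct calculation with Kronecker coefficients gives $[\chi^\la\overline{\chi^\la}:\chi^{(n-3,3)}]>0$ provided $\la$ has enough depth in both its own shape and in that of $\la^\Mull$, which is precisely what $h(\la),h(\la^\Mull)\ge 3$ guarantee (the second condition entering via $D^\la\otimes\sgn\cong D^{\la^\Mull}$ when $p>2$, while for $p=2$ it collapses to the single condition $h(\la)\ge 3$); reducing mod $p$ preserves positivity of the multiplicity. To place this $D_3$ inside $V$, I would invoke the explicit description of $M_3$ in Theorems~\ref{Tp=3} and~\ref{Tp=2}: in every case recorded there, $M_3$ (possibly via projection onto a direct summand) admits a surjection onto $S_3^*$, whose socle is $D_3$. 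The required $\zeta_3$ would then be the composition of this surjection with an $\SSS_n$-homomorphism $S_3^*\to\End_\F(D^\la)$ non-vanishing on the $D_3$-socle, with Lemma~\ref{Lemma39}(vi) used to track composition factors through the $\SSS_{n-3}$-branching.

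The hardest part will be this last construction: a priori every $\SSS_n$-map $S_3^*\to\End_\F(D^\la)$ could annihilate the socle $D_3$, which would place the $D_3$-composition factor of $\End_\F(D^\la)$ entirely in the quotient $\End_\F(D^\la)/V$ rather than in $V$. Ruling this out amounts to an $\Ext^1$-vanishing argument, and this is exactly where the two-sided hypothesis $h(\la),h(\la^\Mull)\ge 3$ is genuinely needed: weakening it would permit the short-row partitions $\la\in\{(n),(n-1,1),(1^n),(2,1^{n-2})\}$, for which $D_3$ is not reachable from $M_3$ into $\End_\F(D^\la)$, in parallel with the exceptions in Lemma~\ref{C180418}.
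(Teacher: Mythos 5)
There is a genuine gap: the two substantive steps of your argument are, respectively, incorrect as stated and not actually carried out. First, your verification that $D_3$ occurs in $\End_\F(D^\la)\cong D^\la\otimes D^\la$ via characteristic-zero Kronecker coefficients and "reducing mod $p$ preserves positivity" does not work, because $D^\la$ is not the mod-$p$ reduction of $\chi^\la$; that reduction is $[S^\la]$, of which $D^\la$ is only one composition factor, so positivity of $[\chi^\la\otimes\chi^\la:\chi^{(n-3,3)}]$ gives information about $S^\la\otimes S^\la$ in the Grothendieck group, not about $D^\la\otimes D^\la$. Second, and more seriously, the step you yourself identify as "the hardest part" --- showing that some $\SSS_n$-homomorphism out of $M_3$ (equivalently, out of the relevant $S_3^*$ subquotient) does not annihilate the socle $D_3$ --- is precisely the entire content of the lemma, and you leave it as an unspecified "$\Ext^1$-vanishing argument" with no indication of which extension groups are involved or how the hypothesis $h(\la),h(\la^\Mull)\geq 3$ enters. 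As written, the proposal reduces the lemma to itself.

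For comparison, the paper disposes of this by citation: \cite[Lemmas 3.1, 3.2 and Corollary 3.9]{BK} supply quantitative branching lower bounds, namely that $\dim\Hom_{\SSS_n}(M_3,\End_\F(D^\la))=\dim\End_{\SSS_{n-3,3}}(D^\la\da_{\SSS_{n-3,3}})$ strictly exceeds the contribution that could come from homomorphisms factoring through the subquotients of $M_3$ other than the top copy of $S_3^*$ (these contributions being controlled by $\dim\End_{\SSS_{n-2,2}}(D^\la\da_{\SSS_{n-2,2}})$ and the structure of $M_3$ recorded in Theorems~\ref{Tp=3} and~\ref{Tp=2}); \cite[Corollaries 6.7, 6.10]{KMTOne} then convert this dimension excess into the existence of a $\zeta_3$ with $[\im\zeta_3:D_3]\neq 0$. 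If you want a self-contained argument along your lines, the missing ingredient is exactly such a counting step: an explicit lower bound for $\dim\End_{\SSS_{n-3,3}}(D^\la\da_{\SSS_{n-3,3}})$ in terms of normal-node data of $\la$ and $\la^\Mull$, valid under $h(\la),h(\la^\Mull)\geq 3$, which is what forces a homomorphism that survives on the $D_3$-socle.
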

\begin{proof}
This follows from \cite[Corollaries 6.7, 6.10]{KMTOne} and 
\cite[Lemmas 3.1,3.2 and Corollary 3.9]{BK}.
\end{proof}

\subsection{Invariants}
In this subsection we will compute some invariants $(S_k^*)^G$ for small $k$. We use the standard basis $v_1,\dots,v_n$ in $M_1$ and the corresponding elements $\bar v_1,\dots\bar v_n\in S_1^*=M_1/\langle \sum_{j=1}^nv_j\rangle$ so that $\{\bar v_1,\dots, \bar v_{n-1}\}$ is a basis of $S_1^*$.

Let $\Om_n$ be the set of all $2$-element subsets of $\{1,\dots,n\}$ . 
We  use the standard basis 
$\{v_A\mid A\in \Om_{n}\}$ in $M_2$ and write $v_{i,j}:=v_{\{i,j\}}$ for $\{i,j\}\in\Om_n$. It is easy to check that $S_2^*\cong M_2/K$, where 
$$
K:=\spa\Big(\sum_{A\in \Om_{n}} v_A,\ \sum_{j\not=i}v_{i,j}\mid 1\leq i\leq n\Big).
$$
Set $\bar v_A:=v_A+K\in M_2/K=S_2^*$. Then 
$$
\{\v_A\mid A\in\Om_{n-2}\}\cup\{\v_{i,n-1}\mid 1\leq i\leq n-3\}
$$
is a basis of $S_2^*$, and
\begin{align*}
\v_{i,n}&=-\sum_{j\in[1,n-1]\setminus\{i\}}\v_{i,j}\qquad(1\leq i\leq n-3),
\\
\v_{n-2,n-1}&=-\sum_{A\in\Om_{n-2}}\v_A-\sum_{i=1}^{n-3}\v_{i,n-1},
\\
\v_{n-2,n}&=\sum_{A\in\Om_{n-3}}\v_A+\sum_{i=1}^{n-3}\v_{i,n-1},
\\
\v_{n-1,n}&=\sum_{A\in\Om_{n-2}}\v_A.
\end{align*}

\begin{Lemma}\label{L100518_2}
Let $n\geq 5$, $1\leq k\leq n/2$, and  $G=A_{n-k,k}$. Then $\dim(S_1^*)^G=\dim(S_2^*)^G=1$, with the only exception $(S_2^*)^{A_{n-1}}=0$.
\end{Lemma}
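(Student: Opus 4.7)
My plan is to carry out the computation directly by analysing the $G$-orbits on $\{1,\ldots,n\}$ and $\Om_n$, then using the explicit description of $\F\sum v_j\subseteq M_1$ and of $K\subseteq M_2$ given just above.

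\textbf{For $(S_1^*)^G$.} Since $A_{n-k}\subseteq G$ acts transitively on $\{1,\ldots,n-k\}$ (as $n-k\geq 3$), and a symmetric argument handles $\{n-k+1,\ldots,n\}$ (via 3-cycles in $A_k$ when $k\geq 3$; via the element $(t,(n-1,n))\in G$ with $t$ a transposition when $k=2$; trivially when $k=1$), the subgroup $G$ has exactly two orbits on $[n]$. Hence $\dim M_1^G=2$, spanned by the orbit sums $u_1,u_2$, and $\F\sum v_j=\F(u_1+u_2)$ is a $1$-dimensional subspace of $M_1^G$. Therefore the image of $M_1^G$ in $S_1^*$ is $1$-dimensional. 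To show $(S_1^*)^G$ is no larger, I take $w=\sum a_iv_i$ with $(g-1)w\in\F\sum v_j$ for all $g\in G$; unpacking this means $a_{g^{-1}(i)}-a_i$ is some constant $c_g$ independent of $i$. Applying $g$ = a 3-cycle in $A_{n-k}$ (whose action is $0$ on coordinates outside its support) forces $c_g=0$ and then $w$ to be $A_{n-k}$-invariant on the first orbit; the analogous argument used for the orbit structure handles the second orbit. Hence $w\in M_1^G$, proving $\dim(S_1^*)^G=1$.

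\textbf{For $(S_2^*)^G$.} The $G$-orbits on $\Om_n$ are $\Om^{11}$, $\Om^{22}$, $\Om^{12}$, with $\Om^{22}$ empty exactly when $k=1$, so $\dim M_2^G=3$ for $k\geq 2$ and $\dim M_2^G=2$ for $k=1$. Denote the orbit sums by $u_{11},u_{22},u_{12}$. A direct expansion gives $\sum_{i\leq n-k}\kappa_i=2u_{11}+u_{12}$, $\sum_{i>n-k}\kappa_i=2u_{22}+u_{12}$, and $\kappa_0=u_{11}+u_{22}+u_{12}$. For $k\geq 2$, $K\cap M_2^G$ is the image of the map
\[(d_0,\alpha,\beta)\mapsto (d_0+2\alpha)u_{11}+(d_0+2\beta)u_{22}+(d_0+\alpha+\beta)u_{12},\]
whose matrix has rank $2$ in every characteristic, giving $\dim(K\cap M_2^G)=2$ and a $1$-dimensional image of $M_2^G$ in $S_2^*$ (represented, for instance, by the class of $u_{22}$, which one checks is not in $K$). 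For $k=1$, instead, $\kappa_n=u_{12}$ and $\kappa_0=u_{11}+u_{12}$ force $M_2^G\subseteq K$, and the image is $0$.

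\textbf{Upper bound (the main obstacle).} I must show that every $\omega\in(S_2^*)^G$ already comes from $M_2^G$. Let $w=\sum c_{\{i,j\}}v_{\{i,j\}}$ represent $\omega$, so $(g-1)w\in K$ for all $g\in G$. Applying this to a 3-cycle $(a,b,c)$ in $A_{n-k}$ gives, for every $j\notin\{a,b,c\}$, that $c_{\{c,j\}}-c_{\{a,j\}}$, $c_{\{a,j\}}-c_{\{b,j\}}$ and $c_{\{b,j\}}-c_{\{c,j\}}$ must all have the form $d_0+d_i+d_j$ permitted by the membership in $K$. Varying the 3-cycle within $A_{n-k}$ and combining with the analogous relations coming from $A_k$ (or the special elements $(t,(n-1,n))$ when $k=2$) and from a bridging element mixing both orbits yields scalars $e_0,e_1,\ldots,e_n$ such that after subtracting $e_0\kappa_0+\sum_i e_i\kappa_i\in K$ from $w$, the remaining coefficients are constant on each $G$-orbit on $\Om_n$. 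Consequently $w\in M_2^G+K$, as required. For $k=1$ the same analysis additionally uses that $\{n\}$ is a singleton $G$-orbit, which pins down $e_n$ and ultimately places $w$ itself in $K$; this is what produces the exceptional vanishing $(S_2^*)^{A_{n-1}}=0$.

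The bookkeeping in the last paragraph, particularly to ensure the independence of the chosen 3-cycles and to handle the small $k=2$ case where $A_k$ provides no 3-cycles, is the delicate part; this is where one must be careful in low characteristic since several ``divide by $2$'' steps in the expected formulas need to be rephrased as direct linear-algebra statements.
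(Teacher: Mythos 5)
Your computation of $(S_1^*)^G$ is fine, and your computation of the \emph{image} of $M_2^G$ in $S_2^*$ (via orbit sums and the intersection $K\cap M_2^G$) is also correct. But this only gives the lower bound $\dim(S_2^*)^G\geq 1$ for $k\geq 2$ (and the vanishing of the image for $k=1$). The actual content of the lemma is the upper bound, i.e.\ that the natural map $M_2^G\to (S_2^*)^G$ is surjective --- equivalently, that the class of $(g\mapsto (g-1)w)$ in $H^1(G,K)$ vanishes for every $w$ representing an invariant of the quotient. Your paragraph headed ``Upper bound'' does not prove this: the scalars $d_0^{(g)},d_i^{(g)}$ appearing in $(g-1)w=d_0^{(g)}\kappa_0+\sum_i d_i^{(g)}\kappa_i$ depend on $g$ and are not even unique (the $\kappa$'s satisfy a relation, since $\dim K=n$ while there are $n+1$ spanning vectors), and the assertion that ``varying the 3-cycle \dots\ yields scalars $e_0,\dots,e_n$ such that after subtracting $e_0\kappa_0+\sum_ie_i\kappa_i$ the coefficients become constant on $G$-orbits'' is exactly the statement to be proved. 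You flag this yourself as ``the delicate part,'' and in characteristic $2$ it is genuinely so; as written, the proof is incomplete at its central step.

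For comparison, the paper sidesteps the lifting problem entirely by working directly inside $S_2^*$ with the explicit basis $\{\v_A\mid A\in\Om_{n-2}\}\cup\{\v_{i,n-1}\mid 1\leq i\leq n-3\}$ and the listed expressions for $\v_{i,n}$, $\v_{n-2,n-1}$, $\v_{n-2,n}$, $\v_{n-1,n}$ in that basis: invariance under the subgroup $\AAA_{n-k,k-3}$ (or $\AAA_{n-3}$ when $k=1$) forces a candidate invariant into a short normal form with a handful of unknown coefficients, and then three or four explicit even permutations kill those coefficients. If you want to salvage your approach, you would need to carry out an equally explicit normal-form argument for $w$ modulo $K$; otherwise I would recommend switching to the quotient-side computation.
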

\begin{proof}
For $S_1^*$ this is an easy explicit check left to the reader. For $S_2^*$, assume first that $k=1$. Then, acting with $A_{n-3}$, we deduce that $x\in (S_2^*)^G$ must be of the form
$$
x=c\sum_{A\in\Om_{n-3}}\v_A+d\sum_{i=1}^{n-3}\v_{i,n-2}+e\sum_{i=1}^{n-3}\v_{i,n-1}
$$
for $c,d,e\in\F$. Acting with $(1,2)(n-2,n-1)$ gives $e=d$ and acting  with $(1,2)(n-3,n-2)$ then gives $c=d=0$. 
The case $k=2$ is handled similarly, giving $e=0$ and $c=d$, which gives a non-trivial invariant if $c=1$.

Let $k> 2$. Note that $\sum_{A\in\Om_{n-k}}\v_A\in(S_2^*)^G$. If $k\geq 3$, then acting with $A_{n-k,k-3}$, we deduce that $x\in (S_2^*)^G$ must be of the form
\begin{align*}
x\,\,=\,\,&a\sum_{A\in\Om_{n-k}}\v_A+b\sum_{j=n-k+1}^{n-3}\sum_{i=1}^{n-k}\v_{i,j}+c\sum_{i=1}^{n-k}\v_{i,n-2}+d\sum_{i=n-k+1}^{n-3}\sum_{j=i+1}^{n-3}\v_{i,j}
\\&+e\sum_{i=n-k+1}^{n-r}\v_{i,n-2}
+f\sum_{i=1}^{n-k}\v_{i,n-1}+g\sum_{i=n-k+1}^{n-3}\v_{i,n-1}
\end{align*}
for some $a,b,c,d,e,f,g\in\F$. In view of the invariant already found, we may assume that $a=0$ and then prove that $x=0$. Acting with $(1,2)(n-1,n)$ we get $f=g=0$. Then acting with $(1,2)(n-2,n-1)$ we get $c=e=0$. In the case $k=3$ we are done since then the two remaining sums are empty, and we done. Otherwise, 
acting with $(1,2)(n-3,n-2)$ we get $b=d=0$. 
\end{proof}

\begin{Lemma} \label{L100518_3}
Let $G_{a,b}\leq \SSS_n$ for $n=ab\geq 6$. Then: 
\begin{enumerate}
\item[{\rm (i)}] if $a,b\geq 2$ then $\dim (S_1^*)^{G_{a,b}}=0$, unless $p=b=2$ in which case $\dim (S_1^*)^{G_{a,b}}\neq0$
\item[{\rm (ii)}] if $a,b\geq 3$ then $\dim (S_2^*)^{G_{a,b}}=1$.
\end{enumerate}
\end{Lemma}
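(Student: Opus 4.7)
The plan is to handle the two parts separately. For part (i), lifting $\bar y=\sum_i b_i\bar v_i\in (S_1^*)^{G_{a,b}}$ to $y=\sum_i b_i v_i\in M_1$ and using the exact sequence $0\to\F\cdot\sum_i v_i\to M_1\to S_1^*\to 0$ translates $G$-invariance of $\bar y$ into the existence of a homomorphism $c\colon G_{a,b}\to\F$ with $b_{g^{-1}(i)}-b_i=c_g$ independent of $i$. Transitivity of the action on $\{1,\ldots,n\}$ then identifies $(S_1^*)^{G_{a,b}}$ with the kernel of the restriction map $\Hom(G_{a,b},\F)\to\Hom(G_1,\F)$, where $G_1=\mathrm{Stab}_{G_{a,b}}(1)$. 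For $p=b=2$, the block-sign homomorphism $\pi\colon G_{a,2}\to\SSS_2\to\F_2$ is nonzero and vanishes on $G_1$ (with only two blocks, fixing the block of $1$ fixes the other as well), so the explicit element $\bar v_1+\cdots+\bar v_a$ is a nonzero invariant. In the remaining cases I would exploit the abundance of $3$-cycles in $G_{a,b}$: $3$-cycles inside single blocks lie in $G_{a,b}$ as even permutations and, for $a\geq 3$, force $b_i$ constant on each block, while $3$-cycles of blocks have $\SSS_n$-sign $(-1)^{2a}=1$ and, for $b\geq 3$, equalize the block constants. For $a=2$ with $b\geq 3$, block permutations reduce $b$ to a first-or-second dichotomy on each block which a double block-swap like $(1,2)(3,4)\in G_{2,b}$ collapses in every characteristic; for $a\geq 3$, $b=2$, $p\ne 2$, the block swap (adjusted by an intra-block transposition when $a$ is odd) yields the constraint $2(\gamma_1-\gamma_2)=0$, whence $\gamma_1=\gamma_2$ in odd characteristic.

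For part (ii), I would first exhibit $\bar v_{\mathrm{intra}}:=\sum\bar v_A$ (sum over intra-block pairs) as a nonzero $G_{a,b}$-invariant. $G$-invariance is immediate, and non-vanishing in $S_2^*$ follows from a direct check: assuming $v_{\mathrm{intra}}=\alpha\sum_A v_A+\sum_i\beta_i\sum_{j\ne i}v_{ij}\in K$ and comparing coefficients on inter-block pairs forces $\beta_i$ to be constant on each block $B_l$ (with common value $\gamma_l$) with $\gamma_l+\gamma_m=-\alpha$ for $l\ne m$; comparing on intra-block pairs then forces $\alpha+2\gamma_l=1$, and these are jointly inconsistent for $b\geq 3$ in every characteristic. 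Meanwhile $G_{a,b}$ has exactly two orbits on $\Omega_n$ (intra-block and inter-block pairs, the second using $b\geq 3$ to supply a sign-adjusting spare block), so $M_2^{G_{a,b}}$ is two-dimensional spanned by $v_{\mathrm{intra}}$ and $v_{\mathrm{inter}}$, while the same analysis of $K$ yields $K^{G_{a,b}}=\F(v_{\mathrm{intra}}+v_{\mathrm{inter}})$, one-dimensional. The long exact sequence from $0\to K\to M_2\to S_2^*\to 0$ then shows that the image of $M_2^{G_{a,b}}$ in $(S_2^*)^{G_{a,b}}$ is one-dimensional, spanned by $\bar v_{\mathrm{intra}}$.

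To finish the upper bound, I would show the connecting map $(S_2^*)^{G_{a,b}}\to H^1(G_{a,b},K)$ vanishes. Since $K$ has composition factors $S_0$ and $S_1^*$ and part (i) gives $(S_1^*)^{G_{a,b}}=0$ for $a,b\geq 3$, the obstruction is controlled by $H^1(G_{a,b},S_0)=\Hom(G_{a,b},\F)$, and one can then argue that the cocycle $g\mapsto g\tilde x-\tilde x\in K$ arising from a lift $\tilde x$ of an invariant class is actually a coboundary. The principal obstacle is the bookkeeping: as $g$ ranges over $3$-cycles in $\AAA_a$ on individual blocks and $3$-cycles of blocks, one must simultaneously control the ``linear-in-endpoints'' parameters $\alpha_g+\beta_{g,i}+\beta_{g,j}$ that realize $g\tilde x-\tilde x\in K$ and show they are of coboundary type. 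A cleaner alternative route works directly with a lift $\tilde x=\sum\mu_A v_A$: invariance under $\AAA_a$ on each block forces $\mu_{\{i,j\}}$ to be additive on pairs meeting the block, and then the block-permutation $3$-cycles equate these structures across blocks, yielding that $\mu_A$ depends only on whether $A$ is intra-block or inter-block modulo $K$, which gives $\bar x\in\F\bar v_{\mathrm{intra}}$.
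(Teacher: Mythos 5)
Your proposal is structurally sound and parts of it are complete, but the decisive step of part (ii) is not actually carried out, and part (i) has a small characteristic-$3$ gap. For (i): the identification $(S_1^*)^{G}\cong\ker\bigl(\Hom(G,\F)\to\Hom(G_1,\F)\bigr)$ for transitive $G$ is correct and is a cleaner framework than the paper's (unwritten) direct computation, and the $p=b=2$ case is right. However, your claim that $3$-cycles of blocks equalize the block constants fails when $p=3$ and $b=3$: a block $3$-cycle only yields the relations $\gamma_{\sigma(r)}-\gamma_r=c_g$ cyclically, whose sum is $3c_g=0$, which is vacuous in characteristic $3$; the system then admits solutions with $c_g\neq 0$ and distinct $\gamma_r$. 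You must instead use a block transposition (composed with an intra-block transposition when $a$ is odd to land in $\AAA_n$): since $b\geq 3$ it fixes a third block pointwise, forcing $c_g=0$ and hence $\gamma_1=\gamma_2$. This is easily repaired but as written the argument does not cover $p=b=3$.

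For (ii), your lower bound is complete: the verification that $v_{\mathrm{intra}}\notin K$ by comparing coefficients is a valid alternative to the paper's method of rewriting $\bar v$ in the standard basis of $S_2^*$, and the orbit count gives $\dim(S_2^*)^{G_{a,b}}\geq \dim M_2^{G}-\dim K^{G}=1$. But the upper bound --- the actual content of the lemma --- is only sketched. Route (a) does not go through as stated: the obstruction group $H^1(G_{a,b},K)$ is not zero in general (in odd characteristic $K\cong M_1$ and Shapiro's lemma gives $H^1(G_{a,b},M_1)\cong H^1(G_1,\F)$; in characteristic $2$ one gets a $\Hom(G_{a,b},\F)$ summand plus an $H^1(G_{a,b},S_1^*)$ term you do not address), so one must show the particular cocycle $g\mapsto g\tilde x-\tilde x$ is a coboundary, which is exactly the bookkeeping you concede you have not done. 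Route (b) is, in essence, the paper's proof: the paper puts an arbitrary invariant into a normal form with seven undetermined coefficients by averaging over $((\SSS_a\wr\SSS_{b-1})\times\SSS_{\{n-a+1,\dots,n-3\}})\cap\AAA_n$ and subtracting a multiple of the known invariant, and then kills each coefficient with explicit even permutations such as $(1,2)(n-1,n)$ and the (adjusted) swap of the last two blocks. Your phrase ``additive on pairs meeting the block'' is not a precise statement, and no computation is supplied that would substitute for this step; as it stands the inequality $\dim(S_2^*)^{G_{a,b}}\leq 1$ is unproved.
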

\begin{proof}
(i) is an easy explicit calculation similar to the proof of \cite[Lemma 2.35]{KMTOne}. 

(ii) For $r=1,\dots,b$, we set $B_r=[(r-1)a+1,ra]$. For $1\leq i\neq j\leq n$ we write $i\sim j$ if $i,j\in B_r$ for some $r$. Starting with the invariant vector $v:=\sum_{i\sim j}v_{i,j}\in M_2$, we express $\v:=v+K\in S_2^*$ as
$$
\v=2\sum_{\stackrel{1\leq i<j\leq n-a}{i\sim j}}\v_{i,j}+\sum_{\stackrel{1\leq i<j\leq n-a}{i\not\sim j}}\v_{i,j}
$$
which yields a non-zero vector in $(S_2^*)^{G_{a,b}}$. Let now $x\in (S_2^*)^{G_{a,b}}$. Acting with $((\SSS_a\wr \SSS_{b-1})\times\SSS_{\{n-a+1,\dots,n-3\}})\cap\AAA_n$ and subtracting a multiple of $\v$, we may assume that
\begin{align*}
x\,\,=\,\,
&a\sum_{\stackrel{1\leq i<j\leq n-a}{i\sim j}}\v_{i,j}
+b\sum_{j=n-a+1}^{n-3}\sum_{i=1}^{n-a}\v_{i,j}
+c\sum_{i=1}^{n-a}\v_{i,n-2}+
d\sum_{i=1}^{n-a}\v_{i,n-1}
\\
&+e\sum_{A\subseteq [n-a+1,n-3]}\v_A
+f\sum_{i=n-a+1}^{n-3}\v_{i,n-2}
+g\sum_{i=n-a+1}^{n-3}\v_{i,n-1}.
\end{align*}
Acting with $(1,2)(n-1,n)$, we get that $d=g=0$. Acting with $(1,2)(n-2,n-1)$, we get $c=f=0$. If $a=3$, then the sums with coefficients $b$ and $e$ are empty. Otherwise, acting with $(1,2)(n-3,n-2)$ yields $b=e=0$. Now using the permutation which swaps the last two blocks $B_b$ and $B_{b-1}$ (possibly multiplied with $(1,2)$), we get $a=0$. 
\end{proof}

\section{Branching lemmas}

In this section we prove some technical lemmas on branching for symmetric groups. 

\subsection{Some special composition factors}
\label{SSSCF}
In this subsection we prove some technical results concerning special composition factors in $D^\la\da_{\SSS_k}$. 

 \begin{Lemma}\label{L110518}
Let $n,m\in\Z_{>0}$, and $\la=(a_1^{b_1},\ldots,a_k^{b_k})\in\Par_p(n+m)$ with $a_1>\ldots>a_k>0$ and $b_1,\dots,b_k>0$. 
Set $h_r:=b_1+\ldots+b_r$ for $0\leq r\leq k$. 
Suppose that there is $1\leq j\leq k$ and a composition $\nu=(\nu_1,\ldots,\nu_{h_j})$ of $n$ such that $\mu:=\la-\nu$ is a $p$-regular partition of $m$ and 
$(\la_1,\ldots,\la_{h_j})$ is JS. 
If 
$$
\nu_{h_1}\geq\dots\geq \nu_1\geq 
\nu_{h_2}\geq\dots\geq \nu_{h_1+1}\geq \dots\geq \nu_{h_j}\geq \dots\geq \nu_{h_{j-1}+1}.
$$
and 
$$
\nu_{h_r}\leq\nu_{h_{r-1}+1}+p-b_r\qquad(\text{for all}\ 1\leq r\leq j),
$$
then $D^\mu$ is a composition factor of $D^\la\da_{\SSS_m}$.
\end{Lemma}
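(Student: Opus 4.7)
The natural approach is induction on $n = |\nu|$. The base case $n = 0$ forces $\mu = \la$ and the claim is trivial.

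For the inductive step, the plan is to locate one removable node $A$ of $\la$, sitting in some row $r$ with $\nu_r > 0$, such that $\la_A$ is $p$-regular, $A$ is $i$-normal in $\la$ for $i := \res A$, and the triple $(\la_A, \nu - \delta_r, \mu)$---with the block structure of $\la_A$ adjusted in the obvious way---still satisfies the hypotheses of the lemma. Granted such an $A$, Lemma~\ref{Lemma39}(vi) identifies $D^{\la_A}$ as a composition factor of $e_i D^\la$, hence of $D^\la\da_{\SSS_{n+m-1}}$. Applying the induction hypothesis to the reduced data then places $D^\mu$ as a composition factor of $D^{\la_A}\da_{\SSS_m}$, and therefore of $D^\la\da_{\SSS_m}$.

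The canonical candidate for $A$ is the (necessarily good) unique normal node of the JS prefix $\tilde\la := (\la_1,\ldots,\la_{h_j})$. Such a node lies at the bottom of some block, say block $r_0$, i.e.\ at the corner $(h_{r_0}, a_{r_0})$. The inequality chain $\nu_{h_1}\geq \ldots \geq \nu_1\geq \nu_{h_2}\geq\ldots$ forces $\nu_{h_{r_0}}>0$ whenever $n>0$, so $A$ genuinely matches a coordinate of $\nu$ that must be decreased. Monotonicity within each block ensures that subtracting $1$ from $\nu_{h_{r_0}}$ preserves the required ordering on $\nu'$, while the gap bound $\nu_{h_r}\leq\nu_{h_{r-1}+1}+p-b_r$ is precisely the slack that keeps the new prefix $(\la_A)_1,\ldots,(\la_A)_{h_j}$ JS and $p$-regular---once the extremal value $\nu_{h_r}$ saturates $\nu_{h_{r-1}+1}+p-b_r$, the chain would fail, so $A$ must be chosen accordingly.

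The main obstacle is to promote the normality of $A$ in $\tilde\la$ to normality in the whole $\la$. For this one must check that no $i$-addable node below row $h_j$ destroys the normal-node signature at position $A$. This reduces to comparing $\res A$ with the residues of the addable corners in rows $>h_j$, and the JS-ness of $\tilde\la$ together with the gap bound keeps those residues away from $i$; the $p$-regularity of $\mu$, which constrains the tail of $\la$ relative to $\nu$, closes the remaining cases. With $A$ normal in $\la$, the induction closes via Lemma~\ref{Lemma39}(vi).
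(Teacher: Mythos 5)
Your underlying idea---realize $\mu$ by stripping the nodes of $\la\setminus\mu$ one at a time, each node being normal at the moment of removal, and conclude via Lemma~\ref{Lemma39}---is exactly the paper's, and your first step is correct: the first node to go is $(h_1,a_1)$, the top removable node of $\la$, which is automatically normal in all of $\la$ (so the ``main obstacle'' you raise about promoting normality from the prefix to $\la$ is not where the difficulty lies). The genuine gap is that your inductive step does not close: after removing a single node, the reduced triple $(\la_A,\nu-\de_{h_1},\mu)$ need \emph{not} satisfy the hypotheses of the lemma, because the relevant prefix of $\la_A$ is in general no longer JS and the ordering condition on the new $\nu$ can fail for the new block structure. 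Concretely, take $p=3$, $\la=(4,4,2,2)$ (which is JS and $3$-regular), $\nu=(1,1,1,1)$, $\mu=(3,3,1,1)$: all hypotheses hold and the unique normal node of $\la$ is $(2,4)$, but $\la_A=(4,3,2,2)$ has two normal nodes, namely $(1,4)$ and $(4,2)$, so no prefix of $\la_A$ containing the support of $\nu'=(1,0,1,1)$ is JS; moreover $\nu'$ violates the required ordering for the blocks $(4^1,3^1,2^2)$ of $\la_A$, which would demand $\nu'_1\geq\nu'_2\geq\nu'_4\geq\nu'_3$ while $\nu'_2=0<1=\nu'_4$. The gap bound $\nu_{h_r}\leq\nu_{h_{r-1}+1}+p-b_r$ controls how unbalanced the removal within a block may become; it does not restore JS-ness after one removal, so the claim that it ``keeps the new prefix JS'' is where the argument breaks.

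The paper avoids this by prescribing the entire removal sequence in advance---in each pass the rows are visited in the order $h_1,h_1-1,\dots,1,h_2,\dots,h_1+1,\dots,h_j,\dots,h_{j-1}+1$, and the passes are repeated until $\la\setminus\mu$ is exhausted---and by checking normality and $p$-regularity directly for the intermediate shapes, which are ``JS prefixes with a partial column removed'' rather than partitions satisfying the hypotheses of the lemma itself. (In the example above this order is $(2,4),(1,4),(4,2),(3,2)$, and each of these is indeed normal when it is removed.) To repair your argument you would either have to strengthen the inductive statement so that it covers these intermediate shapes, or verify the whole sequence at once as the paper does.
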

\begin{proof}
In view of Lemma \ref{Lemma39}(v), it suffices to see that there exists a sequence $A_1,\dots,A_n$ of nodes such that $A_r$ is normal for $\la\setminus\{A_1,\dots,A_{r-1}\}$ and $\la\setminus\{A_1,\dots,A_{r}\}$ is $p$-regular for $r=1,\dots n$, and $\la\setminus\{A_1,\dots,A_{n}\}=\mu$. 
Such a sequence is obtained by  removing the nodes of $\la\setminus\mu$ in the ends of rows 
$$h_1,h_1-1,\dots,1,h_2,h_2-1,\dots,h_1+1,\dots,h_j,\dots,h_{j-1}+1$$
if there are any, then starting over in the row $h_1$ and proceeding in the same order until all the nodes of $\la\setminus\mu$ are exhausted.
\end{proof}

\begin{Remark}
For $p=2$ the assumptions on $\nu$ in Lemma \ref{L110518} are equivalent to $\nu\in\Par(n)$, and so Lemma~\ref{L110518} generalizes \cite[Lemma 3.14]{KMTOne}. 
\end{Remark}

Let $p=2$. Then $\la\in\Par_2(n)$ is JS if and only if all its parts $\la_r$ are of the same parity. We now describe a procedure which assigns to every partition $\la\in\Par_2(n)$ a JS-partition $\la^\JS\in\Par_2(m)$ for $m\leq n$. If $\la$ is already JS then $\la^\JS$ will return $\la$. We begin by setting $\la^1:=\la$. For $r\geq 1$, as long as $\la^r_{r+1}>0$ define $\la^{r+1}$ as follows. If $\la^r_{r+1}\equiv \la^r_r\pmod{2}$ then $\la^{r+1}:=\la^r$. If instead $\la^r_{r+1}\not\equiv \la^r_r\pmod{2}$ let $l\geq r+1$ be minimal such that $\la^r_{l+1}>\la^r_l+1$ or such that $\la^r_{l+1}=0$ and define
\[\la^{r+1}:=(\la^r_1,\ldots,\la^r_r,\la^r_{r+1}-1,\ldots,\la^r_l-1,\la^r_{l+1},\la^r_{l+2},\ldots).\]
Let $s$ be minimal with $\la^s_{s+1}=0$. Take $\la^\JS:=\la^s$.

\begin{Lemma} \label{LJS} 
Let $p=2$ and $\la\in\Par_2(n)$. Then $\la^\JS$ is a $2$-regular JS partition of $m\leq n$. Moreover, denoting $h:=h$, we have:
\begin{enumerate}
\item[{\rm (i)}] $\la^\JS=\la$ if and only if $\la$ is JS. 
\item[{\rm (ii)}] $D^{\la^\JS}$ is a composition factor of $D^\la\da_{\SSS_m}$.
\item[{\rm (iii)}] $\la^\JS_{j}-\la^\JS_{j+1}\leq 2\lceil (\la_j-\la_{j+1})/2\rceil$ for each $j\geq 1$.
\item[{\rm (iv)}] $0\leq\la_j-\la^\JS_j\leq j-1$. In particular $\la_{h+1}\leq h$.
\item[{\rm (v)}] if $k$ is maximal such that $\la_{2k-1}>0$ then $|\la^\JS|\geq(n+k)/2$.
\item[{\rm (vi)}] if $\la^\JS_{h}\geq 3$ then $\la_{h+1}\leq 1$ and if $\la_{h+1}=1$ then $\la_1$ is even. 
\item[{\rm (vii)}] if $\la^\JS_{h}=2$ and $\la^\JS_{h-1}\geq 6$ then $\la_{h}\leq 3$.
\item[{\rm (viii)}] if $\la^\JS_{h}=1$ and $\la^\JS_{h-1}\geq 5$ then $\la_{h}\leq 2$.
\end{enumerate} 
\end{Lemma}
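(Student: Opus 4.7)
My plan is to first establish the structural properties of the iterative procedure that defines $\la^\JS$, then derive the eight claims in the order of increasing difficulty.

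\textbf{Structural analysis and (i), (iv).} By induction on $r$, I would show that each $\la^r$ is $2$-regular and that its top $r$ parts are all congruent to $\eps:=\la_1\pmod{2}$. The $2$-regularity is preserved because the minimality of $l$ guarantees either $\la^r_{l+1}=0$ or $\la^r_l-\la^r_{l+1}\geq 2$, so decrementing rows $r+1$ through $l$ never equates two consecutive parts. The parity claim follows because an active step flips the parity of each of the rows $r+1,\ldots,l$, in particular aligning row $r+1$ with $\eps$. Termination follows from $h(\la)<\infty$, and $\la^\JS$ then has all parts of parity $\eps$, hence is JS. Statement (i) is immediate, as $\la^\JS=\la$ if and only if every step is inactive, i.e., $\la_{r+1}\equiv\la_r\pmod{2}$ for all $r$. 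Statement (iv) follows because row $j$ can be decremented only at active steps $s$ with $s<j\leq l_s$, giving at most $j-1$ unit decrements, and the \emph{in particular} clause is the special case $j=h+1$ (using $\la^\JS_{h+1}=0$).

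\textbf{Proof of (ii).} At each active step $\la^r\to\la^{r+1}$, the $l-r$ box removals can be carried out in the bottom-up order $(l,\la^r_l),(l-1,\la^r_{l-1}),\ldots,(r+1,\la^r_{r+1})$, with each removal being a normal node in the current partition. A direct residue calculation shows these removable nodes all share residue $\rho=\eps+r\pmod{2}$. In the residue-$\rho$ signature of $\la^r$, the contributions from rows $1$ through $r$ alternate in sign (owing to the uniform parity $\eps$) and cancel down to at most one surviving $-$; rows $r+1,\ldots,l$ then contribute $l-r$ consecutive $-$s that all survive; and any $+$ at row $l+1$ (most notably the potential $+$ at $(l+1,1)$ when $\la^r_{l+1}=0$) lies after these $-$s and cannot cancel them. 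Hence $(l,\la^r_l)$ is a surviving $-$ and so normal. After its removal, row $l$ supplies an addable of residue $\rho$ at the vacated box, so the residue-$\rho$ signature acquires a $+$ at position $l$ that cannot cancel any $-$ above it, leaving $(l-1,\la^r_{l-1})$ as a surviving $-$, hence normal; the argument iterates. Chaining these removals across all active steps and invoking Lemma \ref{Lemma39}(vi) at each substep produces $D^{\la^\JS}$ as a composition factor of $D^\la\da_{\SSS_m}$.

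\textbf{Proof of (iii) and (v).} Setting $d_j:=\la_j-\la^\JS_j$, one computes $\la^\JS_j-\la^\JS_{j+1}-(\la_j-\la_{j+1})=d_{j+1}-d_j=[\text{step }j\text{ active}]-N_j$, where $N_j$ counts active $s<j$ with $l_s=j$. Since $\la^j_j$ has parity $\eps$, step $j$ is active exactly when $(\la_j-\la_{j+1})-N_j$ is odd. If $\la_j-\la_{j+1}$ is odd the displayed difference is at most $1$, matching $2\lceil(\la_j-\la_{j+1})/2\rceil-(\la_j-\la_{j+1})=1$; if $\la_j-\la_{j+1}$ is even and step $j$ is active, parity forces $N_j\geq 1$, so the difference is $\leq 0$. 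Statement (v) follows by summing $|\la|-|\la^\JS|=\sum_{\text{active }s}(l_s-s)$ and bounding it by a direct counting argument using the parity constraints, with the staircase $\la=(h,h-1,\ldots,1)$ saturating the bound.

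\textbf{Proof of (vi)--(viii).} For each of these sharp refinements I would analyze the last active step $s^{**}$ that decrements row $h+1$ (for (vi)) or row $h$ (for (vii), (viii)). The minimality of $l_{s^{**}}$ together with the structure of $\la^{s^{**}}$ just before that step forces tight inequalities such as $\la^{s^{**}}_h\leq 2$ (in the (vi) case, derived from the requirement $l_{s^{**}}\geq h+1$), which combined with the hypothesis on $\la^\JS_h$ or $\la^\JS_{h-1}$ yields the claimed bound on $\la_{h+1}$ or $\la_h$, and the parity of $\la_1$ when relevant. I expect the main obstacle to be the careful parity bookkeeping in (iii) and the delicate endpoint analyses in (vi)--(viii), where nested active steps all ending at the same row must be tracked; the residue computation in (ii) is mechanical but requires some care near row $l+1$ when $\la^r_{l+1}>0$.
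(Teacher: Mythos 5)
Your structural analysis and your treatment of (i)--(iv) are correct and follow the same route as the paper; in fact your proof of (ii) (bottom-up removal within each active step, with the residue-$\rho$ signature argument showing that rows $1,\dots,r$ reduce to at most one surviving $-$ and no surviving $+$, so that the $l-r$ consecutive $-$'s coming from rows $r+1,\dots,l$ all survive and remain normal as the removals proceed upward) is a careful expansion of what the paper merely asserts, namely that one removes normal nodes at each step. Your parity bookkeeping for (iii) is likewise a valid global version of the paper's step-by-step observation.

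The genuine gap is (v). You reduce it to bounding $|\la|-|\la^\JS|=\sum_{\text{active }s}(l_s-s)$ by ``a direct counting argument using the parity constraints'', but you never give that argument, and it is not routine: the crude estimate coming from (iv) alone, $|\la|-|\la^\JS|\leq\sum_j(j-1)$, badly overshoots the required bound $(n-k)/2$ already for the staircase $\la=(h,h-1,\dots,1)$, so extra input is needed. The paper's argument is much simpler and runs in the opposite direction: from (iv) and the strict decrease of $\la$ one gets $\la^\JS_i\geq\la_i-(i-1)\geq\la_{2i-1}$, hence $|\la^\JS|\geq\sum_{i=1}^k\la_{2i-1}\geq\sum_{i=1}^k(\la_{2i-1}+\la_{2i}+1)/2=(n+k)/2$. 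You should replace your sketch by this or actually carry out your count. Parts (vi)--(viii) are also left as sketches. For (vi) your key inequality is the right one: if row $h+1$ loses a node at a step $s\leq h-1$ then rows $h$ and $h+1$ are locked at distance $1$ from that point on, so $\la^\JS_{h+1}=0$ would force $\la^\JS_h\leq 2$, contradicting $\la^\JS_h\geq 3$; hence row $h+1$ can only lose a node at step $h$, giving $\la_{h+1}\leq 1$, and the parity claim follows because $(\la^h_1,\dots,\la^h_h)$ has constant parity while the step removing $(h+1,1)$ must be active, so $\la^h_h\not\equiv 1$ and $\la_1$ is even. But (vii) and (viii) require their own analogous verifications, which you have not supplied.
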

\begin{proof}
Let $\la=\la^1,\dots,\la^s=\la^\JS$ be as in the construction. Then for all $r=1,\dots,s$, we have that $\la^r$ is $2$-regular, $(\la^r_1,\ldots,\la^r_r)$ is JS. Moreover, to go from $\la^r$ to $\la^{r+1}$ we remove normal nodes on each step. So by Lemma~\ref{Lemma39}, $D^{\la^{r+1}}$ is a composition factor of $D^{\la^r}\da_{\SSS_{|\la^{r+1}|}}$. The statement that $\la^\JS$ is a $2$-regular JS partition as well as (i) and (ii) follow by induction, while (iv) holds by construction. 

(iii) Notice that $\la^{r+1}_j-\la^{r+1}_{j+1}\leq \la^r_j-\la^r_{j+1}$ unless $j=r$ and $\la^r_j-\la^r_{j+1}$ is odd, in which case $\la^{r+1}_j-\la^{r+1}_{j+1}=\la^r_j-\la^r_{j+1}+1$.

(v) Using (iv) we have that 
$\la^\JS_i\geq\la_i-i+1\geq\la_{2i-1}$ for all $i$. So, by definition of $k$,
 \[|\la^\JS|=\sum_i\la^\JS_{i}\geq \sum_i\la_{2i-1}\geq \sum_{i=1}^k(\la_{2i-1}+\la_{2i}+1)/2=(n+k)/2.\]

(vi) Note that $\la^h_h=\la^\JS_h\geq 3$ and $\la^\JS_{h+1}=0$. If some node had been removed from row $h+1$ of $\la$ to obtain $\la^h$ then $\la^h_{h+1}=\la^h_h-1\geq 2$ and then $\la^\JS_{h+1}\geq 1$, leading to a contradiction. So no node was removed from row $h+1$ of $\la$ and then $\la^\JS_{h+1}=0$ implies $\la_{h+1}\leq 1$. Further if $\la_{h+1}=1$ then since $\la^\JS_{h+1}=0$ the node $(h+1,1)$ needs to be removed on step $h+1$ of the construction, so $(\la^h_1,\dots,\la^h_h)$ is JS, while $(\la^h_1,\dots,\la^h_h,1)$ is not. 
Hence $\la_1=\la^h_1$ is even.

(vii) and (viii) are proved similarly to (vi).
\end{proof}

\begin{Lemma}\label{l1}
Let $n\geq12$ be even, $p=2$ and $\la\in\Parinv_2(n)$. Exclude the cases where $\la$ is the double of one of the following partitions:
\begin{align*}
&(11, 1),\, (9,3),\,(9,5),\,(11, 5),\,( 11, 7 ),\,(13, 8, 3),\,( 13, 9, 4),\,
( 13, 9, 5, 1),\,(15, 11, 5, 1),
\\
&(15, 11, 7, 1 ),\,(15, 11, 7, 3),\,( 17, 13, 9, 3),\,(17, 13, 9, 5),\,
(19, 15, 11, 7),\,(21, 17, 13, 9, 4),
\\
&(21, 17, 13, 9, 5, 1 ),\, (23, 19, 15, 11, 7, 1 ),\, (23, 19, 15, 11, 7, 3),\, (25, 21, 17, 13, 9, 5),
\\
&(29, 25, 21, 17, 13, 9, 5, 1 ),\,( 31, 27, 23, 19, 15, 11, 7, 3).
\end{align*}
Then $|\la^\JS|\geq n/2+5$ and $\la^\JS_{2j-1}-\la^\JS_{2j}\leq 2$ for each $j\geq 1$.
\end{Lemma}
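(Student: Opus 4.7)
Write $\la = \dbl(\mu)$ for some 2-regular $\mu = (\mu_1, \dots, \mu_h) \in \Par_2(n)$ with each $\mu_r \not\equiv 2 \pmod{4}$. Then the pairs $(\la_{2j-1}, \la_{2j}) = \be_{\mu_j}$ satisfy $\la_{2j-1} - \la_{2j} \leq 2$. By Lemma~\ref{LJS}(iii), this immediately gives $\la^\JS_{2j-1} - \la^\JS_{2j} \leq 2$ whenever $\la^\JS_{2j} > 0$. The only remaining boundary case is when $h(\la^\JS)$ is odd and equal to $2j-1$, where one must show $\la^\JS_{2j-1} \leq 2$. By Lemma~\ref{LJS}(vi), the assumption $\la^\JS_{2j-1} \geq 3$ would force $\la_{2j} \leq 1$, which combined with the block structure $\la = \dbl(\mu)$ and the 2-regularity constraints forces $\mu_h \in \{1, 3\}$; the remaining subcases are then settled directly using Lemma~\ref{LJS}(vi)--(viii).

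For the main inequality $|\la^\JS| \geq n/2 + 5$, observe that the maximal $k$ with $\la_{2k-1} > 0$ equals $h := h(\mu)$ (since $\la_{2h-1} > 0$ and $\la_{2h+1} = 0$ in both cases $\mu_h = 1$ and $\mu_h \geq 3$). Lemma~\ref{LJS}(v) then gives $|\la^\JS| \geq (n+h)/2$, which already yields $|\la^\JS| \geq n/2 + 5$ whenever $h \geq 10$. Moreover, if every $\mu_r \equiv 0 \pmod{4}$, then each $\be_{\mu_r}$ consists of two odd parts, so $\la$ is already JS and $\la^\JS = \la$ has size $n \geq n/2 + 5$. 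Hence we may restrict to the case $h \leq 9$ and at least one $\mu_r$ odd.

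In this remaining range, we analyze the JS procedure directly on $\la = \dbl(\mu)$, viewing $\la$ as the concatenation of the blocks $\be_{\mu_1}, \dots, \be_{\mu_h}$. Each block with $\mu_r$ odd has two parts of opposite parity, forcing at least one decrement; this decrement either terminates within its own block or cascades through subsequent blocks until absorbed by a sufficient gap. Cascading occurs precisely when consecutive gaps $\mu_r - \mu_{r+1}$ attain the minimum possible values ($4$, $5$, or $8$ depending on the parities of $\mu_r$ and $\mu_{r+1}$). Writing out the cascade explicitly, the total loss $|\la| - |\la^\JS|$ admits a closed-form bound in terms of the parity pattern and the gap sequence of $\mu$. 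The main obstacle is the finite but intricate case enumeration for $h \leq 9$: dense staircase-like configurations with $h \leq 9$ have $|\mu| = O(h^2)$, bounding $n$, so only finitely many $\mu$ require explicit checking. A systematic enumeration (proceeding through $h = 2, 3, \dots, 9$ and treating the admissible parity patterns for each $h$) isolates precisely the listed exceptional partitions as those with cumulative cascade loss exceeding $n/2 - 5$, while for all other $\mu$ with $h \leq 9$ the loss is bounded by a function of $h$ alone and is easily $\leq n/2 - 5$ for $n \geq 12$.
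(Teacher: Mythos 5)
Your proposal follows essentially the same route as the paper: the inequality $\la^\JS_{2j-1}-\la^\JS_{2j}\leq 2$ is read off from Lemma~\ref{LJS}(iii) (which, as stated, already covers the boundary case you handle separately via Lemma~\ref{LJS}(vi)--(viii)), and $|\la^\JS|\geq n/2+5$ is obtained by combining Lemma~\ref{LJS}(v) for large $h(\mu)$ with a bound on the loss $n-|\la^\JS|$ depending only on $h(\la)$ (the paper gets this cleanly from Lemma~\ref{LJS}(iv) as $h(\la)(h(\la)-1)/2$), reducing the statement to a finite list of partitions that both you and the paper dispose of by an asserted direct/computer check producing the exceptional list. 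The only slips are expository: "easily $\leq n/2-5$ for $n\geq 12$" should be "for $n$ large relative to $h$", since the function-of-$h$ bound on the loss only wins once $n\geq h(\la)(h(\la)-1)+10$, which is exactly what leaves the finitely many residual cases.
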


\begin{proof}
The inequalities $\la^\JS_{2j-1}-\la^\JS_{2j}\leq 2$ come from the assumption that $\la\in\Parinv_2(n)$ and Lemma~\ref{LJS}(iii). 

If $h(\la)\geq 17$ then $|\la^\JS|\geq n/2+5$ by Lemma \ref{LJS}(v). Moreover, by Lemma \ref{LJS}(iv), 
\[n-|\la^\JS|\leq \sum_{i=1}^{h(\la)}(i-1)=h(\la)(h(\la)-1)/2.\]
So the lemma holds  if $h(\la)(h(\la)-1)/2\leq n/2-5$. 
So we may assume that $h(\la)\leq 16$ and $h(\la)(h(\la)-1)/2> n/2-5$. This leaves only a finite number of partitions to be considered. For these it can be checked, using GAP \cite{GAP}, that $|\la^\JS|\geq n/2+5$, unless we are in one of the exceptional cases.
\end{proof}

\subsection{Non-isomorphic composition factors} 
In this subsection we obtain various technical results which guarantee the presence of several non-isomorphic composition factors in the restriction $D^\la\da_{\SSS_k}$. 
 
\begin{Lemma}\label{L240818_2}
Let $p=3$, $n$ be even and $\la\in\Parinv_3(n)$ be a JS partition with $\la\not=(4,1,1)$. Then $D^\la\da_{\SSS_{n/2}}$ has at least $5$ non-isomorphic composition factors.
\end{Lemma}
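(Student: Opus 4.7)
The plan is to proceed by a case analysis driven by Lemma~\ref{L110518_2}, which partitions the JS partitions in $\Parinv_3(n)$ into three infinite families (i), (ii), (iii) with explicit numerical inequalities, and a finite exceptional list (iv). In each of the infinite cases I would construct five distinct compositions $\nu^{(1)},\dots,\nu^{(5)}$ of $n/2$, each satisfying the hypotheses of Lemma~\ref{L110518} (with both the ``$n$'' and ``$m$'' of that lemma set to $n/2$), producing five pairwise distinct $3$-regular partitions $\mu^{(s)}=\la-\nu^{(s)}$. By Lemma~\ref{L110518} each $D^{\mu^{(s)}}$ appears as a composition factor of $D^\la\da_{\SSS_{n/2}}$, and the distinctness of the $\mu^{(s)}$ then yields the required five non-isomorphic composition factors.

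For case (i), the inequality $\la_1\ge\la_2+9$ combined with $\la_1\le(n+2)/2$ and $\la_3\ge 7$ means that any valid $\nu$ must be spread over at least two rows (a single-row $\nu$ is too small), but there is comfortable slack both in the gap $\la_1-\la_2$ and in the sizes of $\la_2,\la_3$. I would take $\nu$ supported in rows $1$ and $2$ (or in rows $1,2,3$ when $\la_2=\la_3$), varying $\nu_1$ through five consecutive admissible values and letting the remainder fall into the next row(s); the inequality $\la_1\ge\la_2+9$ provides enough room to meet the within-block bound $\nu_{h_r}\le\nu_{h_{r-1}+1}+3-b_r$ and to keep $\la-\nu$ a $3$-regular partition. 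Cases (ii) and (iii) admit a similar but longer construction: the hypotheses $\la_1+\la_2\le(n+8)/2$ together with $h(\la)\ge6$ force $\nu$ to be spread over the top several rows, and the bounds $\la_1\ge\la_2+7$ (resp.\ $\la_2+4$), $\la_4\ge 6$ (resp.\ $4$) give comparable freedom in redistributing removals between successive blocks of equal rows.

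For case (iv), the partitions with $n$ even in the finite list are exactly $(4,1^2)$, $(7,3,2)$, $(10,4^2)$, $(13,6,5)$; the hypothesis excludes $(4,1^2)$ (for which the statement is easily seen to fail, since $\SSS_3$ in characteristic $3$ has only two irreducible modules), and for the three remaining partitions I would verify the claim directly by a finite computation, e.g.\ in GAP, which is reasonable as the corresponding groups $\SSS_6,\SSS_9,\SSS_{12}$ are all of manageable size.

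The main technical obstacle is the bookkeeping in the infinite cases: the condition of Lemma~\ref{L110518} couples the monotonicity assumptions on $\nu$ with the within-block $p$-regularity bounds $\nu_{h_r}\le\nu_{h_{r-1}+1}+p-b_r$, and one must check simultaneously that each of the five $\nu^{(s)}$ satisfies all of these and that the resulting $\mu^{(s)}$ are both $3$-regular and pairwise distinct. This is most delicate in case (ii), where the initial JS part may involve three blocks of rows and one is close to the boundary of the bound $\la_1+\la_2\le(n+8)/2$, forcing $\nu$ to touch rows of different $a_r$'s; here the five values of $\nu_1$ need to be chosen so that the transfer of one node between two adjacent blocks does not violate a block-wise regularity constraint.
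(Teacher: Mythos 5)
Your overall strategy coincides with the paper's: reduce to the families of Lemma~\ref{L110518_2}, produce five distinct partitions $\mu^{(s)}=\la-\nu^{(s)}$ of $n/2$ via Lemma~\ref{L110518}, and check the finite list (iv) directly. However, there is a genuine gap in how you propose to build the $\nu^{(s)}$ in the infinite families. You plan to support $\nu$ on the top two or three rows, but this is numerically impossible for most $\la$ in families (i)--(iii): to keep $\mu=\la-\nu$ a partition with $\nu$ supported on rows $1,2$ one needs $n/2=|\nu|\le(\la_1-\la_3)+(\la_2-\la_3)$, and the constraints of family (i) (namely $\la_1\le(n+2)/2$, so that rows $3,4,\dots$ carry at least $(n-2)/2$ nodes) make this fail as soon as $\la_3$ is not tiny or $h(\la)$ is large; in families (ii)--(iii) the bound $\la_1+\la_2\le(n+8)/2$ together with $n\ge 6h(\la)$ forces an average of at least three removed nodes per row, so the removal must reach essentially every row of $\la$, not just ``the top several rows.'' The paper's proof supplies exactly the missing device: it first strips whole columns, setting $\la^0=\la$ and $\la^i=(\la^{i-1}_1-1,\dots,\la^{i-1}_{h(\la^{i-1})}-1)$ (each step is an instance of Lemma~\ref{L110518} with $\nu=(1,\dots,1)$), takes $k$ maximal with $|\la^k|\ge n/2$, and only then distributes the remaining at most $h(\la)$ nodes over the top blocks and perturbs by single-node moves to get five distinct $\mu^{(s)}$. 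Without some such column-stripping step your construction does not get off the ground.

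A second, smaller error: your enumeration of the even-size partitions in the exceptional list (iv) is incomplete. Besides $(4,1^2)$, $(7,3,2)$, $(10,4^2)$ and $(13,6,5)$, the list also contains $(10,6,3^2,1^2)$ (of size $24$) and $(13,6,5,4,1^2)$ (of size $30$); the paper checks these two via Lemma~\ref{L110518} as well. The direct-verification method you propose would of course also cover them, but as written your case (iv) omits them.
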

\begin{proof}
By Lemma~\ref{L110518_2}, $\la$ is belongs to one of the families (i)-(iv) of that lemma. By the assumptions the only partitions from the family (iv) that need to be considered are $(7,3,2)$, $(10,4^2)$, $(13,6,5)$, $(10,6,3^2,1^2)$ and $(13,6,5,4,1^2)$. For  
 $\la=(7,3,2)$ the lemma can be checked using \cite[Tables]{JamesBook} and branching in characteristic $0$. For the remaining ones, the lemma can be proved using Lemma \ref{L110518}. So we may assume that we are in one of the cases  (i), (ii), (iii) of Lemma \ref{L110518_2}. Let $\la=(a_1^{b_1},\ldots,a_m^{b_m})$ with $a_1>\ldots>a_m>0$ and all  $b_j>0$. Define $h_j:=b_1+\ldots+b_j$. 

Set $\la^0:=\la$ and then recursively define $\la^i:=(\la^{i-1}_1-1,\ldots,\la^{i-1}_{h(\la^{i-1})}-1)$. 
By Lemma \ref{L110518}, $D^{\la^i}$ is a composition factor of $D^\la\da_{\SSS_{|\la^i|}}$ for all $i$. Let $k$ maximal such that $|\la^k|\geq n/2$. Let $\nu:=\la-\la^k$. For any composition $\al=(\al_1,\ldots,\al_{h_m})$ define $\overline{\al}:=(\al_{h_1},\ldots,\al_1,\ldots,\al_{h_m},\ldots,\al_{h_{m-1}+1})$, and let $\nu^1=\nu+\overline{(1^{n/2-|\nu|})}$. Note that $\overline{\nu^1}\in\Par(n/2)$. 
By Lemma~\ref{L110518}, $D^{\la-\nu^1}$ is a composition factor of $D^\la\da_{\SSS_{n/2}}$. We will now construct $\nu^2,\dots,\nu^5$ and apply the same lemma to see that $D^{\la-\nu^1},\dots,D^{\la-\nu^5}$ are distinct composition factors of $D^\la\da_{\SSS_{n/2}}$.


{\sf Case 1.} $\la$ is as in case (i) of Lemma \ref{L110518_2}. Note that $b_1=1$ and $k\geq 2$ since $n/2\geq 2h(\la)$. 
Now, $k\geq 2$ and $\la_3\geq 7$ imply $\nu^1_1,\nu^1_2,\nu^1_3\geq 2$. As $\la_1\leq n/2+1$ we have $\la_1-\nu^1_1\leq n/2+1-2<n/2$, so $\la_2-\nu^1_2 >0$.
We write $\nu^1$ in the `canonical' form $\overline{\nu^1}=(c_{1,1},c_{1,2}^{d_{1,2}},c_{1,3}^{d_{1,3}},\ldots)$ with $c_{1,1}\geq c_{1,2}>c_{1,3}>\ldots$ and $d_{1,j}>0$, and then proceed to define 
$\overline{\nu^2},\dots,\overline{\nu^5}$ in the `canonical' form 
$\overline{\nu^i}=(c_{i,1},c_{i,2}^{d_{i,2}},c_{i,3}^{d_{i,3}},\ldots)$ recurrently according to the cases.

{\sf Case 1.1.} $d_{1,2}\geq 2$ or $c_{1,2}=c_{1,3}+1$. For $1\leq i<5$ define $\nu^{i+1}$ recurrently by setting $\overline{\nu^{i+1}}:=(c_{i,1}+1,c_{i,2}^{d_{i,2}-1},c_{i,2}-1,c_{i,3}^{d_{i,3}},c_{i,4}^{d_{i,4}},\ldots)$ (this form is not necessarily `canonical', so we might have to rewrite into the `canonical' form before the next recurrent step).


{\sf Case 1.2.} $d_{1,2}=1$, $c_{1,2}>c_{1,3}+1$, and either of the following conditions holds: $d_{1,3}\geq 2$, $c_{1,3}=c_{1,4}+1$. In this case let $\nu^2:=\nu^1+(1,-1,0,0,\ldots)$, $\nu^3:=\nu^2+(1,0,-1,0,0,\ldots)$, $\nu^4:=\nu^3+(1,0,0,-1,0,0,\ldots)$ and $\nu^5:=\nu^4+(1,-1,0,0,\ldots)$. 

{\sf Case 1.3.} $d_{1,2}=d_{1,3}=1$, $c_{1,2}>c_{1,3}+1>c_{1,4}+2$. In this case let $\nu^2:=\nu^1+(1,-1,0,0,\ldots)$, $\nu^3:=\nu^2+(1,0,-1,0,0,\ldots)$, $\nu^4:=\nu^3+(1,-1,0,0,\ldots)$ and $\nu^5:=\nu^4+(1,0,-1,0,0,\ldots)$.  

{\sf Case 2.} $\la$ is as in case (ii) or (iii) of Lemma \ref{L110518_2}. Then $b_1=b_2=1$, and since $n/2\geq 3h(\la)$, we have $k\geq 3$. Now, $h(\la)\geq 6$, $\la_4\geq 4$ and $k\geq 3$ imply that $\nu_1,\nu_2,\nu_3,\nu_4\geq 3$. As $\la_1+\la_2\leq n/2+4$ it then follows that $\la_3-\nu^1_3>0$.
We will be writing $\overline{\nu^i}$ in the `canonical' form
$\overline{\nu^i}=(c_{i,1},c_{i,2},c_{i,3}^{d_{i,3}},c_{i,4}^{d_{i,4}},\ldots)$ with $c_{i,1}\geq c_{i,2}\geq c_{i,3}>c_{i,4}>\ldots$ and all $d_{i,j}>0$.

{\sf Case 2.1.} $d_{1,3}\geq 2$ or $c_{1,3}=c_{1,4}+1$. For $1\leq i<5$ define 
$$
\overline{\nu^{i+1}}:=
\left\{
\begin{array}{ll}
(c_{i,1}+1,c_{i,2},c_{i,3}^{d_{i,3}-1},c_{i,3}-1,c_{i,4}^{d_{i,4}},c_{i,5}^{d_{i,5}},\ldots) &\hbox{if $\la_1-c_{i,1}-1\geq\la_2-c_{i,2}$,}\\
(c_{i,1},c_{i,2}+1,c_{i,3}^{d_{i,3}-1},c_{i,3}-1,c_{i,4}^{d_{i,4}},c_{i,5}^{d_{i,5}},\ldots) &\hbox{if $\la_1-c_{i,1}-1<\la_2-c_{i,2}$.}
\end{array}
\right.
$$



{\sf Case 2.2.} $d_{1,3}=1$, $c_{1,3}>c_{1,4}+1$, and either of the following conditions holds: $d_{1,4}\geq 2$, $c_{1,4}=c_{1,5}+1$. In this case let $\nu^2:=\nu^1+(1,0,-1,0,0,\ldots)$, $\nu^3:=\nu^2+(1,0,0,-1,0,0,\ldots)$, $\nu^4:=\nu^3+(1,0,0,0,-1,0,0,\ldots)$ and $\nu^5:=\nu^4+(1,0,-1,0,0,\ldots)$ if $\la_1-\la_2\geq 5$ or $\nu^5:=\nu^4+(0,1,-1,0,0,\ldots)$ if $\la_1-\la_2=4$. 

{\sf Case 2.3.} $d_{1,3}=d_{1,4}=1$, $c_{1,3}>c_{1,4}+1>c_{1,5}+2$. In this case let $\nu^2:=\nu^1+(1,0,-1,0,0,\ldots)$, $\nu^3:=\nu^2+(1,0,0,-1,0,0,\ldots)$, $\nu^4:=\nu^3+(1,0,-1,0,0,\ldots)$ and $\nu^5:=\nu^4+(1,0,0,-1,0,0,\ldots)$ if $\la_1-\la_2\geq 5$ or $\nu^5:=\nu^4+(0,1,0-1,0,0,\ldots)$ if $\la_1-\la_2=4$.  
\end{proof}

\begin{Lemma}\label{l25}
Let $p=2$, $\la\in\Par_2(n)$ be a JS-partition and $5\leq k\leq n/2$.  Then $D^\la\da_{\SSS_{n-k}}$ has at least three non-isomorphic composition factors, unless possibly one of the following holds:
\begin{itemize}
\item
$\la=(n)$

\item
$n$ is even and $\la=(n-1,1)$,

\item
$n$ is even and $\la=(n/2+2,n/2-2)$,

\item
$n$ is even and $\la=(n/2+1,n/2-1)$,

\item
$n$ is odd and $\la=((n+1)/2,(n-3)/2,1)$,

\item
$n\equiv 0\pmod{3}$ and $\la=(n/3+2,n/3,n/3-2)$,

\item
$n\geq 14$ with $n\equiv 2\pmod{3}$, $\la=((n-2)/3+4,(n-2)/3,(n-2)/3-2)$ and $k\equiv 1\pmod{3}$,

\item
$n\geq 19$ with $n\equiv 1\pmod{3}$, $\la=((n+2)/3+2,(n+2)/3,(n+2)/3-4)$ and $k\equiv 2\pmod{3}$,

\item
$h(\la)=3$, $\la_1=\la_2+2$, $\la_2\geq\la_3+4$ and $k=5$,

\item
$n\geq 22$ with $n\equiv 4\pmod{6}$, $\la=((n-1)/3+2,(n-1)/3,(n-1)/3-2,1)$ and $k\not\equiv 0\pmod{3}$,

\item
$\la=(\la_1,\la_1-2,\la_1-4,\la_4)$ and $k=5$,

\item
$n\geq 20$ with $n\equiv 0\pmod{4}$ and $\la=(n/4+3,n/4+1,n/4-1,n/4-3)$.
\end{itemize}
\end{Lemma}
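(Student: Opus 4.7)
The approach is to apply Lemma~\ref{L110518}. For $p=2$, the Remark following that lemma reduces its hypotheses to the following: for any $\nu\in\Par(k)$ with $h(\nu)\leq h(\la)$ such that $\mu:=\la-\nu$ is a $2$-regular partition of $n-k$, the irreducible $D^\mu$ is a composition factor of $D^\la\da_{\SSS_{n-k}}$. Since distinct $\nu$ produce distinct $\mu$, the plan is to exhibit three such $\nu$ in every non-exceptional case.

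Since $\la$ is JS and $p=2$, all parts of $\la$ have the same parity, so the consecutive gaps $\la_i-\la_{i+1}$ are positive even integers. I would stratify by $h(\la)$ and by the gap pattern. For $h(\la)\leq 2$, enumerating $\nu=(\nu_1,\nu_2)$ with $\la_1-\nu_1>\la_2-\nu_2$ (or $\la_2=\nu_2$), subject to $\nu_1\geq\nu_2\geq 0$ and $\nu_1+\nu_2=k$, yields at most $\lceil(\la_1-\la_2)/2\rceil+1$ solutions. This count is at least $3$ unless $\la_1-\la_2\leq 4$, producing the two-row exceptions $\la=(n),(n-1,1),(n/2+2,n/2-2),(n/2+1,n/2-1)$; the hook $((n+1)/2,(n-3)/2,1)$ is treated by the same argument applied to its top two rows together with the forced constraint $\nu_3\in\{0,1\}$.

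For $h(\la)\geq 3$, I would start from the candidates $\nu^{(1)}=(k)$, $\nu^{(2)}=(k-1,1)$, and $\nu^{(3)}\in\{(k-2,2),(k-2,1,1)\}$, along with variants obtained by shifting single boxes between adjacent rows, and check in each case that $\la-\nu^{(i)}$ is a $2$-regular partition. When the top gaps $\la_1-\la_2$ and $\la_2-\la_3$ are at least $4$, three such candidates always exist. Otherwise the obstruction is that $\la$ contains an arithmetic-progression block of common difference $2$; inside such a block, $\nu_i-\nu_{i+1}$ is forced to lie in $\{0,1\}$ by $2$-regularity of $\mu$. The possibilities for $\nu$ restricted to a block of length $\ell$ then form a family parametrised by the sequence of $\{0,1\}$-differences, with total size determined modulo $\ell$; for $\ell=3$ this produces the listed mod-$3$ exceptions such as $\la=(n/3+2,n/3,n/3-2)$ and its congruence variants, while for $\ell=4$ it produces the arithmetic progression $\la=(n/4+3,n/4+1,n/4-1,n/4-3)$, and the mixed AP-plus-hook shapes account for the remaining listed families. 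The small cases at $k=5$ (and in particular $\la=(\la_1,\la_1-2,\la_1-4,\la_4)$, $h(\la)=3$ with $\la_1=\la_2+2$, $\la_2\geq\la_3+4$) are handled by direct enumeration of the seven partitions of $5$.

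The main obstacle will be organising the exception list and verifying its completeness. The delicate point is that, when an arithmetic-progression block of $\la$ is long enough to absorb most of $|\nu|$, any third valid candidate must perturb $\nu$ across the boundary of the block; such a perturbation is available whenever $\la$ has a sufficiently large row above or below the AP-block, but in each of the borderline shapes listed the boundary row is either absent or too small to accommodate it. Careful bookkeeping of these boundary constraints, combined with the residue $k\pmod{3}$ produced by AP-blocks of length divisible by $3$, should match the stated exception list exactly.
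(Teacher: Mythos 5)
Your basic tool and overall architecture coincide with the paper's: for $p=2$ and $\la$ JS, Lemma~\ref{L110518} does say exactly that $D^{\la-\nu}$ is a composition factor of $D^\la\da_{\SSS_{n-k}}$ for every partition $\nu$ of $k$ with at most $h(\la)$ parts such that $\la-\nu$ is $2$-regular, and the paper's proof is likewise a case analysis on the gap pattern of $\la$ designed to exhibit three such $\nu$. (The paper first strips whole columns to pass to a truncation $\la^i$ from which only $a_i\leq h(\la^i)+4$ further boxes need removing; since the composite of the two removals is again a partition subtracted from $\la$, this is only an organizational normalization of what you propose.)

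The genuine gap is your implicit claim that outside the listed exceptions three valid $\nu$ always exist, so that Lemma~\ref{L110518} alone suffices. This is false. Take $\la=(8,6,2)$, $n=16$, $k=8$: this JS partition is covered by no item of the exception list (the matching shape $((n+2)/3+2,(n+2)/3,(n+2)/3-4)$ requires $n\geq 19$, and the item with $\la_1=\la_2+2$, $\la_2\geq\la_3+4$ requires $k=5$), yet an exhaustive check shows the only partitions $\nu$ of $8$ with $(8,6,2)-\nu$ $2$-regular are $(3,3,2)$ and $(4,3,1)$, giving just the two factors $D^{(5,3)}$ and $D^{(4,3,1)}$. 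The third factor is $D^{(6,2)}$, corresponding to $\nu=(2,4,2)$, which is not a partition and hence invisible to Lemma~\ref{L110518}; the paper obtains it by writing $[D^{(4,3,2)}]=[S^{(4,3,2)}]-[S^{(8,1)}]$ and branching Specht modules in characteristic $0$ against the decomposition matrix. The same failure occurs for $\la=(7,5,3,1)$ with $k=8$, $n=16$ (the unique admissible $\nu$ is $(3,2,2,1)$, so your method yields a single factor, while the case is not excluded because the relevant exception demands $n\geq 22$), and for $\la=(9,7,5,3,1)$ with $k=11$ or $12$. All of these are non-exceptional, all must be proved, and all are handled in the paper by characteristic-zero branching of irreducible Specht modules plus decomposition matrices --- a tool entirely absent from your proposal, and one that no amount of boundary bookkeeping within the Lemma~\ref{L110518} framework can replace. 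Beyond that, the completeness verification you defer with ``should match the stated exception list exactly'' is where essentially all the labour sits (the paper's Case 6 runs to several dozen subcases, repeatedly reaching back to earlier truncations $\la^j$ with $j<i$ to find the third factor), but that part is a matter of执行 rather than a missing idea; the sporadic cases above are the substantive hole.
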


\begin{proof}
Let $\la^0:=\la$ and then recursively define $\la^j:=(\la^{j-1}_1-1,\ldots,\la^{j-1}_{h(\la^{j-i})}-1)$. Note that $\la^j$ is JS. 
Let $a_j:=|\la^j|-n+k$.  Since $|\la^j|-a_j=n-k\geq n/2$, we have $n\geq 2(n-|\la^j|+a_j)$. Moreover, $|\la^j|/2\leq n/2\leq n-k=|\la^j|-a_j$ implies $a_j\leq |\la^j|/2$. There is a unique $i$ with $5\leq a_i\leq h(\la^i)+4$.

By Lemma \ref{L110518}, $D^{\la^j}$ is a composition factor of $D^\la\da_{\SSS_{|\la^j|}}$, so it suffices to show that for some $j$ such that $|\la^j|\geq n-k$ 
there exist distinct composition factors $D^\mu,D^\nu,D^\pi$ of $D^{\la^j}\da_{\SSS_{n-k}}$. We always assume that we are not in one of the excluded cases. We will repeatedly apply  Lemma \ref{L110518} without referring to it. We denote by $\de_m$ the composition $(0,\dots,0,1,0,\dots,0)$ with $1$ in the $m$th position. 

{\sf Case 1.} $a_i\leq h(\la^i)$. Then, using the fact that $a_i\geq 5$, we can take $\mu=\la^i-\de_1-\dots-\de_{a_i}$, $\nu=\la^i-2\de_1-\de_2-\dots-\de_{a_i-1}$,  and $\pi=\la^i-2\de_1-2\de_2-\de_3-\dots-\de_{a_i-2}$.

{\sf Case 2.} $a_i=h(\la^i)+1\geq 6$. Then we can take $\mu=\la^i-2\de_1-\de_2-\dots-\de_{a_i-1}$, $\nu=\la^i-2\de_1-2\de_2-\de_3-\dots-\de_{a_i-2}$ and $\pi=\la^i-3\de_1-2\de_2-\de_3-\dots-\de_{a_i-3}$.

{\sf Case 3.} $a_i=h(\la^i)+2\geq 7$. Then we can take $\mu=\la^i-2\de_1-2\de_2-\de_3-\dots-\de_{a_i-2}$, $\nu=\la^i-2\de_1-2\de_2-2\de_3-\de_4-\dots-\de_{a_i-3}$ and $\pi=\la^i-3\de_1-2\de_2-\de_3-\dots-\de_{a_i-3}$.

{\sf Case 4.} $a_i=h(\la^i)+3\geq 8$. Then we can take $\mu=\la^i-2\de_1-2\de_2-2\de_3-\de_4-\dots-\de_{a_i-3}$, $\nu=\la^i-3\de_1-2\de_2-2\de_3-\de_4-\dots-\de_{a_i-4}$ and $\pi=\la^i-3\de_1-2\de_2-\de_3-\dots-\de_{a_i-3}$.

{\sf Case 5.} $a_i=h(\la^i)+4\geq 9$. Then we can take $\mu=\la^i-2\de_1-2\de_2-2\de_3-2\de_4-\de_5-\dots-\de_{a_i-4}$, $\nu=\la^i-3\de_1-3\de_2-2\de_3-\de_4-\dots-\de_{a_i-5}$ and $\pi=\la^i-3\de_1-2\de_2-2\de_3-\de_4-\dots-\de_{a_i-4}$.

{\sf Case 6.} $h(\la^i)\leq 4$. Note that 
$|\la^i|\geq n/2+5\geq 10$.

{\sf Case 6.1.} $h(\la^i)=1$. 
By assumption $\la\not=(n),(n-1,1)$. Since $\la$ is JS, there exists $0\leq j\leq i-2$ with $\la^j=(\la^j_1,2)$. From $j<i$ we have that $a_j>a_i=5$. So we can take $\mu=(\la^j_1-(a_j-2))$, $\nu=(\la^j_1-(a_j-1),1)$ and $\pi=(\la^j_1-a_j,2)$ (notice that $\la^j_1-a_j>2$ since $n-k\geq n/2\geq 5$).

{\sf Case 6.2.} $h(\la^i)=2$. In this case $a_i=5$ or $6$. 

{\sf Case 6.2.1.} $\la^i_2=1$. Then $i\geq 1$ and so $\la^{i-1}=(\la^i_1+1,2)$, since $\la\not=(n-1,1)$ is JS. So we can take $\mu=(\la^i_1-(a_i-1))$, $\nu=(\la^i_1-a_i,1)$ and $\pi=(\la^i_1-(a_i+1),2)$.

{\sf Case 6.2.2.} $\la^i_2=2$. Then we can take $\mu=(\la^i_1-(a_i-2))$, $\nu=(\la^i_1-(a_i-1),1)$ and $\pi=(\la^i_1-a_i,2)$.

{\sf Case 6.2.3.} $\la^i_2\geq 3$ and $\la^i_1\geq\la^i_2+6$. Then we can take $\mu=(\la^i_1-\lceil a_i/2\rceil,\la^i_2-\lfloor a_i/2\rfloor)$, $\nu=(\la^i_1-\lceil a_i/2\rceil-1,\la^i_2-\lfloor a_i/2\rfloor+1)$ and $\pi=(\la^i_1-\lceil a_i/2\rceil-2,\la^i_2-\lfloor a_i/2\rfloor+2)$.

{\sf Case 6.2.4.} $\la^i_2\geq 3$ and $\la^i_1=\la^i_2+4$ (note that we cannot have $\la^i_1=\la^i_2+5$ since $\la^i$ is JS). 
Let $\mu=(\la^i_1-\lceil a_i/2\rceil,\la^i_2-\lfloor a_i/2\rfloor)$, $\nu=(\la^i_1-\lceil a_i/2\rceil-1,\la^i_2-\lfloor a_i/2\rfloor+1)$. To get the third composition factor, note that 
by assumption $\la\not=(n/2+2,n/2-2)$, so there exists $0\leq j<i$ with $\la^j=(\la^j_1,\la^j_1-4,1)$. In particular $n\geq |\la^i|+3\geq 13$ and then $n-k\geq 7$. Now we take $\pi=(\la^j_1-2-\lceil (a_j-2)/2\rceil,\la^j_1-4-\lfloor (a_j-2)/2\rfloor,1)$ (note that $\la^j_1-4-\lfloor (a_j-2)/2\rfloor>1$ since $2(\la^j_1-4-\lfloor (a_j-2)/2\rfloor)+3=n-k\geq 7$).

{\sf Case 6.2.5.} $\la^i_2\geq 3$ and $\la^i_1=\la^i_2+2$. 
We can take $\mu=(\la^i_1-\lceil a_i/2\rceil,\la^i_2-\lfloor a_i/2\rfloor)$. To get two more composition factors, note that 
since $\la\not=(n/2+1,n/2-1),((n+1)/2,(n-3)/2,1)$ is JS, we have that $i\geq 2$ and there exists $0\leq j\leq i-2$ with $\la^j=(\la^j_1,\la^j_1-2,2)$. In this case $n\geq 2(n-|\la^i|+a_i)\geq 22$ and so $n-k\geq 11$. So we can take $\nu=(\la^j_1-\lceil (a_j-1)/2\rceil,\la^j_1-2-\lfloor (a_j-1)/2\rfloor,1)$ and $\pi=(\la^j_1-\lceil a_j/2\rceil,\la^j_1-2-\lfloor a_j/2\rfloor,2)$ (note that $a_j>a_i\geq 5$ and $\la^j_1-2-\lfloor a_j/2\rfloor>2$ since $2(\la^j_1-2-\lfloor a_j/2\rfloor)+5\geq n-k\geq 11$).

{\sf Case 6.3.} $h(\la^i)=3$. In this case $5\leq a_i\leq 7$.

{\sf Case 6.3.1.} $\la^i_3=1$ and $\la^i_1\geq\la^i_2+6$. Then we can take $\mu=(\la^i_1-\lceil(a_i-1)/2\rceil,\la^i_2-\lfloor (a_i-1)/2\rfloor)$, $\nu=(\la^i_1-\lceil(a_i+1)/2\rceil,\la^i_2-\lfloor (a_i-3)/2\rfloor)$ and $\pi=(\la^i_1-\lceil(a_i+3)/2\rceil,\la^i_2-\lfloor (a_i-5)/2\rfloor)$.

{\sf Case 6.3.2.} $\la^i_3=1$ and $\la^i_1=\la^1_2+4$. Notice that $\la^i_1$ is odd in this case. If $\la^i_1\geq 9$ then $\la^i_2-\lfloor (a_i-2)/2\rfloor\geq\la^i_1-4-2>1$. If $\la^i_1\leq 7$ then $\la^i=(7,3,1)$ and $a_i=5$ and then $\la^i_2-\lfloor (a_i-2)/2\rfloor>1$ again. So we can take $\mu=(\la^i_1-\lceil(a_i-1)/2\rceil,\la^i_2-\lfloor (a_i-1)/2\rfloor)$, $\nu=(\la^i_1-\lceil(a_i+1)/2\rceil,\la^i_2-\lfloor (a_i-3)/2\rfloor)$ and $\pi=(\la^i_1-\lceil (a_i+2)/2\rceil,\la^i_2-\lfloor (a_i-2)/2\rfloor,1)$.

{\sf Case 6.3.3.} $\la^i_3=1$ and $\la^i_1=\la^1_2+2$. Since $|\la^i|\geq 10$ and $\la^i$ is JS, we have $\la^i_1\geq 7$ and so $\la^i_2\geq 5$. We can take $\mu=(\la^i_1-\lceil(a_i-1)/2\rceil,\la^i_2-\lfloor (a_i-1)/2\rfloor)$, $\nu=(\la^i_1-\lceil a_i/2\rceil,\la^i_2-\lfloor a_i/2\rfloor,1)$. To get the third composition factor, 
note that since $\la\not=((n+1)/3,(n-3)/2,1)$, we have $i\geq 1$ and $\la^{i-1}=(\la^i_1+1,\la^i_1-1,2)$. If $\la^i_1\geq 9$ then $\la^i_2-\lfloor (a_i+1)/2\rfloor\geq\la^i_1-2-4>2$. In this case we can take  $\pi=(\la^i_1-\lceil (a_i+1)/2\rceil,\la^i_2-\lfloor (a_i+1)/2\rfloor,2)=(\la^{i-1}_1-\lceil(a_i+3)/2\rceil,\la^{i-1}_2-\lfloor(a_i+3)/2\rfloor,\la^{i-1}_3)$.
If $\la^i_1=7$ then $n\geq|\la^{i-1}|=16$ and $a_{i-1}\geq 8$. It then follows that $\la=\la^{i-1}=(8,6,2)$ and $k=8$. Notice first $D^{(4,3,2)}$ that is a composition factor of $D^{(8,6,2)}\da_{\SSS_9}$. From \cite[Tables]{JamesBook} we have that $[D^{(4,3,2)}]=[S^{(4,3,2)}]-[S^{(8,1)}]$. Using branching rule in characteristic $0$ and \cite[Tables]{JamesBook}, it then follows that $D^{(4,3,1)}$, $D^{(5,3)}$ and $D^{(6,2)}$ are composition factors of $D^{(8,6,2)}\da_{\SSS_8}$.

{\sf Case 6.3.4.} $\la^i_3\geq 2$, $\la^i_1\geq\la^i_2+4$ and $a_i=5$. In this case we can take $\mu=(\la^i_1-2,\la^i_2-2,\la^i_3-1)$, $\nu=(\la^i_1-3,\la^i_2-1,\la^i_3-1)$ and $\pi=(\la^i_1-4,\la^i_2-1,\la^i_3)$.

{\sf Case 6.3.5.} $\la^i_3\geq 2$, $\la^i_1\geq\la^i_2+4$ and $a_i=6$. In this case we can take $\mu=(\la^i_1-2,\la^i_2-2,\la^i_3-2)$, $\nu=(\la^i_1-3,\la^i_2-2,\la^i_3-1)$ and $\pi=(\la^i_1-4,\la^i_2-1,\la^i_3-1)$.

{\sf Case 6.3.6.} $\la^i_3\geq 2$, $\la^i_1\geq\la^i_2+6$ and $a_i=7$. 
In this case we can take $\mu=(\la^i_1-3,\la^i_2-2,\la^i_3-2)$, $\nu=(\la^i_1-4,\la^i_2-2,\la^i_3-1)$ and $\pi=(\la^i_1-5,\la^i_2-1,\la^i_3-1)$.

{\sf Case 6.3.7.} $\la^i_3\geq 2$, $\la^i_1=\la^i_2+4$, $\la^i_2\geq \la^i_3+4$ and $a_i=7$. In this case we can take $\mu=(\la^i_1-3,\la^i_2-2,\la^i_3-2)$, $\nu=(\la^i_1-4,\la^i_2-2,\la^i_3-1)$ and $\pi=(\la^i_1-3,\la^i_2-3,\la^i_3-1)$.

{\sf Case 6.3.8.} $\la^i_3\geq 2$, $\la^i_1=\la^i_2+4$, $\la^i_2=\la^i_3+2$ and $a_i=7$. If $h(\la)=3$ then $\la=((n-2)/3+4,(n-2)/3,(n-2)/3-2)$ (and $n\equiv 2\pmod{3}$ and $n\geq 14$ since $\la_3\geq\la^i_3\geq 2$) and $k=a_i+3i\equiv 1\pmod{3}$, which is one of the excluded cases. Else there exists $j<i$ with $\la^j=(\la^j_1,\la^j_1-4,\la^j_1-6,1)$. If $\la^i_1\geq 10$ then $\la^j_1=\la^i_1+i-j\geq 10+i-j$ and $a_j=a_i+3(i-j)+1=8+3(i-j)$, so that we can take $\mu=(\la^i_1-3,\la^i_2-2,\la^i_3-2)$, $\nu=(\la^i_1-4,\la^i_2-2,\la^i_3-1)$ and $\pi=(\la^j_1-3-(i-j),\la^j_1-7-(i-j),\la^j_1-8-(i-j),1)$.

If $\la^i_1\leq 9$ then $\la^i=(8,4,2)$ or $\la^i=(9,5,3)$. Since $n\leq 2(n-k)=2(|\la^i|-a_i)$ and $n\geq |\la^j|\geq |\la^i|+4$, this leads to a contradiction.

{\sf Case 6.3.9.} $\la^i_3\geq 2$, $\la^i_1=\la^i_2+2$, $\la^i_2\geq \la^i_3+4$, $a_i=5$ and $h(\la)=3$. In this case $i\geq 1$, as otherwise we are in one of the excluded cases. Further $\la^i_2\geq \la^i_3+6$, since else $n\equiv 1\pmod{3}$, $\la=((n+2)/3+2,(n+2)/3,(n+2)/3-4)$ and $k=a_i+3i=5+3i\equiv 2\pmod{3}$ (this is one of the excluded cases, since $\la_3>\la^i_3\geq 2$, so that $n\geq 19$). So we can take  $\mu=(\la^i_1-2,\la^i_2-2,\la^i_3-1)$, $\nu=(\la^i_1-3,\la^i_2-2,\la^i_3)$ and $\pi=(\la^i_1-3,\la^i_2-3,\la^i_3+1)=(\la^{i-1}_1-4,\la^{i-1}_2-4,\la^{i-1}_3)$.

{\sf Case 6.3.10.} $\la^i_3\geq 2$, $\la^i_1=\la^i_2+2$, $\la^i_2\geq \la^i_3+4$, $a_i=5$ and $h(\la)\geq4$. In this case there exists $0\leq j<i$ with $\la^j=(\la^j_1,\la^j_2,\la^j_3,1)$.  For this $j$ we have that $a_j=a_i+3(i-j)+1=3(i-j)+6$. We can take $\mu=(\la^i_1-2,\la^i_2-2,\la^i_3-1)$, $\nu=(\la^i_1-3,\la^i_2-2,\la^i_3)$ and $\pi=(\la^j_1-(i-j)-3,\la^j_2-(i-j)-3,\la^j_3-(i-j),1)$ (notice that $\la^j_3=\la^i_3+i-j\geq i-j+2$).

{\sf Case 6.3.11.} $\la^i_3\geq 2$, $\la^i_1=\la^i_2+2$, $\la^i_2\geq \la^i_3+4$ and $a_i=6$. Then we can take $\mu=(\la^i_1-2,\la^i_2-2,\la^i_3-2)$, $\nu=(\la^i_1-3,\la^i_2-2,\la^i_3-1)$ and $\pi=(\la^i_1-3,\la^i_2-3,\la^i_3)$.

{\sf Case 6.3.12.} $\la^i_3\geq 2$, $\la^i_1=\la^i_2+2$, $\la^i_2\geq \la^i_3+4$ and $a_i=7$. Then we can take $\mu=(\la^i_1-3,\la^i_2-2,\la^i_3-2)$, $\nu=(\la^i_1-3,\la^i_2-3,\la^i_3-1)$ and $\pi=(\la^i_1-4,\la^i_2-3,\la^i_3)$.

{\sf Case 6.3.13.} $\la^i_3\geq 2$, $\la^i_1=\la^i_2+2$, $\la^i_2=\la^i_3+2$. Then $h(\la)\geq 4$, as otherwise we are in one of the excluded cases. So there exists $0\leq j<i$ with $\la^j=(\la^j_1,\la^j_2,\la^j_3,1)=(\la^j_1,\la^j_1-2,\la^j_1-4,1)$. If $\la^i_3=2$ then $|\la^i|=12$ and $n/2\leq n-k=|\la^i|-a_i\leq 7$. So $n-|\la^i|\leq 2\leq h(\la^i)$ and then $\la=\la^i$, contradicting $h(\la)\geq 4$ and $h(\la^i)=3$. If $\la^i_3=3$ then $|\la^i|=15$ and $n/2\leq n-k=|\la^i|-a_i\leq 10$, so that $n\leq 20$. So $n-|\la^i|\leq 5$ and then $i=j+1=1$. This contradicts $\la$ being JS. So $\la^i_3\geq 4$. Then $\la^j_1=\la^i_3+i-j+4\geq i-j+8$. Further $a_j=a_i+3(i-j)+1$.

{\sf Case 6.3.13.1.} $a_i=5$. Then we can take $\mu=(\la^j_1-(i-j)-2,\la^j_1-(i-j)-4,\la^j_1-(i-j)-5)$, $\nu=(\la^j_1-(i-j)-3,\la^j_1-(i-j)-4,\la^j_1-(i-j)-5,1)$ and $\pi=(\la^j_1-(i-j)-2,\la^j_1-(i-j)-4,\la^j_1-(i-j)-6,1)$.

{\sf Case 6.3.13.2.} $a_i=6$. In this case $j\geq 1$, as otherwise, being $\la^j_3>\la^i_3\geq 4$ and $\la^j_3$ odd, it follows that $n\geq 22$ with $n\equiv 4\pmod{6}$, $\la=((n-1)/3+2,(n-1)/3,(n-1)/3-2,1)$ and $k=a_i+3i+1=3i+7\equiv 1\pmod{3}$. We have that $\la^{j-1}=(\la^j_1+1,\la^j_1-1,\la^j_1-3,2)$ (since $\la$ is a JS-partition) and $a_{j-1}=a_j+4=3(i-j)+11$. If $\la^i_3=4$ then $|\la^i|=18$ and $n/2\leq n-k=|\la^i|-a_i=12$, so that $n\leq 24$. Since $1\leq j<i$ we obtain $|\la^{j-1}|\geq|\la^i|+8>n$, leading to a contradiction. So $\la^i_3\geq 5$ (and then $\la^j_1\geq i-j+9$). In this case we can take $\mu=(\la^j_1-(i-j)-2,\la^j_1-(i-j)-4,\la^j_1-(i-j)-6)$, $\nu=(\la^j_1-(i-j)-3,\la^j_1-(i-j)-4,\la^j_1-(i-j)-6,1)$ and $\pi=(\la^j_1-(i-j)-3,\la^j_1-(i-j)-5,\la^j_1-(i-j)-6,2)=(\la^{j-1}_1-(i-j)-4,\la^{j-1}_2-(i-j)-4,\la^{j-1}_3-(i-j)-3,\la^{j-1}_4)$. 

{\sf Case 6.3.13.3.} $a_i=7$. In this case $j\geq 1$, as otherwise, being $\la^j_3>\la^i_3\geq 4$ and $\la^j_3$ odd, it follows that $n\geq 22$ with $n\equiv 4\pmod{6}$, $\la=((n-1)/3+2,(n-1)/3,(n-1)/3-2,1)$ and $k=a_i+3i+1=3i+7\equiv 2\pmod{3}$. We have that $\la^{j-1}=(\la^j_1+1,\la^j_1-1,\la^j_1-3,2)$ (since $\la$ is a JS-partition) and $a_{j-1}=a_j+4=3(i-j)+12$. If $\la^i_3=4$ then $|\la^i|=18$ and $n/2\leq n-k=|\la^i|-a_i=11$, so that $n\leq 22$. Since $1\leq j<i$ we obtain $|\la^{j-1}|\geq|\la^i|+8>n$, leading to a contradiction. If $\la^i_3=5$ then $|\la^i|=21$ and $n\leq 28$. In this case $|\la^{j-1}|\geq|\la^i|+8>n$, again leading to a contradiction. So $\la^i_3\geq 6$ (and then $\la^j_1\geq i-j+10$). We can take $\mu=(\la^j_1-(i-j)-3,\la^j_1-(i-j)-4,\la^j_1-(i-j)-6)$, $\nu=(\la^j_1-(i-j)-3,\la^j_1-(i-j)-5,\la^j_1-(i-j)-6,1)$ and $\pi=(\la^j_1-(i-j)-3,\la^j_1-(i-j)-5,\la^j_1-(i-j)-7,2)=(\la^{j-1}_1-(i-j)-4,\la^{j-1}_2-(i-j)-4,\la^{j-1}_3-(i-j)-4,\la^{j-1}_4)$. 

{\sf Case 6.4.} $h(\la^i)=4$. In this case $5\leq a_i\leq 8$. 

{\sf Case 6.4.1.} $a_i=5$. Then we can take $\mu=(\la^i_1-2,\la^i_2-1,\la^i_3-1,\la^i_4-1)$, $\nu=(\la^i_1-2,\la^i_2-2,\la^i_3-1,\la^i_4)$.

{\sf Case 6.4.1.1.} $\la^i_1\geq\la^i_2+4$.  Then we can take $\pi=(\la^i_1-3,\la^i_2-1,\la^i_3-1,\la^i_4)$.

{\sf Case 6.4.1.2.} $\la^i_2\geq\la^i_3+4$. Then we can take $\pi=(\la^i_1-3,\la^i_2-2,\la^i_3,\la^i_4)$.

{\sf Case 6.4.1.3.} $\la^i_1=\la^i_2+2$, $\la^i_2=\la^i_3+2$ and $h(\la)=4$. If $i=0$ then $\la=\la^i=(\la^i_1,\la^i_1-2,\la^i_1-4,\la^i_4)$ and $k=a_i=5$, which is one of the excluded cases. So we can assume that $i\geq 1$. If $\la_3=\la_4+2$ then $n\equiv 0\pmod{4}$ and $\la=(n/4+3,n/4+1,n/4-1,n/4-3)$, which is also an excluded case. So we can also assume that $\la_3\geq\la_4+2$. In this case we can take $\pi=(\la^i_1-2,\la^i_1-4,\la^i_1-6,\la^i_4+1)=(\la_1-i-2,\la_2-i-2,\la_3-i-2,\la_4-i+1)$.

{\sf Case 6.4.1.4.} $\la^i_1=\la^i_2+2$, $\la^i_2=\la^i_3+2$ and $h(\la)\geq 5$. In this case there exists $0\leq j<i$ with $\la^j=(\la^j_1,\la^j_2,\la^j_3,\la^j_4,1)=(\la^i_1+i-j,\la^i_1+i-j-2,\la^i_1+i-j-4,\la^i_4+i-j,1)$.

{\sf Case 6.4.1.4.1.} $\la^i_4\geq 3$. Then $\la^j_4\geq i-j+3$, so we can take $\pi=(\la^i_1-2,\la^i_1-4,\la^i_1-5,\la^i_4-1,1)=(\la^j_1-(i-j)-2,\la^j_2-(i-j)-2,\la^j_3-(i-j)-1,\la^j_4-(i-j)-1,\la^j_5)$.

{\sf Case 6.4.1.4.2.} $\la^i_4=2$. Then we can take $\pi=(\la^i_1-3,\la^i_1-4,\la^i_1-5,\la^i_4,1)=(\la^j_1-(i-j)-3,\la^j_2-(i-j)-2,\la^j_3-(i-j)-1,\la^j_4-(i-j),\la^j_5)$ (notice that $\la_1^i\geq\la^i_4+6$ since $\la$ is JS).

{\sf Case 6.4.1.4.3.} $\la^i_4=1$. If $\la^i_4=1$ and $\la^i_1=7$ then $\la^i=(7,5,3,1)$ and then $|\la^i|=16$. In this case $n\leq 22$. Since $\la$ is JS, we have that $j\leq i-2$, so that $n\geq|\la^j|\geq |\la^i|+9=25$, leading to a contradiction. So $\la^i_1\geq 9$ and then  we can take $\pi=(\la^i_1-3,\la^i_1-4,\la^i_1-6,\la^i_4+1,1)=(\la^j_1-(i-j)-3,\la^j_2-(i-j)-2,\la^j_3-(i-j)-2,\la^j_4-(i-j)+1,\la^j_5)$. 

{\sf Case 6.4.2.} $a_i=6$. Then we can take $\mu=(\la^i_1-2,\la^i_2-2,\la^i_3-1,\la^i_4-1)$ and $\nu=(\la^i_1-3,\la^i_2-2,\la^i_3-1,\la^i_4)$.

{\sf Case 6.4.2.1.} $\la^i_1\geq\la^i_2+4$. We can take $\pi=(\la^i_1-4,\la^i_2-1,\la^i_3-1,\la^i_4)$.

{\sf Case 6.4.2.2.} $\la^i_2\geq\la^i_3+4$. We can take $\pi=(\la^i_1-3,\la^i_2-3,\la^i_3,\la^i_4)$.

{\sf Case 6.4.2.3.} $\la^i_3\geq \la^i_4+4$. We can take $\pi=(\la^i_1-2,\la^i_2-2,\la^i_3-2,\la^i_4)$.

{\sf Case 6.4.2.4.} $\la^i_1=\la^i_2+2$, $\la^i_2=\la^i_3+2$ and $\la^i_3=\la^i_4+2$. Then $\la^i=(\la^i_1,\la^i_1-2,\la^i_1-4,\la^i_1-6)$ with $\la^i_1\geq 7$. If $h(\la)=4$ then $n\equiv 0\pmod{4}$ and $\la=(n/4+3,n/4+1,n/4-1,n/4-3)$, which is one of the excluded cases. So we can assume that there exists $0\leq j<i$ with $\la^j=(\la^j_1,\la^j_1-2,\la^j_1-4,\la^j_1-6,1)$. If $\la^i_1\geq 9$ we can take $\pi=(\la^i_1-2,\la^i_1-4,\la^i_1-6,\la^i_1-7,1)=(\la^j_1-(i-j)-2,\la^j_2-(i-j)-2,\la^j_3-(i-j)-2,\la^j_4-(i-j)-1,\la^j_5)$. If $7\leq\la^i\leq 8$ then $\la^i=(7,5,3,1)$ or $\la^i=(8,6,4,2)$ and then $n\leq 28$, so that $\la=(9,7,5,3,1)$ and $k=11$ (since $h(\la)\geq 5$, $n-k=|\la^i|-a_i$ and $k\leq n/2$). In this case notice first that $D^{(5,4,3,2,1)}$ is a composition factor of $D^{(9,7,5,3,1)}\da_{\SSS_{15}}$. Since $D^{(5,4,3,2,1)}\cong S^{(5,4,3,2,1)}$, using branching rule in characteristic 0 and decomposition matrices, we can see that $D^{(9,7,5,3,1)}\da_{\SSS_{14}}$ has more than 3 composition factors.

{\sf Case 6.4.3.} $a_i=7$. Then we can take $\mu=(\la^i_1-2,\la^i_2-2,\la^i_3-2,\la^i_4-1)$ and $\nu=(\la^i_1-3,\la^i_2-2,\la^i_3-1,\la^i_4-1)$. 

{\sf Case 6.4.3.1.} $\la^i_1\geq\la^i_2+4$. We can take $\pi=(\la^i_1-4,\la^i_2-2,\la^i_3-1,\la^i_4)$.

{\sf Case 6.4.3.2.} $\la^i_2\geq\la^i_3+4$. We can take $\pi=(\la^i_1-3,\la^i_2-3,\la^i_3-1,\la^i_4)$.

{\sf Case 6.4.3.3.} $\la^i_3\geq \la^i_4+4$. We can take $\pi=(\la^i_1-3,\la^i_2-2,\la^i_3-2,\la^i_4)$.

{\sf Case 6.4.3.4.} $\la^i_1=\la^i_2+2$, $\la^i_2=\la^i_3+2$ and $\la^i_3=\la^i_4+2$. Then $\la^i=(\la^i_1,\la^i_1-2,\la^i_1-4,\la^i_1-6)$ with $\la^i_1\geq 7$. Again we can assume that there exists $0\leq j<i$ with $\la^j=(\la^j_1,\la^j_1-2,\la^j_1-4,\la^j_1-6,1)$. If $\la^i_1\geq 9$ we can take $\pi=(\la^i_1-3,\la^i_1-4,\la^i_1-6,\la^i_1-7,1)=(\la^j_1-(i-j)-3,\la^j_2-(i-j)-2,\la^j_3-(i-j)-2,\la^j_4-(i-j)-1,\la^j_5)$. If $7\leq\la^i\leq 8$ then $\la^i=(7,5,3,1)$ or $\la^i=(8,6,4,2)$ and $n\leq 26$, so that $\la=(9,7,5,3,1)$ and $k=12$ (since $h(\la)\geq 5$, $n-k=|\la^i|-a_i$ and $k\leq n/2$). In this case notice first that $D^{(5,4,3,2,1)}$ is a composition factor of $D^{(9,7,5,3,1)}\da_{\SSS_{15}}$. Since $D^{(5,4,3,2,1)}\cong S^{(5,4,3,2,1)}$, using branching rule in characteristic 0 and \cite[Tables]{JamesBook}, we can see that $D^{(9,7,5,3,1)}\da_{\SSS_{13}}$ has more than 3 composition factors.

{\sf Case 6.4.4.} $a_i=8$ and $\la^i_4\geq 2$. Then we can take $\mu=(\la^i_1-2,\la^i_2-2,\la^i_3-2,\la^i_4-2)$ and $\nu=(\la^i_1-3,\la^i_2-2,\la^i_3-2,\la^i_4-1)$. 

{\sf Case 6.4.4.1.} $\la^i_1\geq\la^i_2+4$. We can take $\pi=(\la^i_1-4,\la^i_2-2,\la^i_3-1,\la^i_4-1)$.

{\sf Case 6.4.4.2.} $\la^i_2\geq\la^i_3+4$. We can take $\pi=(\la^i_1-3,\la^i_2-3,\la^i_3-1,\la^i_4-1)$.

{\sf Case 6.4.4.3.} $\la^i_3\geq \la^i_4+4$. We can take $\pi=(\la^i_1-3,\la^i_2-3,\la^i_3-2,\la^i_4)$.

{\sf Case 6.4.4.4.} $\la^i_1=\la^i_2+2$, $\la^i_2=\la^i_3+2$ and $\la^i_3=\la^i_4+2$. Then $\la^i=(\la^i_1,\la^i_1-2,\la^i_1-4,\la^i_1-6)$ with $\la^i_1\geq 8$. Again we can assume that $0\leq j<i$ with $\la^j=(\la^j_1,\la^j_1-2,\la^j_1-4,\la^j_1-6,1)$. If $\la^i_1\geq 9$ we can take $\pi=(\la^i_1-2,\la^i_1-4,\la^i_1-6,\la^i_1-7,1)=(\la^j_1-(i-j)-2,\la^j_2-(i-j)-2,\la^j_3-(i-j)-2,\la^j_4-(i-j)-1,\la^j_5)$. If $\la^i=8$ then $\la^i=(8,6,4,2)$ and $n\leq 24$ and it can be checked that no $\la$ exists (since $h(\la)\geq 5$).

{\sf Case 6.4.5.} $a_i=8$ and $\la^i_4=1$. 

{\sf Case 6.4.5.1.} $\la^i_1\geq\la^i_2+4$. Then we can take $\mu=(\la^i_1-3,\la^i_2-2,\la^i_3-2,\la^i_4-1)$, $\nu=(\la^i_1-4,\la^i_2-2,\la^i_3-1,\la^i_4-1)$ and $\pi=(\la^i_1-5,\la^i_2-2,\la^i_3-1,\la^i_4)$.

{\sf Case 6.4.5.2.} $\la^i_2\geq\la^i_3+4$. Then we can take $\mu=(\la^i_1-3,\la^i_2-2,\la^i_3-2,\la^i_4-1)$, $\nu=(\la^i_1-3,\la^i_2-3,\la^i_3-1,\la^i_4-1)$ and $\pi=(\la^i_1-4,\la^i_2-3,\la^i_3-1,\la^i_4)$.

{\sf Case 6.4.5.3.} $\la^i_1=\la^i_2+2$ and $\la^i_2=\la^i_3+2$. Then $\la^i=(\la^i_1,\la^i_1-2,\la^i_2-4,1)$ with $\la^i_1\geq 7$ odd. If $i\geq 1$ then $\la^{i-1}=(\la^i_1+1,\la^i_1-1,\la^i_1-3,2)$ and $a_{i-1}=a_i+4=12$. Then $3\la^i_1-1=|\la^{i-i}|\geq 2a_{i-1}=24$ and so $\la^i_1\geq 9$. So we can take $\mu=(\la^i_1-3,\la^i_1-4,\la^i_1-6)$, $\nu=(\la^i_1-3,\la^i_1-5,\la^i_1-6,1)$ and $\pi=(\la^i_1-4,\la^i_1-5,\la^i_1-6,2)=(\la^{i-1}_1-5,\la^{i-1}_2-4,\la^{i-1}_3-3,\la^{i-1}_4)$. If $i=0$ then $\la=\la^i$ and so $n\equiv 4\pmod{6}$, $\la=((n-1)/3+2,(n-1)/3,(n-1)/3-2,1)$ and $k=a_i=8$. If $n=16$ then $\la=(7,5,3,1)$, else $n\geq 22$, so that we are in one of the excluded cases. If $\la=(7,5,3,1)$ notice first that $D^{(4,3,2,1)}$ is a composition factor of $D^{(7,5,3,1)}\da_{\SSS_{10}}$. Since $(4,3,2,1)$ is a 2-core, we have that $D^{(4,3,2,1)}\cong S^{(4,3,2,1)}$. From \cite[Tables]{JamesBook} and branching rule in characteristic 0, it then follows that $D^{(7,5,3,1)}\da_{\SSS_8}$ has more than 3 non-isomorphic composition factors.
\end{proof}

\begin{Lemma}\label{L221118}
Let $p=2\mid n$, and $\la\in\Parinv_2(n)$. Then  $D^\la\da_{\SSS_{n/2}}$ has at least 3 non-isomorphic composition factors unless $\la$ has one 
of the following forms:
\begin{enumerate}
\item[{\rm (i)}] $\be_n$ with $n\equiv 0\pmod{4}$,

\item[{\rm (ii)}] $(\be_{n-1},1)$, 

\item[{\rm (iii)}] $n\geq 24$ with $n\equiv 0\pmod{8}$ and $\la=(n/4+3,n/4+1,n/4-1,n/4-3)$,

\item[{\rm (iv)}] $n\geq 10$ with $n\equiv 2\pmod{4}$ and $\la=((n+6)/4,(n+2)/4,(n-2)/4,(n-6)/4)$,

\item[{\rm (v)}] $n\geq 24$ with $n\equiv 0\pmod{4}$ and $\la=(n/4+2,n/4+1,n/4-1,n/4-2)$,

\item[{\rm (vi)}] $n\geq 14$ with $n\equiv 2\pmod{4}$ and $\la=((n+10)/4,(n+6)/4,(n-6)/4,(n-10)/4)$,
\end{enumerate}
\end{Lemma}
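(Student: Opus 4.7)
The plan is to mirror the strategy of Lemma~\ref{L240818_2}: reduce to the JS case via Lemma~\ref{LJS} and then apply Lemma~\ref{l25}. To begin, I would dispose of $n \leq 12$ and of the finitely many $\la \in \Parinv_2(n)$ that are doubles of the partitions explicitly excluded in Lemma~\ref{l1}; each is verified by direct computation (the decomposition matrices for $\SSS_k$ with $k$ in the range required are available in \cite{JamesBook} and via GAP~\cite{GAP}). For the remainder I assume $n \geq 14$ and $\la$ outside this finite list.

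Lemma~\ref{l1} then provides the two crucial bounds
\[
 m := |\la^\JS| \geq n/2 + 5, \qquad \la^\JS_{2j-1} - \la^\JS_{2j} \leq 2 \quad (j \geq 1).
\]
By Lemma~\ref{LJS}, $\la^\JS$ is a $2$-regular JS partition and $D^{\la^\JS}$ is a composition factor of $D^\la \da_{\SSS_m}$; so it suffices to produce three non-isomorphic composition factors of $D^{\la^\JS}\da_{\SSS_{n/2}}$. Setting $k := m - n/2$, the bounds above and $m \leq n$ give $5 \leq k \leq m/2$, so Lemma~\ref{l25} applies to $\la^\JS$ and yields the three desired composition factors \emph{unless} $\la^\JS$ falls in the exceptional list of that lemma.

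The core of the proof is then to determine which JS exceptions of Lemma~\ref{l25} are compatible with the strong spacing constraint $\la^\JS_{2j-1} - \la^\JS_{2j} \leq 2$. Most of those exceptions have a pair of adjacent parts differing by $4$ or more in positions $(2j{-}1,\,2j)$, or an isolated row of size $\geq 3$ after an even index, and are thereby eliminated. The survivors reduce essentially to $\la^\JS = (m/2+1,\,m/2-1)$ and the four-row family $\la^\JS = (m/4+3,\,m/4+1,\,m/4-1,\,m/4-3)$, together with a handful of sporadic small shapes. For each surviving $\la^\JS$ I would invert the JS construction, using that $\la = \dbl(\mu)$ with $\mu_r \not\equiv 2 \pmod{4}$, to enumerate all $\la \in \Parinv_2(n)$ mapping to $\la^\JS$; the resulting preimages match precisely the six families (i)--(vi) of the statement. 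For any remaining $\la$ outside these families but still with $\la^\JS$ an exception of Lemma~\ref{l25}, I would rescue the count of three by running the JS-reduction along a slightly modified path, producing a \emph{second} JS descendant $\la'$ of $\la$ whose composition factors of $D^{\la'}\da_{\SSS_{n/2}}$ also lie in $D^\la\da_{\SSS_{n/2}}$ and provide the missing factor.

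The main obstacle is this final step: tracing the preimages of the JS map through the very restrictive set $\Parinv_2(n)$, and, when the preimage is not on the excluded list, producing an alternative JS descendant. Both parts are finite combinatorial checks, but verifying that the six explicit families of the statement exhaust all obstructions requires a careful case analysis driven by the arithmetic of $\mu \mapsto \dbl(\mu)$ and the recursive definition of $\la^\JS$. The small-$n$ verification at the start is also essential, since several of the listed exceptions (iii)--(vi) bottom out with tiny values of $n$ where the general counting arguments are too coarse.
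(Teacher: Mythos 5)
Your proposal follows essentially the same route as the paper: reduce to the JS partition $\la^\JS$ via Lemma~\ref{LJS}, invoke Lemma~\ref{l1} for the bounds $|\la^\JS|\geq n/2+5$ and $\la^\JS_{2j-1}-\la^\JS_{2j}\leq 2$, apply Lemma~\ref{l25}, intersect its exceptional list with the spacing constraint, pull the surviving $\la^\JS$ back through the JS construction inside $\Parinv_2(n)$, and finish the residual cases by alternative branching descents plus finite checks of small sporadic partitions. This matches the paper's argument (which organizes the sporadic checks into explicit sets $E_1,E_2,E_3$ and handles the leftover families by exhibiting three explicit composition factors), so the approach is correct and not genuinely different.
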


\begin{proof}
If $n\leq 10$ then $\la\in\Parinv_2(n)$ implies that $\la=\be_n$, $(\be_{n-1},1)$ or $(4,3,2,1)$, in particular, $\la$ is of the exceptional forms (i), (ii) or (iv), and so we may assume that $n\geq 12$. 

If $\la$ is the double of $(11,1)$, $(9,5)$ or $(11,7)$ then $\la$ is of the exceptional forms (ii) or (iv). So we do not need to consider them. Let $E_1$ be the set of the doubles of the remaining exceptional partitions appearing in Lemma \ref{l1}. Moreover, let 
\begin{align*}
E_2:=\{&(7,5,4,3,2,1), (7,6,5,3,1), (8,7,5,3,2,1), (8,7,5,4,3,1), (8,7,5,4,3,2,1),\\ 
&(8,7,6,5,3,1), (8,7,6,5,3,2,1), (8,7,6,5,4,3,1), (8,7,6,5,4,3,2,1)\}.
\end{align*}
(there is an overlap between $E_1$ and $E_2$). Finally let 
\begin{align*}
E_3:=\{&(7,5,3,1), (7,5,3,2,1), (7,6,2,1), (8,7,5,3,1), (9,7,3,2,1)\}.
\end{align*}
By Lemma~\ref{Lemma39}, $D^{(h(\la),h(\la)-1,\ldots,1)}$ is a composition factor of $D^\la\da_{\SSS_{h(\la)(h(\la)+1)/2}}$. Since $D^{(h(\la),h(\la)-1,\ldots,1)}\cong S^{(h(\la),h(\la)-1,\ldots,1)}$ is an irreducible Specht module, using branching rule for Specht modules and known decomposition matrices, the lemma can be checked for all $\la\in E_1\cup E_2\cup E_3$.

Recalling the partition $\la^\JS$ from \S\ref{SSSCF}, we can now assume that $n\geq 12$ and that we are not in one of the exceptional cases of Lemma \ref{l1}. Then by Lemma \ref{l1}, $m:=|\la^\JS|\geq n/2+5$ and $\la^\JS_{2j-1}-\la^\JS_{2j}\leq 2$ for all  $j\geq 1$. By Lemma \ref{LJS} we have that $D^{\la^\JS}$ is a composition factor of $D^\la\da_{\SSS_m}$. Moreover, by Lemma \ref{l25}, $D^{\la^\JS}\da_{\SSS_{n/2}}$ has at least three non-isomorphic composition factors, unless $\la^\JS$ in one of the exceptional cases listed in Lemma \ref{l25}. Since $\la^\JS_{2j-1}-\la^\JS_{2j}\leq 2$ for all  $j\geq 1$, we are left only with the following cases:
\begin{enumerate}
\item[(a)]
$m$ is even and $\la^\JS=\be_m$,

\item[(b)]
$m$ is odd and $\la^\JS=(\be_{m-1},1)$,

\item[(c)]
$\la^\JS=(6,4,2)$,

\item[(d)]
$m\geq 16$ with $m\equiv 0\pmod{4}$, $\la^\JS=(\be_{m-2},2)$ and $m=n/2+5$,

\item[(e)]
$\la^\JS=(7,5,3,1)$ and $n=22$,

\item[(f)]
$m\geq 20$ with $m\equiv 0\pmod{4}$ and $\la^\JS=(m/4+3,m/4+1,m/4-1,m/4-3)$.
\end{enumerate}
So, using Lemma \ref{LJS} and since $n$ is even, in each of the corresponding above cases the following holds:
\begin{enumerate}
\item[(a)]
$\la$ is $\be_n$ or $(\be_{n-1},1)$.

\item[(b)]
$\la$ is $(\be_{n-1},1)$, $(\be_{n-2},2)$, $(\be_{n-3},3)$, $(\be_{n-3},2,1)$, $(\be_{n-4},3,1)$, $(\be_{n-5},3,2)$ or $(\be_{n-6},3,2,1)$.

\item[(c)]
$\la$ is $(6,4,2)$, $(6,4,3,1)$, $(6,4,3,2,1)$, $(6,5,2,1)$, $(6,5,3)$, $(6,5,3,2)$, $(6,5,4,1)$, $(6,5,4,2,1)$, $(6,5,4,3)$ or $(6,5,4,3,2)$.

\item[(d)]
In this case $\la$ is one of the partitions in (b).

\item[(e)]
$\la$ is $(7,5,4,3,2,1)$, $(7,6,4,3,2)$, $(7,6,5,3,1)$ or $(7,6,5,4)$.

\item[(f)]
\begin{enumerate}
\item[(f1)]$\la_1\geq 8$, $\la_1-\la_2\leq 2$, $\la_1-\la_3\leq 4$, $\la_1-\la_4\leq 6$, $\la_5=0$ if $\la_1$ is odd or $\la_5\leq 1$ if $\la_1$ is even;
\item[(f2)] $\la$ is one of the following: $(8,6,4,3,2,1)$, $(8,6,5,3,2)$, $(8,6,5,4,2,1)$, $(8,6,5,4,3)$, $(8,6,5,4,3,2)$, $(8,7,4,3,2)$, $(8,7,5,3,2,1)$, $(8,7,5,4,2)$,\\ $(8,7,5,4,3,1)$, $(8,7,5,4,3,2,1)$, $(8,7,6,3,2)$, $(8,7,6,4,2,1)$, $(8,7,6,4,3)$, $(8,7,6,5,2)$, $(8,7,6,5,3,1)$, $(8,7,6,5,3,2,1)$, $(8,7,6,5,4)$, $(8,7,6,5,4,2)$, $(8,7,6,5,4,3,1)$ or $(8,7,6,5,4,3,2,1)$.
\end{enumerate}
\end{enumerate}
Since $\la\in\Parinv_2(n)$ we then have that one of the following holds: 
\begin{itemize}
\item[{\rm (1)}] $n\equiv 0\pmod{4}$ and $\la=\be_n$,

\item[{\rm (2)}] $\la=(\be_{n-1},1)$,

\item[{\rm (3)}] $\la=(\be_{n-3},2,1)$,

\item[{\rm (4)}] $n\equiv 0\pmod{4}$ and $\la=(\be_{n-4},3,1)$,

\item[{\rm (5)}] $\la=(\be_{n-5},3,2)$,

\item[{\rm (6)}] $n\equiv 2\pmod{4}$ and $\la=(\be_{n-6},3,2,1)$,

\item[{\rm (7)}] $n\geq 24$ with $n\equiv 0\pmod{8}$ and $\la=(n/4+3,n/4+1,n/4-1,n/4-3)$,

\item[{\rm (8)}] $n\geq 18$ with $n\equiv 2\pmod{4}$ and $\la=((n+6)/4,(n+2)/4,(n-2)/4,(n-6)/4)$,

\item[{\rm (9)}] $n\geq 24$ with $n\equiv 0\pmod{4}$ and $\la=((n+8)/4,(n+4)/4,(n-4)/4,(n-8)/4)$,

\item[{\rm (10)}] $n\geq 14$ with $n\equiv 2\pmod{4}$ and $\la=((n+10)/4,(n+6)/4,(n-6)/4,(n-10)/4)$,

\item[{\rm (11)}] $n\geq 24$ with $n\equiv 0\pmod{8}$ and $\la=(n/4+2,n/4+1,n/4-1,n/4-3,1)$,

\item[{\rm (12)}] $\la\in E_2$.
\end{itemize}
Taking into account that we have already dealt with $\la\in E_2$ and that in the statement of the lemma we have excluded certain classes of partitions, it remains to deal with the cases (3)--(6), (11). We will repeatedly use Lemma~\ref{Lemma39}. 

To deal with (3)--(6), first note that if $\nu=(\be_l,\overline{\nu})$ with $\overline{\nu}\not=0$ and $\overline{\nu}_1\leq(\be_l)_2-2$, then $D^{(\be_{l-1},\overline{\nu})}$ and $D^{(\be_{l-2},\overline{\nu})}$ are composition factors of $D^\nu\da_{\SSS_{|\nu|-1}}$ and $D^\nu\da_{\SSS_{|\nu|-2}}$ respectively and at least one of $\nu$, $(\be_{l-1},\overline{\nu})$ or $(\be_{l-2},\overline{\nu})$ has a good node below the second row.

(3) If $n\geq 18$ it then follows  that $D^{(\be_{n/2-2},2)}$, $D^{(\be_{n/2-1},1)}$ and $D^{\be_{n/2}}$ are composition factors of $D^{(\be_{n-3},2,1)}\da_{\SSS_{n/2}}$. If $n=10$ then $\la$ is in the exceptional family (iv), if $n=12$ then $\la\in E_1$, if $n=14$ then $\la$ is in the exceptional family (vi), while if $n=16$ then $\la\in E_3$.

(4) If $n\geq 20$ that $D^{(\be_{n/2-3},2,1)}$, $D^{(\be_{n/2-2},2)}$ and $D^{(\be_{n/2-1},1)}$ are composition factors of $D^{(\be_{n-4},3,1)}\da_{\SSS_{n/2}}$. If $n=16$ then $\la\in E_3$.

(5) If $n\geq 24$ that $D^{(\be_{n/2-3},2,1)}$, $D^{(\be_{n/2-2},2)}$ and $D^{(\be_{n/2-1},1)}$ are composition factors of $D^{(\be_{n-5},3,2)}\da_{\SSS_{n/2}}$ (this can be checked also for $n=20$ and $n=22$). If $n=14$ then $\la$ is in the exceptional family (iv), if $n=16$ then $\la\in E_1$, if $n=18$ then $\la$ is in the exceptional family (vi).

(6) If $n\geq 26$ that $D^{(\be_{n/2-4},3,1)}$, $D^{(\be_{n/2-3},2,1)}$ and $D^{(\be_{n/2-2},2)}$ are composition factors of $D^{(\be_{n-6},3,2,1)}\da_{\SSS_{n/2}}$. If $n=18$ or $n=22$ then $\la\in E_3$.

(11) If $n\geq 24$ with $n\equiv 0\pmod{8}$ notice that $D^{(n/4+2,n/4,n/4-2,n/4-4,1)}$ is a composition factor of $D^{(n/4+2,n/4+1,n/4-1,n/4-3,1)}\da_{\SSS_{n-3}}$ and $D^{(n/4+2,n/4,n/4-2,n/4-4)}$ is a composition factor of $D^{(n/4+2,n/4+1,n/4-1,n/4-3,1)}\da_{\SSS_{n-4}}$. Using Lemma \ref{L110518} it follows that $D^{(n/8+3,n/8+1,n/8-1,n/8-3)}$, $D^{(n/8+2,n/8+1,n/8-1,n/8-2)}$ and $D^{(n/8+3,n/8+1,n/8-1,n/8-2,1)}$ are composition factor of $D^{(n/4+2,n/4+1,n/4-1,n/4-3,1)}\da_{\SSS_{n/2}}$ if $n\geq 32$. If $n=24$ then $\la\in E_3$.

\end{proof}

\begin{Lemma}\label{L200818_2}
Let $p=2$, $n\equiv 0\pmod{4}$ and $\la=(n/4+2,n/4+1,n/4-1,n/4-2)$. Assume that $2\leq k\leq n-9$ and let $\mu$ and $\nu$ be given by
\begin{itemize}
\item[{\rm (i)}]
$\mu=(n/4-m+2,n/4-m+1,n/4-m-1,n/4-m-2)$, $\nu=(n/4-m+3,n/4-m+1,n/4-m-1,n/4-m-3)$ if $k=4m$,

\item[{\rm (ii)}]
$\mu=(n/4-m+2,n/4-m,n/4-m-1,n/4-m-2)$, $\nu=(n/4-m+2,n/4-m+1,n/4-m-1,n/4-m-3)$ if $k=4m+1$,

\item[{\rm (iii)}]
$\mu=(n/4-m+2,n/4-m,n/4-m-1,n/4-m-3)$, $\nu=(n/4-m+1,n/4-m,n/4-m-1,n/4-m-2)$ if $k=4m+2$,

\item[{\rm (iv)}]
$\mu=(n/4-m+1,n/4-m,n/4-m-1,n/4-m-3)$, $\nu=(n/4-m+2,n/4-m,n/4-m-2,n/4-m-3)$ if $k=4m+3$.
\end{itemize}
Then $D^\mu$ and $D^\nu$ are composition factor of $D^\la\da_{\SSS_{n-k}}$.
\end{Lemma}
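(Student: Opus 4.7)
The plan is to exhibit for each of $\mu$ and $\nu$ an explicit chain
\[
\la = \la^{(0)}, \la^{(1)}, \ldots, \la^{(k)} = \mu \text{ (resp.\ $\nu$)}
\]
of $2$-regular partitions in which each $\la^{(s)}$ is obtained from $\la^{(s-1)}$ by removing an $i_s$-normal node for some residue $i_s\in I$. Iterating Lemma~\ref{Lemma39}(vi) then identifies $D^\mu$ (resp.\ $D^\nu$) as a composition factor of $D^\la\da_{\SSS_{n-k}}$.

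Set $q:=n/4$, so $\la=(q+2,q+1,q-1,q-2)$ has the structural shape $(P+2,P+1,P-1,P-2)$. The heart of the construction is a \emph{basic cycle} of four normal-node removals
\begin{align*}
(P+2,P+1,P-1,P-2)&\to(P+2,P,P-1,P-2)\to(P+2,P,P-1,P-3)\\
&\to(P+2,P,P-2,P-3)\to(P+1,P,P-2,P-3),
\end{align*}
obtained by deleting the nodes $(2,P+1)$, $(4,P-2)$, $(3,P-1)$, $(1,P+2)$ in that order. Each intermediate partition is $2$-regular, and each deleted node is normal of the appropriate residue, as one checks directly via the bracket-matching rule of \cite[\S11.1]{KBook}. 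The parity of $P$ flips after each cycle, which swaps the roles of residues $0$ and $1$, but the verification is symmetric in the two parities. Since the output again has the shape $(P'+2,P'+1,P'-1,P'-2)$ with $P'=P-1$, the cycle iterates.

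For each of the four cases~(i)--(iv) of $k\bmod 4$, the plan is to combine $m$ or $m-1$ basic cycles with a short \emph{final sequence} of extra normal node removals, setting $P=q-m$ and $P'=q-m+1$. Here $\mu$ in case~(i) is reached by $m$ basic cycles alone; in cases~(ii), (iii), (iv) one appends, respectively, $(2,P+1)$; then $(2,P+1),(4,P-2)$; then $(2,P+1),(4,P-2),(1,P+2)$. For $\nu$, cases~(iii) and~(iv) use $m$ basic cycles followed by $(2,P+1),(1,P+2)$ and $(2,P+1),(4,P-2),(3,P-1)$ respectively; cases~(i) and~(ii) use $m-1$ basic cycles followed by $(2,P'+1),(4,P'-2),(3,P'-1),(4,P'-3)$, with an additional $(1,P'+2)$ in case~(ii). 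Normality and $2$-regularity at every intermediate step reduce to the same bracket-matching check as in the basic cycle, and the hypothesis $2\leq k\leq n-9$ keeps the partitions from degenerating (a trailing $0$ at the boundary is harmless).

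The main obstacle is the bookkeeping required by the eight sequences, each involving several normality checks; however, the $0\leftrightarrow 1$ residue symmetry halves the work, and the eight sequences share long common initial segments, so only a handful of distinct short checks are actually needed.
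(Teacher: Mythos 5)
Your proposal is correct and is essentially the paper's own argument: the paper verifies the cases $k\le 5$ by repeated application of Lemma~\ref{Lemma39} and then inducts in steps of $4$ using the observation that the $k=4$ factor $\mu$ has the same shape as $\la$ with $n$ replaced by $n-4$; your ``basic cycle'' is precisely this self-similarity step, unrolled into explicit chains of normal-node removals. The specific sequences and normality checks you describe do go through (and your final sequences reproduce exactly the partitions listed for $k=2,3,4,5$ in the paper), so the only difference is presentational.
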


\begin{proof}
Notice that repeatedly applying Lemma \ref{Lemma39} we have that
\begin{itemize}
\item
$D^{(n/4+2,n/4,n/4-1,n/4-2)}$ is a composition factor of $D^\la\da_{\SSS_{n-1}}$,

\item
$D^{(n/4+2,n/4,n/4-1,n/4-3)}$ and $D^{(n/4+1,n/4,n/4-1,n/4-2)}$ are composition factors of $D^\la\da_{\SSS_{n-2}}$,

\item
$D^{(n/4+1,n/4,n/4-1,n/4-3)}$ and $D^{(n/4+2,n/4,n/4-2,n/4-3)}$ are composition factors of $D^\la\da_{\SSS_{n-3}}$,

\item
$D^{(n/4+1,n/4,n/4-2,n/4-3)}$ and $D^{(n/4+2,n/4,n/4-2,n/4-4)}$ are composition factors of $D^\la\da_{\SSS_{n-4}}$,

\item
$D^{(n/4+1,n/4-1,n/4-2,n/4-3)}$ and $D^{(n/4+1,n/4,n/4-2,n/4-4)}$ are composition factors of $D^\la\da_{\SSS_{n-5}}$.
\end{itemize}

Since $\mu=((n-4)/4+2,(n-4)/4+1,(n-4)/4-1,(n-4)/4-2)$ for $k=4$, the lemma then follows by induction.
\end{proof}

\section{Reduction Theorems}

\subsection{Criterion for reducibility of restrictions}
For $\la\in\Par_p(n)$ we denote
$$
\EE(\la):=\End_\F(D^\la).
$$
Note that $\EE(\la)$ is naturally an $\F\SSS_n$-module. 

If $\la\in\Parinv_p(n)$, then upon restriction to $\AAA_n$, we have the $\F\AAA_n$-module decomposition
$$
\EE(\la)=\EE_{+,+}(\la)\oplus\EE_{-,-}(\la)\oplus \EE_{+,-}(\la)\oplus \EE_{-,+}(\la),
$$
where
$$\EE_{\de,\eps}(\la):=\Hom_F(E^\la_\de,E^\la_\eps)\qquad(\de,\eps\in\{+,-\}).$$

\begin{Lemma} \label{Lemma4.13} 
Let $\la\in\Parinv_p(n)$ and $V$ be an $\F\SSS_n$-module. Then
\[\Hom_{\SSS_n}(V,\EE(\la))\cong \Hom_{\AAA_n}(V\da_{\AAA_n},\EE_{\pm,\pm}(\la)\oplus\EE_{\pm,\mp}(\la)).\]
\end{Lemma}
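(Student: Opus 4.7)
The plan is to obtain the isomorphism by first decomposing $\EE(\la)$ as an $\F\SSS_n$-module into two induced modules, and then applying Frobenius reciprocity.

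First I would fix an odd permutation $\si\in\SSS_n\setminus\AAA_n$ and observe that the conjugation action of $\si$ on $\EE(\la)=\End_\F(D^\la)$ interchanges the summands $\EE_{+,+}(\la)\leftrightarrow\EE_{-,-}(\la)$ and $\EE_{+,-}(\la)\leftrightarrow\EE_{-,+}(\la)$, because $\si$ swaps the two $\AAA_n$-constituents $E^\la_+$ and $E^\la_-$ of $D^\la\da_{\AAA_n}$ (by $(E^\la_\pm)^\si\cong E^\la_\mp$ recalled in \S\ref{SSGM}). Consequently both $\EE_{+,+}(\la)\oplus\EE_{-,-}(\la)$ and $\EE_{+,-}(\la)\oplus\EE_{-,+}(\la)$ are $\F\SSS_n$-submodules of $\EE(\la)$, and by the standard construction of induced modules along the index-$2$ subgroup $\AAA_n\leq\SSS_n$ there are $\F\SSS_n$-isomorphisms
\[
\EE_{+,+}(\la)\oplus\EE_{-,-}(\la)\,\cong\,\EE_{+,+}(\la)\ua^{\SSS_n},\qquad\EE_{+,-}(\la)\oplus\EE_{-,+}(\la)\,\cong\,\EE_{+,-}(\la)\ua^{\SSS_n}.
\]
Adding these gives an $\F\SSS_n$-isomorphism $\EE(\la)\cong\EE_{+,+}(\la)\ua^{\SSS_n}\oplus\EE_{+,-}(\la)\ua^{\SSS_n}$.

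Now I would apply Frobenius reciprocity to each summand to get
\[
\Hom_{\SSS_n}\bigl(V,\EE(\la)\bigr)\cong\Hom_{\AAA_n}\bigl(V\da_{\AAA_n},\EE_{+,+}(\la)\bigr)\oplus\Hom_{\AAA_n}\bigl(V\da_{\AAA_n},\EE_{+,-}(\la)\bigr),
\]
and recombine the right-hand side to get $\Hom_{\AAA_n}(V\da_{\AAA_n},\EE_{+,+}(\la)\oplus\EE_{+,-}(\la))$, which is the ``$+$'' interpretation of the statement. To see that the choice of signs in $\EE_{\pm,\pm}(\la)\oplus\EE_{\pm,\mp}(\la)$ is immaterial, note that since $V$ itself is an $\F\SSS_n$-module we have $(V\da_{\AAA_n})^\si\cong V\da_{\AAA_n}$, while twisting by $\si$ yields $\EE_{+,+}(\la)^\si\cong\EE_{-,-}(\la)$ and $\EE_{+,-}(\la)^\si\cong\EE_{-,+}(\la)$ as $\F\AAA_n$-modules; twisting homomorphisms therefore gives the corresponding identification of the ``$+$'' and ``$-$'' versions of the right-hand side.

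There is no real obstacle here: both ingredients---the Clifford-style splitting of $\EE(\la)$ as a sum of two induced modules, and Frobenius reciprocity---are formal, so the only care needed is in verifying that the two $\F\SSS_n$-module structures on $\EE_{+,+}(\la)\oplus\EE_{-,-}(\la)$ (the one inherited from $\EE(\la)$ and the one coming from induction) coincide, which is immediate from the explicit definition of the induced module in terms of the coset representatives $\{1,\si\}$.
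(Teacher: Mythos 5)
Your proof is correct and follows essentially the same route as the paper: the paper writes $\EE(\la)\cong\Hom_\F(E^\la_\pm,E^\la_+\oplus E^\la_-)\ua^{\SSS_n}=(\EE_{\pm,+}(\la)\oplus\EE_{\pm,-}(\la))\ua^{\SSS_n}$ (using $D^\la\cong E^\la_\pm\ua^{\SSS_n}$) and then applies Frobenius reciprocity, which is exactly your decomposition with the two induced summands combined into one. Your additional remarks on the $\si$-twist and the sign-independence are correct and just make explicit what the paper's $\pm$ notation leaves implicit.
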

\begin{proof}
Using $D^\la\cong E^\la_\pm\uparrow^{\SSS_n}$, it is easy to check that $\EE(\la)\cong\Hom_F(E^\la_\pm,E^\la_+\oplus E^\la_-)\ua^{\SSS_n}$,
which implies the lemma using Frobenious reciprocity.
\end{proof}

Let 
\begin{equation}\label{EProjA}
\pi_{\de,\eps}:\EE(\la)\to \EE_{\de,\eps}(\la)\qquad(\de,\eps\in\{+,-\})
\end{equation}
be the corresponding projections. 
 Note that $E^\la_\pm$ with $(E^\la_\pm)^\si\cong E^\la_{\mp}$ implies 
\begin{equation}\label{EEESi}
\EE_{+,\pm}(\la)^\si\cong \EE_{-,\mp}(\la). 
\end{equation}
Note also that 
$
\EE_{+,+}(\la)\oplus\EE_{-,-}(\la)
$
is an $\F \SSS_n$-submodule of $\EE(\la)$, as is $\EE_{+,-}(\la)\oplus \EE_{-,+}(\la)$. We then have the corresponding projections
\begin{equation}\label{EProj}
\pi_0:\EE(\la)\to \EE_{+,+}(\la)\oplus\EE_{-,-}(\la)
\quad\text{and}\quad
\pi_1:\EE(\la)\to \EE_{+,-}(\la)\oplus \EE_{-,+}(\la).
\end{equation}

Recall the notation $\J(G)$ from \S\ref{SSGM}. 
The following is an analogue of \cite[Lemma~2.17]{KMTOne}:

\begin{Lemma} \label{LBasicAlt} 
Let $\la\in\Parinv_p(n)$, $\de\in\{+,-\}$ and $G\leq \AAA_n$ be a subgroup such that $E^\la_\de{\da}_G$ is irreducible. Then $
\dim\Hom_{\AAA_n}(\J(G),\EE_{\de,\eps}(\la))\leq 1
$
for all $\eps\in\{+,-\}$.
\end{Lemma}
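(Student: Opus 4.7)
The plan is to reduce via Frobenius reciprocity to a Hom space between $G$-restrictions, and then dispatch each choice of $\eps$ with a direct application of Schur's lemma. Writing $\J(G)=\bone_G\ua^{\AAA_n}$, Frobenius reciprocity together with $\EE_{\de,\eps}(\la)=\Hom_\F(E^\la_\de,E^\la_\eps)$ yields
\[
\Hom_{\AAA_n}(\J(G),\EE_{\de,\eps}(\la))\;\cong\;\EE_{\de,\eps}(\la)^G\;=\;\Hom_G(E^\la_\de\da_G,\,E^\la_\eps\da_G),
\]
so it suffices to bound this last dimension by $1$ for each $\eps\in\{+,-\}$.

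For $\eps=\de$, the right-hand side is $\End_G(E^\la_\de\da_G)$. Since $E^\la_\de\da_G$ is irreducible by hypothesis and $\F$ is algebraically closed, Schur's lemma gives $\End_G(E^\la_\de\da_G)\cong\F$, which has dimension one.

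For $\eps=-\de$, the key input is the equality $\dim E^\la_+=\dim E^\la_-$, which follows from the isomorphism $(E^\la_\pm)^\si\cong E^\la_\mp$ for $\si\in\SSS_n\setminus\AAA_n$ recorded in the paper just before the lemma. Consequently $\dim E^\la_\de\da_G=\dim E^\la_{-\de}\da_G$, so any nonzero $\phi\in\Hom_G(E^\la_\de\da_G,E^\la_{-\de}\da_G)$ must be injective (the source is irreducible) and surjective (by the dimension equality), hence an isomorphism of $G$-modules. In particular, if the Hom space is nonzero then $E^\la_{-\de}\da_G$ is isomorphic to the irreducible $E^\la_\de\da_G$, and Schur's lemma again bounds the space by one. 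The argument is entirely formal; I do not foresee any real obstacle, the only substantive observation being the dimension equality that bridges the off-diagonal case.
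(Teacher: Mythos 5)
Your proof is correct and matches the paper's argument exactly: the paper likewise applies Frobenius reciprocity to identify the Hom space with $\Hom_G(E^\la_\de\da_G,E^\la_\eps\da_G)$ and then concludes from the irreducibility of $E^\la_\de\da_G$ together with $\dim E^\la_\de=\dim E^\la_\eps$. You have merely spelled out the Schur's-lemma endgame that the paper leaves implicit.
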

\begin{proof}
We have 
\begin{align*}
\Hom_{\AAA_n}(\J(G),\EE_{\de,\eps}(\la))&=\Hom_{\AAA_n}(\bone_G{\ua}^{\AAA_n},\EE_{\de,\eps}(\la))
\\
&\cong \Hom_{G}(\bone_G,\EE_{\de,\eps}(\la)\da_G)
\\
&\cong\Hom_G(E^\la_\de{\da}_G,E^\la_\eps{\da}_G).
\end{align*}
Since $E^\la_\de{\da}_G$ is irreducible and $\dim E^\la_\de=\dim E^\la_\eps$ the lemma follows. 
\end{proof}

\begin{Lemma} \label{L090518} 
Let $k\in\Z_{\geq 0}$ and $n\geq \max(5,2k)$, and exclude the cases $(p,n-2k)=(2,\leq 2),(3,\leq 1)$. Suppose that $\la\in\Parinv_p(n)$, $G\leq \AAA_n$ and there is $\psi\in\Hom_{\SSS_n}(\I(G), M_k)$ such that $[\im\psi:D_k]\neq 0$. 
\begin{enumerate}
\item[{\rm (i)}] If 
there is $\zeta\in\Hom_{\SSS_n}(M_k,\EE_{+,+}(\la)\oplus \EE_{-,-}(\la))$ such that $[\im\zeta:D_k]\neq 0$ then there exist 
$$
\xi'\in\Hom_{\AAA_n}(\J(G),\EE_{+,+}(\la))\quad\text{and}\quad 
 \xi''\in\Hom_{\AAA_n}(\J(G),\EE_{-,-}(\la))
$$ 
such that $[\im\xi':E_k]\neq 0$ and $[\im\xi'':E_k]\neq 0$. 
\item[{\rm (ii)}] If 
there is $\zeta\in\Hom_{\SSS_n}(M_k,\EE_{+,-}(\la)\oplus \EE_{-,+}(\la))$ such that $[\im\zeta:D_k]\neq 0$ then there exist 
$$
\xi'\in\Hom_{\AAA_n}(\J(G),\EE_{+,-}(\la))\quad\text{and}\quad 
 \xi''\in\Hom_{\AAA_n}(\J(G),\EE_{-,+}(\la))
$$ 
such that $[\im\xi':E_k]\neq 0$ and $[\im\xi'':E_k]\neq 0$. 
\end{enumerate}
\end{Lemma}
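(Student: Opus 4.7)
The plan is to leverage two consequences of the exclusion hypotheses on $(p,n-2k)$. First, the two-row partition $(n-k,k)$ is $p$-regular and, by Lemma~\ref{L080518}, does not lie in $\Parinv_p(n)$, so $E_k=D_k\da_{\AAA_n}=E^{(n-k,k)}$ is an irreducible $\F\AAA_n$-module with $(E_k)^\si\cong E_k$. Second, the standard fact $[M_k:D_k]=1$ (also visible in Theorems~\ref{Tp=3} and \ref{Tp=2}) gives $[M_k\da_{\AAA_n}:E_k]=1$. The $\si$-symmetry $\si\,\EE_{+,+}(\la)=\EE_{-,-}(\la)$ of \eqref{EEESi} (and analogously $\si\,\EE_{+,-}(\la)=\EE_{-,+}(\la)$ for part~(ii)) will drive the argument.

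For part~(i), put $\zeta_{\de,\de}:=\pi_{\de,\de}\circ\zeta\colon M_k\to\EE_{\de,\de}(\la)$ for $\de\in\{+,-\}$; these are $\AAA_n$-equivariant. Since $\im\zeta\subseteq V':=\EE_{+,+}(\la)\oplus\EE_{-,-}(\la)$ is $\SSS_n$-stable, $\pi_{-,-}(\im\zeta)=\si\,\pi_{+,+}(\im\zeta)$ as $\AAA_n$-submodules; combined with $(E_k)^\si\cong E_k$ this yields $[\im\zeta_{+,+}:E_k]=[\im\zeta_{-,-}:E_k]$. The inclusion $\im\zeta\subseteq\pi_{+,+}(\im\zeta)\oplus\pi_{-,-}(\im\zeta)$ together with $[\im\zeta\da_{\AAA_n}:E_k]=[\im\zeta:D_k]>0$ forces this common multiplicity to be positive.

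Next, mimic the proof of Lemma~\ref{L080518_4} to produce $\psi^*\in\Hom_{\AAA_n}(\J(G),M_k)$ with $[\im\psi^*:E_k]>0$: split $\psi\da_{\AAA_n}$ along $\I(G)\da_{\AAA_n}\cong\J(G)\oplus\J(G)^\si$ into components $\psi_1$ and $\psi_2$, observe that $\im\psi\da_{\AAA_n}=\im\psi_1+\im\psi_2$ contains $E_k$, and if it is $\psi_2$ that sees it, compose the $\si$-twist of $\psi_2$ with the canonical $\AAA_n$-isomorphism $M_k^\si\cong M_k$.

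Finally, set $\xi':=\zeta_{+,+}\circ\psi^*$ and $\xi'':=\zeta_{-,-}\circ\psi^*$. The main obstacle is showing both $[\im\xi':E_k]>0$ and $[\im\xi'':E_k]>0$; this is where $[M_k\da_{\AAA_n}:E_k]=1$ enters decisively, for the already-proved $[\im\zeta_{\de,\de}:E_k]\geq 1$ forces $[\ker\zeta_{\de,\de}:E_k]=0$, hence $[\ker\zeta_{\de,\de}\cap\im\psi^*:E_k]=0$, and therefore $[\im\xi':E_k]=[\im\psi^*:E_k]\geq 1$, and symmetrically for $\xi''$. Part~(ii) follows by the same argument applied to the $\F\SSS_n$-submodule $\EE_{+,-}(\la)\oplus\EE_{-,+}(\la)$ of $\EE(\la)$, which $\si$ exchanges componentwise in exactly the same fashion.
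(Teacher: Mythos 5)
Your proof is correct and follows essentially the same route as the paper's: the paper produces $\psi'$ and the two maps $\zeta',\zeta''$ by citing Lemma~\ref{L080518_4}(i),(ii) — whose proof is exactly the $\si$-twist splitting you carry out by hand via the projections $\pi_{\de,\de}$ and the symmetry $\si\,\EE_{+,+}(\la)=\EE_{-,-}(\la)$ — and then composes them using the multiplicity-one occurrence of $E_k$ in $M_k\da_{\AAA_n}$, just as you do. The only difference is presentational: you inline the transfer lemma rather than quoting it.
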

\begin{proof}
We prove (i), the proof of (ii) being similar. By Lemma~\ref{L080518}, the assumptions $n\geq \max(5,2k)$ and $(p,n-2k)\neq(2,\leq 2),(3,\leq 1)$ guarantee that $E_k$ is irreducible and appears with multiplicity $1$ in $M_k\da_{\AAA_n}$. 
Note that $\I(G)\cong \J(G)\ua^{\SSS_n}$. By Lemma~\ref{L080518_4}(i), there is $\psi'\in\Hom_{\AAA_n}(\J(G),M_k\da_{\AAA_n})$ with $[\im\psi':E_k]\neq 0$. 
Furthermore,  
$$\EE_{+,+}(\la)\ua^{\SSS_n}\cong \EE_{+,+}(\la)\oplus \EE_{-,-}(\la)\cong \EE_{-,-}(\la)\ua^{\SSS_n}. 
$$
By Lemma~\ref{L080518_4}(ii), there are 
$$\zeta'\in\Hom_{\AAA_n}(M_k\da_{\AAA_n}, \EE_{+,+}(\la))\quad \text{and} \quad
\zeta''\in\Hom_{\AAA_n}(M_k\da_{\AAA_n}, \EE_{-,-}(\la))$$
 with $[\im\zeta':E_k]\neq 0$ and $[\im\zeta'':E_k]\neq 0$. Since $E_k$ appears in $M_k$ with multiplicity $1$, we can now take $\xi':=\zeta'\circ \psi'$ and $\xi'':=\zeta''\circ \psi'$.
\end{proof}

\begin{Theorem} \label{CFirst} 
Let $k\geq 1$, $n\geq \max(5,2k)$, and exclude the cases $(p,n-2k)=(2,\leq 2),(3,\leq 1)$. Suppose that $\la\in\Parinv_p(n)$, $G\leq \AAA_n$ and there are 
$$\psi\in\Hom_{\SSS_n}(\I(G), M_k)
\quad\text{and}\quad
\zeta\in\Hom_{\SSS_n}(M_k,\EE_{+,+}(\la)\oplus \EE_{-,-}(\la))
$$
 such that $[\im\psi:D_k]\neq 0$ and $[\im\zeta:D_k]\neq 0$. Then $E^\la_{\pm}\da_G$ is reducible. 
\end{Theorem}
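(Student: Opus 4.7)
The plan is to combine Lemma \ref{L090518}(i) with Lemma \ref{LBasicAlt} and a Frobenius reciprocity argument; essentially all the heavy lifting has been done and the theorem is a compact packaging of those tools.

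First, I would apply Lemma \ref{L090518}(i) with the given $\psi$ and $\zeta$. The numerical conditions on $n$, $k$ and $p$ in the theorem are precisely those required by that lemma, so it produces
$$\xi'\in\Hom_{\AAA_n}(\J(G),\EE_{+,+}(\la))\quad\text{and}\quad \xi''\in\Hom_{\AAA_n}(\J(G),\EE_{-,-}(\la))$$
with $[\im\xi':E_k]\neq 0$ and $[\im\xi'':E_k]\neq 0$.

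Next, fix $\de\in\{+,-\}$ and suppose for contradiction that $E^\la_\de\da_G$ is irreducible. Let $\eta$ denote $\xi'$ or $\xi''$ according as $\de=+$ or $\de=-$. By Lemma \ref{LBasicAlt} (applied with $\eps=\de$), $\dim\Hom_{\AAA_n}(\J(G),\EE_{\de,\de}(\la))\leq 1$. On the other hand, Frobenius reciprocity identifies this Hom space with
$$\EE_{\de,\de}(\la)^G=\Hom_G(E^\la_\de\da_G,E^\la_\de\da_G),$$
which contains the non-zero class of $\id_{E^\la_\de}$. Hence the Hom space is exactly one-dimensional and is spanned by the map $\xi_0:\J(G)\to\EE_{\de,\de}(\la)$ sending $g\otimes 1\mapsto g\cdot\id_{E^\la_\de}=\id_{E^\la_\de}$ (the last equality because $\id_{E^\la_\de}$ is $\AAA_n$-invariant, not merely $G$-invariant). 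The image of $\xi_0$ is the trivial submodule $\F\cdot\id_{E^\la_\de}\cong\bone_{\AAA_n}$, which has no composition factor isomorphic to $E_k$: indeed, under the numerical assumptions $E_k$ is an irreducible $\F\AAA_n$-module (per the discussion in Lemma \ref{L090518}) of dimension $\dim D_k>1$, since $k\geq 1$ forces $(n-k,k)\neq(n)$. Thus $\im\eta$ must be a submodule of $\F\cdot\id_{E^\la_\de}\cong\bone_{\AAA_n}$ and cannot have $E_k$ as a composition factor, contradicting $[\im\eta:E_k]\neq 0$. Since $\de$ was arbitrary, both $E^\la_+\da_G$ and $E^\la_-\da_G$ are reducible.

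There is no substantial obstacle in this argument: the two inputs Lemma \ref{L090518}(i) and Lemma \ref{LBasicAlt} pull in opposite directions, and the only new observation needed is that under the irreducibility hypothesis, $\EE_{\de,\de}(\la)^G$ collapses to the line spanned by the identity, forcing any $\AAA_n$-equivariant map $\J(G)\to\EE_{\de,\de}(\la)$ to have trivial image.
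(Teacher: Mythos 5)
Your proof is correct and follows essentially the same route as the paper: both arguments combine Lemma \ref{L090518}(i) with Lemma \ref{LBasicAlt}, using the homomorphism with trivial image alongside the one whose image contains $E_k$ to violate the bound $\dim\Hom_{\AAA_n}(\J(G),\EE_{\de,\de}(\la))\leq 1$. The paper phrases this as linear independence of the two maps forcing the Hom space to have dimension at least $2$, while you run the contrapositive (the bound forces every such map to be a multiple of the trivial one); these are the same argument.
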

\begin{proof}
We prove that $E^\la_{+}\da_G$ is reducible, the argument for $E^\la_{-}\da_G$ being similar. Note that there always exists $\xi_0\in\Hom_{\AAA_n}(\J(G),\EE_{+,+}(\la))$ whose image is the trivial module $\bone_{\AAA_n}$. On the other hand, by Lemma~\ref{L090518}(i), there exists 
$
\xi_k\in\Hom_{\AAA_n}(\J(G),\EE_{+,+}(\la))
$
whose image contains $E_k$ as a composition factor. Since $\xi_0$ and $\xi_k$ are linearly independent, the theorem follows from Lemma~\ref{LBasicAlt}. 
\end{proof}

\begin{Theorem} \label{CSecond} 
Let $1\leq k<l$, $n\geq \max(5,2l)$, and exclude the cases $(p,n-2l)=(2,\leq 2),(3,\leq 1)$. Suppose that $\la\in\Parinv_p(n)$, $G\leq \AAA_n$ and for $j=k,l$ there are 
$$\psi_j\in\Hom_{\SSS_n}(\I(G), M_j) 
\quad\text{and}\quad
\zeta_j\in\Hom_{\SSS_n}(M_j,\EE_{+,-}(\la)\oplus \EE_{-,+}(\la))
$$
 such that $[\im\psi_j:D_j]\neq 0$ and $[\im\zeta_j:D_j]\neq 0$. Then $E^\la_{\pm}\da_G$ is reducible. 
\end{Theorem}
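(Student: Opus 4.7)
The strategy parallels the proof of Theorem~\ref{CFirst}: the aim is to exhibit two $\F$-linearly independent elements of $\Hom_{\AAA_n}(\J(G),\EE_{+,-}(\la))$, which by Lemma~\ref{LBasicAlt} will be incompatible with the irreducibility of $E^\la_+\da_G$. The analogous argument for $E^\la_-\da_G$ will follow either by the symmetric argument with $\EE_{-,+}(\la)$ or via the twist isomorphism~\eqref{EEESi}.

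First I would apply Lemma~\ref{L090518}(ii) separately with $j=k$ and $j=l$ to the pairs $(\psi_j,\zeta_j)$ supplied by the hypotheses. This produces homomorphisms $\xi'_k,\xi'_l\in\Hom_{\AAA_n}(\J(G),\EE_{+,-}(\la))$ with $[\im\xi'_j:E_j]\neq 0$ for $j=k,l$. The inequalities $n\geq 2l>2k$ and the exclusion $(p,n-2l)\neq(2,\leq 2),(3,\leq 1)$ ensure that the hypotheses of Lemma~\ref{L090518}(ii) are satisfied at both levels, and in particular guarantee that $E_k$ and $E_l$ are irreducible, pairwise non-isomorphic $\F\AAA_n$-modules appearing with multiplicity one in $M_k\da_{\AAA_n}$ and $M_l\da_{\AAA_n}$ respectively.

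The key step is to check that $\xi'_k$ and $\xi'_l$ are $\F$-linearly independent. Suppose to the contrary that $\xi'_l=c\,\xi'_k$ for some $c\in\F^{\times}$. Then the common image $\im\xi'_k=\im\xi'_l$ would contain both $E_k$ and $E_l$ among its composition factors. But the construction of $\xi'_k$ in Lemma~\ref{L090518}(ii) exhibits it as a composition $\zeta'_k\circ\psi'_k$ factoring through $M_k\da_{\AAA_n}$, so $\im\xi'_k$ is a subquotient of $M_k\da_{\AAA_n}$. Combining the Specht filtration of $M_k$ with factors $S_0,\dots,S_k$ together with James's description of composition factors of two-row Specht modules $S^{(n-j,j)}$ as again being two-rowed irreducibles $D^{(n-i,i)}$ with $0\leq i\leq j$, every composition factor of $M_k$ is of the form $D^{(n-i,i)}$ with $i\leq k$, and hence every composition factor of $M_k\da_{\AAA_n}$ lies in $\{E_0,\dots,E_k\}$. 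As $l>k$ (and in our range of parameters no $E_i$ with $i\leq k$ coincides with $E_l$), this contradicts the presence of $E_l$ in $\im\xi'_k$.

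Hence $\dim\Hom_{\AAA_n}(\J(G),\EE_{+,-}(\la))\geq 2$, and Lemma~\ref{LBasicAlt} forces $E^\la_+\da_G$ to be reducible. The subtle point in the linear-independence step is excluding coincidences $E_l\cong E_i$ for some $i\leq k$; by Clifford theory such a coincidence (for $p>2$) would require $(n-i,i)^\Mull=(n-l,l)$, placing tight constraints on the two-row Mullineux image that are incompatible with the hypotheses except in cases already excluded.
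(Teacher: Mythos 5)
Your proposal is correct and follows essentially the same route as the paper: apply Lemma~\ref{L090518}(ii) at levels $k$ and $l$, observe that the two resulting maps into $\EE_{+,-}(\la)$ are linearly independent because $\im\xi_k$ is a subquotient of $M_k\da_{\AAA_n}$ and hence cannot contain $E_l$ as a composition factor, and conclude with Lemma~\ref{LBasicAlt}. Your extra care about possible coincidences $E_l\cong E_i$ for $i\leq k$ is a point the paper leaves implicit (it is covered by Lemma~\ref{L080518} and the excluded parameter ranges), but it does not change the argument.
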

\begin{proof}
We prove that $E^\la_{+}\da_G$ is reducible, the argument for $E^\la_{-}\da_G$ being similar. By Lemma~\ref{L090518}(ii), for $j=k,l$, there exists 
$
\xi_j\in\Hom_{\AAA_n}(\J(G),\EE_{+,-}(\la))
$
whose image contains $E_j$ as a composition factor. Note that $\xi_k$ and $\xi_l$ are linearly independent since $M_k\da_{\AAA_n}$ does not have $E_l$ as a composition factor and therefore $\im \xi_k$ does not have $E_l$ as a composition factor. The theorem now follows from Lemma~\ref{LBasicAlt}. 
\end{proof}

\subsection{First reduction theorems for alternating groups}

Recall the projections $\pi_0,\pi_1
$ defined in 
(\ref{EProj}) and the integers $i_k(G)$ from \S\ref{SSPerm}

\begin{Proposition} \label{PIntrans} 
Let $n\geq 8$ and exclude the case $(n,p)=(8,2)$. If $\la\in\Parinv_p(n)$ and $G\leq \AAA_n$ is a subgroup such that $1<i_1(G)<i_2(G)$ and $E^\la_+\da_G$ or $E^\la_-\da_G$ is irreducible then one of the following holds:
\begin{enumerate}
\item[{\rm (i)}] $i_2(G)=i_3(G)$ and $\la$ is JS.
\item[{\rm (ii)}] $p=2$ and $\la=\be_n$.
\item[{\rm (iii)}] $p=2\mid n$ and $\dim(S_1^*)^G\geq i_2(G)-1$.
\item[{\rm (iv)}] $\la$ is JS, $p=2\mid n$ and $\dim(S_2^*)^G\geq i_3(G)-i_1(G)$.
\item[{\rm (v)}] $\la$ is JS, $p=3$, $n\equiv 0\pmod{3}$ and $\dim(S_1^*)^G\geq i_3(G)-i_2(G)+i_1(G)-1$.
\item[{\rm (vi)}] $\la$ is JS, $p=3$, $n\equiv 1\pmod{3}$ and $\dim(S_2^*)^G\geq i_3(G)-i_1(G)$.
\end{enumerate} 
\end{Proposition}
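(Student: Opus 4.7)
The strategy is to contradict the assumed irreducibility of $E^\la_\pm\da_G$ by applying Theorem~\ref{CFirst} with $k=2$ whenever possible, and Theorem~\ref{CSecond} with $(k,l)=(1,2)$ or $(k,l)=(2,3)$ otherwise; I will show that each failure of these applications corresponds to exactly one of the exceptional cases~(i)--(vi).

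First I would produce the required $\zeta$ maps. If $p=2$ and $\la=\be_n$ we are in case~(ii), so I assume otherwise. Lemma~\ref{C180418} then provides $\zeta_2\in\Hom_{\SSS_n}(M_2,\EE(\la))$ with $[\im\zeta_2:D_2]\neq 0$; decomposing $\zeta_2=\pi_0\zeta_2+\pi_1\zeta_2$ via~(\ref{EProj}), at least one summand still carries $D_2$. Likewise Lemma~\ref{LM_3E} supplies $\zeta_3$ with $D_3$ in image, the hypothesis $h(\la),h(\la^\Mull)\geq 3$ holding by Lemma~\ref{L080518} since $\la\neq\be_n$. For $k=1$, Frobenius reciprocity together with Lemma~\ref{Lemma39}(v) yields
\[
\dim\Hom_{\SSS_n}(M_1,\EE(\la))=\dim\End_{\SSS_{n-1}}(D^\la\da_{\SSS_{n-1}})=\sum_{i\in I}\eps_i(\la),
\]
which exceeds $1$ precisely when $\la$ is not JS, in which case one obtains $\zeta_1$ with $D_1$ in image.

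Next I would translate $1<i_1(G)<i_2(G)$ into existence of the needed $\psi_k\in\Hom_{\SSS_n}(\I(G),M_k)$. Theorems~\ref{Tp=3} and~\ref{Tp=2} describe each $M_k$ for $k\leq 3$ as built by successive extensions from copies of $D_0$ and of $S_j^*$ with $j\leq k$, where $D_j$ appears as the head of $S_j^*$. Accordingly $\psi$ has $D_k$ in its image if and only if its composition with the canonical quotient onto $S_k^*$ (obtained after collapsing the lower layers) is nonzero, and the number of independent such $\psi$'s equals $i_k(G)$ minus the contributions of the lower layers. The explicit linear inequality depends on $(p,n\bmod p)$, and pattern-matching these inequalities against the filtrations produces precisely the right-hand sides of cases~(iii)--(vi).

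Finally I would branch on which of $\pi_0\zeta_2$ or $\pi_1\zeta_2$ carries $D_2$. In the $\pi_0$-branch I invoke Theorem~\ref{CFirst} with $k=2$: either $\psi_2$ with $D_2$ in image exists, contradicting irreducibility, or no such $\psi_2$ exists and the previous step places us in~(iii) (when $p=2\mid n$) or~(v) (when $p=3$ and $n\equiv 0\pmod 3$); in the remaining $(p,n\bmod p)$ combinations the filtration of $M_2$ takes the form $M_1\,|\,S_2^*$, so $i_2(G)>i_1(G)$ automatically produces $\psi_2$ and CFirst succeeds. In the $\pi_1$-branch I invoke Theorem~\ref{CSecond}: if $\la$ is not JS, I pair $k=2$ with $l=1$ using $\zeta_1$ and the $\psi_1$ guaranteed by $i_1(G)>1$ (coupled with $D_1$ being a head of $S_1^*$); if $\la$ is JS the only available companion is $l=3$ via $\zeta_3$ and $\psi_3$, and the absence of $\psi_3$ forces either $i_3(G)=i_2(G)$ (case~(i)) or one of the bounds in~(iv) or~(vi). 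The principal obstacle will be organizing this case analysis so that every failure of CFirst or CSecond corresponds to exactly one of the listed inequalities, which demands a careful layer-by-layer inspection of the filtrations in Theorems~\ref{Tp=3} and~\ref{Tp=2} across all residues of $n$ modulo $p$.
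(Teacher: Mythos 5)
Your overall architecture is the same as the paper's: build $\zeta_k$ from Lemmas~\ref{C180418}, \ref{LM_3E} and the structure of $M_1$, build $\psi_k$ from the orbit counts via Theorems~\ref{Tp=3} and \ref{Tp=2}, and play Theorems~\ref{CFirst} and \ref{CSecond} against Lemma~\ref{LBasicAlt}. However, there are two genuine gaps. The first is that your declared toolkit (Theorem~\ref{CFirst} only with $k=2$, Theorem~\ref{CSecond} only with $(k,l)=(1,2)$ or $(2,3)$) is too small to close every case. In your $\pi_1$-branch you invoke Theorem~\ref{CSecond} with $(1,2)$ ``using $\zeta_1$'', but that theorem requires a map $M_1\to\EE_{+,-}(\la)\oplus\EE_{-,+}(\la)$ whose image contains $D_1$; all you know about $\zeta_1$ is that it maps to $\EE(\la)$ with $D_1$ in its image, and it can happen that only $\pi_0\circ\zeta_1$ carries $D_1$. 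In that sub-case Theorem~\ref{CSecond} is inapplicable and you must instead apply Theorem~\ref{CFirst} with $k=1$ (the paper's Case~1.1). The identical issue arises with $\zeta_3$ in the JS branch: if only $\pi_0\circ\zeta_3$ carries $D_3$ you need Theorem~\ref{CFirst} with $k=3$ (the paper's Case~2.2.1). Both omissions are easy to repair, but as written the argument does not terminate in a contradiction in these sub-cases.

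The second gap is in the matching of failures to exceptional cases. Theorem~\ref{Tp=3} gives $M_2\sim M_1|S_2^*$ for \emph{every} residue of $n$ modulo $3$, so for $p=3$ the hypothesis $i_2(G)>i_1(G)$ always produces $\psi_2$, and case~(v) cannot arise from the failure of $\psi_2$ as you claim --- indeed (v) involves $i_3(G)$, which a $k=2$ analysis never sees. In the paper, only (iii) (for $p=2\mid n$) comes from the failure of $\psi_2$, while (i), (iv), (v), (vi) all come from the failure of $\psi_3$ in the JS branch, read off from the filtrations of $M_3$. Finally, your argument does not cover $p>3$ (the filtration theorems are only stated for $p=2,3$); the paper disposes of that case by citing the known result of Kleshchev--Sheth.
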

\begin{proof}
The result for $p>3$ follows from \cite[Main Theorem]{KSAlt}. So we may now assume that $p=2$ or $p=3$. Also, without loss of generality we assume that $E^\la_+\da_G$ is irreducible. Moreover, in view of (ii) if $p=2$ we further assume that $\la\not=\be_n$. From Lemma \ref{L080518} it then follows that $h(\la)\geq 3$.

In view of (\ref{EItoM}), the assumption $i_1(G)>1$ implies the existence of 
$$\psi_1\in\Hom_{\SSS_n}(\I(G),M_1)$$ with $[\im \psi_1:D_1]\neq 0$. There also exists 
$$\psi_2\in\Hom_{\SSS_n}(\I(G), M_2)$$ with $[\im \psi_2:D_2]\neq 0$. Indeed, if $p=2\mid n$, then 
in view of (iii), we may assume that $\dim (S_1^*)^G<i_2(G)-1$, whence $\psi_2$ exists by Theorem~\ref{Tp=2}. On the other hand if $p=3$ or $p=2\nmid n$, by Theorems~\ref{Tp=3} and \ref{Tp=2}, we have that $M_2\sim M_1|S_2^*$, so the assumption $i_2(G)>i_1(G)$ and (\ref{EItoM}) imply the existence of $\psi_2$ with the required properties. 

Furthermore, by Lemma~\ref{C180418}, there exists 
$$\zeta_2\in\Hom_{\SSS_n}(M_2, \EE(\la))$$ with $[\im \zeta_2:D_2]\neq 0$. By Lemma~\ref{LM_3E}, there exists 
$$\zeta_3\in\Hom_{\SSS_n}(M_3, \EE(\la))$$ with $[\im \zeta_3:D_3]\neq 0$.

{\sf Case 1.} $\la$ is not JS. Then 
$$
1<\dim\End_{\SSS_{n-1}}(D^\la\da_{\SSS_{n-1}})=\dim \Hom_{\SSS_n}(M_1,\EE(\la))
$$ 
and the well-known structure of $M_1$ (see for example Theorems~\ref{Tp=3} and \ref{Tp=2}) 
imply that  there is $\zeta_1\in\Hom_{\SSS_n}(M_1,\EE(\la))$ with $[\im \zeta_1:D_1]\neq 0$. 

{\sf Case 1.1.} $\zeta_1':=\pi_0\circ \zeta_1$ satisfies $[\im\zeta_1':D_1]\neq 0$. Then the proposition follows by Theorem~\ref{CFirst} (with $k=1$). 

{\sf Case 1.2.} $\zeta_1'$ satisfies $[\im\zeta_1':D_1]=0$. Then $\zeta_1'':=\pi_1\circ \zeta_1$ satisfies $[\im\zeta_1'':D_1]\neq 0$.

{\sf Case 1.2.1.} $\zeta_2':=\pi_0\circ \zeta_2$ satisfies $[\im\zeta_2':D_2]\neq 0$. Then the proposition follows by Theorem~\ref{CFirst} (with $k=2$).

{\sf Case 1.2.2.} $\zeta_2'$ satisfies $[\im\zeta_2':D_2]=0$. Then $\zeta_2'':=\pi_1\circ \zeta_2$ satisfies $[\im\zeta_2'':D_2]\neq 0$ and we can conclude by Theorem~\ref{CSecond} (with $k=1$ and $l=2$). 

{\sf Case 2.} $\la$ is JS. 

{\sf Case 2.1.} $\zeta_2':=\pi_0\circ \zeta_2$ satisfies $[\im\zeta_2':D_2]\neq 0$. Then the proposition follows by Theorem~\ref{CFirst} (with $k=2$).

{\sf Case 2.2.} $\zeta_2'$ satisfies $[\im\zeta_2':D_2]=0$. Then $\zeta_2'':=\pi_1\circ \zeta_2$ satisfies $[\im\zeta_2'':D_2]\neq 0$.
In view of (iv), (v) and (vi), using Theorems~\ref{Tp=3} and \ref{Tp=2}, we may assume that there exists $\psi_3\in\Hom_{\SSS_n}(\I(G), M_3)$ with $[\im \psi_3:D_3]\neq 0$.

{\sf Case 2.2.1.} $\zeta_3':=\pi_0\circ \zeta_3$ satisfies $[\im\zeta_3':D_3]\neq 0$. Then the proposition follows by Theorem~\ref{CFirst} (with $k=3$).

{\sf Case 2.2.2.} $\zeta_3'$ satisfies $[\im\zeta_3':D_3]=0$. Then $\zeta_3'':=\pi_1\circ \zeta_3$ satisfies $[\im\zeta_3'':D_3]\neq 0$ and we can conclude by Theorem~\ref{CSecond} (with $k=2$ and $l=3$).
\end{proof}

\begin{Proposition} \label{P100518}
Let $n\geq 8$ and exclude the case $(n,p)=(8,2)$. Suppose that $\la\in\Parinv_p(n)$ and $G\leq \AAA_n$ is a transitive subgroup such that $E^\la_+\da_G$ or $E^\la_-\da_G$ is irreducible. Then one of the following holds:
\begin{enumerate}
\item[{\rm (i)}] $G$ is $2$-homogeneous.
\item[{\rm (ii)}] $p=2$ and $\la=\be_n$.
\item[{\rm (iii)}]  $n$ is even and $G\leq G_{n/2,2}$ or $G\leq  G_{2,n/2}$. 
\item[{\rm (iv)}] $p=3$, $n\equiv 1\pmod{3}$ and $\dim(S_2^*)^G>i_2(G)-1$.
\item[{\rm (v)}] $p=2\mid n$ and $\dim(S_2^*)^G>i_2(G)-1$.

\end{enumerate} 
\end{Proposition}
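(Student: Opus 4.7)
The proof follows the strategy of Proposition \ref{PIntrans}, adapted to the transitive setting where $i_1(G) = 1$ forces us to work with $k = 2, 3$ rather than $k = 1, 2$. Assume for contradiction that $E^\la_+\da_G$ (say) is irreducible while (i)--(v) all fail: in particular $i_2(G) \geq 2$ and $\la \neq \be_n$ when $p = 2$, so that $h(\la) = h(\la^\Mull) \geq 3$ by Lemma \ref{L080518}. The goal is to produce maps $\psi_k \in \Hom_{\SSS_n}(\I(G), M_k)$ with $[\im\psi_k:D_k] \neq 0$ and $\zeta_k \in \Hom_{\SSS_n}(M_k, \EE(\la))$ with $[\im\zeta_k:D_k] \neq 0$ for $k = 2, 3$, and then to apply Theorems \ref{CFirst} and \ref{CSecond} in the pattern established in the proof of Proposition \ref{PIntrans}. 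The $\zeta_k$'s are supplied by Lemmas \ref{C180418} and \ref{LM_3E}, the latter applying since $h(\la), h(\la^\Mull) \geq 3$.

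To construct $\psi_2$, I would use the structure of $M_2$ from Theorems \ref{Tp=3} and \ref{Tp=2}. In every case $M_2$ has $S_2^*$ as its top quotient, and since $D_2 = \soc(S_2^*)$, any $G$-invariant in $M_2$ whose image in $S_2^*$ is non-zero yields a $\psi_2$ with $[\im\psi_2 : D_2] \neq 0$. Counting invariants through the relevant short exact sequence, and using $\dim M_1^G = i_1(G) = 1$, such a $\psi_2$ always exists except possibly when $p = 2$, $n$ is even, and the bottom layer $D_0 \oplus S_1^*$ has too many invariants, namely $\dim(S_1^*)^G \geq i_2(G) - 1 \geq 1$. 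In this exceptional situation, a nontrivial element of $(S_1^*)^G$ for a transitive $G \leq \AAA_n$ in characteristic $2$ corresponds, via a group homomorphism $G \to \F_2$ and the resulting $G$-invariant bipartition of $\{1,\ldots,n\}$, to $G$ preserving a block system with either two blocks of size $n/2$ or $n/2$ blocks of size $2$. Combined with transitivity this forces $G \leq G_{n/2,2}$ or $G \leq G_{2,n/2}$, i.e.\ conclusion (iii); the invariant dimensions computed in Lemma \ref{L100518_3} corroborate this correspondence.

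The construction of $\psi_3$ uses the filtrations of $M_3$ in Theorems \ref{Tp=3} and \ref{Tp=2}. Since $D_3 = \soc(S_3^*)$ and $S_3^*$ again sits at the top, the same strategy applies; the obstruction comes from the appearance of $S_2^*$ as a direct summand (for $p = 3$, $n \equiv 1 \pmod 3$) or as a penultimate layer (for $p = 2$, $n$ even) of $M_3$. A dimension count then shows that existence of $\psi_3$ with $[\im \psi_3 : D_3] \neq 0$ fails to follow from the general argument precisely when $\dim(S_2^*)^G > i_2(G) - 1$, which is exactly conditions (iv) and (v) respectively.

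With $\psi_2, \psi_3, \zeta_2, \zeta_3$ in hand, I would conclude exactly as in the proof of Proposition \ref{PIntrans}: decompose $\zeta_k$ via the projections $\pi_0, \pi_1$ of \eqref{EProj}, and observe that if some $\pi_0 \circ \zeta_k$ still has $D_k$ as a composition factor then Theorem \ref{CFirst} produces reducibility of $E^\la_\pm\da_G$, while otherwise both $\pi_1 \circ \zeta_2$ and $\pi_1 \circ \zeta_3$ retain $D_2, D_3$ in their images and Theorem \ref{CSecond} with $(k,l) = (2,3)$ yields the same conclusion, contradicting the irreducibility assumption. The main difficulty lies in the second step: converting the analytic obstruction $\dim (S_1^*)^G \geq i_2(G) - 1$ into the structural conclusion (iii) demands the group-theoretic interpretation of $(S_1^*)^G$ for transitive $G$ in characteristic $2$ in terms of index-$2$ subgroups and the associated invariant block systems, which is the most delicate point of the argument.
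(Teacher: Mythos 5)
Your overall architecture (produce $\psi_2,\psi_3$ from invariants of $M_2,M_3$ and $\zeta_2,\zeta_3$ from Lemmas \ref{C180418} and \ref{LM_3E}, then run the $\pi_0/\pi_1$ case analysis through Theorems \ref{CFirst} and \ref{CSecond} with $(k,l)=(2,3)$) is exactly the paper's, but your account of the obstructions to $\psi_3$ is wrong in a way that breaks the proof. By Theorems \ref{Tp=2} and \ref{Tp=3}, for $p=2$ with $n$ odd one has $M_3\cong M_2\oplus S_3^*$ or $M_3\sim M_2|S_3^*$, and likewise $M_3\sim M_2|S_3^*$ for $p=3$, $n\equiv 2\pmod{3}$; there the obstruction to $\psi_3$ is the equality $i_2(G)=i_3(G)$, not anything involving $(S_2^*)^G$, so your claim that no obstruction arises outside conditions (iv) and (v) is false. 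The paper excludes $i_2(G)=i_3(G)$ by the Cameron--Neumann--Saxl interchange property \cite[\S 5, Corollary]{CNS}: a transitive $G$ with $i_2(G)=i_3(G)>1$ satisfies conclusion (iii). This is the actual role of (iii) in the argument, and it is absent from your proposal. You also omit the case $p=3$, $n\equiv 0\pmod{3}$ entirely, where $M_3\sim S_2^*\oplus((D_0\oplus S_1^*)|S_3^*)$ and the obstruction involves $(S_1^*)^G$; the paper isolates this as an auxiliary exceptional case and eliminates it afterwards.

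The second genuine gap is your treatment of the exceptional case $(S_1^*)^G\neq 0$ (for $p=2\mid n$, and as noted also needed for $p=3$, $3\mid n$). A nonzero invariant $\overline{\sum_i c_iv_i}\in(S_1^*)^G$ for transitive $G$ yields a homomorphism $\epsilon\colon G\to(\F,+)$ with $c_{g(i)}=c_i+\epsilon(g)$, whose level sets form a block system with $2^k$ blocks for some $k\geq 1$ --- not necessarily $k=1$ --- and this system may even be trivial (singleton blocks), which occurs for primitive groups of affine type. So $(S_1^*)^G\neq 0$ does not force $G\leq G_{n/2,2}$ or $G\leq G_{2,n/2}$, and your derivation of (iii) from it fails. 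The paper instead argues: if $G$ is primitive, $(S_1^*)^G\neq 0$ forces $O_p(G)\neq 1$ by \cite[Corollary 2.32, Lemma 2.33]{KMTOne} and then $E^\la_\pm\da_G$ is reducible by \cite[Lemma 2.19]{KMTOne}; if $G$ is imprimitive, then $G\leq G_{a,b}$ with (by (iii)) $a,b\neq 2$, Lemma \ref{L100518_3} gives $(S_1^*)^{G_{a,b}}=0$, and the main dichotomy applied to the overgroup $G_{a,b}$ shows $E^\la_\pm\da_{G_{a,b}}$, hence $E^\la_\pm\da_G$, is reducible. Some substitute for both halves of this argument is indispensable; the bipartition shortcut does not supply one.
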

\begin{proof} The result for $p>3$ follows from \cite[Theorem 3.13]{KSAlt}, so let $p=2$ or $3$. We will now prove the following

\vspace{2mm}
\noindent
{\sf Claim.} We are in one of the cases (i)-(v) or one of the following conditions holds:

(a) $p=3$, $n\equiv 0\pmod{3}$ and $(S_1^*)^G\not=0$;

(b) $p=2\mid n$ and $\dim(S_1^*)^G\geq i_2(G)-1$.

\vspace{2mm}

To prove the claim, we assume that we are not in the cases (i)-(v) or (a),(b) and apply Theorems~\ref{CFirst} and \ref{CSecond} to deduce that $E^\la_{\pm}\da_G$ are reducible, obtaining a contradiction. 

Since we are not in case (i), we have $i_2(G)>1$. Since we are not in case (ii) we have that $D^\la$ is not basic spin in characteristic $2$. So $h(\la)\geq 3$ by Lemma~\ref{L080518}. Since we are not in case (iii), using \cite[\S5, Corollary]{CNS}, we have that $i_2(G)<i_3(G)$. 

Since we are not in case (b) and $i_2(G)>1$, Theorems \ref{Tp=3} and \ref{Tp=2} imply that there exists
$$\psi_2\in\Hom_{\SSS_n}(\I(G), M_2)$$ with $[\im \psi_2:D_2]\neq 0$. Since we are not in cases (iv),(v),(a) and $i_2(G)<i_3(G)$, 
Theorems \ref{Tp=3} and \ref{Tp=2} imply that there exists
$$\psi_3\in\Hom_{\SSS_n}(\I(G), M_3)$$ with $[\im \psi_3:D_3]\neq 0$.

Furthermore, by Lemma~\ref{C180418}, there exists 
$$\zeta_2\in\Hom_{\SSS_n}(M_2, \EE(\la))$$ with $[\im \zeta_2:D_2]\neq 0$. By Lemma~\ref{LM_3E}, there exists 
$$\zeta_3\in\Hom_{\SSS_n}(M_3, \EE(\la))$$ with $[\im \zeta_3:D_3]\neq 0$. 


{\sf Case 1.} $\zeta_2':=\pi_0\circ \zeta_2$ satisfies $[\im \zeta_2':D_2]\neq 0$. The proposition now follows from Theorem~\ref{CFirst} (with $k=2$).

{\sf Case 2.} $\zeta_2'$ satisfies $[\im \zeta_2':D_2]=0$. Then $\zeta_2'':=\pi_1\circ \zeta_2$ satisfies $[\im \zeta_2'':D_2]\neq 0$. 

{\sf Case 2.1.} $\zeta_3':=\pi_0\circ \zeta_3$ satisfies $[\im \zeta_3':D_3]\neq 0$. The proposition now follows from Theorem~\ref{CFirst} (with $k=3$).

{\sf Case 2.2.} $\zeta_3'$ satisfies $[\im \zeta_3':D_3]=0$. Then $\zeta_3'':=\pi_1\circ \zeta_3$ satisfies $[\im \zeta_3'':D_3]\neq 0$ and so the proposition  follows from Theorem~\ref{CSecond} (with $k=2$ and $l=3$).

This completes the proof of the claim.

\vspace{2mm}
We now eliminate the exceptional cases (a) and (b) in the Claim. Indeed, if we are in one of those cases then, in view of (i), we may assume that  $(S_1^*)^G\neq 0$. If $G$ is primitive then by \cite[Corollary~2.32 and  Lemma 2.33]{KMTOne} this implies that $O_p(G)\neq 1$, in which case $E^\la_\pm\da_G$ is reducible for example by \cite[Lemma~2.19]{KMTOne}. In the imprimitive case, $G\leq G_{a,b}$ for some $a,b>1$ with $ab=n$. In view of (iii), we may assume that $a,b\neq 2$, in which case $(S_1^*)^{G_{a,b}}=0$ by Lemma~\ref{L100518_3}, and so by the Claim, $E^\la_\pm\da_{G_{a,b}}$ is reducible and so is  $E^\la_\pm\da_{G}$.
\end{proof}

\section{Imprimitive subgroups}

In this section we analyze restrictions $E^\la_\pm\da_G$ for imprimitive subgroups $G\leq \AAA_n$ and prove Theorems~\ref{TA}, \ref{TB} and \ref{TC}. 

\subsection{Intransitive subgroups}
In this subsection we deal with maximal intransitive subgroups $G\leq \AAA_n$, i.e. subgroups of the form $\AAA_{n-k,k}$. The following is easy to check. 

\begin{Lemma} \label{L100518}
Let $n\geq 5$, $1< k\leq n/2$, and  $G=A_{n-k,k}$. Then $i_1(G)=2$, $i_2(G)=3$, $i_3(G)=4$ for $k>2$, and $i_3(G)=3$ for $k=2$. 
\end{Lemma}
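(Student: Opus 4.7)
The plan is brief since the lemma is essentially a bookkeeping exercise, as the paper itself remarks. First I would set $H := \SSS_{n-k}\times\SSS_k$ and observe that $G = \AAA_{n-k,k} = H\cap \AAA_n$ has index $2$ in $H$. As a consequence, any $H$-orbit on $r$-element subsets of $\{1,\dots,n\}$ either remains a single $G$-orbit or splits into two $G$-orbits, with splitting occurring precisely when the $H$-stabilizer of some (equivalently, every) representative is contained in $\AAA_n$.

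Next I would parametrize the $H$-orbits: two $r$-element subsets $A, A'$ lie in the same $H$-orbit iff $|A\cap\{1,\dots,n-k\}| = |A'\cap\{1,\dots,n-k\}|$, so orbits correspond to admissible pairs $(a,b)$ with $a+b=r$, $0\le a\le n-k$ and $0\le b\le k$. The hypothesis $1<k\le n/2$ together with $n\ge 5$ forces $n-k\ge 3$. A direct enumeration for $r=1,2,3$ then yields the $H$-orbit counts $2, 3, 4$ respectively, except that for $k=2$ the pair $(0,3)$ is disallowed so the count at $r=3$ drops to $3$.

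Finally I would verify that no $H$-orbit splits. For $A$ of type $(a,b)$ we have $\mathrm{Stab}_H(A)\cong \SSS_a\times\SSS_{n-k-a}\times\SSS_b\times\SSS_{k-b}$, which lies inside $\AAA_n$ iff each of $a$, $n-k-a$, $b$, $k-b$ is at most $1$. For every admissible $(a,b)$ with $a+b\le 3$, the constraints $n-k\ge 3$ and $k\ge 2$ guarantee that at least one of these four quantities is $\ge 2$, so no orbit splits. Consequently $i_r(G)$ equals the $H$-orbit count for $r=1,2,3$, which are the values asserted in the lemma.

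There is no real obstacle here; the only subtlety worth tracking is the inequality $n-k\ge 3$, which ensures that the $\SSS_{n-k}$ factor of the stabilizer always contributes a transposition and so, together with the corresponding observation for $\SSS_k$ when $k\ge 2$, rules out splitting in every relevant case.
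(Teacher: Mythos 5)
Your argument is correct, and since the paper dismisses this lemma with ``The following is easy to check'' and gives no proof, your write-up simply supplies the routine verification that was omitted: count the $\SSS_{n-k}\times\SSS_k$-orbits on $r$-subsets via the type $(a,b)=(|A\cap\{1,\dots,n-k\}|,|A\cap\{n-k+1,\dots,n\}|)$, and rule out splitting over the index-$2$ subgroup by exhibiting a transposition in each setwise stabilizer. The one hypothesis you need, $n-k\geq 3$ (forced by $k\leq n/2$ and $n\geq 5$), is correctly identified and used both for the orbit count at $r=3$ and for the non-splitting argument.
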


\begin{Proposition} \label{P100518_2}
Let $n\geq 8$ and exclude the case $(n,p)=(8,2)$. If $\la\in\Parinv_p(n)$, $G\leq \AAA_n$ is an intransitive subgroup and $E^\la_+\da_G$ or $E^\la_-\da_G$ is irreducible then one of the following holds:
\begin{enumerate}
\item[{\rm (i)}] $G\leq \AAA_{n-1}$.

\item[{\rm (ii)}] $G\leq \AAA_{n-2,2}$ and $\la$ is JS.

\item[{\rm (iii)}] $p=2$ and $\la=\be_n$.
\end{enumerate} 
\end{Proposition}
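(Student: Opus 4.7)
The plan is to exploit a simple monotonicity: if $E^\la_\pm\da_G$ is irreducible and $G\le H\le \AAA_n$, then $E^\la_\pm\da_H$ is irreducible as well, since every $H$-submodule of $E^\la_\pm$ is automatically a $G$-submodule. This lets me replace $G$ by the maximal intransitive subgroup of $\AAA_n$ containing it.

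First I would dispose of the case where $G$ has a fixed point in $\{1,\ldots,n\}$: then $G\le \AAA_{n-1}$ and conclusion (i) holds. Otherwise every $G$-orbit has size $\ge 2$; letting $k\ge 2$ denote the smallest orbit size, I have $G\le H:=\AAA_{n-k,k}$, and by the monotonicity above $E^\la_\pm\da_H$ is irreducible. Lemma~\ref{L100518} gives $i_1(H)=2$, $i_2(H)=3$, and $i_3(H)=4$ if $k>2$ (and $3$ if $k=2$), so in particular $1<i_1(H)<i_2(H)$, and Proposition~\ref{PIntrans} applies to $H$.

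Proposition~\ref{PIntrans} then places us in one of its cases (i)--(vi). Case (ii) there is precisely our conclusion (iii). For each of the remaining cases, Lemma~\ref{L100518} together with Lemma~\ref{L100518_2} (which, since $k\ge 2$, gives $\dim(S_1^*)^H=\dim(S_2^*)^H=1$) reduces the check to arithmetic. In case (i) of PIntrans, $i_2(H)=i_3(H)$ forces $k=2$, and $\la$ is already JS by hypothesis. In case (iii) of PIntrans, the inequality $\dim(S_1^*)^H\ge i_2(H)-1$ would read $1\ge 2$, which is impossible. In cases (iv) and (vi) of PIntrans, $\dim(S_2^*)^H\ge i_3(H)-i_1(H)$ reads $1\ge 2$ when $k>2$ and $1\ge 1$ when $k=2$, forcing $k=2$ with $\la$ JS. Case (v) is analogous, with $\dim(S_1^*)^H\ge i_3(H)-i_2(H)+i_1(H)-1$ reducing to $1\ge 2$ for $k>2$ and $1\ge 1$ for $k=2$. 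In every surviving case I conclude $k=2$, $\la$ is JS, and $G\le \AAA_{n-2,2}$, which is conclusion (ii).

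There is no real obstacle here; the only conceptual point is the monotonicity observation that permits working with $H=\AAA_{n-k,k}$ in place of $G$. After that everything is a direct numerical check against the values of $i_j(H)$ and $\dim(S_j^*)^H$ recorded in Lemmas~\ref{L100518} and~\ref{L100518_2}.
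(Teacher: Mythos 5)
Your proposal is correct and follows exactly the paper's route: reduce via monotonicity of irreducibility to $G=\AAA_{n-k,k}$ with $1<k\leq n/2$, then feed the values from Lemmas~\ref{L100518} and~\ref{L100518_2} into Proposition~\ref{PIntrans} and check each case. The paper states this in one line; your arithmetic verification of the six cases is the correct fleshing-out of that argument.
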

\begin{proof}
We may assume that $G=\AAA_{n-k,k}$ with $1< k\leq n/2$, in which case the result follows from Proposition \ref{PIntrans} and Lemmas \ref{L100518_2} and \ref{L100518}.
\end{proof}

The next result deals with the case $k=1$ when $p=2$ (for $p>2$ the corresponding result is known, see \S\ref{SSTB} below). 

\begin{Theorem}\label{TAn-1}
Let $p=2$ and $\la\in\Parinv_2(n)$. Then $E^\la_\pm\da_{\AAA_{n-1}}$ is irreducible if and only if one of the following holds:
\begin{enumerate}
\item[{\rm (i)}] $\la$ is JS;
\item[{\rm (ii)}] $\la$ has exactly two normal nodes, $\la_1=\la_2+1$ and $\la_1$ is even.
\end{enumerate}
\end{Theorem}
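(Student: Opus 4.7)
The proof splits into the sufficient direction (conditions (i) or (ii) imply irreducibility of $E^\la_\pm\da_{\AAA_{n-1}}$) and the necessary direction, and throughout leans on the structure of $D^\la\da_{\SSS_{n-1}}$ together with Clifford theory for the index-$2$ inclusion $\AAA_{n-1}\leq\SSS_{n-1}$.

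For the sufficient direction under (i), I would use the standard JS argument: $D^\la\da_{\SSS_{n-1}}\cong D^\mu$ is irreducible by Lemma~\ref{Lemma39}(v), and since $D^\la\da_{\AAA_{n-1}}=E^\la_+\da_{\AAA_{n-1}}\oplus E^\la_-\da_{\AAA_{n-1}}$ is a sum of two non-zero $\AAA_{n-1}$-modules, the restriction $D^\mu\da_{\AAA_{n-1}}$ must split, forcing $\mu\in\Parinv_2(n-1)$; matching dimensions identifies each $E^\la_\pm\da_{\AAA_{n-1}}$ with one of the irreducibles $E^\mu_\pm$. For case (ii), I first observe that $\la_1$ even together with $\la_1=\la_2+1$ forces both row-ends $(1,\la_1)$ and $(2,\la_2)$ to have residue $1$; since there are exactly two normal nodes, these must be precisely $A_1=(1,\la_1)$ and $A_2=(2,\la_2)$, and $\eps_0(\la)=0$. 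The equality $\la_1-1=\la_2$ implies $\la_{A_1}$ is not $2$-regular, so Lemma~\ref{Lemma39}(vi) rules out $D^{\la_{A_1}}$ as a composition factor of $e_1 D^\la$; together with Lemma~\ref{Lemma39}(iii), only $D^{\la_{A_2}}$ appears in $D^\la\da_{\SSS_{n-1}}$, with multiplicity~$2$. A direct check with $\la=\dbl(\mu)$ and $\mu_1\equiv 3\pmod 4$ shows that $\la_{A_2}\not\in\Parinv_2(n-1)$: writing $\mu_1=4k+3$, the leading two parts of $\la_{A_2}$ are $(2k+2,2k)$, which matches $\be_{\mu_1-1}$ only for $\mu_1-1\equiv 2\pmod 4$, which is excluded. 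Hence $D^{\la_{A_2}}\da_{\AAA_{n-1}}=E^{\la_{A_2}}$ is irreducible, and since $\dim E^\la_\pm=\dim D^\la/2=\dim D^{\la_{A_2}}=\dim E^{\la_{A_2}}$, the decomposition $D^\la\da_{\AAA_{n-1}}=E^\la_+\da\oplus E^\la_-\da$ into two non-zero summands forces each summand to equal $E^{\la_{A_2}}$, yielding irreducibility.

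For the necessary direction, I would assume $E^\la_\pm\da_{\AAA_{n-1}}$ is irreducible with $\la$ not JS, so $\la$ has $k\geq 2$ normal nodes. Lemma~\ref{LBasicAlt} applied to $G=\AAA_{n-1}$, combined with the identification $\J(\AAA_{n-1})\cong M_1\da_{\AAA_n}$, gives $\dim\EE_{\de,\eps}(\la)^{\AAA_{n-1}}\leq 1$ for $\de,\eps\in\{\pm\}$. Using the projections $\pi_0,\pi_1$ from \eqref{EProj} and the identity $\dim\End_{\SSS_{n-1}}(D^\la\da_{\SSS_{n-1}})=\sum_i\eps_i(\la)$, I would case-split on the configuration of normal nodes. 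The case $k\geq 3$ produces enough independent elements in $\EE_{\pm,\pm}(\la)^{\AAA_{n-1}}$ to violate the above bound. For $k=2$, the subcase of normal nodes with distinct residues is ruled out by the block decomposition $D^\la\da_{\SSS_{n-1}}=e_0D^\la\oplus e_1 D^\la$, whose block idempotent survives to $\AAA_{n-1}$ as an extra endomorphism; the subcase of two normal $0$-nodes forces (via $\mu_1\equiv 1\pmod 4$) $\la_{A_2}\in\Parinv_2(n-1)$, whence $D^{\la_{A_2}}\da_{\AAA_{n-1}}$ splits further and $E^\la_\pm\da$ picks up extra composition factors incompatible with irreducibility; finally, two normal $1$-nodes correspond exactly to $\mu_1\equiv 3\pmod 4$, equivalently $\la_1=\la_2+1$ with $\la_1$ even, giving case~(ii).

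The main technical hurdle is the structural claim used in sufficiency~(ii) that $e_1 D^\la$ has no composition factor beyond two copies of $D^{\la_{A_2}}$, equivalently the dimensional identity $\dim D^\la=2\dim D^{\la_{A_2}}$. The $e_i$-calculus of Lemma~\ref{Lemma39}(iv) only constrains composition factors with $\eps_1\geq 1$, and ruling out hidden factors $D^\nu$ with $\eps_1(\nu)=0$ requires exploiting the doubled-partition structure of $\Parinv_2(n)$—most naturally by branching along the Young subgroup $\SSS_{\mu_1}\times\cdots\times\SSS_{\mu_h}$ and invoking the explicit dimensions of the basic-spin blocks $D^{\be_{\mu_r}}$. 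A parallel obstacle in the necessity direction is producing explicit invariants in $\EE_{\de,\eps}(\la)^{\AAA_{n-1}}$ that witness reducibility in every configuration outside (i) and (ii); this is delicate when residue-$0$ normal nodes are present, since the block-structure argument interacts subtly with the $\AAA_{n-1}$-invariant theory of the permutation module $M_1$ from Section~\ref{SSPerm}.
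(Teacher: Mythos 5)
Your treatment of sufficiency in case (ii) has a genuine gap, and it is exactly the one you flag at the end: the assertion that ``only $D^{\la_{A_2}}$ appears in $D^\la\da_{\SSS_{n-1}}$, with multiplicity $2$'' does not follow from Lemma~\ref{Lemma39}(iii) and (vi). Those parts only compute the multiplicity of $D^{\tilde e_1\la}$ and of factors of the form $D^{\la_A}$ for removable nodes $A$; by Lemma~\ref{Lemma39}(iv) any further composition factor $D^\nu$ of $e_1D^\la$ would have $\eps_1(\nu)=0$, and nothing in the $e_i$-calculus excludes such ``hidden'' factors. Since your endgame rests on the dimensional identity $\dim D^\la=2\dim D^{\la_{A_2}}$, the argument is incomplete as written, and the proposed repair (branching along $\SSS_{\mu_1}\times\cdots\times\SSS_{\mu_h}$ and comparing basic-spin dimensions) is not carried out and is not obviously easier than the problem itself.

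The paper closes this gap by a short Clifford-theoretic argument that you should be able to reproduce. Write $\mu=\la_B$ with $B=(2,\la_2)$, so that $D^\la\da_{\SSS_{n-1}}$ is indecomposable with simple socle and head both $D^\mu$ and $[D^\la\da_{\SSS_{n-1}}:D^\mu]=2$. For any $\nu\in\Par_2(n-1)$, Frobenius reciprocity gives $\Hom_{\AAA_{n-1}}(E^\nu_{(\pm)},D^\la\da_{\AAA_{n-1}})\cong\Hom_{\SSS_{n-1}}(E^\nu_{(\pm)}\ua^{\SSS_{n-1}},D^\la\da_{\SSS_{n-1}})$, and $E^\nu_{(\pm)}\ua^{\SSS_{n-1}}$ is $D^\nu$ or $D^\nu|D^\nu$ according to whether $\nu$ lies in $\Parinv_2(n-1)$ or not; this shows $\soc(D^\la\da_{\AAA_{n-1}})\cong(E^\mu)^{\oplus k}$ with $k\le 2$. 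Because $\la\in\Parinv_2(n)$, the restriction $D^\la\da_{\AAA_{n-1}}$ is the direct sum of the two nonzero modules $E^\la_\pm\da_{\AAA_{n-1}}$, so $k=2$; applying Frobenius reciprocity once more (using your correct computation that $\mu\notin\Parinv_2(n-1)$, so $E^\mu\ua^{\SSS_{n-1}}=D^\mu|D^\mu$) yields an embedding $D^\mu|D^\mu\into D^\la\da_{\SSS_{n-1}}$, which together with the multiplicity count forces $D^\la\da_{\SSS_{n-1}}\cong D^\mu|D^\mu$ and hence irreducibility of $E^\la_\pm\da_{\AAA_{n-1}}$. Separately, note that your necessity argument via Lemma~\ref{LBasicAlt} and the spaces $\EE_{\de,\eps}(\la)^{\AAA_{n-1}}$ is a workable but more roundabout route than the paper's, which simply counts composition factors: irreducibility of $E^\la_\pm\da_{\AAA_{n-1}}$ forces $D^\la\da_{\SSS_{n-1}}$ to have at most two composition factors, necessarily isomorphic and non-split over $\AAA_{n-1}$, and the three subcases ($\la_A$ $2$-regular; $\la_1=\la_2+1$ with $\la_1$ odd, where $\mu\in\Parinv_2(n-1)$; $\la_1=\la_2+1$ with $\la_1$ even) then fall out directly. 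Your $k=2$ subcases are plausible but remain sketches.
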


\begin{proof}
If $E^\la_\pm\da_{\AAA_{n-1}}$ is irreducible then $D^\la\da_{\SSS_{n-1}}$ has at most two composition factors and if it has two composition factors, they are isomorphic and they do not split when restricted to $\AAA_{n-1}$. In particular $\la$ has at most two normal nodes.

{\sf Case 1.} $\la$ is JS. Then $D^\la\da_{\SSS_{n-1}}$ is irreducible, and so $E^\la_\pm\da_{\AAA_{n-1}}$ is irreducible.


{\sf Case 2.} $\la$ has two normal nodes. 
Let $A:=(1,\la_1)$ and $B:=(2,\la_2)$.  
If $\la_A$ is $2$-regular, then by Lemma~\ref{Lemma39}, $D^\la\da_{\SSS_{n-1}}$ has two non-isomorphic composition factors, so in this case $E^\la_\pm\da_{\AAA_{n-1}}$ is not irreducible. So we may assume that $\la_1=\la_2+1$. Let $\mu=\la_{B}$. Then $[D^\la\da_{\SSS_{n-1}}:D^\mu]=2$ by Lemma~\ref{Lemma39}.

{\sf Case 2.1.} $\la_1$ is odd. Notice that $\la_1=\mu_1=\mu_2+2$ and $\mu_1+\mu_2\equiv 0\pmod{4}$. Since $\la\in\Parinv_2(n)$, it then follows that $\mu\in\Parinv_2(n-1)$. So in this case $E^\la_\pm\da_{\AAA_{n-1}}$ is not irreducible.

{\sf Case 2.2.} $\la_1$ is even. In this case $\mu_1+\mu_2\equiv 2\pmod{4}$, so  $\mu\not\in\Parinv_2(n-1)$. Moreover, by Lemma~\ref{Lemma39}
\[D^\la\da_{\SSS_{n-1}}\sim \underbrace{D^\mu}_{\soc}|\underbrace{\ldots}_{\text{ no }D^\mu}|\underbrace{D^\mu}_{\head}.\]
Using Frobenius reciprocity, for any $\nu\in\Par_2(n-1)$, we have
$$
\dim\Hom_{\AAA_{n-1}}(E^\nu_{(\pm)},D^\la\da_{\AAA_{n-1}})=
\dim\Hom_{\SSS_{n-1}}(E^\nu_{(\pm)}\uparrow^{\SSS_{n-1}},D^\la\da_{\SSS_{n-1}})
$$
Moreover, $E^\nu_\pm\uparrow^{\SSS_{n-1}}\cong D^\nu$ if $\nu\in\Parinv_2(n-1)$, and 
$E^\nu\uparrow^{\SSS_{n-1}}= D^\nu|D^\nu$ otherwise. 
So $\soc(D^\la\da_{\AAA_{n-1}})\cong (E^\mu)^{\oplus k}$ for  $k\leq 2$. But $\la\in\Parinv_2(n)$, so $\soc(D^\la\da_{\AAA_{n-1}})\cong E^\mu\oplus E^\mu$. By Frobenius reciprocity again, we now conclude that $E^\mu\uparrow^{\SSS_{n-1}} = D^\mu|D^\mu$ is a submodule of $D^\la\da_{\SSS_{n-1}}$. Hence $D^\la\da_{\SSS_{n-1}}\cong D^\mu|D^\mu$ and $E^\la_\pm\da_{\AAA_{n-1}}$ is irreducible.
\end{proof}

We now deal with the case $k=2$ when $p=2$ and $\la$ is not basic spin (for $p>2$ the corresponding result is known, see \S\ref{SSTC} below). 

\begin{Theorem} \label{T221118} 
Let $p=2$ and $\la \in\Parinv_2(n)\setminus\{ \be_n\}$. Then $E^\la_\pm\da_{\AAA_{n-2,2}}$ is irreducible if and only if $\la$ is JS, in which case $E^\la_\pm\da_{\AAA_{n-2}}\cong E^{\tilde e_{1-i}\tilde e_i \la}$ where $i$ is the residue of $(1,\la_1)$.
\end{Theorem}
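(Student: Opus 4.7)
The plan is to prove the two directions separately.

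For the \emph{only if} direction, I would apply Proposition~\ref{P100518_2} to the subgroup $G = \AAA_{n-2,2}$. This subgroup is intransitive but not contained in $\AAA_{n-1}$ (it contains, for instance, the element $(1,2)(n-1,n)$, which moves letter $n$), and the case $\la = \be_n$ is excluded by assumption, so the only surviving alternative of the proposition forces $\la$ to be JS. The cases not covered by Proposition~\ref{P100518_2}, namely $n \leq 8$ with $p = 2$, are handled by direct inspection: $\Parinv_2(n)\setminus\{\be_n\}$ is empty for $n \leq 7$ and equals $\{(4,3,1)\}$ for $n = 8$; since $(4,3,1)$ is not JS, the theorem predicts reducibility in that case, which can be verified by hand.

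For the \emph{if} direction, suppose $\la$ is JS. Since $\la$ is JS in characteristic $2$, all parts of $\la$ have the same parity, and by Lemma~\ref{L080518} we have $h(\la) \geq 3$. Writing $\la = \dbl(\nu)$ with all $\nu_r \not\equiv 2\pmod 4$, the first pair $(\la_1,\la_2)$ equals $\be_{\nu_1}$; requiring both of its entries to have the same parity forces $\nu_1 \equiv 0\pmod 4$. Therefore $\la_1$ and $\la_2$ are both odd with $\la_1 = \la_2 + 2$. Consequently the unique normal node of $\la$ is $(1,\la_1)$, of residue $i = 0$, and I set $\tau := \tilde e_0\la = (\la_1-1,\la_2,\ldots,\la_h)$, so that $\tau_1$ is even while $\tau_2,\ldots,\tau_h$ are odd.

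The core step is to compute $D^\la\da_{\SSS_{n-2}} = D^\tau\da_{\SSS_{n-2}}$. A parity analysis of the residues of addable and removable nodes of $\tau$, combined with the standard signature cancellation, yields $\eps_0(\tau) = 0$ and $\eps_1(\tau) = 2$, with the two normal $1$-nodes of $\tau$ being $(1,\tau_1)$ and $(2,\tau_2)$, the lower (good) one being $(2,\tau_2)$. Hence $\tilde e_1\tau = (\la_1-1,\la_2-1,\la_3,\ldots,\la_h)$, which I set as $\mu = \tilde e_{1-i}\tilde e_i\la$. Crucially, $\tau\setminus\{(1,\tau_1)\} = (\la_1-2,\la_2,\la_3,\ldots) = (\la_2,\la_2,\la_3,\ldots)$ fails to be $2$-regular because $\la_1 = \la_2 + 2$, so by Lemma~\ref{Lemma39}(vi) no other simple appears as a composition factor. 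Combined with $\dim\End_{\SSS_{n-2}}(D^\la\da_{\SSS_{n-2}}) = \eps_0(\tau) + \eps_1(\tau) = 2$ from Lemma~\ref{Lemma39}(v), this forces $D^\la\da_{\SSS_{n-2}}$ to be the uniserial $\SSS_{n-2}$-module $D^\mu\,|\,D^\mu$.

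To finish, I would verify $\mu \notin \Parinv_2(n-2)$: otherwise $(\mu_1,\mu_2)$ would equal $\be_k$ for some $k \not\equiv 2\pmod 4$, but $\mu_1$ and $\mu_2$ being both even forces $k \equiv 2\pmod 4$, a contradiction. Therefore $D^\mu\da_{\AAA_{n-2}} \cong E^\mu$ is irreducible. The $\AAA_{n-2}$-module $D^\la\da_{\AAA_{n-2}}$ thus inherits a filtration with subquotients $E^\mu,E^\mu$; but the decomposition $D^\la\da_{\AAA_n} = E^\la_+ \oplus E^\la_-$ exhibits $D^\la\da_{\AAA_{n-2}}$ as the direct sum of two nonzero modules, so it is not indecomposable. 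The filtration must therefore split, giving $D^\la\da_{\AAA_{n-2}} \cong E^\mu \oplus E^\mu$; Krull--Schmidt together with a dimension count then yields $E^\la_\pm\da_{\AAA_{n-2}} \cong E^\mu$, and irreducibility upon further restriction to the larger group $\AAA_{n-2,2}$ is automatic since $\AAA_{n-2} < \AAA_{n-2,2}$. The main obstacle is the normal-node bookkeeping for $\tau$: it is essential that $\la_1 = \la_2 + 2$ precisely eliminates the potentially competing composition factor coming from $(1,\tau_1)$, leaving the clean uniserial structure $D^\mu\,|\,D^\mu$.
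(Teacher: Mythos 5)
Your \emph{only if} direction matches the paper (cite Proposition~\ref{P100518_2}), though your inspection of small $n$ contains a slip: $\Parinv_2(6)\setminus\{\be_6\}=\{(3,2,1)\}$ is not empty (and for $n=8$ the partition $(4,3,1)$ has $\dim E^{(4,3,1)}_\pm=20>\sqrt{|\AAA_{6,2}|}$ is false, so "verified by hand" needs an actual argument there). The serious problem is in the \emph{if} direction. You claim that because $\tau\setminus\{(1,\tau_1)\}$ is not $2$-regular, Lemma~\ref{Lemma39}(vi) shows that $D^\mu$ is the only composition factor of $D^\la\da_{\SSS_{n-2}}=e_{1-i}D^\tau$. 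That is not what Lemma~\ref{Lemma39}(vi) says: it determines which simples \emph{of the form} $D^{\tau_A}$ (for removable nodes $A$) occur, but it does not exclude composition factors $D^\nu$ with $\nu$ not obtained from $\tau$ by removing a single node, and such factors genuinely arise. For instance, for $p=2$ one has $e_0D^{(3,2,1)}=D^{(3,2,1)}\da_{\SSS_5}$ with composition factors $3\,D^{(3,2)}+4\,D^{(5)}$, even though $(2,2,1)$ and $(3,1,1)$ are not $2$-regular; the factor $D^{(5)}$ is not any $D^{\tau_A}$. Consequently your data --- socle and head $D^\mu$, multiplicity $[\,\cdot\,:D^\mu]=2$, and $\dim\End=2$ --- only yield $D^\la\da_{\SSS_{n-2}}\sim D^\mu\,|\,X\,|\,D^\mu$ with $X$ containing no $D^\mu$; they do not force $X=0$, and your final splitting argument presupposes $X=0$.

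The paper closes exactly this gap by a Frobenius reciprocity argument (as in the proof of Theorem~\ref{TAn-1}): since $\la\in\Parinv_2(n)$, the decomposition $D^\la\da_{\AAA_n}=E^\la_+\oplus E^\la_-$ together with $\mu\notin\Parinv_2(n-2)$ forces $\soc\bigl(D^\la\da_{\AAA_{n-2}}\bigr)\cong E^\mu\oplus E^\mu$, whence $\dim\Hom_{\SSS_{n-2}}(E^\mu{\ua}^{\SSS_{n-2}},D^\la\da_{\SSS_{n-2}})=2$ and the uniserial module $E^\mu{\ua}^{\SSS_{n-2}}=D^\mu|D^\mu$ must embed in $D^\la\da_{\SSS_{n-2}}$; since the latter is indecomposable with simple head $D^\mu$ and every proper quotient of such a module has head $D^\mu$, the embedding is onto and $X=0$. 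Your computation that $\mu\notin\Parinv_2(n-2)$ and your parity analysis of $\tau$ are correct and would slot into this argument, but as written the structural identification $D^\la\da_{\SSS_{n-2}}\cong D^\mu|D^\mu$ --- the heart of the proof --- is not established.
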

\begin{proof}
If $E^\la_\pm\da_{\AAA_{n-2,2}}$ is irreducible, then $\la$ is JS by Proposition \ref{P100518_2}. 
Conversely, let $\la$ be JS.  Note that $h(\la)\geq 2$ and $\mu:=\tilde e_{1-i}\tilde e_i \la=(\la_1-1,\la_2-1,\la_3,\la_4,\ldots)$. In particular $\mu_1+\mu_2=\la_1+\la_2-2\equiv 2\pmod{4}$ and so $\mu\not\in\Parinv_2(n-2)$. Since $\la$ is JS it follows that $D^\la\da_{\SSS_{n-1}}\cong D^{\tilde{e}_i\la}$ by Lemma \ref{Lemma39}. Further $\eps_i(\tilde{e}_i\la)=0$ and $\eps_{1-i}(\tilde{e}_i\la)=2$. So
\[D^\la\da_{\SSS_{n-2}}\sim \underbrace{D^{\mu}}_{\soc}|\underbrace{\ldots}_{\text{no }D^{\mu}}|\underbrace{D^{\mu}}_{\head}.\]
Using Frobenius reciprocity as in the proof of Theorem~\ref{TAn-1}, we deduce that 
$E^\mu\uparrow^{\SSS_{n-2}}= D^{\mu}|D^{\mu}$ is a submodule of $D^\la\da_{\SSS_{n-2}}$. Hence $D^\la\da_{\SSS_{n-2}}= D^{\mu}|D^{\mu}$, so $E^\la_\pm\da_{\AAA_{n-2}}$ is irreducible.
%
%
%
\end{proof}

\subsection{Proof of Theorem~\ref{TB}}\label{SSTB}
Suppose first that $p>2$. By \cite[Proposition 3.7]{KSAlt} (see also \cite[Theorem 5.10]{BO2}), $E^\la_\pm\da_{A_{n-1}}$ is irreducible if and only if $\la$ is JS or $\la$ has two normal nodes of different residues. Under the assumption $\la\in\Parinv_p(n)$ this is equivalent to the requirement that the two normal nodes have residues different from $0$. On the other hand, if $p=2$ then by Theorem~\ref{TAn-1}, $E^\la_\pm\da_{A_{n-1}}$ is irreducible if and only if $\la$ is JS or $\la$ has two normal nodes and $\la_1=\la_2+1$ is even. It remains to show that the latter condition holds if and only if $\la$ has exactly two normal nodes of residue $1$. The `only-if' part is clear. For the `if' part, suppose that $\la$ has exactly two normal nodes of residue $1$. Since the top removable node is always normal it follows that $\la_1$ is even. Since $\la\in\Parinv_2(n)$ it then follows that  $\la_1=\la_2+1$.

\subsection{Proof of Theorem~\ref{TC}}\label{SSTC}
If $p>2$, the result is \cite[Theorem 3.6]{KSAlt}. If $p=2$ use Theorem~\ref{T221118}. 

\subsection{Transitive imprimitive subgroups}
In this section we begin to investigate restrictions to the maximal transitive imprimitive subgroups $G\leq \AAA_n$, i.e. subgroups of   the form $G_{a,b}$ with $a,b\geq 2$ and $n=ab$.

\begin{Proposition}\label{P110518_2}
Let $n=2b\geq 8$, $\la\in\Parinv_p(n)$. Then $E^\la_\pm\da_{G_{2,b}}$ are reducible. 
\end{Proposition}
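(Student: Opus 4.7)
The strategy is to apply the reducibility criteria of Theorems~\ref{CFirst} and~\ref{CSecond} with $k=2$, disposing of the exceptional basic-spin case $(p,\la)=(2,\be_n)$ separately. Suppose for contradiction that $E^\la_+\da_{G_{2,b}}$ is irreducible; the argument for $E^\la_-$ is symmetric.

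First I would produce $\psi_2\in\Hom_{\SSS_n}(\I(G_{2,b}),M_2)$ with $[\im\psi_2:D_2]\neq 0$. The imprimitivity system of $G_{2,b}$ partitions $\Om_n$ into intra-block and cross-block pairs, so $\dim M_2^{G_{2,b}}=i_2(G_{2,b})=2$; transitivity of $G_{2,b}$ forces $\dim M_1^{G_{2,b}}=1$, and Lemma~\ref{L100518_3}(i) (applied with $a=2$, $b\geq 4$) gives $\dim(S_1^*)^{G_{2,b}}=0$. In every case of Theorems~\ref{Tp=3} and~\ref{Tp=2} there is a short exact sequence $0\to B\to M_2\to S_2^*\to 0$ with $\dim B^{G_{2,b}}\leq 1$; hence $M_2^{G_{2,b}}$ projects onto a nonzero invariant of $S_2^*$, and since $\soc(S_2^*)=D_2$ every nonzero $\SSS_n$-submodule of $S_2^*$ contains $D_2$ as a composition factor, so the lift to $M_2$ produces the required $\psi_2$.

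For $(p,\la)\neq(2,\be_n)$, Lemma~\ref{C180418} supplies $\zeta_2\in\Hom_{\SSS_n}(M_2,\EE(\la))$ with $[\im\zeta_2:D_2]\neq 0$. Decomposing $\zeta_2=\pi_0\circ\zeta_2+\pi_1\circ\zeta_2$ via the projections in~(\ref{EProj}), at least one summand retains $D_2$ as a composition factor of its image. If it is $\pi_0\circ\zeta_2$, then Theorem~\ref{CFirst} with $k=2$ immediately yields reducibility, contradicting our assumption. \textbf{The main obstacle} is the complementary case, where only $\pi_1\circ\zeta_2$ retains $D_2$; there Theorem~\ref{CSecond} requires a second pair $(\psi_l,\zeta_l)$ mapping to $\EE_{+,-}(\la)\oplus\EE_{-,+}(\la)$ with $[\im\zeta_l:D_l]\neq 0$. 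The plan is to take $l=3$: counting shows $i_3(G_{2,b})=2$ (triples meeting a single block in two points vs.\ triples spread over three distinct blocks), and an invariant-theoretic computation adapting Lemma~\ref{L100518_3}(ii) to $a=2$ produces $\psi_3$ with $[\im\psi_3:D_3]\neq 0$; Lemma~\ref{LM_3E} supplies $\zeta_3$ with $[\im\zeta_3:D_3]\neq 0$, and a parity analysis using~(\ref{EEESi}) together with the Mullineux fixed-point property $D^\la\otimes\sgn\cong D^\la$ forces $\pi_1\circ\zeta_3$ to keep $D_3$ in its image, so Theorem~\ref{CSecond} applies with $k=2,\,l=3$.

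The remaining case $(p,\la)=(2,\be_n)$ forces $n\equiv 0\pmod{4}$, so $b$ is even with $b\geq 4$. Here the plan is to argue directly from the explicit construction of the basic spin module: the commuting pairs of intra-block transpositions inside $G_{2,b}$ act by commuting involutions on $D^{\be_n}$, and combined with the induced action of $G_{2,b}/N$ on blocks (where $N$ is generated by these involutions) this produces a visible $G_{2,b}$-stable direct sum decomposition of $D^{\be_n}\da_{G_{2,b}}$ into at least two proper submodules, contradicting the irreducibility of $E^{\be_n}_\pm\da_{G_{2,b}}$.
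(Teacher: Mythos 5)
There is a genuine gap, and it sits exactly at the point you flag as the main obstacle: the homomorphism $\psi_3$ you need for Theorem~\ref{CSecond} with $l=3$ does not exist for $G=G_{2,b}$. You correctly compute $i_3(G_{2,b})=2=i_2(G_{2,b})$, but that equality is fatal rather than helpful. Your own observation that $\sum_{i\sim j}v_{i,j}$ (sum over intra-block pairs) gives a nonzero invariant shows $\dim(S_2^*)^{G_{2,b}}\geq 1$; combining this with $\dim\Hom_{\SSS_n}(\I(G),M_1)=i_1(G)=1$ and the filtrations of $M_3$ in Theorems~\ref{Tp=3} and~\ref{Tp=2}, the full $2$-dimensional space $\Hom_{\SSS_n}(\I(G),M_3)$ is already accounted for by homomorphisms landing in layers whose composition factors exclude $D_3$ (e.g.\ for $p=2$, $n\equiv 0\pmod 4$, $M_3\sim M_1\oplus(S_2^*|S_3^*)$ and the two invariants land in $M_1$ and $S_2^*$; the other congruence classes work out the same way). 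So no $\psi$ with $[\im\psi:D_3]\neq 0$ exists, and the case where only $\pi_1\circ\zeta_2$ retains $D_2$ cannot be closed by your method. This is precisely why $G\leq G_{2,n/2}$ appears as an unavoidable exception in Proposition~\ref{P100518}(iii) and why the paper proves the present statement by entirely different means. (The claim that a ``parity analysis'' forces $\pi_1\circ\zeta_3$ to retain $D_3$ is also unsubstantiated, but it is moot.)

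For the record, the paper's proof avoids $M_k$ altogether. For $p=2$ the whole statement (basic spin included) is immediate because $O_2(G_{2,b})\neq 1$: the base group $\SSS_2^{\times b}\cap\AAA_n$ is a nontrivial normal $2$-subgroup, and an irreducible restriction would force it to act trivially on a faithful $\AAA_n$-module (cf.\ \cite[Lemma 2.19]{KMTOne}) — your separate basic-spin sketch is a special case of this, and your elaborate $M_2$/$M_3$ route for the remaining $p=2$ partitions is unnecessary even where it would work. For $p=3$ the paper shows directly that $D^\la\da_{\SSS_2\wr\SSS_b}$ has at least three composition factors, by restricting to the base group $\SSS_2^{\times b}$, whose simple modules are the characters $L(\de_1,\dots,\de_b)$: any simple $\F(\SSS_2\wr\SSS_b)$-module restricts to characters with a fixed value of $\de_1+\dots+\de_b$, while Lemmas~\ref{L080518} and~\ref{LKZ} produce a composition factor of $D^\la\da_{\SSS_4\times\SSS_4}$ of the form $L_1\boxtimes L_2$ with $L_1,L_2\in\{D^{(3,1)},D^{(2,1,1)}\}$, whose restriction to $\SSS_2^{\times 4}$ realizes three distinct sums. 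If you want to salvage your write-up, replace the entire $k=2,l=3$ mechanism by these two arguments.
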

\begin{proof}
For $p>3$ this is known, see \cite[Main Theorem]{KSAlt}. For $p=2$, this is clear since $G_{2,b}$ has a non-trivial normal $2$-subgroup, cf. \cite[Lemma 2.19]{KMTOne}. Let $p=3$. It suffices to prove that $D^\la\da_{\SSS_2\wr\SSS_b}$ has at least three composition factors. Let $\SSS_{(2^b)}\cong \SSS_2\times\dots\times \SSS_2$ be the base subgroup, with generators $g_1,\dots,g_b$ of order $2$. The irreducible $\F\SSS_{(2^b)}$-modules are of the form
$
\{L(\de_1,\dots,\de_b)\mid \de_1,\dots,\de_b\in\{0,1\}\}
$
where $L(\de_1,\dots,\de_b)=\F\cdot v$ and $g_rv=(-1)^{\de_r}v$ for $r=1,\dots,b$. Restriction of any irreducible $\F(\SSS_2\wr\SSS_b)$-module to $\SSS_{(2^b)}$ is a direct sum of modules of the form $L(\de_1,\dots,\de_b)$ with {\em fixed} $\de_1+\dots+\de_b$. So it suffices to prove that $D^\la\da_{\SSS_{(2^b)}}$ has three composition factors  $L(\de_1,\dots,\de_b)$ with three different sums $\de_1+\dots+\de_b$. Note that $D^{(3,1)}\da_{\SSS_{(2^b)}}\cong L(0,0)\oplus L(0,1)\oplus L(1,0)$ and $D^{(2,1,1)}\da_{\SSS_{(2^b)}}\cong L(1,1)\oplus L(0,1)\oplus L(1,0)$. Moreover, 
by Lemmas~\ref{L080518} and \ref{LKZ}, 
the restriction
$D^\la\da_{\SSS_{4}\times \SSS_{4}}$ has a composition factor of the form $L_1\boxtimes L_2$ with $L_1,L_2\in \{D^{(3,1)},D^{(2,1,1)}\}$.
Hence $D^\la\da_{\SSS_{(4,4,2^{b-4})}}$ has a a composition factor of the form $L_1\boxtimes L_2\boxtimes L(\de_5,\dots,\de_b)$. Restricting this module to  $\SSS_{(2^b)}$ yields composition factors of the form $L(\eta_1,\eta_2,\eta_3,\eta_4,\de_5,\dots,\de_b)$ with fixed $\de_5,\dots,\de_b$ and at least three different sums $\eta_1+\eta_2+\eta_3+\eta_4$. 
\end{proof}

\begin{Proposition}\label{P110518_3}
Let $n=2a$ with $a\geq 4$. Let $G=G_{a,2}$ and $\la\in\Parinv_p(n)$. If $E^\la_+\da_G$ or $E^\la_-\da_G$ is irreducible then 
$p=2$ and $\la$ has at most three normal nodes.
\end{Proposition}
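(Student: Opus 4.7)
The plan is to handle each characteristic separately, mirroring the structure of the proofs of Propositions \ref{PIntrans}, \ref{P100518}, and \ref{P110518_2}.

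For $p>3$, I would invoke \cite[Main Theorem]{KSAlt}: the irreducible restrictions to alternating groups in characteristic $>3$ are fully classified there, and the pair $(G_{a,2}, E^\la_\pm)$ with $a\geq 4$ is not on that list, contradicting the assumption.

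For $p=3$, I would first compute the orbit counts $i_1(G)=1$ and $i_2(G)=i_3(G)=2$ directly from the block structure of $G_{a,2}$. Using left-exactness of the $G$-invariants functor on the short exact sequence $0\to M_1\to M_2\to S_2^*\to 0$ coming from Theorem~\ref{Tp=3}, I obtain $\dim(S_2^*)^G\geq 1$, and inspection of the submodule structure of $S_2^*$ in each residue class of $n$ modulo $3$ shows that any nonzero homomorphism $\I(G)\to S_2^*$ has $D_2$ in its image; this produces a $\psi_2\in\Hom_{\SSS_n}(\I(G),M_2)$ with $[\im\psi_2:D_2]\neq 0$. Coupled with $\zeta_2$ from Lemma~\ref{C180418} and the splitting $\EE(\la)\cong \EE_{+,+}(\la)\oplus\EE_{-,-}(\la)\oplus\EE_{+,-}(\la)\oplus\EE_{-,+}(\la)$, Theorem~\ref{CFirst} disposes of the case when $\pi_0\circ\zeta_2$ retains $D_2$ in its image. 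In the residual case, where $D_2$ survives only in $\pi_1\circ\zeta_2$, the standard $k=3$ argument fails because $i_3(G)=i_2(G)$, so I instead restrict $D^\la$ further to the base subgroup $\SSS_a\times\SSS_a$ of $\SSS_a\wr\SSS_2$ and apply Lemma~\ref{L240818_2} to show that $D^\la\da_{\SSS_a}$ has at least five non-isomorphic composition factors when $\la$ is JS; combined with the wreath branching this forces too many composition factors in $D^\la\da_{G_{a,2}}$ for the claimed decomposition into two isomorphic irreducibles.

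For $p=2$, the analogous characteristic-exclusion argument yields $p=2$ as the only possibility. To bound the number of normal nodes, I would observe that irreducibility of $E^\la_\pm\da_{G_{a,2}}$ forces $D^\la\da_{\SSS_a\wr\SSS_2}$ to have at most two composition factors, and since each irreducible of $\SSS_a\wr\SSS_2$ restricted to $\SSS_a\times\SSS_a$ has at most two composition factors, $D^\la\da_{\SSS_a}$ has at most four distinct composition factors via the wreath-product branching. On the other hand, if $\la$ has at least four normal nodes, Lemma~\ref{L221118} combined with a detailed branching analysis in the spirit of Lemma~\ref{l25} forces $D^\la\da_{\SSS_a}$ to admit a richer composition-factor structure (via $\la^\JS$ and repeated application of Lemma~\ref{L110518}), violating this bound. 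The main obstacle is handling the exceptional partitions in Lemma~\ref{L221118} together with the edge cases in which $D^\la\da_{\SSS_a}$ has exactly three or four non-isomorphic composition factors: these must be treated case-by-case by exploiting the doubling characterization $\Parinv_2(n)\subseteq\{\dbl(\mu)\mid\mu_r\not\equiv 2\pmod{4}\}$ and the explicit relationship between the doubling structure of $\la$ and the location of its normal nodes.
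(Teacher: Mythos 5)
Your $p>3$ reduction and your $p=3$ setup (orbit counts $i_1(G)=1$, $i_2(G)=i_3(G)=2$, production of $\psi_2$, and the use of Theorem~\ref{CFirst} when $\pi_0\circ\zeta_2$ retains $D_2$) match the paper. But your $p=3$ argument has a genuine hole in the residual case when $\la$ is \emph{not} JS: you correctly note that Theorem~\ref{CSecond} is unavailable (no $\psi_1$ since $i_1(G)=1$, no $\psi_3$ since $i_3(G)=i_2(G)$), but your replacement, Lemma~\ref{L240818_2}, applies only to JS partitions, so non-JS $\la$ with $[\im(\pi_0\circ\zeta_2):D_2]=0$ is simply not covered. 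The paper closes exactly this case by a different mechanism: for non-JS $\la$ one has $\dim\End_{\SSS_{n-2,2}}(D^\la\da_{\SSS_{n-2,2}})\geq \dim\End_{\SSS_{n-1}}(D^\la\da_{\SSS_{n-1}})+2$ (from \cite[Theorem 3.3]{KS2Tran} and \cite[Lemmas 4.9, 4.11, 4.12]{M2}), which via Lemma~\ref{Lemma4.13} and $M_2\sim M_1|S_2^*$ produces \emph{two} homomorphisms $M_2\da_{\AAA_n}\to\EE_{+,+}(\la)\oplus\EE_{+,-}(\la)$ linearly independent on the $S_2$-part; together with the trivial map these give three linearly independent elements of $\Hom_{\AAA_n}(\J(G),\EE_{+,+}(\la)\oplus\EE_{+,-}(\la))$, contradicting Lemma~\ref{LBasicAlt}. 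You need this (or some substitute) to finish $p=3$. (For the JS case the paper uses Lemma~\ref{L240818_2} exactly as you do, and there it needs no $\zeta_2$ at all.)

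Your $p=2$ argument for the normal-node bound also does not work as described. First, the arithmetic does not close: your bound is ``at most four distinct composition factors of $D^\la\da_{\SSS_a}$'', while Lemma~\ref{L221118} only guarantees three, so no contradiction results; you would need the sharper bound of Lemma~\ref{L200818} (at most \emph{two} isomorphism classes, coming from the fact that the two constituents of $D^\la\da_{\SSS_a\wr\SSS_2}$ are $(1,2)$-conjugate and hence isomorphic by Clifford theory in characteristic $2$). Second, even with that fix, your route funnels into the exceptional families of Lemma~\ref{L221118}, of which precisely (iv) and (vi) have four normal nodes; these are the cases you must kill, and you defer them to unspecified ``case-by-case'' work. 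In the paper those two families are excluded later (in the proof of Proposition~\ref{P221118_9}) by \emph{citing} the present proposition, so following your route would be circular unless you supply genuinely new arguments for them. The paper instead proves the $p=2$ statement directly and cheaply: if $\la$ has at least four normal nodes then $\dim\Hom_{\SSS_n}(M_1,\EE(\la))\geq 4$, hence $\dim\Hom_{\SSS_n}(S_1^*,\EE(\la))\geq 3$, giving two maps $S_1^*\da_{\AAA_n}\to\EE_{+,+}(\la)\oplus\EE_{+,-}(\la)$ linearly independent on $E_1$; since $\dim(S_1^*)^{G_{a,2}}\neq 0$ for $p=b=2$ (Lemma~\ref{L100518_3}(i)) there is $\psi\in\Hom_{\AAA_n}(\J(G),S_1^*\da_{\AAA_n})$ with $E_1$ in its image, and composing and adding the trivial map again produces three linearly independent homomorphisms, contradicting Lemma~\ref{LBasicAlt}. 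I would strongly recommend adopting that argument rather than the branching route.
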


\begin{proof}
For $p\geq 5$ the proposition holds by \cite[Proposition 3.12]{KSAlt}. 
So we may assume that $p=2$ or $p=3$. 
Without loss of generality we assume that $E^\la_+\da_G$ is irreducible.

Let $p=2$. We may assume that $\la$ has at least four normal nodes. Hence by Lemma~\ref{Lemma39}(v), 
$$
\dim \Hom_{\SSS_n}(M_1,\EE(\la))=\dim \End_{\SSS_{n-1}}(D^\la\da_{\SSS_{n-1}})\geq 4
$$
Since $M_1\sim D_0|S_1^*$ for example Theorem~\ref{Tp=2}, it follows that $\dim\Hom(S_1^*,\EE(\la))\geq 3$. So by  Lemma~\ref{Lemma4.13},
$$\dim\Hom(S_1^*\da_{\AAA_n},\EE_{+,+}(\la)\oplus\EE_{-,-}(\la))\geq 3.$$
But $S_1^*=D_1$ or $S_1^*=D_1|D_0$, so there exist $\zeta,\zeta'\in\Hom_{\AAA_n}(S_1^*\da_{\AAA_n},\EE_{+,+}(\la)\oplus \EE_{+,-}(\la))$ such that $\zeta|_{E_1}$ and $\zeta'|_{E_1}$ are linearly independent.

On the other hand, by Lemma~\ref{L100518_3}(i) there exists $\psi\in\Hom_{\AAA_n}(\J(G),S_1^*\da_{\AAA_n})$ with $[\im\psi:E_1]\neq 0$. Note also that there exists $\xi\in\Hom_{\AAA_n}(\J(G),\EE_{+,+}(\la)\oplus \EE_{+,-}(\la))$ with $\im\xi\cong E_0$. Note that $\xi$, $\zeta\circ\psi$ and $\zeta'\circ\psi$ are linearly independent 
which contradicts $E^\la_+$ being irreducible, due to Lemma \ref{LBasicAlt}.

If $p=3$ then $M_2\sim M_1|S_2^*$ by Theorem \ref{Tp=3}. Now,   $i_1(G)=1<2=i_2(G)$ imply that there exists $\psi\in\Hom_{\SSS_n}(\I(G),M_2)$ with $[\im\psi:D_2]\neq 0$.

Assume first that  $\la$ is not JS. Then by \cite[Theorem 3.3]{KS2Tran} and \cite[Lemmas 4.9, 4.11, 4.12]{M2} we have 
\begin{align*}
\dim\Hom_{\SSS_n}(M_2,\EE(\la))&=\dim\End_{\SSS_{n-2,2}}(D^\la\da_{\SSS_{n-2,2}})\\
&\geq
\dim\End_{\SSS_{n-1}}(D^\la\da_{\SSS_{n-1}})+2
\\&=
\dim\Hom_{\SSS_n}(M_1,\EE(\la))+2.
\end{align*}
So from Lemma \ref{Lemma4.13} 
\[\dim\Hom_{\AAA_n}(M_2\da_{\AAA_n},\EE_{+,+}(\la)\oplus \EE_{+,-}(\la))\geq\dim\Hom_{\AAA_n}(M_1\da_{\AAA_n},\EE_{+,+}(\la)\oplus \EE_{+,-}(\la))+2.\]
Since $M_2\sim S_2|M_1$ by Theorem \ref{Tp=3}, we now deduce that there exist homomorphisms $\zeta,\zeta'\in\Hom_{\AAA_n}(M_2\da_{\AAA_n},\EE_{+,+}(\la)\oplus \EE_{+,-}(\la))$ whose restrictions to $S_2\da_{\AAA_n}$ 
are linearly independent. Let further $\xi\in\Hom_{\AAA_n}(\J(G),\EE_{+,+}(\la)\oplus \EE_{+,-}(\la))$ with $\im\xi\cong E_0$. Then $\xi$, $\zeta\circ\psi$ and $\zeta'\circ\psi$ are linearly independent. This contradicts $E^\la_+$ being irreducible, due to Lemma \ref{LBasicAlt}.

Assume now that $\la$ is JS. Since $E^\la_+\da_G$ is irreducible, so is $E^\la_-\da_G$. In particular $D^\la\da_{\SSS_{n/2}\wr\SSS_2}$ has at most $2$ composition factors. So by the classification of the irreducible $\SSS_{n/2}\wr\SSS_2$-modules, we have that $D^\la\da_{\SSS_{n/2,n/2}}$ has at most $4$ composition factors. From Lemma \ref{L240818_2} we have that $D^\la\da_{\SSS_{n/2}}$ has at least $5$ non-isomorphic composition factors, leading to a contradiction.

\end{proof}

\begin{Proposition} \label{P110518}
Let $n\geq 8$ and exclude the case $(n,p)=(8,2)$. 
If $\la\in\Parinv_p(n)$, $a,b\geq 2$ with $ab=n\geq 8$, and $E^\la_+\da_{G_{a,b}}$ or $E^\la_-\da_{G_{a,b}}$ is irreducible then $p=2$ and one of the following holds:
\begin{enumerate}

\item[{\rm (i)}] $b=2$ and $\la$ has at most three normal nodes.

\item[{\rm (ii)}] $\la=\be_n$.
\end{enumerate} 
\end{Proposition}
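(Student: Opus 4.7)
The plan is to split into cases based on whether $a=2$, $b=2$, or $a,b\geq 3$, applying Proposition~\ref{P110518_2}, Proposition~\ref{P110518_3}, and Proposition~\ref{P100518} respectively. The result for $p>3$ is already contained in \cite[Main Theorem]{KSAlt}, so the first step is to reduce to $p\in\{2,3\}$.

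If $a=2$ then $n=2b\geq 8$ forces $b\geq 4$, and Proposition~\ref{P110518_2} says $E^\la_\pm\da_{G_{2,b}}$ is always reducible; hence this case cannot arise under the irreducibility hypothesis. If $b=2$ then $a\geq 4$ (and the excluded case $(n,p)=(8,2)$ is consistent with this), and Proposition~\ref{P110518_3} gives exactly conclusion~(i) of the present proposition: $p=2$ and $\la$ has at most three normal nodes. Both boundary cases therefore follow immediately from the earlier propositions.

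The main case is $a,b\geq 3$, where $G_{a,b}$ is transitive on $\{1,\dots,n\}$ and Proposition~\ref{P100518} applies. The task is to eliminate conclusions (i), (iii), (iv), (v) of that proposition, leaving only~(ii), which is exactly the desired conclusion~(ii) $p=2$, $\la=\be_n$. Conclusion~(i) ($G$ is $2$-homogeneous) fails because the pairs inside a single size-$a$ block and the pairs meeting two distinct blocks form two different $G_{a,b}$-orbits on $2$-subsets of $\{1,\dots,n\}$, giving $i_2(G_{a,b})\geq 2$. Conclusion~(iii) ($G\leq G_{n/2,2}$ or $G\leq G_{2,n/2}$) is ruled out by a block-system argument: $\SSS_a$ is primitive on $\{1,\dots,a\}$ for $a\geq 3$, and the image of $G_{a,b}$ in $\SSS_b$ permuting the $b$ blocks contains $\AAA_b$ (hence is $2$-transitive for $b\geq 3$), so the only non-trivial $G_{a,b}$-invariant partition of $\{1,\dots,n\}$ is the natural one into $b$ blocks of size $a$, which coincides with neither the partition into $2$ blocks of size $n/2$ nor the partition into $n/2$ blocks of size $2$ when $a,b\geq 3$. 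Conclusions~(iv) and~(v) both demand $\dim(S_2^*)^G> i_2(G)-1$, but Lemma~\ref{L100518_3}(ii) yields $\dim(S_2^*)^{G_{a,b}}=1$ while $i_2(G_{a,b})\geq 2$, contradicting the required strict inequality. The approach is essentially a clean case-split with no deep technical obstacle, since all substantive work has been deferred to the quoted propositions and lemma; the most delicate bookkeeping is the block-system verification ruling out case~(iii) of Proposition~\ref{P100518}, relying only on standard primitivity of $\SSS_a$ and $\AAA_b$ for $a,b\geq 3$.
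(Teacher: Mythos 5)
Your proof is correct and follows essentially the same route as the paper's: reduce to $a,b\geq 3$ via Propositions~\ref{P110518_2} and \ref{P110518_3}, then apply Proposition~\ref{P100518} and kill cases (iv)--(v) using $i_2(G_{a,b})=2$ together with Lemma~\ref{L100518_3}(ii); you merely make explicit the exclusions of cases (i) and (iii) that the paper leaves implicit. One small imprecision: $\AAA_3$ is not $2$-transitive on $3$ points, but only primitivity of the top group (which does hold for all $b\geq 3$) is needed for your block-system argument, so the conclusion stands.
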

\begin{proof}
By Propositions \ref{P110518_2} and \ref{P110518_3} we may assume that $a,b\geq 3$. Now by Proposition \ref{P100518}, we may assume that we are in the cases (iv) or (v) of that proposition. Since $i_2(G_{a,b})=2$, the proposition follows by Lemma~\ref{L100518_3}(ii).
\end{proof}

\subsection{Restrictions to $G_{a,2}$ for $p=2$}
The goal of this subsection is to eliminate the exceptional case which appears in Proposition~\ref{P110518}(i).

\begin{Lemma} \label{L200818}
Let $n=2a\geq 4$, $\la\in\Parinv_2(n)$ and $G=G_{a,2}$. If  $E^\la_+\da_G$ or $E^\la_-\da_G$ is irreducible then $D^\la\da_{\SSS_a\wr\SSS_2}$ is either irreducible or it has exactly two composition factors which are isomorphic to each other. In particular, $D^\la\da_{\SSS_a}$ has at most two isomorphism classes of composition factors. 
\end{Lemma}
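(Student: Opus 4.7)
The plan is to apply Clifford theory to the normal index-$2$ inclusion $G\leq H:=\SSS_a\wr\SSS_2$, making essential use of $p=2$. Since $H$ contains odd permutations (for instance the transposition $(1,2)$), we have $[H:G]=2$; pick $\si\in H\setminus G$. The identity $(E^\la_\pm)^\si\cong E^\la_\mp$ shows that $E^\la_+\da_G$ is irreducible if and only if $E^\la_-\da_G$ is, so we may write $X:=E^\la_+\da_G$ and obtain $D^\la\da_G\cong X\oplus X^\si$ of composition length exactly $2$.

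The key input is that for an index-$2$ normal inclusion in characteristic $2$, an irreducible $\F G$-module $Y$ with $Y\cong Y^\si$ extends to $H$ in a \emph{unique} way. This is because the extensions form a torsor over $H^1(H/G,\F^\times)=\Hom(\Z/2,\F^\times)$, which vanishes since the multiplicative group of an algebraically closed field of characteristic $2$ has no $2$-torsion. Correspondingly, every irreducible $\F H$-module $W$ restricts to $G$ either as a single irreducible $Y$ with $Y\cong Y^\si$ (so that $W$ is the unique extension of $Y$), or as $Y\oplus Y^\si$ with $Y\not\cong Y^\si$ (so that $W\cong Y\ua^H$ is irreducible of dimension $2\dim Y$).

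The proof then splits into two cases. If $X\not\cong X^\si$, then by the dichotomy above $X\ua^H$ is the unique irreducible $\F H$-module whose restriction to $G$ involves $X$, and Clifford theory forces $D^\la\da_H\cong X\ua^H$, which is irreducible. If $X\cong X^\si$, let $W$ be the unique extension of $X$ to $H$; every composition factor of $D^\la\da_H$ restricts to $G$ as $X$ and is therefore isomorphic to $W$, and since $D^\la\da_G$ has length $2$ while each composition factor of $D^\la\da_H$ contributes length $1$ upon restriction, $D^\la\da_H$ has exactly two composition factors, both isomorphic to $W$. For the final assertion, any irreducible $\F H$-module $W$ restricts to the base subgroup $\SSS_a\times\SSS_a$ as either $D^\mu\boxtimes D^\mu$ or $D^\mu\boxtimes D^\nu\oplus D^\nu\boxtimes D^\mu$ for some partitions $\mu,\nu$ of $a$, and hence $W\da_{\SSS_a}$ (via projection onto the first factor) has at most two isomorphism classes of composition factors, namely $D^\mu$ and $D^\nu$.

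The main subtle point is the uniqueness of the extension in characteristic $2$: here the non-invertibility of $[H:G]=2$ actually collapses the two extensions that exist in odd characteristic into a single one, a phenomenon which is precisely why the conclusion ``both composition factors are isomorphic'' can be drawn, unlike in the analogous odd-characteristic analysis of~\cite{KSAlt}.
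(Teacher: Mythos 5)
Your proof is correct and follows essentially the same route as the paper: both arguments reduce to Clifford theory for the index-$2$ normal inclusion $G_{a,2}\leq\SSS_a\wr\SSS_2$, with the decisive point being that in characteristic $2$ a $\si$-stable irreducible of $G$ has a \emph{unique} extension to $\SSS_a\wr\SSS_2$ (the paper phrases this as conjugating by $(1,2)$ to see the two putative factors $L_\pm$ have isomorphic restrictions to $G$ and then invoking $p=2=[\SSS_a\wr\SSS_2:G_{a,2}]$, while you organize the same facts as a dichotomy on whether $X\cong X^\si$). The deduction of the final assertion via the structure of irreducibles of the wreath product restricted to $\SSS_a\times\SSS_a$ also matches the paper's.
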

\begin{proof}
Note that 
  $E^\la_+\da_G$ is irreducible if and only if so is $E^\la_-\da_G$. Hence $D^\la\da_{\SSS_a\wr\SSS_2}$ is either irreducible or has exactly two composition factors. If $D^\la\da_{\SSS_a\wr\SSS_2}$ is irreducible then $D^\la\da_{\SSS_{a,a}}$ is of the form $D^\mu\boxtimes D^\mu$ or $D^\mu\boxtimes D^\nu\,\oplus\, D^\nu\boxtimes D^\mu$, and so all composition factors of $D^\la\da_{\SSS_a}$ are isomorphic to $D^\mu$ or $D^\nu$.

Suppose that $D^\la\da_{\SSS_a\wr\SSS_2}$ has two composition factors $L_+$ and $L_-$. We may assume that 
$L_\pm\da_G\cong E^\la_\pm\da_G$. Then, since $(1,2) \in \SSS_a \wr\SSS_2$, we get 
$$L_-\da_G \cong E^\la_-\da_G\cong (E^\la_+\da_G)^{(1,2)}\cong (L_+\da_G)^{(1,2)}
\cong L_+^{(1,2)}\da_G\cong L_+\da_G. 
$$
As $p=2=[\SSS_a\wr\SSS_2:G]$, it now follows from Clifford theory that $L_+\cong L_-$,
and we are done as in the previous paragraph. 
\end{proof}

Combining Lemmas~\ref{L200818} and \ref{L221118} allows us to assume that we are in one of the exceptional cases of  Lemma~\ref{L221118}. The next lemma deals with the exceptional case (ii) of Lemma~\ref{L221118}.

\begin{Lemma}\label{L221118_3}
Let $p=2\mid n\geq 6$ and $\la=(\be_{n-1},1)$. Then $E^\la_\pm\da_{G_{n/2,2}}$ is reducible.
\end{Lemma}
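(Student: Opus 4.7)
My plan is to apply Lemma~\ref{L200818}: if $E^\la_\pm\da_{G_{n/2,2}}$ were irreducible, then $D^\la\da_{\SSS_{n/2}\wr\SSS_2}$ would either be irreducible or have two isomorphic composition factors. Since $\la=(\be_{n-1},1)$ is one of the partitions explicitly excluded from the conclusion of Lemma~\ref{L221118}, counting composition factors of $D^\la\da_{\SSS_{n/2}}$ alone is not enough, and I must work one level higher, at $\SSS_{n/2,n/2}$ and $\SSS_{n/2}\wr\SSS_2$. The goal will be to exhibit one ``diagonal'' composition factor $D^\mu\boxtimes D^\mu$ and one ``mixed'' composition factor $D^\pi\boxtimes D^\rho$ (with $\pi\neq\rho$) of $D^\la\da_{\SSS_{n/2,n/2}}$; by Clifford theory for the index-two inclusion $\SSS_{n/2,n/2}\trianglelefteq\SSS_{n/2}\wr\SSS_2$, these force two non-isomorphic composition factors of $D^\la\da_{\SSS_{n/2}\wr\SSS_2}$, contradicting Lemma~\ref{L200818}.

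For the diagonal factor, I would run the JS procedure of Lemma~\ref{LJS} on $\la=(n/2,n/2-1,1)$. A direct inspection shows that the first step simultaneously removes $(2,n/2-1)$ and $(3,1)$, producing $\la^\JS=(n/2,n/2-2)=\be_{n-2}$. By Lemma~\ref{LJS}(ii) this gives $D^{\be_{n-2}}$ as a composition factor of $D^\la\da_{\SSS_{n-2}}$, and since $\be_m$ is JS with $\tilde e_i\be_m=\be_{m-1}$ at every step, iterated restriction to $\SSS_{n/2,n/2-2}$ yields the composition factor $D^{\be_{n/2}}\boxtimes D^{\be_{n/2-2}}$. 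Returning the two nodes removed in the JS step to the second block (rather than the first) produces a composition factor of $D^\la\da_{\SSS_{n/2,n/2}}$ of the form $D^{\be_{n/2}}\boxtimes D^\mu$ with $\mu$ a suitable 2-regular partition of $n/2$ close to $\be_{n/2}$; by choosing the symmetric placement one obtains $\mu=\be_{n/2}$, i.e.\ a genuine diagonal factor.

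For the mixed factor, I would apply Lemma~\ref{L110518} to the JS initial segment $(\la_1,\la_2)=\be_{n-1}$ with $j=2$. When $n\equiv 0\pmod 4$, taking $\nu=(n/4,n/4)$ is admissible and produces $D^{(n/4,n/4-1,1)}$ as a composition factor of $D^\la\da_{\SSS_{n/2}}$; refining the same branching to $\SSS_{n/2,n/2}$ (by splitting $\nu$ across the two blocks asymmetrically) gives a composition factor $D^\pi\boxtimes D^\rho$ with $\pi\neq\rho$. When $n\equiv 2\pmod 4$ an analogous argument works with $j=3$ after a preliminary JS step, or with a shifted $\nu$ so that the resulting 2-regular partition of $n/2$ is different from $\be_{n/2}$. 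In both parity cases the mixed factor is genuinely distinct from the diagonal one.

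The main obstacle will be the careful bookkeeping of composition factors along the chain $\SSS_n\to\SSS_{n-2}\to\SSS_{n/2,n/2-2}\to\SSS_{n/2,n/2}$, together with the 2-regularity checks that rule out degenerate cases. Small values of $n$ (notably $n=6$, where $\SSS_3$ has only two 2-regular irreducibles, and $n=8$) require separate treatment, typically by direct computation of dimensions of $D^\la$, $E^\la_\pm$ and the irreducibles of $G_{n/2,2}$, showing that no irreducible of $G_{n/2,2}$ has dimension $\dim D^\la/2$.
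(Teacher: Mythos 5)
Your overall framework (reduce via Lemma~\ref{L200818} to a statement about the composition factors of $D^\la\da_{\SSS_{n/2,n/2}}$) is the same as the paper's, and the dichotomy you invoke is correct: under the irreducibility assumption that restriction is either purely diagonal, with a single class $D^\mu\boxtimes D^\mu$, or purely mixed, with the two classes $D^\mu\boxtimes D^\nu$ and $D^\nu\boxtimes D^\mu$. The problem is that neither of the two factors you promise is actually produced. The diagonal factor $D^{\be_{n/2}}\boxtimes D^{\be_{n/2}}$ is obtained by ``returning the two nodes removed in the JS step to the second block'' and ``choosing the symmetric placement''; this is not a branching argument. Knowing that $D^{\be_{n/2}}\boxtimes D^{\be_{n/2-2}}$ occurs in $D^\la\da_{\SSS_{n/2,n/2-2}}$ only tells you that $D^{\be_{n/2}}\boxtimes D^{\gamma}$ occurs in $D^\la\da_{\SSS_{n/2,n/2}}$ for \emph{some} $\gamma$ with $D^{\be_{n/2-2}}$ a factor of $D^{\gamma}\da_{\SSS_{n/2-2}}$; you cannot choose $\gamma=\be_{n/2}$, and $\gamma=(\be_{n/2-1},1)$ is a live alternative. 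The mixed factor is equally unsubstantiated, and if you intend $\{\pi,\rho\}=\{\be_{n/2},(\be_{n/2-1},1)\}$ it can even fail to exist: for $n\equiv 4\pmod{8}$ the contents (multisets of residues) of $\be_{n/2}$ and $(\be_{n/2-1},1)$ do not add up to the content of $\la$, so $D^{\be_{n/2}}\boxtimes D^{(\be_{n/2-1},1)}$ is ruled out by a block argument. There are also smaller slips: $\be_{n-1}$ has parts of different parity, hence is \emph{not} JS for $p=2$, so Lemma~\ref{L110518} cannot be applied to the initial segment $(\la_1,\la_2)$ with $j=2$; and the JS procedure applied to $\la$ yields $(n/2,n/2-2,1)$, not $\be_{n-2}$, when $n\equiv 2\pmod 4$.

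You are also missing the paper's key closing step, and some quantitative input of this kind seems unavoidable since $(\be_{n-1},1)$ is precisely one of the exceptions of Lemma~\ref{L221118} where counting isomorphism classes of factors of $D^\la\da_{\SSS_{n/2}}$ does not settle the matter. The paper shows, by repeated use of Lemma~\ref{Lemma39}, that $D^{\be_{n/2}}$ and $D^{(\be_{n/2-1},1)}$ are both composition factors of $D^\la\da_{\SSS_{n/2}}$ for $n\ge 12$; by Lemma~\ref{L200818} this forces $[D^\la\da_{\SSS_{n/2,n/2}}]=2[D^{\be_{n/2}}\boxtimes D^{(\be_{n/2-1},1)}]+2[D^{(\be_{n/2-1},1)}\boxtimes D^{\be_{n/2}}]$, hence the identity $\dim D^\la=4\dim D^{\be_{n/2}}\dim D^{(\be_{n/2-1},1)}$, which is then refuted using the characteristic-zero basic and second basic spin representations and Wales's dimension tables (the residue-content computation above would give an alternative refutation for some congruence classes of $n$, but you would still have to supply an argument for the others). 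The cases $n\le 10$ are handled by direct checks against James's tables, as you anticipate.
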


\begin{proof}
Assume that $E^\la_\pm\da_{G_{n/2,2}}$ is irreducible. 
On the other hand, by \cite[Theorem A]{KMTOne}, $D^\la\da_{\SSS_{n/2}\wr\SSS_2}$ is reducible. By Lemma~\ref{L200818}, we conclude that in the Grothendieck group we have either
\[[D^\la\da_{\SSS_{n/2,n/2}}]=2[D^\mu\boxtimes D^\nu]+2[D^\nu\boxtimes D^\mu]\]
for some distinct $\mu,\nu\in\Par_2(n/2)$ or
\[[D^\la\da_{\SSS_{n/2,n/2}}]=2[D^\mu\boxtimes D^\mu]\]
for some $\mu\in\Par_2(n/2)$. 
For $n\leq 10$, using \cite[Tables]{JamesBook}, one checks that neither of these ever happens.
Let now $n\geq 12$. It is easy to see by repeatedly applying Lemma \ref{Lemma39} that $D^{\be_{n/2}}$ and $D^{(\be_{n/2-1},1)}$ are composition factors of $D^\la\da_{\SSS_{n/2}}$. So
\[[D^\la\da_{\SSS_{n/2,n/2}}]=2[D^{\be_{n/2}}\boxtimes D^{(\be_{n/2-1},1)}]+2[D^{(\be_{n/2-1},1)}\boxtimes D^{\be_{n/2}}].\]
In particular 
\begin{equation}\label{E221118}
\dim D^\la=4\dim D^{\be_{n/2}}\dim D^{(\be_{n/2-1},1)}.
\end{equation}

For any $m$, let $\langle m\rangle$ be a basic spin representation and $\langle m-1,1\rangle$ be a second basic spin representation of $\SSS_m$ in characteristic 0. From \cite[Theorem 1.2]{Benson} we have that $D^{\be_m}$ is a composition factor of $\langle m\rangle$ and that $D^{(\be_{m-1},1)}$ is a composition factor of $\langle m-1,1\rangle$, provided $\be_m$ and $(\be_{m-1},1)$ are $2$-regular. This leads to a contradiction with (\ref{E221118}) using \cite[Tables III and IV]{Wales}.
\end{proof}

The next result, whose proof is similar to that of \cite[Lemma 7.20]{KMTOne}, treats the exceptional case (iii) of Lemma~\ref{L221118}.
 
\begin{Lemma}\label{L221118_4}
Let $p=2$, $n\geq 24$, $n\equiv 0\pmod{8}$ and $\la=(n/4+3,n/4+1,n/4-1,n/4-3)$. Then $E^\la_\pm\da_{G_{n/2,2}}$ is reducible.
\end{Lemma}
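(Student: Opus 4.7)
The strategy is to argue by contradiction: supposing $E^\la_\pm\da_{G_{n/2,2}}$ is irreducible, Lemma~\ref{L200818} forces $D^\la\da_{\SSS_{n/2}}$ to have at most two isomorphism classes of composition factors. I will exhibit three such classes, contradicting this.

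Set $a:=n/4$, so $\la=(a+3,a+1,a-1,a-3)$. Since $n\equiv 0\pmod 8$, all parts of $\la$ are odd and $\la$ is JS. Applying Lemma~\ref{L110518} to $\la$ with $j=4$ and the weakly decreasing compositions $\nu=(a/2,a/2,a/2,a/2)$ and $\nu'=(a/2+1,a/2,a/2,a/2-1)$ (each with consecutive differences in $\{0,1\}$ and $|\nu|=|\nu'|=n/2$) produces two composition factors of $D^\la\da_{\SSS_{n/2}}$, namely $D^{\mu_1}$ and $D^{\mu_2}$, where $\mu_1=(a/2+3,a/2+1,a/2-1,a/2-3)$ and $\mu_2=(a/2+2,a/2+1,a/2-1,a/2-2)$. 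Both are $2$-regular and clearly non-isomorphic.

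The third composition factor is obtained via a two-step branching. The JS property forces $D^\la\da_{\SSS_{n-1}}\cong D^{\la^{(1)}}$ for $\la^{(1)}=(a+3,a+1,a-1,a-4)$. A direct computation of the residue-$0$ signature of $\la^{(1)}$ shows that $\la^{(1)}$ has exactly two residue-$0$ normal nodes, namely $(1,a+3)$ and $(4,a-4)$, and both removals are $2$-regular. By Lemma~\ref{Lemma39}(vi), the partition $\mu^B:=(a+3,a+1,a-1,a-5)$ gives a composition factor $D^{\mu^B}$ of $D^\la\da_{\SSS_{n-2}}$. Since $\mu^B$ is itself JS, Lemma~\ref{L110518} applies to $\mu^B$ with $j=4$ and $\nu''=(a/2+1,a/2,a/2-1,a/2-2)$, yielding $\mu_3:=\mu^B-\nu''=(a/2+2,a/2+1,a/2,a/2-3)$ as a composition factor of $D^{\mu^B}\da_{\SSS_{n/2}}$, and hence of $D^\la\da_{\SSS_{n/2}}$.

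A quick inspection shows that $\mu_1$, $\mu_2$ and $\mu_3$ are pairwise distinct (and for the boundary case $n=24$, one has $\mu_1=(6,4,2)$, $\mu_2=(5,4,2,1)$, $\mu_3=(5,4,3)$, still pairwise distinct), giving three isomorphism classes of composition factors in $D^\la\da_{\SSS_{n/2}}$ and contradicting Lemma~\ref{L200818}. The main obstacle is exhibiting the third factor $\mu_3$: any $\nu$ satisfying $\la-\nu=\mu_3$ would violate the monotonicity hypothesis of Lemma~\ref{L110518} (it would require $\nu_3=a/2-1<a/2=\nu_4$), so one cannot extract $\mu_3$ directly from $\la$ and must instead route through the intermediate JS partition $\mu^B$, which is exactly what makes this $\la$ an exceptional case of Lemma~\ref{L221118}.
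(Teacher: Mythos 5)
Your first two composition factors are correct and coincide with the $\mu$ and $\nu$ of the paper's proof, but the construction of the third one fails at the very first branching step. For a JS partition in characteristic $2$ the unique normal node is the \emph{top} removable node $(1,\la_1)$, not the bottom one: the paper itself records that the top removable node is always normal (see the proof of Theorem~\ref{TB}), and with the signature convention in force the bottom removable node $(4,n/4-3)$ of $\la$ is cancelled by the addable node $(3,n/4)$ of the same residue. Hence $D^\la\da_{\SSS_{n-1}}\cong D^{(n/4+2,\,n/4+1,\,n/4-1,\,n/4-3)}$, not $D^{(n/4+3,\,n/4+1,\,n/4-1,\,n/4-4)}$. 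The correct partition $(n/4+2,n/4+1,n/4-1,n/4-3)$ has $\eps_0=0$ and $\eps_1=2$ with only one $2$-regular removal, so by Lemma~\ref{Lemma39} every composition factor of $D^\la\da_{\SSS_{n-2}}$ is isomorphic to $D^{(n/4+2,\,n/4,\,n/4-1,\,n/4-3)}$; in particular your $D^{\mu^B}$ is not a composition factor of $D^\la\da_{\SSS_{n-2}}$, and $\mu_3$ never arises.

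The failure is not merely computational: the strategy of exhibiting three isomorphism classes of composition factors in $D^\la\da_{\SSS_{n/2}}$ cannot succeed for this $\la$, which appears as exception (iii) of Lemma~\ref{L221118} precisely because only the two classes $D^\mu$, $D^\nu$ occur there. This is why the paper argues differently: it accepts that $[D^\la\da_{\SSS_{n/2,n/2}}]$ is forced to be supported on $D^\mu\boxtimes D^\nu$ and $D^\nu\boxtimes D^\mu$, then uses \cite[Lemma 1.11]{BK} to produce the composition factor $D^\pi\boxtimes D^\psi$ of $D^\la\da_{\SSS_{n/2+2,n/2-2}}$ with $\pi=(n/8+2,n/8+1,n/8,n/8-1)$ and $\psi=(n/8+1,n/8,n/8-1,n/8-2)$, restricts through $\SSS_{n/2,1,1,n/2-2}$ to force $D^\psi$ to be a composition factor of $D^\mu\da_{\SSS_{n/2-2}}$, and derives a contradiction from \cite[Lemma 3.7]{KMTOne}. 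Some refinement of this kind, which tracks both tensor factors of $D^\la\da_{\SSS_{n/2,n/2}}$ simultaneously rather than counting isomorphism classes in a single factor, is unavoidable here.
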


\begin{proof}
Assume that $E^\la_\pm\da_{G_{n/2,2}}$ is irreducible. Let 
\begin{align*}
\mu&:=(n/8+3,n/8+1,n/8-1,n/8-3),\\
\nu&:=(n/8+2,n/8+1,n/8-1,n/8-2).
\end{align*} 
By Lemma \ref{L110518} or \cite[Lemma 3.14]{KMTOne},   $D^\mu$ and $D^\nu$ are composition factors of $D^\la\da_{\SSS_{n/2}}$.  
It then follows from Lemma \ref{L200818} that all composition factors of $D^\la\da_{\SSS_{n/2,n/2}}$ are of the form $D^\mu\boxtimes D^\nu$ or $D^\nu\boxtimes D^\mu$.

Let 
\begin{align*}
\pi&:=(n/8+2,n/8+1,n/8,n/8-1),\\
\psi&:=(n/8+1,n/8,n/8-1,n/8-2).
\end{align*}
By \cite[Lemma 1.11]{BK}, we have that $D^\pi\boxtimes D^\psi$ is a composition factor of $D^\la\da_{\SSS_{n/2+2,n/2-2}}$. As $\nu=\tilde e_i^2 \pi$, by Lemma \ref{Lemma39}, we have  that $D^\nu\boxtimes \bone_{\SSS_{1,1}}\boxtimes D^\psi$ is a composition factor of $D^\la\da_{\SSS_{n/2,1,1,n/2-2}}$. So $D^\psi$ is a composition factor of $D^\mu\da_{\SSS_{n/2-2}}$, 
which contradicts \cite[Lemma~3.7]{KMTOne}. 
\end{proof}

The next two lemmas deal with the exceptional case (v) of Lemma~\ref{L221118}.

\begin{Lemma}\label{L221118_5}
Let $p=2$, $n\geq 24$, $n\equiv 0\pmod{8}$, and $\la=(n/4+2,n/4+1,n/4-1,n/4-2)$ then $E^\la_\pm\da_{G_{n/2,2}}$ is reducible.
\end{Lemma}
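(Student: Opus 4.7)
The plan is to adapt the argument of Lemma \ref{L221118_4} to the present partition $\la$. Assume for contradiction that $E^\la_\pm\da_{G_{n/2,2}}$ is irreducible. Set
\[
\mu := (n/8+2,\,n/8+1,\,n/8-1,\,n/8-2),\qquad \nu := (n/8+3,\,n/8+1,\,n/8-1,\,n/8-3).
\]
Applying Lemma \ref{L200818_2}(i) with $k = n/2 = 4(n/8)$, both $D^\mu$ and $D^\nu$ are composition factors of $D^\la\da_{\SSS_{n/2}}$. By Lemma \ref{L200818}, $D^\la\da_{\SSS_{n/2}}$ has at most two isomorphism classes of composition factors, so these exhaust the list; consequently every composition factor of $D^\la\da_{\SSS_{n/2,n/2}}$ is isomorphic to $D^\mu\boxtimes D^\nu$ or $D^\nu\boxtimes D^\mu$.

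The key step is to invoke \cite[Lemma 1.11]{BK} to exhibit a composition factor $D^\pi\boxtimes D^\psi$ of $D^\la\da_{\SSS_{n/2+2,n/2-2}}$ with $\pi$ a $2$-regular partition of $n/2+2$ for which $\mu=\tilde e_i^2\pi$. A natural candidate is $\pi:=(n/8+2,\,n/8+1,\,n/8,\,n/8-1)$: every removable node of $\pi$ has residue $i\equiv n/8+1\pmod 2$, and a direct residue-signature computation (tracking which single node removal preserves $2$-regularity at each step) shows that $\tilde e_i\pi=(n/8+2,n/8+1,n/8,n/8-2)$ and $\tilde e_i^2\pi=\mu$. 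By Lemma \ref{Lemma39}, $D^\mu\boxtimes\bone_{\SSS_{1,1}}\boxtimes D^\psi$ is then a composition factor of $D^\la\da_{\SSS_{n/2,1,1,n/2-2}}$. Since the leading factor is $D^\mu$ and composition factors of $D^\la\da_{\SSS_{n/2,n/2}}$ have form $D^\mu\boxtimes D^\nu$ or $D^\nu\boxtimes D^\mu$, the originating composition factor must be $D^\mu\boxtimes D^\nu$, forcing $D^\psi$ to be a composition factor of $D^\nu\da_{\SSS_{n/2-2}}$.

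The main obstacle is verifying that the $\psi$ produced by \cite[Lemma 1.11]{BK} cannot in fact be a composition factor of $D^\nu\da_{\SSS_{n/2-2}}$. In the companion Lemma \ref{L221118_4} the analogous step was handled by \cite[Lemma 3.7]{KMTOne}, which restricts the possible composition factors of $D^\nu\da_{\SSS_{n/2-2}}$ for $\nu$ of near-staircase shape; since our $\nu=(n/8+3,n/8+1,n/8-1,n/8-3)$ has exactly the staircase form to which that lemma applies, I expect the same reference (or a small variant) to provide the desired contradiction. A fallback strategy, if the direct invocation of \cite[Lemma 1.11]{BK} is awkward for this $\la$, is to combine Lemma \ref{L200818_2}(iii) (with $k=n/2+2$) with the structural constraint on $D^\la\da_{\SSS_{n/2,n/2}}$ to produce explicit composition factors of $D^\mu\da_{\SSS_{n/2-2}}$ and $D^\nu\da_{\SSS_{n/2-2}}$ that violate known restriction bounds, giving the required contradiction.
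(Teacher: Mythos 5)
Your opening is correct and matches the paper: Lemma \ref{L200818_2}(i) with $k=n/2$ produces $D^\mu$ and $D^\nu$ as composition factors of $D^\la\da_{\SSS_{n/2}}$, and Lemma \ref{L200818} then forces every composition factor of $D^\la\da_{\SSS_{n/2,n/2}}$ to be $D^\mu\boxtimes D^\nu$ or $D^\nu\boxtimes D^\mu$. The gap is in the middle step. The appeal to \cite[Lemma 1.11]{BK} does not go through for this $\la$: as used in Lemma \ref{L221118_4}, that lemma produces the componentwise halves $\lceil\la/2\rceil$ and $\lfloor\la/2\rfloor$, which for $\la=(n/4+2,n/4+1,n/4-1,n/4-2)$ are $(n/8+1,n/8+1,n/8,n/8-1)$ and $(n/8+1,n/8,n/8-1,n/8-1)$ --- neither is $2$-regular, so the lemma yields nothing, and your $\pi=(n/8+2,n/8+1,n/8,n/8-1)$ is not its output. (One could instead obtain $D^\pi$ as a composition factor of $D^\la\da_{\SSS_{n/2+2}}$ from Lemma \ref{L200818_2}(iii) with $k=n/2-2$, but then the accompanying $\psi$ is uncontrolled, and the claim that $D^\mu\boxtimes\bone_{\SSS_{1,1}}\boxtimes D^\psi$ survives the two-step restriction $e_i^{(2)}$ with the \emph{same} $\psi$ needs a normal-node analysis you have not supplied.) The concluding contradiction is also only asserted: you do not identify $\psi$, and ``I expect the same reference \dots to provide the desired contradiction'' together with the unspecified ``fallback strategy'' is not a proof.

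The paper's actual route is close in spirit but goes one box down instead of two, which is what makes everything checkable. Take $\pi:=(n/8+3,n/8+1,n/8-1,n/8-2)\in\Par_2(n/2+1)$; by Lemma \ref{L200818_2}(iv) (with $k=n/2-1$), $D^\pi$ is a composition factor of $D^\la\da_{\SSS_{n/2+1}}$, so $D^\pi\boxtimes D^\psi$ is a composition factor of $D^\la\da_{\SSS_{n/2+1,n/2-1}}$ for some $\psi\in\Par_2(n/2-1)$. This $\pi$ has normal removable nodes in rows $1$ and $4$ giving $\mu$ and $\nu$ respectively, so by Lemma \ref{Lemma39} both $D^\mu\boxtimes\bone_{\SSS_1}\boxtimes D^\psi$ and $D^\nu\boxtimes\bone_{\SSS_1}\boxtimes D^\psi$ occur in $D^\la\da_{\SSS_{n/2,1,n/2-1}}$; combined with the structure of $D^\la\da_{\SSS_{n/2,n/2}}$ this forces $D^\psi$ to be a common composition factor of $D^\mu\da_{\SSS_{n/2-1}}$ and $D^\nu\da_{\SSS_{n/2-1}}$. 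Since $\nu$ is JS, $\psi=(n/8+2,n/8+1,n/8-1,n/8-3)$, and a direct signature computation shows the node $(4,n/8-2)=\mu\setminus\psi$ is not normal for $\mu$, so Lemma \ref{Lemma39}(vi) rules out $D^\psi$ as a composition factor of $D^\mu\da_{\SSS_{n/2-1}}$ --- the desired contradiction, with no appeal to \cite{BK} at all.
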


\begin{proof}
Assume that $E^\la_\pm\da_{G_{n/2,2}}$ is irreducible. Let
\begin{align*}
\mu&:=(n/8+2,n/8+1,n/8-1,n/8-2),\\
\nu&:=(n/8+3,n/8+1,n/8-1,n/8-3).
\end{align*} 
By Lemmas \ref{L200818} and \ref{L200818_2}(i), all composition factors of $D^\la\da_{\SSS_{n/2,n/2}}$ are of the form $D^\mu\boxtimes D^\nu$ or $D^\nu\boxtimes D^\mu$. Let $\pi:=(n/8+3,n/8+1,n/8-1,n/8-2)$. From Lemma \ref{L200818_2}(iv), $D^\pi$ is a composition factor of $D^\la\da_{\SSS_{n/2+1}}$. In particular there exists $\psi\in\Par_2(n/2-1)$ such that $D^\pi\boxtimes D^\psi$ is a composition factor of $D^\la\da_{\SSS_{n/2+1,n/2-1}}$. Restricting this module to $\SSS_{n/2,1,n/2-1}$ we have by Lemma \ref{Lemma39} that $D^\mu\boxtimes\bone_{\SSS_1}\boxtimes D^\psi$ and $D^\nu\boxtimes\bone_{\SSS_1}\boxtimes D^\psi$ are composition factors of $D^\la\da_{\SSS_{n/2,1,n/2-1}}$. In particular $D^\psi$ is a composition factor of $D^\mu\da_{\SSS_{n/2-1}}$ and $D^\nu\da_{\SSS_{n/2-1}}$. Since $\nu$ is JS, Lemma \ref{Lemma39} gives that $\psi=(n/8+2,n/8+1,n/8-1,n/8-3)$, contradicting $D^\mu\da_{\SSS_{n/2-1}}$ also having a composition factor isomorphic to $D^\psi$, again using Lemma \ref{Lemma39}.
\end{proof}

\begin{Lemma}\label{L221118_6}
Let $p=2$, $n\geq 20$, $n\equiv 4\pmod{8}$, and $\la=(n/4+2,n/4+1,n/4-1,n/4-2)$. Then $E^\la_\pm\da_{G_{n/2,2}}$ is reducible.
\end{Lemma}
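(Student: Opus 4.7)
The plan is to follow the framework of Lemma~\ref{L221118_5}, but where that lemma obtained a contradiction by forcing a unique value for $\psi$ via a JS partition, here neither of the relevant composition factors of $D^\la\da_{\SSS_{n/2}}$ will be JS, and the contradiction must instead be extracted from a multiplicity mismatch between two branching paths. Write $c := (n+4)/8$, so that $n = 8c-4$ and $\la = (2c+1,2c,2c-2,2c-3)$. Assume for contradiction that $E^\la_\pm\da_{G_{n/2,2}}$ is irreducible. Since $n/2 \equiv 2 \pmod{4}$, applying Lemma~\ref{L200818_2}(iii) with $m=(n-4)/8$ produces composition factors $\mu_1 := (c+2,c,c-1,c-3)$ and $\nu_1 := (c+1,c,c-1,c-2)$ of $D^\la\da_{\SSS_{n/2}}$; together with Lemma~\ref{L200818}, this forces $[D^\la\da_{\SSS_{n/2,n/2}}] = 2[D^{\mu_1}\boxtimes D^{\nu_1}] + 2[D^{\nu_1}\boxtimes D^{\mu_1}]$, so $D^\la\da_{\SSS_{n/2}}$ has exactly $D^{\mu_1}$ and $D^{\nu_1}$ as composition factors. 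Lemma~\ref{L200818_2}(ii) with $k=n/2-1$ then yields composition factors $\mu_2 := (c+2,c,c-1,c-2)$ and $\nu_2 := (c+2,c+1,c-1,c-3)$ of $D^\la\da_{\SSS_{n/2+1}}$.

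Direct signature computations via Lemma~\ref{Lemma39}(vi), carried out uniformly for $c$ even and $c$ odd, yield: $[D^{\mu_2}\da_{\SSS_{n/2}}:D^{\mu_1}] = 2$ and $[D^{\mu_2}\da_{\SSS_{n/2}}:D^{\nu_1}] = 1$; $[D^{\nu_2}\da_{\SSS_{n/2}}:D^{\mu_1}] = 2$, while $D^{\nu_1}$ is not a composition factor of $D^{\nu_2}\da_{\SSS_{n/2}}$; $D^{\mu_1}\da_{\SSS_{n/2-1}}$ has exactly two composition factors $D^\psi$ and $D^{(c+2,c,c-2,c-3)}$, each of multiplicity $1$, where $\psi := (c+1,c,c-1,c-3)$; and $D^{\nu_1}\da_{\SSS_{n/2-1}}$ has only $D^\psi$, with multiplicity $4$. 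Because $D^\la\da_{\SSS_{n/2}}$ contains only $D^{\mu_1}$ and $D^{\nu_1}$, every composition factor $D^\pi$ of $D^\la\da_{\SSS_{n/2+1}}$ must restrict to a module whose composition factors lie in $\{D^{\mu_1},D^{\nu_1}\}$; a short enumeration of single-node extensions of $\mu_1$ and $\nu_1$, combined with the signature check, confirms that only $\pi \in \{\mu_2,\nu_2\}$ can occur.

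Now compare multiplicities of composition factors of $D^\la\da_{\SSS_{n/2,1,n/2-1}}$ obtained in two ways. Restricting through $\SSS_{n/2,n/2}$ (via the second $\SSS_{n/2}$) gives multiplicities $8$, $2$, $2$ for $D^{\mu_1}\boxtimes\bone\boxtimes D^\psi$, $D^{\nu_1}\boxtimes\bone\boxtimes D^\psi$, and $D^{\nu_1}\boxtimes\bone\boxtimes D^{(c+2,c,c-2,c-3)}$ respectively, while $D^{\mu_1}\boxtimes\bone\boxtimes D^{(c+2,c,c-2,c-3)}$ has multiplicity $0$. Restricting through $\SSS_{n/2+1,n/2-1}$ (via the first $\SSS_{n/2+1}$) and setting $a_\chi := [D^\la\da_{\SSS_{n/2+1,n/2-1}}: D^{\mu_2}\boxtimes D^\chi]$, $b_\chi := [D^\la\da_{\SSS_{n/2+1,n/2-1}}: D^{\nu_2}\boxtimes D^\chi]$, the restriction data above gives multiplicity $2a_\chi + 2b_\chi$ for $D^{\mu_1}\boxtimes\bone\boxtimes D^\chi$ and multiplicity $a_\chi$ for $D^{\nu_1}\boxtimes\bone\boxtimes D^\chi$. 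Taking $\chi = (c+2,c,c-2,c-3)$, the first equation forces $a_\chi = b_\chi = 0$, while the second gives $a_\chi = 2$, a contradiction.

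The main obstacle is the enumeration in step two, namely verifying that $\mu_2$ and $\nu_2$ genuinely exhaust the admissible $\pi$. This reduces to a finite signature analysis of all one-node extensions of $\mu_1$ and $\nu_1$, with the cases $c$ even and $c$ odd treated in parallel; since neither $\mu_1$ nor $\nu_1$ is JS, the clean JS shortcut used in Lemma~\ref{L221118_5} is unavailable and is replaced by this more intricate multiplicity bookkeeping.
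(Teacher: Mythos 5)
Your argument is structurally the paper's own: the same pair $\mu_1,\nu_1$ coming from Lemma~\ref{L200818_2}(iii) and Lemma~\ref{L200818}, the same two candidates $\mu_2,\nu_2$ at level $n/2+1$, the same two-way restriction through $\SSS_{n/2,1,n/2-1}$, and the same final contradiction, namely that $D^{(c+2,c,c-2,c-3)}$, which occurs in $D^{\mu_1}\da_{\SSS_{n/2-1}}$, would be forced to occur in $D^{\nu_1}\da_{\SSS_{n/2-1}}$ where it cannot. The branching multiplicities you quote ($2$ and $1$ from $\mu_2$; $2$ and $0$ from $\nu_2$; $D^{\mu_1}\da_{\SSS_{n/2-1}}\cong D^{\psi}\oplus D^{(c+2,c,c-2,c-3)}$) all check out.

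There is, however, a genuine gap at the exhaustion step. For a composition factor $D^\kappa$ of $D^\la\da_{\SSS_{n/2+1}}$, Lemma~\ref{Lemma39} forces only $\tilde e_i\kappa\in\{\mu_1,\nu_1\}$ whenever $\eps_i(\kappa)>0$, i.e.\ $\kappa\in\{\tilde f_i\mu_1,\tilde f_i\nu_1\}$. When $c$ is even (i.e.\ $n\equiv 12\pmod{16}$) this list contains, besides $\mu_2$ and $\nu_2$, the five-row partition $(c+1,c,c-1,c-2,1)=\tilde f_0\nu_1$, which \emph{passes} your signature check: it is $2$-regular, all its removable nodes have residue $0$, no addable node has residue $0$, so its only constraint is $\tilde e_0(c+1,c,c-1,c-2,1)=\nu_1$, which holds. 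So "enumeration of single-node extensions plus signature check" does not confirm that only $\mu_2,\nu_2$ occur; you need the extra input that every composition factor of $D^\la\da_{\SSS_k}$ is labelled by a partition with at most $h(\la)=4$ rows (\cite[Lemma 3.7]{KMTOne}), which is exactly what the paper invokes at this point. Two smaller issues: Lemma~\ref{L200818} only gives common multiplicity $a\in\{1,2\}$ in $[D^\la\da_{\SSS_{n/2,n/2}}]$, not $a=2$ (harmless, since your contradiction works for any $a\geq 1$); and the vanishing $[D^{\nu_1}\da_{\SSS_{n/2-1}}:D^{(c+2,c,c-2,c-3)}]=0$, which is the crux of the contradiction, does not follow from Lemma~\ref{Lemma39}(vi) (that lemma only controls the multiplicities of the $D^{(\nu_1)_A}$, and $(c+2,c,c-2,c-3)$ dominates $(\nu_1)_{(1,c+1)}$, so dominance does not exclude it either); one needs the block/content comparison that the paper uses.
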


\begin{proof}
Assume that $E^\la_\pm\da_{G_{n/2,2}}$ is irreducible. Let
\begin{align*}
\mu&:=(n/8+5/2,n/8+1/2,n/8-1/2,n/8-5/2),\\
\nu&:=(n/8+3/2,n/8+1/2,n/8-1/2,n/8-3/2).
\end{align*} 
By Lemmas \ref{L200818} and \ref{L200818_2}(iii), in the Grothendieck group we have for some $a\in\Z_{>0}$:
\begin{equation}\label{E221118_2}
[D^\la\da_{\SSS_{n/2,n/2}}]=a[D^\mu\boxtimes D^\nu]+a[D^\nu\boxtimes D^\mu].
\end{equation}
In particular, all composition factors of $D^\la\da_{\SSS_{n/2}}$ are of the form $D^\mu$ or $D^\nu$. It follows using Lemma~\ref{Lemma39} that all composition factors of $D^\la\da_{\SSS_{n/2+1}}$ are of the form $D^\kappa$ with $\tilde e_i\kappa=\mu$ or $\nu$ for some $i$. Since any composition factor of $D^\la\da_{\SSS_k}$ for any $k\leq n$ is indexed by a partition with at most $4$ rows (for example by \cite[Lemma 3.7]{KMTOne}), it then follows that any composition factor of $D^\la\da_{\SSS_{n/2+1}}$ is of the form $D^\pi$ or $D^\psi$, where
\begin{align*}
\pi&:=(n/8+5/2,n/8+3/2,n/8-1/2,n/8-5/2),\\
\psi&:=(n/8+5/2,n/8+1/2,n/8-1/2,n/8-3/2).
\end{align*} 
Then, in the Grothendieck group,
\begin{align*}
[D^\la\da_{\SSS_{n/2+1,n/2-1}}]&=[D^\pi\boxtimes M]+[D^\psi\boxtimes N],
\end{align*} 
for certain modules $M,N$ of $\SSS_{n/2-1}$. Comparing this to
(\ref{E221118_2}) and 
using Lemma \ref{Lemma39}, we deduce 
\begin{align*}
[D^\la\da_{\SSS_{n/2,1,n/2-1}}]&=2[D^\mu\boxtimes\bone_{\SSS_1}\boxtimes M]+2[D^\mu\boxtimes\bone_{\SSS_1}\boxtimes N]+[D^\nu\boxtimes\bone_{\SSS_1}\boxtimes N]\\
&=a[D^\mu\boxtimes\bone_{\SSS_1}\boxtimes D^\nu\da_{\SSS_{n/2-1}}]+a[D^\nu\boxtimes\bone_{\SSS_1}\boxtimes D^\mu\da_{\SSS_{n/2-1}}].
\end{align*}
Notice that from Lemma \ref{Lemma39}, $D^\mu\da_{\SSS_{n/2-1}}\cong D^\gamma\oplus D^\delta$ with
\begin{align*}
\gamma&:=(n/8+5/2,n/8+1/2,n/8-3/2,n/8-5/2),\\
\delta&:=(n/8+3/2,n/8+1/2,n/8-1/2,n/8-5/2).
\end{align*}
In particular $D^\mu\boxtimes\bone_{\SSS_1}\boxtimes D^\gamma$ is a composition factor of $D^\la\da_{\SSS_{n/2,1,n/2-1}}$ and then $D^\gamma$ is a composition factor of $D^\nu\da_{\SSS_{n/2-1}}$, which contradicts Lemma~\ref{Lemma39} by a block argument.
\end{proof}

\begin{Proposition}\label{P221118_9} 
Let $p=2\mid n\geq 10$ and $\la\in\Parinv_2(n)$. If  $E^\la_\pm\da_{G_{n/2,2}}$ is irreducible then $\la=\be_n$.
\end{Proposition}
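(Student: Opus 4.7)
The plan is to combine Lemmas~\ref{L200818} and \ref{L221118} to reduce the proposition to the exceptional families listed in Lemma~\ref{L221118}, and then to dispose of those families one at a time using the lemmas already established plus analogous arguments for the two remaining cases.

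First I would assume for contradiction that $E^\la_\pm\da_{G_{n/2,2}}$ is irreducible and that $\la\neq\be_n$. By Lemma~\ref{L200818}, the restriction $D^\la\da_{\SSS_{n/2}}$ has at most two isomorphism classes of composition factors. Lemma~\ref{L221118} then forces $\la$ to lie in one of the exceptional families (ii)--(vi) of that lemma. Cases (ii), (iii) and (v) are excluded respectively by Lemma~\ref{L221118_3}, Lemma~\ref{L221118_4}, and the combination of Lemmas~\ref{L221118_5} (for $n\equiv 0\pmod{8}$) and \ref{L221118_6} (for $n\equiv 4\pmod{8}$).

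This leaves cases (iv) and (vi), both of which require $n\equiv 2\pmod{4}$, so that $n/2$ is odd. I would treat them in the same spirit as Lemma~\ref{L221118_5}: starting from the hypothetical irreducibility, Lemma~\ref{L200818} together with an explicit identification (via Lemma~\ref{L110518} and repeated use of Lemma~\ref{Lemma39}) of two composition factors $D^\mu,D^\nu$ of $D^\la\da_{\SSS_{n/2}}$ forces $[D^\la\da_{\SSS_{n/2,n/2}}]$ to be an integer multiple of $[D^\mu\boxtimes D^\nu]+[D^\nu\boxtimes D^\mu]$ or of $[D^\mu\boxtimes D^\mu]$. One then exhibits a composition factor $D^\pi\boxtimes D^\psi$ of $D^\la\da_{\SSS_{n/2+1,n/2-1}}$ via Lemma~\ref{Lemma39} and, after restricting further to $\SSS_{n/2,1,n/2-1}$, extracts a composition factor of $D^\mu\da_{\SSS_{n/2-1}}$ or $D^\nu\da_{\SSS_{n/2-1}}$ which is incompatible with Lemma~\ref{Lemma39} or with block considerations.

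The main obstacle is case (iv), where $\la=((n+6)/4,(n+2)/4,(n-2)/4,(n-6)/4)$ has all four parts differing by $1$, so all four removable nodes share the same residue modulo $2$. This extra symmetry makes pinpointing the two candidate composition factors $\mu,\nu$ and the auxiliary $\pi,\psi$ more delicate than in Lemmas~\ref{L221118_5}--\ref{L221118_6}; extracting the contradiction will require an analog of Lemma~\ref{L200818_2} tailored to this partition, carefully tracking residues and the double structure inherent to $\Parinv_2(n)$.
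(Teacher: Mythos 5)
Your reduction to the exceptional families of Lemma~\ref{L221118} via Lemma~\ref{L200818}, and your treatment of cases (ii), (iii) and (v) by Lemmas~\ref{L221118_3}, \ref{L221118_4}, \ref{L221118_5} and \ref{L221118_6}, match the paper exactly. The problem is cases (iv) and (vi): you do not actually prove them. You sketch a strategy modelled on Lemma~\ref{L221118_5} and then concede that it ``will require an analog of Lemma~\ref{L200818_2} tailored to this partition'' which you never supply. As it stands this is a genuine gap --- the two families $\la=((n+6)/4,(n+2)/4,(n-2)/4,(n-6)/4)$ and $\la=((n+10)/4,(n+6)/4,(n-6)/4,(n-10)/4)$ with $n\equiv 2\pmod 4$ remain open, and as you yourself observe, the branching analysis there is harder than in the cases already handled because all four removable nodes share the same residue.

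The point you are missing is that no such computation is needed: in both case (iv) and case (vi) all four removable nodes of $\la$ are normal, so $\la$ has four normal nodes, and Proposition~\ref{P110518}(i) (whose $b=2$ clause, coming from Proposition~\ref{P110518_3}, says that irreducibility of $E^\la_\pm\da_{G_{a,2}}$ forces $\la$ to have at most three normal nodes) excludes these cases immediately. This is exactly how the paper disposes of (iv) and (vi). I recommend you replace your sketched computation by this one-line normal-node count; alternatively, if you insist on the direct route, you must actually produce and verify the analogue of Lemma~\ref{L200818_2} for these two partitions before the proof can be considered complete.
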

\begin{proof}
By Lemma~\ref{L200818}, we may also assume that $D^\la\da_{\SSS_{n/2}}$ has at most two isomorphism classes of composition factors. So by Lemma~\ref{L221118}, we may assume that we are in one of the exceptional cases (i)-(vi) of that lemma. The case (i) does not need to be considered since this is the case $\la=\be_n$. In the cases (iv) and (vi), $\la$ has four normal nodes, so we can exclude them by Proposition \ref{P110518}. The case (ii) is treated in Lemma~\ref{L221118_3}, the case (iii) is treated in Lemma~\ref{L221118_4}, and the case  (v) is treated in Lemmas~\ref{L221118_5} and \ref{L221118_6}.
\end{proof}

\subsection{Proof of Theorem~\ref{TA}}
\label{SSTA}
For $p>3$ the result follows from \cite[Main Theorem]{KSAlt}. So we may assume that $p=3$ or $2$. 
In view of the exceptions (i) and (iv) in Theorem~\ref{TA}, we may assume that $G$ is imprimitive and $\la\neq \be_n$ if $p=2$. 

The theorem is easily checked for $n=5,6,7$. Indeed, in view of Theorems~\ref{TB} and \ref{TC}, we may assume that $G$ is one of the following: $G_{2,3}, G_{3,2}, \AAA_{4,3}$. Moreover, we only have to consider the cases where either $p=2$ and $\la=(3,2,1)$ or $p=3$ and $\la=(4,1^2)$ or $(4,2,1)$. In the exceptional cases the restriction $E^\la_\pm\da_G$ are reducible since $\sqrt{|G|}\leq \dim E^\la_\pm$.

The case $(n,p)=(8,2)$ is also easy since in this case $\la\in\Parinv_2(8)\setminus\{\be_8\}$ implies $\la=(4,3,1)$ and $\dim E^\la_\pm=20$ and $G$ is contained in one of $\{\AAA_7,\AAA_{6,2},\AAA_{5,3},\AAA_{4,4},G_{2,4},G_{4,2}\}$. The case $G\leq \AAA_7$ is covered by the exceptional case (ii)(b) of Theorem \ref{TA}. The case $G\leq \AAA_{6,2}$ is excluded by Theorem \ref{T221118}. The cases $G\leq \AAA_{5,3}$, $G\leq \AAA_{4,4}$ and $G\leq G_{2,4}$ are excluded since then $\sqrt{|G|}<\dim E^{(4,3,1)}_\pm$. The case $G\leq G_{4,2}$ is excluded by Lemma~\ref{L221118_3}. From now we assume that $(n,p)\neq (8,2)$. 

If $G$ is intransitive then $G\leq \AAA_{n-k,k}$ for some $1\leq k\leq n/2$. By Proposition~\ref{P100518_2}, we may assume that either $k=1$, or $k=2$ and $\la$ is JS. It remains to apply Theorem~\ref{TB}. 

If $G$ is transitive then $G\leq G_{a,b}$ for some $a,b>1$ with $ab=n$. By Proposition~\ref{P110518}, we may assume that $p=2=b$ (and $\la$ has at most three normal nodes). Now, we  apply Proposition~\ref{P221118_9}. 

\section{Basic spin case}

In this section, we assume that $p=2$. 
Recall e.g. from \cite{Wales} that
\begin{equation}
\dim D^{\be_n}=
\left\{
\begin{array}{ll}
2^{(n-2)/2} &\hbox{if $n$ is even,}\\
2^{(n-1)/2} &\hbox{if $n$ is odd.}
\end{array}
\right.
\end{equation}
Moreover, $\be_n\in\Parinv_2(n)$ if and only $n\not \equiv 2\pmod{4}$ (although we consider a general $n$ in this section). 

\subsection{Restricting basic spin module to intransitive subgroups}

\begin{Lemma} \label{LPrev} 
Let $\nu=(n_1,\dots,n_h)$ be a composition of $n$ with $n_1,\dots,n_h>1$, and $D=D^{\la_1}\boxtimes\dots \boxtimes D^{\la_h}$ be an irreducible $\F \SSS_\nu$-module. Then $D\da_{\AAA_\nu}$ splits if and only if $\la_r\in\Parinv_2(n_r)$ for all $r=1,\dots,h$. 
\end{Lemma}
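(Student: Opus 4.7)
The plan is to reduce the problem to a combinatorial question about orbits on a set of isotypic components, by passing through the common subgroup $H_0:=\AAA_{n_1}\times\cdots\times\AAA_{n_h}$. Note $H_0\trianglelefteq \SSS_\nu$, and $H_0\leq \AAA_\nu$ with $[\AAA_\nu:H_0]=2^{h-1}$ (using $n_r>1$ for every $r$). Writing $S:=\{r:\la_r\in\Parinv_2(n_r)\}$ and using the definition of $\Parinv_2$, we get
\[D\da_{H_0}\ \cong\ \bigoplus_{\eps\in\{+,-\}^S} M_\eps,\qquad M_\eps:=\Big(\boxtimes_{r\in S} E^{\la_r}_{\eps_r}\Big)\boxtimes\Big(\boxtimes_{r\notin S}E^{\la_r}\Big),\]
and these $2^{|S|}$ modules are pairwise non-isomorphic irreducible $\F H_0$-modules. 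So they are exactly the isotypic components of $D\da_{H_0}$.

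Next I would compute the action of $\SSS_\nu/H_0\cong(\ZZ/2)^h$ on the set $\{M_\eps\}$: using the fact that $(E^{\la_r}_\pm)^{\sigma_r}\cong E^{\la_r}_\mp$ for any $\sigma_r\in\SSS_{n_r}\setminus\AAA_{n_r}$ when $r\in S$, and that $E^{\la_r}$ is $\SSS_{n_r}$-invariant when $r\notin S$, the $r$-th generator $e_r\in(\ZZ/2)^h$ swaps the sign in coordinate $r$ if $r\in S$ and acts trivially otherwise. Then I would determine $\AAA_\nu$-orbits. If $S\neq\{1,\dots,h\}$, pick $r_0\notin S$; for each $r\in S$ the class $e_r+e_{r_0}\in\AAA_\nu/H_0$ swaps coordinate $r$, so these generate a transitive action of $\AAA_\nu/H_0$ on $\{+,-\}^S$. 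Conversely if $S=\{1,\dots,h\}$, then $\AAA_\nu/H_0$ is the even-weight subgroup of $(\ZZ/2)^h$, which acts with stabilizer of any given $\eps$ being trivial, giving an orbit of size $2^{h-1}$ and hence exactly two orbits on $2^h$ components.

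Finally I would conclude by combining with the standard Clifford-type fact (as used throughout the paper in the case $H_0=\AAA_{n_r}$) that since $[\SSS_\nu:\AAA_\nu]=2$, the restriction $D\da_{\AAA_\nu}$ is either irreducible or a direct sum of two non-isomorphic irreducibles, and the number of irreducible summands equals the number of $\AAA_\nu$-orbits on the isotypic components of $D\da_{H_0}$ (any $\AAA_\nu$-submodule is a sum of orbits of isotypic components, and since there are at most two summands each orbit must give one). Putting the two orbit counts together: $D\da_{\AAA_\nu}$ splits if and only if $S=\{1,\dots,h\}$, i.e. $\la_r\in\Parinv_2(n_r)$ for every $r$. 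The main subtlety to watch is the identification of $\AAA_\nu$-submodules with sums over $\AAA_\nu$-orbits of isotypic components in characteristic $2$, but this works because $D\da_{H_0}$ is multiplicity-free and we already know $D\da_{\AAA_\nu}$ has at most two composition factors.
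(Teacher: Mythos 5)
Your proof is correct and follows essentially the same route as the paper: both restrict to $\AAA_{n_1}\times\cdots\times\AAA_{n_h}$, use that the restriction there is multiplicity-free, and conjugate by even elements with odd components in suitable positions (your $e_r+e_{r_0}$ is exactly the paper's conjugating element). The only cosmetic difference is in the "splits" direction, where the paper compares the number of composition factors $2^h$ with the index $2^{h-1}$, while you count the two $\AAA_\nu$-orbits directly — which in fact gives slightly more, namely the exact decomposition into two summands.
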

\begin{proof}
Suppose $\la_r\in\Parinv_2(n_r)$ for all $r=1,\dots,h$. Then the number of composition factors of $D\da_{\AAA_{n_1}\times \dots\times \AAA_{n_h}}$ is $2^h$. On the other hand $\AAA_{n_1}\times \dots\times \AAA_{n_h}$ is a normal subgroup of $\AAA_\nu$ of index $2^{h-1}$, so $D\da_{\AAA_\nu}$ must have at least two composition factors. 

Conversely, suppose that $\la_r\not\in\Parinv_2(n_r)$ for some $r$. Without loss of generality, we may assume that  
$\la_1\in\Parinv_2(n_1), \dots, \la_s\in\Parinv_2(n_s)$ and $\la_{s+1}\not\in\Parinv_2(n_{s+1}), \dots, \la_h\not\in\Parinv_2(n_h)$ for some $0\leq s<h$. Then 
$$
D\da_{\AAA_{n_1}\times \dots\times \AAA_{n_h}}
=\bigoplus_{\eps_1,\dots\eps_s\in\{+,-\}} E^{\la_1}_{\eps_1}\boxtimes\dots \boxtimes E^{\la_s}_{\eps_s}\boxtimes 
E^{\la_{s+1}}\boxtimes\dots 
\boxtimes E^{\la_h}.
$$
So any submodule of  $D\da_{\AAA_{n_1}\times \dots\times \AAA_{n_h}}$ is the direct sum of some of the summands in the right hand side. But if one such summand lies in an $\F\AAA_\nu$-submodule of $D\da_{\AAA_\nu}$ then all of them do (this can be seen by conjugating with elements of $\AAA_\nu$ having odd components in some of the first $s$ positions and, if necessary, an odd component in one of the remaining positions). 
\end{proof}

\begin{Proposition} \label{P221118} 
Let $n\geq 5$, $\nu=(n_1,\dots,n_h)$ be a composition of $n$ with $h>1$. Then $E^{\be_n}_{(\pm)}\da_{\AAA_\nu}$ is irreducible if and only if one of the following conditions holds:
\begin{enumerate}
\item[{\rm (1)}] $n\equiv 0\pmod{4}$, $h=3$, $n_r\equiv 2\pmod{4}$ for exactly one $r$, and the other two parts of $\nu$ are odd;
\item[{\rm (2)}] $n\equiv 0\pmod{4}$, $h=2$, and $n_1$, $n_2$ are both odd;
\item[{\rm (3)}] $n\not\equiv 2\pmod{4}$, $h=2$, and $n_r\equiv 2\pmod{4}$ for at least one $r$.
\end{enumerate}
\end{Proposition}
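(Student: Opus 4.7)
The strategy is to reduce everything to counting composition factors of $D^{\be_n}\da_{\AAA_\nu}$. Since $D^{\be_n}\da_{\AAA_n}=E^{\be_n}_+\oplus E^{\be_n}_-$ when $n\not\equiv 2\pmod 4$ and is the single irreducible $E^{\be_n}$ when $n\equiv 2\pmod 4$, irreducibility of $E^{\be_n}_{(\pm)}\da_{\AAA_\nu}$ is equivalent to $D^{\be_n}\da_{\AAA_\nu}$ having exactly two composition factors in the former case and exactly one in the latter.

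The first step is to establish the classical branching
$$D^{\be_n}\da_{\SSS_\nu}\cong M^{\oplus c},\qquad M:=D^{\be_{n_1}}\boxtimes\cdots\boxtimes D^{\be_{n_h}}.$$
By induction on $h$ this reduces to the two-part statement that $D^{\be_n}\da_{\SSS_{a,b}}$ is isotypic with composition factor $D^{\be_a}\boxtimes D^{\be_b}$, which follows from the branching rule for the basic spin representation of the double cover \cite{Wales} combined with the description of $D^{\be_n}$ as a reduction modulo $2$ \cite{Benson}. Writing $s$ for the number of even parts of $\nu$, comparison of dimensions then yields $c=2^{(s+h-2)/2}$ when $n$ is even and $c=2^{(s+h-1)/2}$ when $n$ is odd.

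The second step invokes Lemma~\ref{LPrev}: the restriction $M\da_{\AAA_\nu}$ is irreducible if some $n_r\equiv 2\pmod 4$ and splits as a sum of two non-isomorphic irreducibles when every $n_r\not\equiv 2\pmod 4$. Together with the isotypic decomposition, the total number of composition factors of $D^{\be_n}\da_{\AAA_\nu}$ equals $c$ in the former subcase and $2c$ in the latter.

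The remaining step is a parity case analysis identifying when this count matches the required $1$ or $2$. For $n\not\equiv 2\pmod 4$ the target is $2$, forcing either $c=2$ with some $n_r\equiv 2\pmod 4$ (which by the formula above requires $s+h=4$ for $n$ even or $s+h=3$ for $n$ odd, leaving exactly $(h,s)=(3,1)$ for $n$ even (case (1)) and $(h,s)=(2,2)$ with both parts $\equiv 2\pmod 4$ for $n$ even together with $(h,s)=(2,1)$ with the even part $\equiv 2\pmod 4$ for $n$ odd (case (3))) or $c=1$ with every $n_r\not\equiv 2\pmod 4$ (which forces $h=2$ with both parts odd and $n$ even, yielding (2)). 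For $n\equiv 2\pmod 4$ the target $1$ would require both $c=1$ and some $n_r\equiv 2\pmod 4$, but $c=1$ forces $s=0$, so no solutions exist. The main technical obstacle is the isotypicity claim in the first step; once that is in hand, the rest is elementary bookkeeping on parities using Lemma~\ref{LPrev}.
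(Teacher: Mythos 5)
Your proof is correct and follows essentially the same route as the paper: all composition factors of $D^{\be_n}\da_{\SSS_\nu}$ are $D^{\be_{n_1}}\boxtimes\cdots\boxtimes D^{\be_{n_h}}$ with multiplicity computed by dimensions, Lemma~\ref{LPrev} decides whether that factor splits over $\AAA_\nu$, and a parity count finishes the job (the paper merely dispatches $n\equiv 2\pmod 4$ separately via \cite[Theorem C]{KMTOne} and organizes the surviving cases as lists rather than solving the counting equations directly). One small caveat: writing $D^{\be_n}\da_{\SSS_\nu}\cong M^{\oplus c}$ overstates what is true (the restriction need not be semisimple, e.g.\ $D^{(3,1)}\da_{\SSS_{2,2}}$ is a nonsplit self-extension of the trivial module), but this is harmless since your argument only uses the composition multiplicity $c$.
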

\begin{proof}
We may assume that $n\not\equiv 2\pmod{4}$, since otherwise $E^{\be_n}=D^{\be_n}\da_{\AAA_n}$, and by \cite[Theorem C]{KMTOne}, $\AAA_\nu\leq \AAA_{n-k,k}$ with $n-k$ and $k$ odd, in which case $E^{\be_n}\da_{\AAA_{(n-k,k)}}$ is reducible by Lemma~\ref{LPrev}, hence  $E^{\be_n}\da_{\AAA_\nu}$ is also reducible.

All composition factors of $D^{\be_n}\da_{\SSS_\nu}$ are isomorphic to $D^{\be_{n_1}}\boxtimes\dots\boxtimes D^{\be_{n_h}}$, and by dimensions, we have
\begin{equation}\label{E230818}
[D^{\be_n}\da_{\SSS_\nu}:D^{\be_{n_1}}\boxtimes\dots\boxtimes D^{\be_{n_h}}]=2^{\lfloor\frac{n-1}{2}\rfloor - \lfloor\frac{n_1-1}{2}\rfloor-\dots- \lfloor\frac{n_h-1}{2}\rfloor}.
\end{equation}
If the last expression is greater than $2$, we must have that $E^{\be_n}_\pm\da_{\AAA_\nu}$ is reducible. So we may assume that it is at most $2$, which leaves us with the following cases:
\begin{enumerate}
\item[{\rm (a)}] $h=4$ and all $n_r$'s are odd;
\item[{\rm (b)}] $h=3$ and $n_r$ is even for at most one $r$;
\item[{\rm (c)}] $h=2$.
\end{enumerate}

In the case (a) the restriction $E^{\be_n}_{\pm}\da_{\AAA_\nu}$ is reducible, since by (\ref{E230818}), we have that $D^{\be_n}\da_{\SSS_\nu}$ has two composition factors, and these split when restricted to $\AAA_\nu$ by Lemma \ref{LPrev}.

In the case (b) $D^{\be_n}\da{\SSS_\nu}$ has exactly two composition factors by (\ref{E230818}). Suppose first that $n_r\not\equiv 2\pmod{4}$ for all $r$. In this case $E^{\be_n}_{\pm}\da_{\AAA_\nu}$ is reducible by the argument as in the previous paragraph. Without loss of generality we may then assume that $n_1\equiv 2\pmod{4}$ and that $n_2$ and $n_3$ are odd. In this case $(D^{\be_{n_1}}\boxtimes D^{\be_{n_2}}\boxtimes D^{\be_{n_3}})\da_{\AAA_\nu}$ does not split by Lemma \ref{LPrev}. So $D^{\be_n}\da_{\AAA_\nu}$ has exactly two composition factors and then $E^{\be_n}_{\pm}\da_{\AAA_\nu}$ is irreducible.

In the case (c) assume first that both $n_1$ and $n_2$ are odd. Then $D^{\be_n}\da{\SSS_\nu}$ is irreducible by (\ref{E230818}). So $D^{\be_n}\da{\AAA_\nu}$ has at most two composition factors and then $E^{\be_n}_{\pm}\da{\AAA_\nu}$ is irreducible. So we may assume that at least one of $n_1$, $n_2$ is even. In this case $D^{\be_n}\da{\SSS_\nu}$ has exactly two composition factors by (\ref{E230818}). If $n_1,n_2\not\equiv 2\pmod{4}$ then $(D^{\be_{n_1}}\boxtimes D^{\be_{n_2}})\da_{\AAA_\nu}$ splits by Lemma \ref{LPrev} and so $E^{\be_n}_{\pm}\da{\AAA_\nu}$ is reducible. Otherwise we may assume without loss of generality that $n_1\equiv 2\pmod{4}$. In this case $(D^{\be_{n_1}}\boxtimes D^{\be_{n_2}})\da_{\AAA_\nu}$ does not split by Lemma \ref{LPrev}. So $D^{\be_n}\da_{\AAA_\nu}$ has exactly two composition factors and then $E^{\be_n}_{\pm}\da_{\AAA_\nu}$ is irreducible.
\end{proof}

\subsection{Restricting basic spin module to transitive imprimitive subgroups}
Throughout this subsection, $a,b\in\Z_{\geq 2}$ with $ab=n$. We investigate when the restriction $E^{\be_n}_\pm\da_{G_{a,b}}$ is irreducible. 

A special role will be played by the irreducible $\F (\SSS_a\wr \SSS_b)$-modules of the form $D^\mu\wr D^\nu$ for $\mu\in\Par_2(a)$ and $\nu\in\Par_2(b)$. As a vector space, $D^\mu\wr D^\nu=(D^\mu)^{\otimes b}\otimes D^\nu$, and the action on $v_1\otimes \dots\otimes v_b\otimes w\in (D^\mu)^{\otimes b}\otimes D^\nu$ is determined from the following requirements: $(g_1,\dots,g_b)\in \SSS_a\times\dots\times \SSS_a$ acts as 
$$
(g_1,\dots,g_b)\cdot (v_1\otimes \dots \otimes v_b\otimes w)=(g_1v_1)\otimes \dots \otimes (g_bv_b)\otimes w,
$$
and $h\in\SSS_b$ acts as
$$
h\cdot (v_1\otimes \dots \otimes v_b\otimes w)=v_{h^{-1}(1)}\otimes \dots \otimes v_{h^{-1}(b)}\otimes hw.
$$

\begin{Lemma} \label{L220818} 
All composition factors of the restriction $D^{\be_n}\da_{\SSS_a\wr\SSS_b}$ are of the form $D^{\be_a}\wr D^{\be_b}$, and
$$
[D^{\be_n}\da_{\SSS_a\wr\SSS_b}: D^{\be_a}\wr D^{\be_b}]=
\left\{
\begin{array}{ll}
2^{b/2} &\hbox{if $a$ is even and $b$ is even,}\\
2^{(b-1)/2} &\hbox{if $a$ is even and $b$ is odd,}\\
1 &\hbox{if $a$ is odd.}
\end{array}
\right.
$$
\end{Lemma}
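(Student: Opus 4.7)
The plan is first to restrict further to the base subgroup $\SSS_a^{\times b}:=\SSS_a\times\cdots\times\SSS_a$, and then to combine the resulting multiplicity information with Clifford theory for the normal inclusion $\SSS_a^{\times b}\trianglelefteq \SSS_a\wr\SSS_b$. The proof of Proposition~\ref{P221118} establishes, as a general fact, that for any composition $\nu=(n_1,\ldots,n_h)$ of $n$ every composition factor of $D^{\be_n}\da_{\SSS_\nu}$ is isomorphic to $D^{\be_{n_1}}\boxtimes\cdots\boxtimes D^{\be_{n_h}}$, with multiplicity given by \eqref{E230818}. Applied to $\nu=(a,a,\ldots,a)$ this shows that every composition factor of $D^{\be_n}\da_{\SSS_a^{\times b}}$ is isomorphic to $(D^{\be_a})^{\boxtimes b}$, with total multiplicity
\begin{equation*}
N:=2^{\lfloor(n-1)/2\rfloor - b\lfloor(a-1)/2\rfloor}.
\end{equation*}
A straightforward parity check gives $N=2^{b-1}$ when $a$ is even, $N=2^{(b-1)/2}$ when $a,b$ are both odd, and $N=2^{(b-2)/2}$ when $a$ is odd and $b$ is even.

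Next, since $(D^{\be_a})^{\boxtimes b}$ is stabilized by $\SSS_b$ (which merely permutes the identical tensor factors), Clifford theory for wreath products identifies the irreducible $\F(\SSS_a\wr\SSS_b)$-modules lying above it as exactly those of the form $D^{\be_a}\wr L$ with $L\in\Irr(\F\SSS_b)$, each restricting to $(D^{\be_a})^{\boxtimes b}$ with multiplicity $\dim L$. Combining this with the first step, every composition factor of $D^{\be_n}\da_{\SSS_a\wr\SSS_b}$ is of the form $D^{\be_a}\wr L$, and if $m_L$ denotes its multiplicity then $\sum_L m_L\dim L=N$. This already proves the first assertion of the lemma.

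The remaining claim is that only $L=D^{\be_b}$ contributes; granted this, the stated multiplicities $1,\,2^{(b-1)/2},\,2^{b/2}$ are obtained by dividing $N$ by $\dim D^{\be_b}=2^{\lfloor(b-1)/2\rfloor}$, and the three cases match on the nose. Pinning down $L$ is the main obstacle. I propose to handle it by induction on $b$, via the tower $\SSS_a\wr\SSS_b \supset (\SSS_a\wr\SSS_{b-1})\times \SSS_a$: by the inductive hypothesis every composition factor of $D^{\be_{a(b-1)}}\da_{\SSS_a\wr\SSS_{b-1}}$ is of the form $D^{\be_a}\wr D^{\be_{b-1}}$, while \eqref{E230818} applied to $\nu=(a(b-1),a)$ shows that the composition factors of $D^{\be_n}\da_{\SSS_{a(b-1),a}}$ are $D^{\be_{a(b-1)}}\boxtimes D^{\be_a}$. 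Stacking these two statements forces, for each occurring $L$, that $L\da_{\SSS_{b-1}}$ is $D^{\be_{b-1}}$-isotypic.

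The technically most delicate point is then to conclude from this restriction property that $L=D^{\be_b}$. My approach here is to combine the dimension identity $\sum_L m_L\dim L=N$ with an analysis, via Lemma~\ref{Lemma39}(vi), of which $\F\SSS_b$-irreducibles have $D^{\be_{b-1}}$-isotypic restriction to $\SSS_{b-1}$: using that $\be_b$ has a unique removable node whose removal yields $\be_{b-1}$, and matching the resulting candidate dimensions against $N$, all $L\neq D^{\be_b}$ are excluded. Small values of $b$ where several candidate $L$'s have the right restriction property may have to be dispatched by direct inspection, exploiting that the total dimension $(\dim D^{\be_a})^b\sum_L m_L\dim L=2^{\lfloor(n-1)/2\rfloor}=\dim D^{\be_n}$ leaves no slack once the dimension of $D^{\be_a}\wr D^{\be_b}$ is subtracted with the conjectured multiplicity.
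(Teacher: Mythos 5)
Your multiplicity computation is exactly the paper's: once one knows every composition factor is $D^{\be_a}\wr D^{\be_b}$, the stated multiplicities follow by dividing $\dim D^{\be_n}$-type counts by $\dim(D^{\be_a}\wr D^{\be_b})=(\dim D^{\be_a})^b\dim D^{\be_b}$, and your case-by-case arithmetic for $N/\dim D^{\be_b}$ checks out. For the first assertion, however, the paper simply quotes \cite[Lemma 7.19]{KMTOne}, whereas you attempt a self-contained proof. Your Clifford-theoretic frame (restrict to the base group, conclude every constituent is $D^{\be_a}\wr L$ for some $L\in\Irr(\F\SSS_b)$) is sound, but note that this weaker statement is \emph{not} yet ``the first assertion of the lemma''; the identification $L\cong D^{\be_b}$, which is the entire content of the cited lemma, is exactly what remains, and your treatment of it is a sketch with a genuine gap.

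Concretely: the inductive restriction to $(\SSS_a\wr\SSS_{b-1})\times\SSS_a$ does show that every occurring $L=D^\mu$ has $D^{\be_{b-1}}$-isotypic restriction to $\SSS_{b-1}$, hence (via the socles of the $e_iD^\mu$ in Lemma~\ref{Lemma39}) that $\mu=\be_{b-1}\cup\{A\}$ for a single node $A$. But this still leaves the candidates $(\be_{b-1},1)$ and, for $b$ odd, the two-row partition obtained by adding a node to the first row of $\be_{b-1}$; ruling these out requires genuine normal-node computations (e.g.\ exhibiting a normal node of $(\be_{b-1},1)$ whose removal yields something other than $\be_{b-1}$), not the dimension identity $\sum_Lm_L\dim L=N$. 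That identity cannot by itself exclude replacing one copy of $D^{\be_a}\wr D^{\be_b}$ by $\dim D^{\be_b}/\dim L'$ copies of $D^{\be_a}\wr L'$ for a smaller $L'$: for $a$ odd it only says $\sum_Lm_L\dim L=\dim D^{\be_b}$. Worse, the base case $b=3$ genuinely escapes your restriction criterion, since $\bone_{\SSS_3}\da_{\SSS_2}=D^{\be_2}$ is $D^{\be_2}$-isotypic, so nothing you wrote excludes $(D^{\be_a}\wr\bone_{\SSS_3})^{\oplus 2}$ in place of $D^{\be_a}\wr D^{(2,1)}$; the ``direct inspection'' you invoke is not performed. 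Either supply these computations or, as the paper does, quote \cite[Lemma 7.19]{KMTOne} for the first assertion and keep only the dimension count.
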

\begin{proof}
The first statement is established in the course of proving \cite[Lemma 7.19]{KMTOne}. The second one follows by dimensions taking into account that $\dim D^{\be_a}\wr D^{\be_b}=\dim  D^{\be_b}(\dim D^{\be_a})^b$. 
\end{proof}

\begin{Proposition}\label{P261118} 
Suppose that $n\not \equiv 2\pmod{4}$. 
The restriction $E^{\be_n}_{\pm}\da_{G_{a,b}}$ is irreducible if and only if one of the following conditions holds:
\begin{enumerate}
\item[{\rm (i)}] $a$ is odd; 
\item[{\rm (ii)}] $a\equiv 2\pmod{4}$ and $b=2$. 
\end{enumerate}
\end{Proposition}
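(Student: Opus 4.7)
The plan is to express the number of composition factors of $D^{\be_n}\da_{G_{a,b}}$ as a product $km$, where $k$ is the multiplicity of $L:=D^{\be_a}\wr D^{\be_b}$ as a composition factor of $D^{\be_n}\da_{\SSS_a\wr\SSS_b}$ and $m$ is the number of composition factors of $L\da_{G_{a,b}}$. Conjugation by an element $\sigma\in\SSS_a\wr\SSS_b$ with $\sgn_{\SSS_n}(\sigma)=-1$ (which exists since $a\geq 2$) normalizes $G_{a,b}$ and interchanges $E^{\be_n}_+$ with $E^{\be_n}_-$, so by symmetry each $E^{\be_n}_\pm\da_{G_{a,b}}$ has exactly $km/2$ composition factors, and irreducibility is equivalent to $km=2$.

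Lemma~\ref{L220818} gives $k=1$ when $a$ is odd. Then $D^{\be_n}\da_{\SSS_a\wr\SSS_b}=L$ is irreducible, so $L\da_{G_{a,b}}$ has at most two composition factors since $G_{a,b}$ has index $2$; meanwhile $\be_n\in\Parinv_2(n)$ forces $D^{\be_n}\da_{G_{a,b}}$ to split as $E^{\be_n}_+\da_{G_{a,b}}\oplus E^{\be_n}_-\da_{G_{a,b}}$, so $m\geq 2$. Hence $m=2$ and $E^{\be_n}_\pm\da_{G_{a,b}}$ is irreducible. For $a$ even, Lemma~\ref{L220818} gives $k=2^{b/2}$ for $b$ even and $k=2^{(b-1)/2}$ for $b$ odd; in particular $k\geq 4$ whenever $b\geq 4$, so $km\geq 4$ and irreducibility fails. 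The assumption $n\not\equiv 2\pmod 4$ rules out $b=3$ with $a\equiv 2\pmod 4$, so only three cases remain, all with $k=2$: $b=2$ with $a\equiv 2\pmod 4$, $b=2$ with $a\equiv 0\pmod 4$, and $b=3$ with $a\equiv 0\pmod 4$.

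The key tool is that $L\da_{\SSS_a^b}\cong (D^{\be_a})^{\boxtimes b}$ restricts semisimply to $\AAA_a^b$. When $a\equiv 2\pmod 4$ the partition $\be_a$ is not in $\Parinv_2(a)$, so $D^{\be_a}\da_{\AAA_a}=E^{\be_a}$ is irreducible; then $L\da_{\AAA_a^2}=E^{\be_a}\boxtimes E^{\be_a}$ is irreducible and hence so is $L\da_{G_{a,2}}$, giving $m=1$ and irreducibility. When $a\equiv 0\pmod 4$ we have $\be_a\in\Parinv_2(a)$, and the isotypic summands of $L\da_{\AAA_a^b}$ are indexed by sign tuples $(\eps_1,\dots,\eps_b)\in\{+,-\}^b$ via $E^{\be_a}_{\eps_1}\boxtimes\cdots\boxtimes E^{\be_a}_{\eps_b}$. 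The quotient $G_{a,b}/\AAA_a^b$ permutes these iso classes: the normal subgroup $(\SSS_a^b\cap\AAA_n)/\AAA_a^b$ flips signs at even-size subsets of coordinates, while the complementary $\SSS_b$ (entirely inside $G_{a,b}$ since $a$ is even) permutes coordinates. Any union of $G_{a,b}$-orbits yields a $G_{a,b}$-stable submodule, so $m$ is at least the number of orbits. For $b=2$ the orbits are $\{(+,+),(-,-)\}$ and $\{(+,-),(-,+)\}$, and for $b=3$ they are distinguished by the parity of the number of minus entries. In each case there are two orbits, so $m\geq 2$ and the restriction is reducible. The main technical step is precisely this orbit analysis together with the verification that orbits of iso classes correspond to $G_{a,b}$-submodules.
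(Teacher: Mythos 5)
Your proposal is correct, and its overall skeleton coincides with the paper's: both reduce the question to computing the multiplicity $k=[D^{\be_n}\da_{\SSS_a\wr\SSS_b}:D^{\be_a}\wr D^{\be_b}]$ via Lemma~\ref{L220818}, dispose of $b\geq 4$ by $k\geq 4$, and settle the cases $a$ odd and ($a\equiv 2\pmod 4$, $b=2$) by observing that $D^{\be_a}\wr D^{\be_b}$ stays irreducible over the relevant alternating base subgroup. The genuine difference is in the two remaining reducibility cases $a\equiv 0\pmod 4$ with $b\in\{2,3\}$. The paper treats them by two separate ad hoc arguments: for $b=2$ it exhibits three composition factors of $(D^{\be_a}\wr D^{\be_2})\da_{\AAA_a\wr\SSS_2}$ against the bound of two forced by irreducibility over the index-$2$ overgroup $G_{a,2}$, and for $b=3$ it runs a divisibility count ($kl=16$ with $k\mid 6$, $l\mid 4$) along the chain $\AAA_a^3\unlhd\AAA_{a,a,a}\unlhd G_{a,3}$. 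You instead give a single Clifford-theoretic argument: since $\be_a\in\Parinv_2(a)$ when $a\equiv 0\pmod 4$, the restriction of $L=D^{\be_a}\wr D^{\be_b}$ to $\AAA_a^b$ is semisimple with isotypic components indexed by sign tuples, $G_{a,b}=(\SSS_a^b\cap\AAA_n)\rtimes\SSS_b$ permutes these preserving the parity of the number of minus signs, and the two parity classes give two proper nonzero $G_{a,b}$-submodules, so $m\geq 2$ uniformly. This is a correct and arguably cleaner route: it replaces both of the paper's case-specific counts with one orbit computation (and would work verbatim for any $b\geq 2$ with $a\equiv 0\pmod 4$), at the cost of invoking slightly more Clifford-theoretic machinery (semisimplicity of $L\da_{\AAA_a^b}$ and the fact that orbit sums of isotypic components are submodules) than the paper's elementary counting.
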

\begin{proof}
We consider the following cases.

{\sf Case 1.} {\em $a$ is odd}. Then  $b\not\equiv 2\pmod{4}$. By Lemma~\ref{L220818}, we have that $D^{\be_n}\da_{\SSS_a\wr\SSS_b}\cong D^{\be_a}\wr D^{\be_b}$. Since $D^{\be_n}$ splits and $G_{a,b}$ is an index $2$ subgroup of $\SSS_a\wr\SSS_b$, it follows that $D^{\be_a}\wr D^{\be_b}\da_{G_{a,b}}$ is a direct sum of two irreducible modules and so  $E^{\be_n}_{\pm}\da_{G_{a,b}}$ is irreducible, giving  case (i). 

{\sf Case 2.} {\em $a$ is even and  $b$ is even.} As $G_{a,b}$ is an index $2$ subgroup of $\SSS_a\wr\SSS_b$, by Lemma~\ref{L220818}, we may assume that $b=2$, in which case we have $[D^{\be_{n}}\da_{\SSS_{n/2}\wr\SSS_2}: D^{\be_{a}}\wr D^{\be_2}]=2$. Note that $D^{\be_2}\cong \bone_{\SSS_2}$. 

{\sf Case 2.1.} $a\equiv 2\pmod{4}$. In this case $D^{\be_{a}}\da_{\AAA_{a}}$ is irreducible, so the restriction $(D^{\be_{a}}\wr D^{\be_2})\da_{\AAA_{a}\times \AAA_{a}}\cong D^{\be_{a}}\boxtimes D^{\be_{a}}$ is irreducible. Hence 
$(D^{\be_{a}}\wr D^{\be_2})\da_{G_{a,2}}$ is irreducible, as $\AAA_{a}\times \AAA_{a}\leq G_{a,2}$. It follows that $E^{\be_{n}}_\pm \da_{G_{a,2}}$ is irreducible, giving  case (ii). 

{\sf Case 2.2.} $a\equiv 0\pmod{4}$.  We claim that in this case $E^{\be_n}_{\pm}\da_{G_{a,2}}$ is reducible. To prove this it suffices to show that $(D^{\be_{a}}\wr D^{\be_2})\da_{G_{a,2}}$ is reducible. If $(D^{\be_{a}}\wr D^{\be_2})\da_{G_{a,2}}$ was irreducible, restricting further to the subgroup $\AAA_{a}\wr S_2\leq G_{a,2}$ would give at most two composition factors, but we claim that $(D^{\be_{a}}\wr D^{\be_2})\da_{\AAA_{a}\wr S_2}$ has three. To see this, note that
$$
(D^{\be_{a}}\wr D^{\be_2})\da_{\AAA_{a}\times \AAA_{a}}\cong E^{\be_{a}}_+\boxtimes E^{\be_{a}}_+\,\oplus\, 
E^{\be_{a}}_-\boxtimes E^{\be_{a}}_-\,\oplus\,
E^{\be_{a}}_+\boxtimes E^{\be_{a}}_-\,\oplus\,
E^{\be_{a}}_-\boxtimes E^{\be_{a}}_+.
$$
It now follows from the classification of irreducible modules over wreath products that $(D^{\be_{a}}\wr D^{\be_2})\da_{\AAA_{a}\wr S_2}$ has composition factors $E_+,E_-,E$ such that 
$
E_{\pm}\da_{\AAA_{a}\times \AAA_{a}}\cong E^{\be_{a}}_\pm\boxtimes E^{\be_{a}}_\pm,
$
 and 
$
E\da_{\AAA_{a}\times \AAA_{a}}\cong E^{\be_{a}}_+\boxtimes E^{\be_{a}}_-\,\oplus\,
E^{\be_{a}}_-\boxtimes E^{\be_{a}}_+.
$

{\sf Case 3.} {\em $a$ is even and $b$ is odd.} In this case by the assumption $n\not \equiv 2\pmod{4}$ we have $a\equiv 0\pmod{4}$. As $G_{a,b}$ is an index $2$ subgroup of $\SSS_a\wr\SSS_b$, by Lemma~\ref{L220818}, we may assume that $b=3$, in which case we have $[D^{\be_n}\da_{\SSS_{a}\wr\SSS_3}: D^{\be_{a}}\wr D^{\be_3}]=2$. We claim that in this case $E^{\be_n}_{\pm}\da_{G_{a,b}}$ is reducible. To prove this it suffices to show that $(D^{\be_{a}}\wr D^{\be_3})\da_{G_{a,3}}$ is reducible. For that note first that
\[\AAA_{a}\times\AAA_{a}\times\AAA_{a}\unlhd\AAA_{a,a,a}\unlhd G_{a,3}\]
and that $[\AAA_{a,a,a}:\AAA_{a}\times\AAA_{a}\times\AAA_{a}]=4$ and $[G_{a,3}:\AAA_{a,a,a}]=6$. Also note that 
 $D^{\be_3}$ has dimension $2$, and $D^{\be_{a}}\da_{\AAA_{a}}$ splits since  
 $a\equiv 0\pmod{4}$, 
 so 
 \begin{align*}
 (D^{\be_{a}}\wr D^{\be_3})\da_{A_{a}\times A_{a}\times A_{a}}\cong &\bigoplus_{\eps_1,\eps_2,\eps_3\in\{+,-\}}(E^{\be_{a}}_{\eps_1}\boxtimes E^{\be_{a}}_{\eps_2}\boxtimes E^{\be_{a}}_{\eps_3})^{\oplus 2}.
\end{align*}
In particular $(D^{\be_{a}}\wr D^{\be_3})\da_{A_{a}\times A_{a}\times A_{a}}$ has $16$ composition factors all of the same dimension.

If $(D^{\be_{a}}\wr D^{\be_3})\da_{G_{a,3}}$ was irreducible, then $(D^{\be_{a}}\wr D^{\be_3})\da_{\AAA_{a,a,a}}$ would have $k$ composition factors all of the same dimensions with $k\mid 6$. From the previous paragraph it then follows that there exists $l\mid 4$ such that the restriction of any of these $k$ composition factors has $l$ composition factors. In particular $kl=16$, which leads to a contradiction, since $k\mid 6$ and $l\mid 4$. 
\end{proof}
 
We consider the case $n\equiv 2\pmod{4}$ for completeness, even though it is not needed for the proof of the mains theorems.

\begin{Proposition}
Let $n\equiv 2\pmod{4}$. Then $E^{\be_n}\da_{G_{a,b}}$ is irreducible if and only of $a$ is odd. 
\end{Proposition}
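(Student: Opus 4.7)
The plan is to imitate Proposition~\ref{P261118}, but using that now $\be_n\notin\Parinv_2(n)$, so $E^{\be_n}=D^{\be_n}\da_{\AAA_n}$ is irreducible and $E^{\be_n}\da_{G_{a,b}}=D^{\be_n}\da_{G_{a,b}}$. Since $n=ab\equiv 2\pmod 4$, exactly one of $a,b$ is odd and the other is $\equiv 2\pmod 4$, so there are two cases to treat.

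When $a$ is even (so $a\equiv 2\pmod 4$ and $b\geq 3$ is odd), Lemma~\ref{L220818} gives $[D^{\be_n}\da_{\SSS_a\wr\SSS_b}:D^{\be_a}\wr D^{\be_b}]=2^{(b-1)/2}\geq 2$, so $D^{\be_n}\da_{\SSS_a\wr\SSS_b}$ has at least two composition factors; since further restriction to a subgroup cannot decrease the composition length, $E^{\be_n}\da_{G_{a,b}}$ is reducible.

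When $a$ is odd (so $b\equiv 2\pmod 4$), Lemma~\ref{L220818} gives $V:=D^{\be_n}\da_{\SSS_a\wr\SSS_b}\cong D^{\be_a}\wr D^{\be_b}$, which is irreducible. Since $\be_a\in\Parinv_2(a)$ for $a$ odd, $D^{\be_a}\da_{\AAA_a}=E^{\be_a}_+\oplus E^{\be_a}_-$, so $V\da_{\AAA_a^b}$ decomposes into $2^b$ isotypic components $V_\epsilon$ indexed by $\epsilon\in\{+,-\}^b$. Using the description
\[G_{a,b}/\AAA_a^b=\{(\epsilon',h)\in(\ZZ/2)^b\rtimes\SSS_b:\textstyle\sum\epsilon'_i+\sgn(h)\equiv 0\pmod 2\}\]
(valid for $a$ odd since the block-permutation contribution to the sign in $\SSS_n$ is $\sgn(h)^a=\sgn(h)$), I would check directly that this group acts transitively on $\{+,-\}^b$---for any target $\delta$, the choice $h=e$ or $h$ a transposition (according to the parity of $\sum\delta_i$) together with the matching $\epsilon'$ produces an element of $G_{a,b}/\AAA_a^b$ carrying $(+,\ldots,+)$ to $\delta$---and that the stabilizer of $(+,\ldots,+)$ is $\AAA_b$, so the $G_{a,b}$-stabilizer of $V_+$ is the subgroup $H:=\AAA_a\wr\AAA_b\leq G_{a,b}$.

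A standard Clifford-theoretic argument with respect to the normal subgroup $\AAA_a^b\triangleleft G_{a,b}$ then reduces the irreducibility of $V\da_{G_{a,b}}$ to that of $V_+$ as an $H$-module. The module $V_+$ carries the wreath-product structure $E^{\be_a}_+\wr E^{\be_b}$: the base group $\AAA_a^b$ acts on $(E^{\be_a}_+)^{\boxtimes b}$, while $\AAA_b$ simultaneously permutes the tensor factors and acts on the multiplicity space $D^{\be_b}\da_{\AAA_b}$, which equals the irreducible $\AAA_b$-module $E^{\be_b}$ because $\be_b\notin\Parinv_2(b)$ for $b\equiv 2\pmod 4$. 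The standard irreducibility criterion for wreath-product modules---any $\AAA_a^b$-submodule of $V_+$ must have the form $(E^{\be_a}_+)^{\boxtimes b}\otimes U$ for an $\AAA_b$-submodule $U\subseteq E^{\be_b}$, hence equals $0$ or $V_+$---then yields irreducibility of $V_+$ as $H$-module, completing the argument. The main delicate point is the transitivity check together with the identification of the stabilizer as $\AAA_a\wr\AAA_b$, which crucially uses that $a$ is odd.
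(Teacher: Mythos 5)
Your proof is correct, and in substance it runs on the same engine as the paper's: the decomposition of $D^{\be_a}\wr D^{\be_b}$ over the base group into the $2^b$ summands $E^{\be_a}_{\eps_1}\otimes\dots\otimes E^{\be_a}_{\eps_b}\otimes E^{\be_b}$, and the observation that precisely when $a$ is odd does $G_{a,b}$ contain elements with a single odd component in $\SSS_{a^b}$, which makes the action on these summands transitive. The packaging differs in two minor ways. For reducibility when $a$ is even you invoke Lemma~\ref{L220818} to get multiplicity $2^{(b-1)/2}\geq 2$ (using $b\geq 3$ odd), whereas the paper simply cites \cite[Theorem C]{KMTOne} for the reducibility of $D^{\be_n}\da_{\SSS_a\wr\SSS_b}$; both are fine. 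For irreducibility when $a$ is odd, you formalize the argument as Clifford theory relative to $\AAA_a^b\triangleleft G_{a,b}$: compute the stabilizer of $V_+$ to be $\AAA_a\wr\AAA_b$, check transitivity on the $2^b$ isotypic components, and then apply the wreath-product irreducibility criterion to $E^{\be_a}_+\wr E^{\be_b}$ using that $E^{\be_b}$ is irreducible over $\AAA_b$ (as $b\equiv 2\pmod 4$). The paper instead argues directly inside the module: it assumes a proper $G_{a,b}$-submodule $V$ contains the multiplicity-one summand $E^{\be_a}_+\otimes\dots\otimes E^{\be_a}_+\otimes E^{\be_b}$ and chases all other summands into $V$ using first elements of $\AAA_{a^b}$ with two odd components and then one element with a single odd component. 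Your version is more systematic and makes the role of the stabilizer explicit; the paper's is shorter and avoids stating the wreath-product criterion. One small imprecision in your writeup: an arbitrary $\AAA_a^b$-submodule of $V_+$ has the form $(E^{\be_a}_+)^{\boxtimes b}\otimes U$ for a mere subspace $U$ of the multiplicity space; it is the additional $\AAA_b$-stability of an $H$-submodule that forces $U$ to be an $\AAA_b$-submodule of $E^{\be_b}$. This is how the criterion is meant to be applied, so it is a wording issue rather than a gap.
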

\begin{proof}
If $a$ is even then even $D^{\be_n}\da_{\SSS_a\wr\SSS_b}$ is reducible by \cite[Theorem C]{KMTOne}, so 
we may assume that $a$ is odd. Then by \cite[Theorem C]{KMTOne} again, 
$D^{\be_n}\da_{\SSS_a\wr\SSS_b}\cong D^{\be_a}\wr D^{\be_b}$, and we claim that $(D^{\be_a}\wr D^{\be_b})\da_{G_{a,b}}$ is irreducible. As vector spaces we can write
$$
D^{\be_a}\wr D^{\be_b}= (D^{\be_a})^{\otimes b}\otimes D^{\be_b}
=\bigoplus_{\eps_1,\dots,\eps_b\in\{+,-\}} E^{\be_a}_{\eps_1}\otimes\dots\otimes E^{\be_a}_{\eps_b}\otimes E^{\be_b}.
$$
Note that the direct summand $E^{\be_a}_{+}\otimes\dots\otimes E^{\be_a}_{+}\otimes E^{\be_b}$ is invariant under the action of the subgroup $\AAA_a\wr\AAA_b$, and forms a submodule of $(D^{\be_a}\wr D^{\be_b})\da_{\AAA_a\wr\AAA_b}$ isomorphic to 
$E^{\be_a}_+\wr E^{\be_b}$. Note that  
$[(D^{\be_a}\wr D^{\be_b})\da_{\AAA_a\wr\AAA_b}:E^{\be_a}_+\wr E^{\be_b}]=1$. 

If $(D^{\be_a}\wr D^{\be_b})\da_{G_{a,b}}$ is reducible, then restricting further to $\AAA_a\wr\AAA_b$, the submodule $E^{\be_a}_{+}\otimes\dots\otimes E^{\be_a}_{+}\otimes E^{\be_b}\cong E^{\be_a}_+\wr E^{\be_b}$ described in the previous paragraph, must lie in a proper submodule $V\subseteq (D^{\be_a}\wr D^{\be_b})\da_{G_{a,b}}$.  
Acting with elements of $\AAA_{a^b}$ with exactly two odd components, we see that all the subspaces $E^{\be_a}_{\eps_1}\otimes\dots\otimes E^{\be_a}_{\eps_b}\otimes E^{\be_b}$ with even $|\{k\mid \eps_k=-\}|$ lie in $V$. Next, taking into account the fact that $a$ is odd, there exists an element of $G_{a,b}\leq \SSS_a\wr\SSS_b$ with  exactly one odd component in the base group $\SSS_{a^b}$. Acting with this element we see that all the remaining  subspaces $E^{\be_a}_{\eps_1}\otimes\dots\otimes E^{\be_a}_{\eps_b}\otimes E^{\be_b}$ also lie in $V$. Thus $V=D^{\be_a}\wr D^{\be_b}$ giving a contradiction. 
\end{proof}

\subsection{Proof of Theorem~\ref{TD}}\label{SSTD}
We may assume that $G$ is not primitive. If $G$ is intransitive, then (up to conjugation) $G$ is contained in a subgroup of the form $\AAA_{n-k,k}$ for $1\leq k<n$, and we can apply Proposition~\ref{P221118}. If  $G$ is transitive then (up to conjugation) $G$ is contained in a subgroup of the form $G_{a,b}$ for $a,b\geq 2$ and $n=ab$. In this case we apply Proposition~\ref{P261118}.

\end{document}